\documentclass[12pt]{amsart}
\usepackage{longtable}
\usepackage{amsfonts,amssymb,amscd,amsmath,epsfig}
\usepackage{latexsym}
\usepackage{tocvsec2}
\usepackage[scr]{rsfso}
\usepackage{bbm}

\usepackage[]{youngtab}
\usepackage{pifont}
\newcommand{\hollowstar}{\text{\ding{80}}}

\usepackage{chngcntr}
\counterwithin*{equation}{section}

\usepackage{appendix}

\usepackage[%dvipdfm,
hyperindex=true,pagebackref=true,bookmarks=true,
colorlinks=true,linkcolor=blue,citecolor=red]{hyperref}

\begin{document}

%MACOS FOR LECTURES ON DAHA
\renewcommand{\tilde}{\widetilde}
\renewcommand{\hat}{\widehat}

\newcommand{\BR}{{\mathbb R}}
\newcommand{\BQ}{{\mathbb Q}}
\newcommand{\BC}{{\mathbb C}}
\newcommand{\BP}{{\mathbb P}}
\newcommand{\BZ}{{\mathbb Z}}
\newcommand{\BN}{{\mathbb N}}
\newcommand{\BS}{{\mathbb S}}

\newcommand{\cH}{{\mathcal H}}
\newcommand{\cA}{{\mathcal A}}
\newcommand{\cB}{{\mathcal B}}
\newcommand{\ccF}{{\mathfrak F}}
\newcommand{\cD}{{\mathcal D}}
\newcommand{\cL}{{\mathcal L}}
\newcommand{\cF}{{\mathcal F}}
\newcommand{\cP}{{\mathcal P}}
\newcommand{\cX}{{\mathcal X}}
\newcommand{\cY}{{\mathcal Y}}
\newcommand{\cS}{{\mathcal S}}
\newcommand{\cSol}{\hbox{$\mathcal Sol$}}
\newcommand{\cT}{\hbox{$\mathcal T$}}

\newcommand{\Z}{{\mathbb Z}}
\newcommand{\Q}{{\mathbb Q}}
\newcommand{\N}{{\mathbb N}}
\newcommand{\C}{{\mathbb C}}
\newcommand{\R}{{\mathbb R}}
\newcommand{\X}{{\mathbb X}}
\newcommand{\Y}{{\mathbb Y}}

\newcommand{\CH}{{\mathcal H}}
\newcommand{\CA}{{\mathcal A}}

\def\HH{\mbox{${\mathcal H}$\kern-5.2pt${\mathcal H}$}}

\newcommand{\binomial}[2]{\genfrac{(}{)}{0pt}{}{ #1 }{ #2 }}
\newcommand{\qbinomial}[2]{\genfrac{[}{]}{0pt}{}{ #1 }{ #2 }_q }
\newcommand{\qbinom}[3]{\genfrac{[}{]}{0pt}{}{ #1 }{ #2 }_{ #3 } }

%%SPECIAL SEC 1.0

\def\der{\partial}
\def\tensor{\otimes}
\def\gam{\gamma} \def\Gam{\Gamma}
\def\del{\delta} \def\Del{\Delta}
\def\kap{\kappa}
\def\lam{\lambda} \def\Lam{\Lambda}
\def\Comp{{\mathbb C}}
\def\sM{{\mathcal M}}

\newtheorem{theorem}{Theorem}[section]
\newtheorem{maintheorem}[theorem]{Main Theorem}
\newtheorem{proposition}[theorem]{Proposition}
\newtheorem{definition}[theorem]{Definition}
\newtheorem{lemma}[theorem]{Lemma}
\newtheorem{corollary}[theorem]{Corollary}
\newtheorem{notation}[theorem]{Notation}
\newtheorem{remark}[theorem]{Remark}
\newtheorem{example}[theorem]{Example}

\newtheorem{theorem }{Theorem}[section]
\newtheorem{maintheorem }[theorem]{Main Theorem}
\newtheorem{proposition }[theorem]{Proposition}
\newtheorem{definition }[theorem]{Definition}
\newtheorem{lemma }[theorem]{Lemma}
\newtheorem{corollary }[theorem]{Corollary}
\newtheorem{notation }[theorem]{Notation}
\newtheorem{remark }[theorem]{Remark}
\newtheorem{example }[theorem]{Example}

\newtheorem{ maintheorem }[theorem]{Main Theorem}
\newtheorem{ theorem}{Theorem}[section]
\newtheorem{ proposition}[theorem]{Proposition}
\newtheorem{ definition}[theorem]{Definition}
\newtheorem{ lemma}[theorem]{Lemma}
\newtheorem{ corollary}[theorem]{Corollary}
\newtheorem{ notation}[theorem]{Notation}
\newtheorem{ remark}[theorem]{Remark}
\newtheorem{ example}[theorem]{Example}

\newtheorem{thm}{Theorem}[section]
\newtheorem{prop}[thm]{Proposition}
\newtheorem{lem}[thm]{Lemma}
\newtheorem{cor}[thm]{Corollary}
\newtheorem{conj}[thm]{Conjecture}
\newtheorem{con}[thm]{Conjecture}
\newtheorem{dfn}[thm]{Definition}
\newtheorem{df}[thm]{Definition}
 \newcommand{\rem}{{\bf Comment.\ }}
 \newcommand{\rmk}{{\bf Comment.\ }}
 \newcommand{\exmp}{{\bf Example.\ }}
 \newcommand{\ex}{{\bf Example.\ }}
 \newcommand{\prob}{{\bf Problem.\ }}

\newtheorem{note}{Note} 
\renewcommand{\thenote}{}
\newtheorem*{acka}{Acknowledgments}
\newtheorem{ack}{Acknowledgments}
\renewcommand{\theack}{}
\renewcommand{\appendixname}{\bf Appendix}
\renewcommand{\proof}{{\em Proof.\ }}

\hyphenation{
ap-pen-dix as-ymp-tot-ic at-trib-uted at-trib-ut-able
Bry-li-n-sky com-mu-ta-tion de-ge-ne-rate
de-riv-a-tive dis-trib-ute equi-vari-ant ex-tra-or-di-nary  
geo-met-ric griev-ance griev-ous grad-ed ho-lo-no-my ho-mo-thetic
in-fin-ite-ly in-fin-i-tes-i-mal Ha-rish Cha-n-dra mul-ti-plic-able 
non-euclid-ean non-iso-mor-phic non-smooth par-a-digm 
par-a-bol-ic pa-rab-o-loid pa-ram-e-trize phe-nom-e-non 
post-script pseu-do-dif-fer-en-tial pseu-do-fi-nite 
qua-drat-ics quad-ra-ture Han-kel rec-tan-gle semi-def-i-nite 
set-up wide-spread Euler-ian Feb-ru-ary Gauss-ian Grothen-dieck 
Hamil-ton-ian Her-mi-t-ian her-mi-t-ian Jan-u-ary 
Japan-ese Ka-shi-wa-ra Kor-te-weg Le-gendre No-vem-ber Rie-mann-ian 
Sep-tem-ber Za-mo-lo-d-chi-kov Kni-zh-nik quan-tum Op-dam
Mac-do-nald Ca-lo-ge-ro Su-ther-land Mo-ser 
Ol-sha-net-sky  Pe-re-lo-mov in-de-pen-dent ope-ra-tors 
cy-clo-to-mic ra-tio-nal de-gen-er-a-tion 
in-ter-est-ing de-for-ma-tions de-for-ma-tion pro-ce-dure 
fol-lows ope-ra-tors  pre-serve suf-fices ap-proach 
for-mu-las con-sider its com-ple-tion cor-re-spond-ing 
au-to-mor-phism be-cause pro-por-tional fi-nal-ly let-ting 
equi-v-a-lence ge-n-er-al-ized Mac-do-nald iden-ti-ties 
cor-re-s-pond sub-dia-grams par-ti-tion na-t-u-ral-ly 
or-dered stan-dard de-for-ma-tion ar-gu-ment com-bined 
sphe-r-i-cal rep-re-sen-ta-tions tri-go-no-me-t-ric
ge-n-er-al-ly speak-ing pri-m-it-ive ir-re-du-cible 
sum-ma-tion  rep-re-sen-ta-tives pro-por-ti-o-na-li-ty
ultra-sphe-ri-cal Ro-gers}

\def\ffor{\quad\hbox{ for }\quad}
\def\wwhen{\quad\hbox{ when }\quad}
\def\wwhere{\quad\hbox{ where }\quad}
\def\aand{\quad\hbox{ and }\quad}
\def\for{\  \hbox{ for } \ }
\def\iif{ \ \hbox{ if } \ }
\def\when{ \ \hbox{ when } \ }
\def\where{\  \hbox{ where } \ }
\def\and{\  \hbox{ and } \ }
\def\and{\  \hbox{ and } \ }
\def\oor{\  \hbox{ or } \ }
\def\proof{{\em Proof. \  }}

\def\equal{\stackrel{\,\mathbf{def}}{= \kern-3pt =}}

\def\la{\lambda}
\def\La{\Lambda}
\def\om{\omega}
\def\Om{\Omega}
\def\Th{\Theta}
\def\th{\theta}
\def\al{\alpha}
\def\be{\beta}
\def\ga{\gamma}
\def\ep{\epsilon}
\def\up{\upsilon}
\def\Up{\Upsilon}
\def\de{\delta}
\def\De{\Delta}
\def\ka{\kappa}
\def\kapp{\hbox{\bf \ae}}
\def\si{\sigma}
\def\Si{\Sigma}
\def\Ga{\Gamma}
\def\ze{\zeta}
\def\io{\iota}
\def\bio{b^\iota}
\def\aio{a^\iota}
\def\twio{\tilde{w}^\iota}
\def\hwio{\hat{w}^\iota}
\def\gio{\g^\iota}
\def\Bio{B^\iota}

\def\del{\delta}
\def\pa{\partial}
\def\vp{\varphi}
\def\ve{\varepsilon}
\def\inf{\infty}

\def\vph{\varphi}
\def\vps{\varpsi}
\def\vPh{\varPhi}
\def\vep{\varepsilon}
\def\vpi{{\varpi}}
\def\vth{{\vartheta}}
\def\vsi{{\varsigma}}
\def\vrh{{\varrho}}

\def\bph{\bar{\phi}}
\def\bsi{\bar{\si}}
\def\bvp{\bar{\varphi}}

\newcommand{\bS}{{\mathbf S}}
\newcommand{\bH}{{\mathbf H}}
\newcommand{\bF}{{\mathbf F}}
\newcommand{\bE}{{\mathbf E}}

\def\tal{\tilde{\alpha}}
\def\tbe{\tilde{\beta}}
\def\tde{\tilde{\delta}}
\def\tpi{\tilde{\pi}}
\def\txi{\tilde{\xi}}
\def\tPi{\tilde{\Pi}}
\def\tPhi{\tilde{\Phi}}
\def\tV{\tilde{V}}
\def\tJ{\tilde{J}}
\def\tla{\tilde{\lambda}}
\def\tga{\tilde{\gamma}}
\def\tGa{\tilde{\Gamma}}
\def\tvs{\tilde{{\varsigma}}}
\def\tu{\tilde{u}}
\def\tU{\tilde{U}}
\def\tw{\widetilde w}
\def\tW{\widetilde W}
\def\tB{\tilde B}
\def\tv{\tilde v}
\def\tV{\tilde V}
\def\tz{\tilde z}
\def\tb{\tilde b}
\def\ta{\tilde a}
\def\tih{\tilde h}
\def\trh{\tilde {\rho}}
\def\tx{\tilde x}
\def\tf{\tilde f}
\def\tg{\tilde g}
\def\tG{\tilde G}
\def\tk{\tilde k}
\def\tl{\tilde l}
\def\tL{\tilde L}
\def\tD{\tilde D}
\def\tR{\tilde R}
\def\tP{\tilde P}
\def\tH{\tilde H}
\def\tp{\tilde p}

\def\hH{\hat{H}}
\def\hh{\hat{h}}
\def\hR{\hat{R}}
\def\hY{\hat{Y}}
\def\hX{\hat{X}}
\def\hP{\hat{P}}
\def\hT{\hat{T}}
\def\hV{\hat{V}}
\def\hG{\hat{G}}
\def\hF{\hat{F}}
\def\hw{\widehat{w}}
\def\hW{\widehat{W}}
\def\hu{\hat{u}}
\def\hs{\hat{s}}
\def\hv{\hat{v}}
\def\hb{\hat{b}}
\def\hB{\widehat{B}}
\def\hze{\hat{\zeta}}
\def\hsi{\hat{\sigma}}
\def\hrh{\hat{\rho}}
\def\hth{\hat{\theta}}
\def\hy{\hat{y}}
\def\hx{\hat{x}}
\def\hz{\hat{z}}
\def\hg{\hat{g}}
\def\he{\hat{e}}
\def\hE{\widehat{E}}

\def\B{\mathbf{B}}
\def\I{\mathbf{I}}
\def\P{\mathbf{P}}
\def\G{\mathbf{G}}
\def\S{\mathbf{S}}
\def\F{\mathbf{F}}
\def\one{\mathbf{1}}
\def\Sn{\mathbf{S}_n}
\def\0{\mathbf{0}}
\def\H{\mathbf{H}}
\def\V{\mathbf{V}}

\def\f{\mathcal{F}}
\def\çF{\mathcal{F}}
\def\o{\mathcal{O}}
\def\t{\mathcal{T}}
\def\r{\mathcal{R}}
\def\l{\mathcal{L}}
\def\m{\mathcal{M}}
\def\k{\mathcal{K}}
\def\n{\mathcal{N}}
\def\d{\mathcal{D}}
\def\p{\mathcal{P}}
\def\cP{\mathcal{P}}
\def\a{\mathcal{A}}
\def\h{\mathcal{H}}
\def\c{\mathcal{C}}
\def\y{\mathcal{Y}}
\def\e{\mathcal{E}}
\def\v{\mathcal{V}}
\def\z{\mathcal{Z}}
\def\x{\mathcal{X}}
\def\s{\mathcal{S}}
\def\g{\mathcal{G}}
\def\u{\mathcal{U}}
\def\w{\mathcal{W}}
\def\i{\mathcal{I}}
\def\j{\mathcal{J}}
\def\b{\mathcal{B}}

\def\lan{\langle}
\def\llb{(\!(}
\def\ran{\rangle}
\def\rrb{)\!)}
 \def\dim{{\hbox{\rm dim}}_{\mathbb C}\,}
\def\lng{\hbox{\rm{\tiny lng}}}
\def\sht{\hbox{\rm{\tiny sht}}}
\def\sph{\hbox{\rm{\tiny sph}}}
\def\inv{\hbox{\rm{\tiny inv}}}

\def\br#1{\langle #1 \rangle}

\def\rank{\hbox{rank}}
\def\gl{\mathfrak{gl}_N}
%\def\sgn{\hbox{sgn}}
%\font\germ=eufb10 %at 12pt 
%\def\mathfrak#1{\hbox{\germ #1}}

\newcommand{\Aut}{\operatorname{Aut}}
\newcommand{\Hom}{\operatorname{Hom}}
\newcommand{\End}{\operatorname{End}}
\newcommand{\Ind}{\operatorname{Ind}}
\newcommand{\ad}{\operatorname{ad}}
\newcommand{\pr}{\operatorname{pr}}
\newcommand{\aweyl}{\tilde{\mathbb S}_n}
\newcommand{\hec}{{\mathcal H}^t_n}
\newcommand{\Func}{{\mathcal F}({\mathbb C}^n,{\mathcal H}^t_n)}
\newcommand{\tr}{\operatorname{tr}}
\newcommand{\Out}{\operatorname{Out}}
\newcommand{\Rad}{\operatorname{Rad}}
\newcommand{\Spec}{\operatorname{Spec}}
\newcommand{\id}{\operatorname{id}}
\newcommand{\Int}{\operatorname{Int}}
\newcommand{\ct} {\operatorname{ct}}

\newcommand{\rat}{{\mathbb Q}}
\newcommand{\real}{{\mathbb R}}
\newcommand{\cplx}{{\mathbb C}}
\newcommand{\zint}{{\mathbb Z}}

\newcommand{\sq}{\phantom{1}\hfill$\qed$}
\newcommand{\Rea}{\Re}
\newcommand{\Ima}{\Im}

\newcommand{\st}{\bowtie}
\newcommand{\modd}{\mbox{\,mod\,}}
\newcommand{\lr}{\langle}
\newcommand{\rr}{\rangle}
\newcommand{\eps}{\varepsilon}
\newcommand{\phk}{\phi^{(k)}}
\newcommand{\psk}{\psi^{(k)}}
\newcommand{\Res}{\mbox{Res}\;}
\newcommand{\sgn}{\mbox{sgn}}
\newcommand{\mn} {\left\{ \begin{array}{c}m\\
n\end{array}\right\}}

\def\sX{\mathscr{X}}
\def\sH{\mathscr{H}}
\def\sY{\mathscr{Y}}
\def\TT{\mathfrak{T}}
\def\JJ{\mathfrak{J}}
\def\HH{\mathfrak{H}}
\def\FF{\mathfrak{F}}
\def\GG{\mathfrak{G}}
\def\CC{\mathfrak{C}}
\def\LL{\mathfrak{L}}

\def\BB{\mathfrak{B}}
\def\AA{\mathfrak{A}}
\def\ZZ{\mathfrak{Z}}
\def\HH{\hbox{${\mathcal H}$\kern-5.2pt${\mathcal H}$}}
\def\HHH{\hbox{${\mathbb H}$\kern-4.2pt${\mathbb H}$}}
\def\tHH{\widetilde{\HH\ }}

\font\smm=msbm10 at 12pt 
\def\symbol#1{\hbox{\smm #1}}
\def\lsmash{{\symbol n}}
\def\rsmash{{\symbol o}}
\def\#{\sharp}

\font\tenbf=cmbx10
\font\tenrm=cmr10
\font\tenit=cmti10
\font\ninebf=cmbx9
\font\ninerm=cmr9
\font\nineit=cmti9
\font\eightbf=cmbx8
\font\eightrm=cmr8
\font\eightit=cmti8
\font\sevenrm=cmr7
\font\sevenbf=cmbx7

%END MACROS

%\par
%{\centering
%Dedicated with admiration to Yuri Ivanovich Manin  \\
%on the occasion of his 80th birthday
%\medskip
%\par}

\title [Instanton slices and their superpolynomials]
{Instanton slices and their superpolynomials}
\author[Ivan Cherednik]{Ivan Cherednik}% $^\dag$}
%\date{February 2, 2014}

%{%\vskip -0.8cm
%\centering 
%{\sf\em Dedicated to the memory of Ian Macdonald
%\medskip\par}
%\vskip 0.3cm}

\begin{abstract} 
The key theorem is a connection between motivic
superpolynomials of plane curve singularities
in any ranks with superpolynomials of the corresponding
instanton slices, Nekrasov-type instanton sums 
with conductors. In this case, instanton slices are
related to Compactified Jacobians, but they form a much
wider class and, generally, have no connection to
plane curve singularities. 
This development is expected
to impact theory of affine Springer fibers (at least, in type $A$),
and related fields. For instance, we obtain a motivic 
interpretation (counting $\mathbb{F}_q$-points of some stacks)
of superpolynomials for hyperbolic knots K12n242, K12n725,
among many others. New formulas for 
instanton sums in any ranks are obtained using that they are 
inductive limits of superpolynomials
of proper families of plane curve singularities. 

The main conjecture states that instanton superpolynomials
for arbitrary Young diagrams (only very few such
come from plane curve singularities) coincide with the 
corresponding (new) DAHA
superpolynomials if and only if the latter are positive or
(equivalently) the former are superdual. Our DAHA superpolynomials
are parallel to Galashin-Lam EHA ones, but we used the nonsymmetric
approach. The connection
with Khovanov-Rozansky polynomials of the corresponding 
Coxeter knots is expected. Among applications, we
 obtain the formula for Nekrasov's instanton sums with an
impressive connection to nonsymmetric Macdonald polynomials,
topological invariance of unibranch motivic 
superpolynomials for semigroups with $4$ generators,
some mixed-characteristic conjecture, and upgraded 
Weak Riemann Hypothesis. 
\end{abstract}

%\thanks{$^\dag$ \today.
%\ \ \ Partially supported by NSF grant
%DMS--1901796}

\address[I. Cherednik]{Department of Mathematics, UNC
Chapel Hill, North Carolina 27599, USA\\
chered@email.unc.edu}

\newcommand{\hga}{\hat{\ga}}
 \def\sht{\raisebox{0.4ex}{\hbox{\rm{\tiny sht}}}}
 \def\bysame{{\bf --- }}
\let\oldt\~   %%%!!!
 \def\~{{\bf --}}
 \def\hga{{\hat{\gamma}}}
 \def\rr{{\mathsf r}}
 \def\ss{{\mathsf s}}
 \def\tt{{\mathsf t}}
 \def\mm{{\mathsf m}}
\def\nn{{\mathsf n}}
 \def\pp{{\mathsf p}}
 \def\ll{{\mathsf l}}
 \def\aa{{\mathsf a}}
 \def\bb{{\mathsf b}}
 \def\cc{{\mathsf c}}
 \def\cn{{\mathbbm n}}
 \def\NS{\hbox{\tiny\sf ns}}
 \def\ssum{\hbox{\small$\sum$}}
\def\sC{\mathscr{C}}
\def\sH{\mathscr{H}}

\newcommand{\comment}[1]{}
\renewcommand{\tilde}{\widetilde}
\renewcommand{\hat}{\widehat}
\renewcommand{\V}{\mathbb{V}}
\renewcommand{\S}{\mathbb{S}}
\renewcommand{\F}{\mathbb{F}}
\newcommand{\q}{\mathcal{Q}}
\newcommand{\dagx}{\hbox{\tiny\mathversion{bold}$\dag$}}
\newcommand{\ddagx}{\hbox{\tiny\mathversion{bold}$\ddag$}}
\newtheorem{conjecture}[theorem]{Conjecture}
\newcommand*\toeq{
\raisebox{-0.15 em}{\,\ensuremath{
\xrightarrow{\raisebox{-0.3 em}{\ensuremath{\sim}}}}\,}
}
\newcommand{\unknot}{\hbox{\tiny\!\raisebox{0.2 em}
{$\bigcirc$}}\!}
\newcommand{\mmu}{\hbox{\mathversion{bold}$\mu$}}
\newcommand{\lla}{\hbox{\mathversion{bold}$\lambda$}}
\newcommand{\dde}{\hbox{\mathversion{bold}$\delta$}}

\newcommand\rightthreearrow{\hbox{\tiny
        $\mathrel{\vcenter{\mathsurround0pt
         \ialign{##\crcr
         \noalign{\nointerlineskip}$\rightarrow$\crcr
         \noalign{\nointerlineskip}$\rightarrow$\crcr
         \noalign{\nointerlineskip}$\rightarrow$\crcr
                }}}$ }}
\newcommand\rightfourarrow{\hbox{\tiny
        $\mathrel{\vcenter{\mathsurround0pt
         \ialign{##\crcr
         \noalign{\nointerlineskip}$\rightarrow$\crcr
         \noalign{\nointerlineskip}$\rightarrow$\crcr
         \noalign{\nointerlineskip}$\rightarrow$\crcr
         \noalign{\nointerlineskip}$\rightarrow$\crcr
                }}}$ }}

\newcommand\rightdotsarrow{\hbox{\small
        $\mathrel{\vcenter{\mathsurround0pt
         \ialign{##\crcr
         \noalign{\nointerlineskip}$\,\rightarrow$\crcr
         \noalign{\nointerlineskip}$\cdots$\crcr
         \noalign{\nointerlineskip}$\,\rightarrow$\crcr
                }}}$ }}
\newcommand\rightdotsarrowtiny{\hbox{\tiny
        $\mathrel{\vcenter{\mathsurround0pt
         \ialign{##\crcr
         \noalign{\nointerlineskip}$\,\rightarrow$\crcr
         \noalign{\nointerlineskip}$\cdots$\crcr
         \noalign{\nointerlineskip}$\,\rightarrow$\crcr
                }}}$ }}

\newcommand*{\vect}[1]{\overrightarrow{\mkern0mu#1}}
\newcommand{\twoone}
{\hbox{\rm
$\circ\!$\raisebox{-2.6pt}{$\rightarrow$}
\raisebox{-2.6pt}{$\!\!\circ\!\!\rightarrow$}
\kern-33pt\raisebox{+2.6pt}{$\rightarrow$}
\kern+14pt}}
\newcommand{\twoonetiny}
{\hbox{\tiny
$\circ\!$\raisebox{-2.pt}{$\rightarrow$}
\raisebox{-2.pt}{$\!\!\circ\!\!\rightarrow$}
\kern-23pt\raisebox{+2.pt}{$\rightarrow$}
\kern+10pt}}

\newcommand{\twotwo}
{\hbox{\rm $\circ\!\rightrightarrows\!
\raisebox{2.5pt}{\hbox{\small $\circ$}}
\kern-5.5pt\raisebox{-2.5pt}{\hbox{\small $\circ$}}
\!\rightrightarrows\,$}}
\newcommand{\twotwotiny}
{\hbox{\tiny $\circ\!\rightrightarrows\!
\raisebox{2.pt}{\hbox{\tiny $\circ$}}
\kern-4.2pt\raisebox{-2.pt}{\hbox{\tiny $\circ$}}
\!\rightrightarrows$}}
\newcommand{\tax}{\hbox{\sf[r,s]}}
\newcommand{\lxi}{\raisebox{0.5pt}{${}^\xi$}\!}
\vskip -0.0cm

\newcommand{\qbin}[2]{\begin{bmatrix}{#1}\\ {#2}\end{bmatrix}_q}

%\par
%{\centering
%\medskip
%\par}
%\vskip -0.0cm
\maketitle
\vskip -0.0cm
\noindent
{\em\small 
{\bf Key words}: double affine Hecke algebras; superpolynomials;
superduality; plane curve singularities; 
surface singularities; instanton sums; 
Quot schemes; Hilbert schemes; compactified Jacobians; 
affine Springer fibers; iterated torus links; 
hyperbolic knots; Khovanov-Rozansky polynomials}
\smallskip

{\footnotesize
\centerline{{\bf MSC} (2010): 14H50, 17B22, 17B45, 20C08,
20F36, 33D52, 30F10, 55N10, 57M25}
}
\smallskip

\vskip -0.cm
\renewcommand{\baselinestretch}{0.9}
{\vbadness=10000\textmd
{\small 
\tableofcontents}
}
\renewcommand{\baselinestretch}{1.2}
%}
%$\mathfrak{a,b,c,d,e,f,g,h,i,j,k,l,m,n,o,p,q,r,s,t,e^u,e^v,e^w}$
\vfill\eject

\renewcommand{\natural}{\wr}

\setcounter{section}{0}
\setcounter{equation}{0}
\section{\sc Introduction}
\vbadness=3000
\hbadness=3000

\subsection{\bf Overview, instanton slices} 
The $1${\footnotesize st} direction
of this paper is based on Theorem \ref{thm:mainred}, which connects
the motivic superpolynomials $\h^{mot}(q,t,\aa; \mathbbm{n})$
of plane curve singularities from \cite{ChP1,ChP2,ChQ}
with  superpolynomials
of {\sf\em instanton slices}. It develops the corresponding
theorems from \cite{ChG,ChQ}. {\sf\em Instanton slices} 
are Quot-stacks $\mathfrak{S}^{\mathbbm{n}}_{\mathscr{C}}$
of submodules $\m$ 
in $\a^\mathbbm{n}$ for $\a=\F[[x,y]]$ containing specific 
submodules $\mathscr{C}$, called 
{\sf\em conductors}. 
In Theorem \ref{thm:mainred}:\ 
$\mathscr{C}=\c^{\mathbbm{n}}$ for
the preimages $\c$ in $\a$ of the conductors of
unibranch plane curve singularities. 

The base field  $\F$ is
$\F_q$ in our definition of superpolynomials:\,
$\mathscr{H}^{\cn}_{\mathscr{C}}\equal
\sum_{\m}t^{deg(\m)}(1+\aa q)
\cdots (1+\aa q^{\varrho(\m)-1})$, 
summed over $\mathfrak{S}^{\mathbbm{n}}_{\mathfrak{C}}$.
Here $deg(\m)=dim_\F\bigl(\a^\mathbbm{n}/\m\bigr)$ and
$\varrho(\m)=dim_\F\bigl(\m/\mathfrak{m}_\a\m\bigr)$ for
the maximal ideal $\mathfrak{m}_\a\subset \a$,
the number of generators of $\m$.
\medskip

An impressive development is that instanton
slices produce superpolynomials associated with 
non-algebraic
cable $C\!ab(11,2)T(3,2)$, well-known 
{\sf\em hyperbolic knots} K12n242 and
K12n725, and many more of them. The knot K12n242 is quite famous.
Hyperbolic knots are the key in low-dimensional topology
and geometry. The prior theory of DAHA and motivic superpolynomials
was restricted to iterated torus links and algebraic links; 
see \cite{ChQ} for its current  status.
 
%For instance, we can now reformulate the 
%{\sf\em Volume Conjecture} as the coincidence of
%hyperbolic\,  $vol(S^3\setminus K)$ with point-counting of some
%stacks over $\F_{p^m}$ in the limit  $m\to \infty$.
%Counting
%$\F_{q}$-points is a classical substitute for volumes
%and other geometric invariants of  {\sf\em algebraic} varieties.
%Now we can do some {\sf\em hyperbolic} volumes in a similar manner.
\medskip

The $2${\footnotesize nd}  direction of the paper is 
summarized in a powerful and elegant
Conjecture \ref{conj:main}, which  connects
$\sH^{inst}_\sC$ for $\sC=I_\la^\cn$ with new
DAHA superpolynomials $\mathfrak{H}^{daha}_\la$,
introduced in Definition \ref{def:frakH}.
Here and throughout this paper,  $I_\la\subset \a$
is the monomial ideal associated with a Young diagram 
$\la$. The superduality 
$\sH^{inst}_\sC$ for $\cn=1$ or, equivalently,
the positivity of  $\mathfrak{H}^{daha}_\la$,
are conjectured to be necessary and sufficient for
their coincidence. We conjecture the former to be
always positive, and the latter always superdual.
Then, if this coincidence holds for $\cn=1$, then we conjecture
that it holds for any ranks $\cn$. Conjecture \ref{conj:supd}
provides the corresponding conditions for $\la$, 
intriguingly nontrivial.
 
\medskip

The rationale is that $\h^{daha}$ are {\sf\em superdual} 
for iterated torus links due to \cite{GoN,ChD1,ChD2}
(conjectured in the original \cite{CJ}), i.e. invariant with 
respect to $q\leftrightarrow t^{-1}$, and this 
is expected for $\mathfrak{H}^{daha}_\la(\cn=1)$ too.
{\sf\em Superduality} is a fundamental
feature of superpolynomials. Upon their conjectural
coincidence with the Galkin-St\"ohr $L$-functions of plane 
curve singularities, it becomes the {\sf\em
Hasse-Weil functional equation}.
%This is  $\h(q,t)=L(\frac{q}{t},t)$ in \cite{ChW} and
%``Quot vs Hilb" in \cite{KTr}. 

\medskip
The definition of $\mathfrak{H}^{daha}_\la$
is parallel to the
construction of the {\sf\em EHA superpolynomials} in
\cite{GL2} (where $\cn=1$). However, there are deviations:\ 
our construction
is non-symmetric, there
are diagrammatic differences, 
and we can do any minuscule $\om_\cn$.  
Theorem \ref{thm:trans} demonstrates the
potential of the
nonsymmetric approach due to \cite{CJ}:\  
$\mathfrak{H}_\la= \mathfrak{H}_{\la^{tr}}$
up to some $q^\bullet t^\bullet$-factor, where  
$\la^{tr}$ is the transpose of  $\la$. It is
used to find certain Nekrasov's instanton sums,
the answer appeared the evaluation formula for {\sf\em nonsymmetric}
Macdonald polynomials from \cite{Ch4} in the case of
rectangles; see Theorem \ref{thm:daha-free} (with an impressive
proof).

In spite of these differences, we conjecture that
our $\mathfrak{H}_\la$ for $\cn=1$ 
coincide with the EHA ones. We are very grateful to
Pavel Galashin, who shared with us the formulas for their
superpolynomials and helped to understand their construction.
%We note that {\sf\em superduality}
%holds for the EHA superpolynomials according to \cite{GL2}.
%\smallskip

Our DAHA-definition is  $\{ W_\la[\cn]\, X_{\om_\cn}\}$
for the DAHA {\sf\em coinvariant} $\{\cdot \}$ for
the roots systems $A_{N-1}$ or $gl_N$ subject to the $N$-stabilization.
To obtain   $W_\la[\nn]$,
we move along the border of the complement of
$\la$ in $\Z_+^2$ from the top
to the bottom, and put $X$ for each step 
left, and $Y$ for each step down. 
%It must be:\  
%$W_\la=Y\cdots X$ because
%the first step is always $X$ and the last one is always $Y$.
The word  $W_\la[\nn]$ is when  $X,Y$ are replaced 
 by $X_{\om_\cn}, Y_{\om_\cn}$. This is parallel 
to \cite{GL2}, but with obvious differences.

%The rest of the paper is with (many) examples and various
%applications, including the product formula (\ref{prod-free}) 
%for ``free instanton sums" with $\aa$ and  for any $\cn$. 

\medskip
{\bf Free $\mathscr{H}^{inst}$ as limits of  $\h^{mot}$.}
In the absence of $\mathscr{C}$, 
$\mathscr{H}^{inst}[\cn]$  are ``free"
punctual Nekrasov-type
instanton sums for modules $\m$ of rank $\cn$
supported at $(0,0)\in \mathbb{A}_\F^2$,
enhanced by adding $\varrho(\m)$, the number of
generators of $\m$.
The
Nekrasov's parameters $a_1,\cdots,a_{\mathbbm{n}-1}$
vanish in our  setting; see \cite{Nek,NO,Nak1}
for general theory of instanton sums. 
\vskip 0.2cm

In our approach, 
$\mathscr{H}^{inst}_{\text{\scalebox{.7}{$\le$\,}} \ell}[\cn]$
can be obtained 
as limits of motivic superpolynomials of torus knots.
Here we restrict the 
length of the corresponding partitions by $\ell$.  Alternatively, we
can use directly 
\cite{NY1,NY2} too. Up to some implications,
this formula is proved in Theorem \ref{thm:daha-free}. The 
proof is remarkable;  the answer appeared the evaluation
formula  for rectangle 
{\sf\em nonsymmetric} Macdonald polynomials  from \cite{Ch4}. 

\medskip

Throughout this paper, $\r\subset \F[[x,y]]\subset \o=\F_q[[z]]$ 
is the ring of a plane curve singularity generated by $x,y$. 
``Free" $\sH^{inst}$ occur when the corresponding 
conductors $\mathfrak{c}$ lifted to $\c\subset \a$
tend to zero. For instance, the family
$\r=\F_q[[x=z^{m\ss+1},y=z^\ss]]$ as $m\to\infty$ gives
$\mathscr{H}^{inst}_{\text{\scalebox{.7}{$\le$\,}} \ell}[\cn]$
for $\ell=\ss-1$; see formula (\ref{lim2p+1}) for $\ell=1$.
%\medskip

The most general version of Theorem \ref{thm:mainred}
 is Theorem \ref{thm;gen-sing}, where the
curve singularities are arbitrary Gorenstein and there are
no restrictions for the isolated singularities (can be not only
surface ones). The limits then must
be over sufficiently {\sf\em ample} Gorenstein
curve singularities inside a
given isolated one. Presenting instanton sums for isolated  
surface singularities via curve singularities 
 can be a valuable tool. There is significant progress 
for the surface $ADE$-singularities, 
but only for their Euler characteristic (when $q=1$).
See \cite{To,GNS,Nak}.

\subsection{\bf Conductors and Gr\"obner cells}
Theorem \ref{thm:cond}
is devoted to the calculation of conductors of algebraic
$2$-cables. It extends 
Theorem 3.4,(ii) in \cite{ChG}, where it was proven that
the (lifted) conductors $\c[\rr,\ss,\upsilon, p]$ are monomial 
for torus knots
and cables $C\!ab(\upsilon \rr\ss+p,\upsilon)T(\rr,\ss)$
for $p=1$. Generally, $gcd(\rr,\ss)=1=gcd(\upsilon,p)$.
It is likely now that such $\c$ for algebraic knots
can be monomial {\sf\em only} in such cases,
supported by  Theorem \ref{thm:cond} for algebraic
$2$-cables and examples of algebraic 
$3$-cables. 

We calculate in this theorem the 
limits of $\c[\rr,\ss,\upsilon, p]$ as $p\to \infty$, which
are not monomial (unless $\upsilon=1$). We found them
(for $\cn=1$) 
for the family $\ss=2,\rr=3, \upsilon=2$ in Corollary 
\ref{cor:lim-up2}, and used computers 
for $\ss=2, \rr=3, \upsilon=3$. See also Theorem
\ref{thm:explicit-semiroot-conductor} (any algebraic $2$-cables).
their lifted conductors 
are ``combinatorial":\, topological invariants.

%Paper \cite{ChG} is devoted to {\sf\em Gr\"obner cells} in $\mathbb{A}^2$
%and their relation to Compactified Jacobians of plane curve
%singularities.  

\vskip 0.2cm
The combinatorics of the Young diagrams $\la(\c)$ corresponding to
$\c[\rr,\ss,\upsilon,p=1]$ is very rich. Such $\la$ are formed by 
the boxes in rectangles
$\upsilon\rr\times \upsilon\ss$ above the diagonal ($\upsilon=1$
for torus knots); those touched it are {\sf\em included}. 
We will also consider $\la^\flat$ where the latter boxes are 
{\sf\em excluded}. They correspond to\, {\sf\em non-algebraic}\, 
 $C\!ab(\upsilon \rr\ss-1,\upsilon)T(\rr,\ss)$,
so $\h^{mot}$ cannot be used. However, the following $3$
constructions perfectly work:\ (i)\, $\h^{daha}$ for  
$C\!ab(\upsilon \rr\ss-1,\upsilon)T(\rr,\ss)$, \,
(ii)\, $\mathfrak{H}^{daha}$ for $\la^\flat$, and\, (iii)\,
$\sH^{inst}$ for
the conductor $\c=I_{\la^\flat}$, which is {\sf\em not} from 
plane curve singularities.

The diagrams $\la$ for 
$[\rr,\ss,\upsilon, p=1]$ and related Dyck paths  were studied in
quite a few papers, though without connections to the {\sf\em conductors} 
and Gr\"obner cells (from \cite{ChG}). 
Let us mention \cite{GM,GMV1,GMV2,GMO},
and \cite{BHM, CGM,CaM} devoted to the {\sf\em Shuffle Conjecture}.
By the way, the superduality can be seen directly
from the formulas there and those for the EHA superpolynomials. 

Our Conjecture \ref{conj:main},(i) is a very general
version of the Shuffle Conjecture. It is for any plane 
curve singularities
and arbitrary ranks $\cn$. Its Part  $(ii)$ is for 
{\sf\em instanton slices} associated with Young diagrams. See also 
the end of Section \ref{sec:polynom} concerning the usage
of nonsymmetric Macdonald polynomials in the theory of 
$\nabla$-operator. 
\vskip 0.2cm

{\bf Gr\"obner cells.}
The usage of Gr\"obner cells is very clarifying here.
In the case of monomial $\sC=I_\la$ and
for $\cn=1$, 
they are  $G_{\mu\subset\la}$ formed by ideals $\i\subset \a$
such that $\la(\i)=\mu$, so  one-to-one with the generalized Dyck paths.
 Here $\la(\i)$ is the Young diagram of the
monomial ideal $\i^0$ generated by the leading term of  $f\in \i$.
The latter are minimal monomials with respect to the 
Gr\"obner-type ordering:\ 
$1 \prec x \prec y$, which means that $x^m\prec y$ for  $m>0$.
For  $\cn>1$,
we use the order $1\prec x \prec e_2/e_1\prec\cdots
\prec e_\cn/e_{\cn-1}\prec y$,
which means that $x^me_1\prec e_2,\ x^m e_\cn\prec y e_1$ and so on
for a (fixed)  basis $\{e_1,e_2,\ldots, e_\cn\}$ in $\a^n$. 

\comment{
What
happens if we make it $e\prec x\prec y$? A full answer is
in formula (\ref{sum-second}) and in Appendix \ref{app:sec-Gr}
(joint with ChatGPT).
}

Gr\"obner cells are, generally, 
very different from the {\sf\em Piontkowski cells}
in {\sf\em Compactified Jacobians}; see \cite{ChG}. 
For instance, Piontkowski cells are affine spaces 
in the case of $[\rr,\ss,\upsilon, p=1]$ due to \cite{Pi}
and recent \cite{GMO}; 
however some of them can be empty.
Explicit Piontkowski-style formulas were used in
\cite{GMO} to justify that nonempty ones are one-to-one with
{\sf\em Dyck paths} above the diagonal in the
rectangle $\upsilon \rr \times \upsilon\ss$. 
This  gives (conditionally) the 
Euler characteristic of the corresponding Compactified Jacobian
(when $\aa=0,q=1$ and $\cn=1$).
See \cite{KTs} for a different ($p$-adic) non-conditional
justification. 
%\medskip

In our approach,  $|G_{\mu\subset \la}|=1$ modulo 
a sum of nontrivial divisors of $q-1$, which vanish at $q=1$,
if $\sC$ is monomial and $q$ is general enough.
Indeed, $I_\mu\in G_{\mu\subset \la}$
is a unique fixed  point for  the sufficiently
general action $\F_q^\ast\ni u:\, (x,y)\mapsto
(u^\al x,u^\be y)$, and  $ G_{\mu\subset \la}$ becomes a 
disjoint union of $I_\mu$ and non-empty
orbits of such an action. Assuming that $\sH^{inst}_\la(q,t=1,\aa=0)$
is a $q$-polynomial, its value at $q=1$ equals
the number of $\mu\subset \la$, and
this argument  can be extended to any rank $\cn$.
%by adding the action of $\F_q^{\cn-1}$.

%compare 
%with \cite{HS}. This direction is challenging beyond torus
%knots; we provide examples. 

\vskip 0.2cm
\subsection{\bf Connections and  perspectives}
The intersection of  DAHA superpolynomials
from \cite{CJ,GoN,CJJ,ChD1,ChD2} with  new 
DAHA superpolynomials $\mathfrak{H}_\la$ 
is for 
torus knots $T(\rr,\ss)$ 
and cables $C\!ab(\upsilon\rr\ss\pm 1,\upsilon)T(\rr,\ss)$;
here  $\rr>\ss>0$ and $gcd(\rr,\ss)=1$.
This is Theorem \ref{thm:Htor} for $T(\rr,\ss)$, and (still)
a conjecture for  $[\rr,\ss,\upsilon, p=\pm 1]$. 
We note that we do not consider links in our 
$\mathfrak{H}^{daha}_\la$; this  is only for knots by now:\ {\sf\em
Coxeter knots} due to  \cite{ObR2, GL2} and some prior papers.
The lifted conductors $\c$ of plane curve singularities 
are generally non-monomial and we expect that future theory will
be generally for  non-monomial
conductors  $\sC$.
However, $\sC=I_\la$ in  $\sH^{inst}_\la$ and $\mathfrak{H}^{daha}_\la$ are 
very fruitful. 

%They already provide superpolynomials for
%quite a few hyperbolic knots, which are very difficult to
%obtain topologically.
\Yboxdim7pt
%including  K12n242 for the diagram $\la=\yng(3,2)$.

As we mentioned, the 
{\sf\em EHA-superpolynomials} were introduced
in \cite{GL2}. They 
match uncolored DAHA-superpolynomials 
from \cite{CJ} for torus knots; see Theorem \ref{thm:Htor} 
and discussion there. Conjecturally,
our $\mathfrak{H}^{daha}_\la$ 
from Definition \ref{def:frakH} for $X_1=X_{\om_1}$
and when $\mathbbm{n}=1$ 
coincide with the EHA-superpolynomials for any $\la$.

\vskip 0.2cm
Their approach is via
{\sf\em Elliptic Hall algebras}, which are stable spherical DAHA in
type $A$ due to \cite{SV}. We used {\sf\em the whole} DAHA, and 
that the {\sf\em nonsymmetric} Macdonald polynomials 
are simply $X_{\om_r}$ for minuscule $\om_r$  
Accordingly, the 
{\sf\em DAHA coinvariant} is the key for us, or coinvariants
(there are various options).
%used vs. their approach based on a certain
%{\sf\em plethystic} formulas. 
Generally, DAHA-Jones polynomials
from \cite{CJ,CJJ, ChD1,ChD2} are for any dominant weights,
and superpolynomials are their stabilization in type $A$.
The stabilization was conjectured
in \cite{CJ} for $BCD$ too;
 see \cite{ChE,DG} on the ``E-series".

%\medskip
Thus, $\mathfrak{H}^{daha}_\la$ can be considered a special
DAHA chapter, which utilizes exceptional features
of minuscule weights $\om_r$, such as 
$E_{\om_r}=X_{\om_r}$ 
and simple action of the projective 
$PSL_2(\Z)$ on $X_{\om_r}$ and $Y_{\om_r}$. 
%Hyperbolic knots are quite a motivation.
\medskip

The instanton part of
this paper is focused on monomial $\c=I_\la$. The lifted conductors $\c$ 
of plane curve singularities are generally 
{\sf\em non-monomial}, but their calculation is very much
combinatorial. The generators are described in terms
of the so-called {\sf\em semiroots} \cite{AM,Cam}. See Theorem  
\ref{thm:general-semiroot-conductor}.

%We used some elements of it.
\medskip
An interesting new
phenomenon is that the 
Connection Conjecture $\mathfrak{H}^{daha}_\la\leftrightarrow
\sH^{inst}_\la$,
% similar to $\h^{daha}=\h^{mot}$ 
%from \cite{ChQ} and prior papers,  
holds only for 
sufficiently general $\la$.
For instance, Theorem  \ref{thm:2-row} states that the condition
$b\ge 2a-1$ is necessary and sufficient for the
superduality among the two-row diagrams $(b,a)$.

The case of $2$-row diagrams
appeared already challenging combinatorially; we provide
full justifications, including the corresponding dimension
formulas. The intersection of the $\la$-theory with
the classical theory of plane curve singularities and algebraic
knots is only for diagrams $(2\pp,\pp)$
and $(2\pp-1,\pp-1)$, which correspond to $T(3\pp\pm 1, 3)$ .
So there are  just $2$ of them when $a\ge 1$ is fixed.
% All other superpolynomials
%are for {\sf\em hyperbolic knots} in this case. 
The simplest  superdual one is for $(3,2)$, exactly
the case of the famous K12n242.
\medskip

Conjecture \ref{conj:supd} gives general conditions for 
$\la$ with  superdual $\sH^{inst}_\la$ and (equivalently) positive  
$\mathfrak{H}^{daha}_\la$.
We (with ChatGPT) 
checked it systematically, but it is not impossible that 
 corrections may occur beyond what we can reach numerically.
It was fully checked 
for $3$-row diagrams and quite significantly 
for $\la$ with up to $4$ corners. The top Young diagrams we
considered were like  $\la=(5,5,4,3,3,2,1)$, to give an
idea of the range of our calculations.
%\medskip

{\bf Some perspectives.}
One can expect some physics interpretation of the conductors $\sC$ in 
$\mathscr{H}^{\mathbbm{n}}_\mathscr{C}$; some kind of  
bound state? Anyway, {\sf\em conductors}
result in  {\sf\em superduality}, fundamental 
in related physics. 

Mathematically, 
$\mathfrak{S}^{\mathbbm{n}}_{\mathscr{C}}$ provide
a robust generalization of {\sf\em Compactified Jacobians} of
plane curve singularities, with expected implications 
in the theory of 
affine Springer fibers (type $A$), Hitchin systems,
Kac-Moody algebras and beyond. Conjecture  \ref{conj:mixed}
on mixed characteristic hints on even greater perspective. 
Generally,  the definition of
$\mathscr{H}^{\mathbbm{n}}_\mathscr{C}$ is very relaxed, but
the conductors  $\mathscr{C}$ must be sufficiently special to
ensure the superduality of $\mathscr{H}^{\mathbbm{n}}_\mathscr{C}$,
a very desirable feature.
 
\medskip
In topology, 
the passage from (any) diagrams $\la$ to knots is
due to \cite{GL1,GL2,ObR1,ObR2}; the corresponding knots
are called {\sf\em Coxeter knots}.
The definition is mostly due to Oblomkov-Rozansky
(with some prior related constructions).
However \cite{GL2} is more suitable for our paper,
and it contains an interesting connection with, what they call,
{\sf\em monotonic paths} in $T^2$, connected with {\sf\em postroid} 
paths from \cite{GL1}.
Generally, the place of Coxeter links
in knot theory
remains unclear. 

\medskip
Algebraically,
theory of ``good" superpolynomials 
of instanton slices for 
non-monomial $\sC$
is wide open  beyond $I_\la$ and lifted conductors. 
Some of such $\sC$  are related to 
hyperbolic knots:\, the corresponding 
HOMFLY-PT polynomials match. Moreover, they cannot
be obtained from any $I_\la$ or lifted conductors $\c$ 
(which give only cables).

Furthermore, we found an
example of a small ``perfect"
instanton superpolynomial for non-monomial
$\sC$ that can be beyond reduced KhR-polynomials. 
The corresponding formally generated
HOMFLY-PT polynomial was carefully checked against
the database of knots with up to $16$ crossings.
It was not there in spite of its
relatively small $z$-degree,
which gives an estimate for the 
number of crossings.
See the example after Conjecture \ref{conj:echar1}.
%\medskip

Superpolynomials for hyperbolic knots $K$
contain a lot of important geometric information 
about the corresponding (cusped)  hyperbolic manifolds
 $S^3\setminus K$.
 It is challenging to employ our $\mathfrak{H}^{daha}_\la$
and $\sH^{inst}_\sC$ to
obtain ``refined" $\rho_{ab}$-invariants of $K$, related
to the $\eta$-invariants (and flat connections) and the hyperbolic
volume. This problem was touched upon 
in \cite{ChS,ChQ} for algebraic $2$-cables. 

The connection to hyperbolic geometry is exciting, but 
the case of mixed characteristic is equally interesting:\,  
which is for $\Z_p[[x]]$ instead of $\F_q[[x]]$. For instance, K12n242 has
its counterpart in $\Z_p[[[x]]$. This is  connected
with Iwasawa theory. Our conjecture is that the corresponding
superpolynomials always coincide; it was checked for $3$-row diagrams
by ChatGPT (preliminary, for $4$ rows):\, Conjecture \ref{conj:mixed} and
Appendix \ref{app:mot-top}. Concerning other connections to Number Theory,
let us mention ``Weak Riemann Hypothesis" from Section \ref{sec:rh-hyper}.

\subsection{\bf Summary of main results}
\ \ 

(1) Theorem \ref{thm:mainred}:\, the passage from Compactified
Jacobians to instanton slices $\mathfrak{S}_\c$ and the formula
for $\h_\r^{mot}$ in terms of 
$\sH^{inst}_\c$ for the lifted conductors $\c$; generalizations to
any ranks $\cn$.
%\vskip 0.2cm

(2) A relatively simple  construction of 
$\mathscr{H}^{\mathbbm{n}}_\mathscr{C}$ by counting 
$\F_q$-points of some stacks, instanton slices 
$\mathfrak{S}^{\mathbbm{n}}_\sC$,
%superpolynomials presumably serving hyperbolic knots the connection
%of $\sH^{inst}_\la$ for $\sC=I_\la$ with the corresponding
% hyperbolic knots,
a  surprising link between arithmetic geometry and hyperbolic geometry.
%\vskip 0.2cm

(3) Theory of DAHA-superpolynomials $\mathfrak{H}^{daha}_\la$, including 
their coincidence for  $\la$ and its transpose
 in Theorem \ref{thm:trans}, various results for 
$\la$ related to rectangles $\up \rr\times \up\ss$,
and many examples.
%\vskip 0.2cm

(4) Main Conjecture \ref{conj:main}\,:\,
$\sH^{inst}_\la(q,t,\aa)=$ %for $\sC=I_\la^\cn$ with 
$t^{\la}\,\mathfrak{H}^{daha}_\la(q, 1/(q^nt),\aa)$ for $\cn=1$
assuming the superduality of the former or (equivalently) 
positivity of the latter;  its extension to $\cn>1$.  
%\vskip 0.2cm

(5) Combinatorial Conjecture \ref{conj:supd}, which gives
conditions for $\la$ ensuring the Main Conjecture, its proof in the
case of $2$-row diagrams in Theorem  \ref{thm:2-row} and various 
other aspects.

%\vskip 0.2cm
(6) Product formula for enhanced Nekrasov's instanton sums as
a limit of $\h_\r^{mot}$; an impressive proof 
in the DAHA setting in Theorem \ref{thm:daha-free}, linking
it to nonsymmetric Macdonald polynomials.
%\vskip 0.2cm

(7) Mixed-Characteristic Conjecture  \ref{conj:mixed}, which
establishes a connection between $\sH^{inst}_\la$ and
superpolynomials for  $\Z_p[[x]]$, which is related to Iwasawa theory; 
for instance, $\sH^{inst}$ of  K12n242 via  $\Z_p$.
%\vskip 0.2cm

(8) Motivic superpolynomials for irreducible $\r$ 
with up to $3$ generators of the 
corresponding valuation semigroup $\Ga$ are
topological invariants; Appendix \ref{app:mot-top} (joint with
ChatGPT) and Corollary \ref{cor:3gen-top}.

%\vskip 0.2cm
Also, there are $2$ other Appendices written by ChatGPT (under the
supervision of the author), which contributed a lot to this paper.

\comment{
A general definition is expected to
be {\sf\em Schubert-style}, like this one:

\noindent 
{\it Given a flag of modules 
$\mathscr{F}=\{M_0\subset M_1\subset \cdots
\subset M_k\subset \a^\cn\}$, and a sequence 
of numbers $\{d_0\ge  d_1\ge \cdots\ge d_k\ge 0\}$,
the corresponding instanton slice is 
$\mathfrak{S}_\mathscr{F}\equal \{\m\subset \a^\cn\, \mid\, 
dim_{\F}\bigl(\m/(\m \cap M_i)\bigr)=d_i\}$. }
}

\setcounter{equation}{0}
\section{\sc Double Hecke algebras}\label{sec:daha}
\subsection{\bf Affine root systems}\label{se:roots}
Let 
$R=\{\al\}\subset \R^\nn$ be a reduced irreducible
root system. Here and further  $\al_i$ for $1\le i\le \nn$ 
are simple roots,  
$Q=\oplus_{i=1}^\nn \Z\al_i$ is the root lattice.
The weight lattice is $P=\oplus_{i=1}^\nn \Z\om_i$ for the
fundamental weights $\om_i$; the  dominant weights
are those from $P_+=\oplus_{i=1}^\nn \Z_+\om_i$ for 
$\Z_{+}=\{m\in\Z, m\ge 0\}$. The inner product in $\R^\nn$
will be normalized by the condition 
$(\vth,\vth)=2$ for the maximal {\sf\em short} root $\vth$.
This normalization is better compatible with the passage to
Quantum Group invariants for $B,C,F,G$.
In the simply-laced case $\vth=\th$
for the maximal root $\th$.
See,  e.g., \cite{Bo} and \cite{C101}.

We will mostly need type $A$ below, especially
the case of  minuscule weights. However, let us provide
the full DAHA construction
from \cite{CJ,CJJ,ChD1,ChD2,ChQ} for any dominant weights 
and iterated torus knots. This is uniform.
It will be then extended to Coxeter knots in the sense of 
\cite{ObR2,GL2}, generally hyperbolic, but this will be
only for {\sf\em minuscule} weights. Let us mention here
\cite{AS}, which triggered \cite{CJ}, and \cite{GS},
where {\sf\em superduality} was predicted (for any colors).

Vectors $\ \tal=[\al,j]\in \R^{\nn+1}$
for $\al \in R, j \in \Z $ form the
{\sf\em affine root system\,}
$\tR \supset R$, where $\al\in R$ are identified with
$[\al,0]$. Generally, $\tal=[\al,\nu_{\al} j]$, where
$\nu_\al=1$ for short roots and $\nu_\al=2,3$ for long 
ones ($3$ is for $G_2$). The set
$\tR_+$ of affine positive roots is %%%% tR_+BOOK!!!
$R_+\cup \{[\al,j],\ \al\in R, \ j > 0\}$. Adding
$\al_0 \equal [-\vth,1]$ to simple $\{\al_i, i>0\}$
is the passage from the standard Dynkin
diagram to the corresponding extended one.

In the case of $A_\nn$, let $N=\nn+1$, %$\mathbb{R},\mathbb{X},\mathbb{F}$
$R\!=\!\{\al\!=\!\mathbbm{e}_i\!-\!\mathbbm{e}_j, i\!\neq\! j\}$
for the basis $\{\mathbbm{e}_i, 1\le N\}\in \R^{N}$,
orthonormal with respect to the usual euclidean form
$(\mathbbm{e}_i,\mathbbm{e}_j)=\de_{ij}$. The Weyl group is $W=\S_{N}$, 
generated by $s_\al$ for the set of positive  roots
$R_+=\{\mathbbm{e}_i-\mathbbm{e}_j, i<j\}$.
The simple roots are $\al_i=\mathbbm{e}_i\!-\!\mathbbm{e}_{i+1},$ 
and $\al_0=[-\th,1]$
for $\th=\mathbbm{e}_{1}-\mathbbm{e}_{N}$. The fundamental weights 
and $\rho$ are:
%the weight lattice is
%$P=\oplus^\nn_{i=1}\Z \om_i$,
%where $\{\om_i\}$ are fundamental weights, satisfying
%$ (\om_i,\al_j)=\de_{ij}$. For $A_\nn$:\,
\begin{align}%\label{omviavep}
&\om_i=\mathbbm{e}_1+\cdots+\mathbbm{e}_i-\frac{i}{N}
(\mathbbm{e}_1+\cdots+\mathbbm{e}_{N})
\for i=1,\ldots,N-1,\\
\rho=&\om_1\!+\cdots+\!\om_{N-1}=\frac{1}{2}\bigl(
(N\!-\!1)\mathbbm{e}_1+(N\!-\!3)\mathbbm{e}_2+\cdots+(1\!-\!N)
\mathbbm{e}_N\bigr).\notag
\end{align}

We will consider  below the root system $gl_N$, when
$\om_i=\mathbbm{e}_1+\ldots \mathbbm{e}_i$ for $1\le i\le N$. 
A simple but important fact from \cite{CJ} and
further papers is that the passage
from $A_{N-1}$ to $gl_N$ does not change the
DAHA-Jones polynomials up to some factor $q^\bullet t^\bullet$,
and does not change the corresponding DAHA-superpolynomials.
We will use this later. 

%\smallskip
\vskip 0.2cm

{\bf Affine Weyl groups.} 
Given $\tal=[\al,j]\in \tR,  \ b \in P$, let
\begin{align}
&s_{\tal}(\tz)\ =\  \tz-(z,\al^\vee)\tal,\
\ b'(\tz)\ =\ [z,\ze-(z,b)]
\label{ondon}
\end{align}
for $\tz=[z,\ze]\in \R^{\nn+1}$.
The
{\sf\em affine Weyl group\,}
$\tW=\lan s_{\tal}, \tal\in \tR_+\ran$
is the semidirect product $W\lsmash Q$ of
its subgroups $W=$ $\lan s_\al,
\al \in R_+\ran$ and $Q$, where $\al$ in the latter are identified with
\begin{align*}
& s_{\al}s_{[\al,\,1]}=\
s_{[-\al,\,1]}s_{\al}\text{ for any }
\al\in R.
\end{align*}

The {\sf\em extended Weyl group\,} $ \hW$ is $W\lsmash P$, where
the action is
\begin{align}
&(wb)([z,\ze])\ =\ [w(z),\ze-(z,b)] \for w\in W, b\in P.
\label{ondthr}
\end{align}
It is isomorphic to $\tW\rsmash \Pi$ for $\Pi\equal P/Q$.
The latter group consists of $\pi_0=$\,id\, and the
images $\pi_r$ of $\om_r$ in $P/Q$ for {\sf\em minuscule weights}.
From now on, $O$ will be the set of such $r$;\, $O'\equal
O\setminus \{0\}$.
 
Note that
$\pi_r^{-1}$ is $\pi_{r^\varsigma}$,  where $\varsigma$ is
the standard involution of the {\sf\em non-affine\,}
Dynkin diagram of $R$, which is
$\al_i\mapsto \al_{\nn+1-i}$ for $A_\nn$.
Generally, we set
$\varsigma(b)=-w_0(b)=b^{\,\iota}\,$, where $w_0$ is the
longest element
in $W$. For $A_\nn$:\,  $w_0$
 sends $ \{1,2,\ldots,\nn+1\}$
to $\{\nn+1,\ldots,2,1\}$, and $\Pi=\Z_{\nn+1}$.
\vskip 0.2cm

The group $\Pi$
is naturally identified with the subgroup of $\hW$ of the
elements of the length zero; the {\sf\em length\,} is defined as
follows:
\begin{align*}
&l(\hw)=|\La(\hw)| \for \La(\hw)\equal\tR_+\cap \hw^{-1}(-\tR_+).
\end{align*}
One has $\om_r=\pi_r u_r$, 
where $u_r$ is the
element $u\in W$ of {\sf\em minimal\,} length such that
$u(\om_r)\in P_-$,
equivalently, $w=w_0u$ is of {\sf\em maximal\,} length such that
$w(\om_r)\in P_+$. This is for any $1\le r\le \nn$, but we will
need it only for $r\in O$. Then 
$u_r$ are very explicit.
Let $w^r_0$ be the longest element
in the subgroup $W_0^{r}\subset W$ of the elements
preserving $\om_r$; this subgroup is generated by simple
reflections. Generally,
\begin{align}\label{ururstar}
u_r = w_0w^r_0 \hbox{\,\, and\,\, } (u_r)^{-1}=w^r_0 w_0=
u_{r^\iota} \for 1\le r\le \nn.
\end{align}
\smallskip

Setting $\hw = \pi_r\tw \in \hW$ for $\pi_r\in \Pi,\, \tw\in \tW,$
\,$l(\hw)$ coincides with the length of any reduced decomposition
of $\tw$ in terms of the simple reflections
$s_i,\, 0\le i\le \nn.$ Thus, $\Pi$ is, indeed, a subgroup of
$\hW$ of the elements of length $0$. We will
also use below the {\sf\em partial lengths} 
denoted by $l_\nu(\hw)$, where we
count long and short $s_i$ separately. 

%\smallskip

\subsection{\bf Definition of DAHA}
For the irreducible reduced root system $R$ and the
definitions above,
let $\mm$ be the smallest natural number
such that  $(P,P)=(1/\mm)\Z.$  Thus,
$\mm=|\Pi|$ unless 
$\mm=2 \for D_{2k}$ and $\ \mm=1 \for B_{2k},C_{k}.$ 
Recall that there are one or two different lengths
$\nu$ of roots $\al\in R$ and those in $\tR$:\, 
$\nu=\nu_{\sht}=1,\ \nu=\nu_{\lng}.$

\vskip 0.2cm

{\sf\em Double affine Hecke algebras},  DAHA,  depend
on the parameters
$q, t_\nu$.  These algebras are defined
over the ring
$\Z_{q,t}\equal\Z[q^{\pm 1/\mm},t_\nu^{\pm 1/2}]$
formed by
polynomials in terms of $q^{\pm 1/\mm}$ and
$\{t_\nu^{\pm1/2}\}.$ 

It will be convenient to use parameters
$k_\nu$ (one for $ADE$ or two otherwise). We set:
$t_\nu =q_\nu^{k_\nu}=q^{\nu k_\nu}$ for $q_\nu=q^\nu$,
and $ t_{\tal}= t_{\nu_\al}=q_\al^{k_\al} ,
q_{\tal}\equal q^{\nu_\al},$
for $\tal=[\al,\nu_\al j] \in \tR,\ 0\le i\le \nn$.
Also, $t_i=t_{\al_i}=q_i^{k_i}$.

Let 
$\rho_k\equal \frac{1}{2}\sum_{\al\in R_+}k_\al \al$.
Using $\rho_\nu\equal \frac{1}{2}\sum_{\nu_\al=\nu}\al$,
we have: $\rho_k=\sum_\nu k_\nu \rho_\nu=k_{\sht}\rho_{\sht}+
k_{\lng}\rho_{\lng}$. 
The standard argument based on the application of $s_i$ for $i>0$
to  $\rho_\nu$ gives that $(\rho_\nu, \al_i^\vee)=1$ for $\nu_i=\nu$ 
and $0$ otherwise (for  $i>0$). In particular, 
 $\rho_k=\sum_{i=1}^\nn k_i\om_i$. 
\vskip 0.2cm

For pairwise commutative $X_{\om_1},\ldots,X_{\om_\nn},$ let
\begin{align}
& X_{\tb}\ \equal\ q^{ j}\prod_{i=1}^\nn X_{\om_i}^{l_i} 
\iif \tb=[b,j],\ \hw(X_{\tb})\ =\ X_{\hw(\tb)},
\label{Xdex}\\
&\hbox{where\ } b=\sum_{i=1}^\nn l_i \om_i\in P,\ \, j \in
\frac{1}{ \mm}\Z \ \text{\  and \ } \hw\in \hW.
\notag \end{align}
For instance, one has: $X_{\al_0}=qX_\vth^{-1}$. 
From now on,  $X_i\equal X_{\om_i}$.

Also, let 
 $X_b=q^{(x,b)}$ and $X_i=q^{(x,\om_i)}$, where
$x\in \R^\nn$ and the pairing $(\cdot,\cdot)$ is as above.
 If the root system $gl_N$ is used, 
we set $\mathbb{X}_i=X_{\mathbbm{e}_i}$, where
$\om_i=\mathbbm{e}_1+\ldots\mathbbm{e}_i$. The same notation
will be used for $Y$.
\medskip

Recall that 
$\om_r=\pi_r u_r$ for $r\in O'$. One has:
$\pi_r^{-1}=\pi_{\varsigma(i)}$, where
$\,\varsigma\,$ is the action of $-w_0$ on roots and weights;
we set $X_b^\varsigma=X_{b^\varsigma}.$ 
%Finally, we set $m_{ij}=2,3,4,6$
%when the number of links between $\al_i$ and $\al_j$ in the affine 
%Dynkin diagram is $0,1,2,3$. 

\begin{definition}\label{def:daha}
The double affine Hecke algebra $\HH\ $
is generated over $\Z_{q,t}$ by 
$\h=\lan  \Pi, T_i,\ 0\le i\le \nn \ran$, subject to the homogeneous
Coxeter relations and the quadratic relations 
$(T_i-t_{i}^{1/2})(T_i+t_{i}^{-1/2})=0$, and by
pairwise commutative $\{X_b, \ b\in P\}$ satisfying
(\ref{Xdex}). The following ``cross-relations" are imposed:

(i)\ \ \  $T_iX_b \ =\ X_b X_{\al_i}^{-1} T_i^{-1}$ if
$(b,\al^\vee_i)=1,\,
0 \le i\le \nn$;

(ii)\ \ $T_iX_b\ =\ X_b T_i\ $ if $\ (b,\al^\vee_i)=0
\for 0 \le i\le \nn$;

(iii)  $\pi_rX_b \pi_r^{-1} = X_{\pi_r(b)} =
X_{ u^{-1}_r(b)}
 q^{(\om_{\varsigma(r)},b)} \text{ for }  r\in O'$. \sq
\label{double}
\end{definition}

\noindent
The action of $\hW$ in $\R^{\nn+1}$ is used 
in $(iii)$. Namely:\,
 $\pi_r(b)=\om_r u_r^{-1}(b)=
[u_r^{-1}(b),-(\om_r,u_r^{-1}(b))]$, where $-(\om_r,u_r^{-1}(b))=
(b,-u_r(\om_r))=(b,\om_{\varsigma(r)})$ and
$u_r^{-1}=u_{\varsigma(r)}$.  We will use below that
\begin{align}\label{uiomr}
&u_r(\om_r)=w_0(\om_r)=-\om_{\varsigma(r)},\,
X_r\pi_r=q^{(\om_r,\om_r)}\pi_r X_{\varsigma(r)}^{-1}\ \, (r\in O').
\end{align}

The following {\sf\em pairwise commutative} elements
are the key:
\begin{align}
& Y_{b}\equal
\prod_{i=1}^\nn Y_i^{l_i} \iif
b=\sum_{i=1}^\nn l_i\om_i\in P,\ 
Y_i\equal T_{\om_i},b\in P
\label{Ybx}
\end{align}
They can be used instead of $T_0$ and $\{\pi_r\}$ in the
definition of DAHA. This makes the definition of DAHA
$X\leftrightarrow Y$-symmetric, where this  symmetry
is a formalization of {\sf\em Fourier Transform} in many
theories. 

\comment{
When acting in the polynomial representation
(see below), they are called {\sf\em difference
Dunkl operators}. We arrive at the presentation
$\HH\!=\!\lan X_b,T_w,Y_b,q^{\pm 1/\mm},
t_\nu^{\pm 1/2}\ran,\, b\in P,\, w\in W$.
The cross-relations of $\{Y_b\}$ with $\{T_i, X_b\}$ 
result from those for $T_0$ and the relations in 
$\h_X\equal \lan T_i X_b\ran$, where
$1\le i \le \nn,\, b\in P$. 
The algebra $\h_X$ is isomorphic
to $\h=\h_Y$ under the map $X_b\mapsto Y_b^{-1}$, $T_w\mapsto T_w$. 
%\smallskip
}
\vskip 0.2cm

\subsection{\bf  Automorphisms and involutions}\label{sect:Aut}
We add $q^{1/(2\mm)}$  to $\Z_{q,t}$ 
and treat it and $t_\nu^{1/2}$ as
central elements. 
The following formulas define 
automorphisms of $\HH\,$
(see \cite{C101}, (3.2.10)--(3.2.15))\,:
\begin{align}\label{tauplus}
& \tau_+:\  X_b \mapsto X_b, \ T_i\mapsto T_i\, (i>0),
\ Y_{\om_r} \mapsto X_{\om_r}
Y_{\om_r} q^{-\frac{(\om_r,\om_r)}{2}},
\\
& \tau_+:\ T_0\mapsto  q^{-1}\,X_\vth T_0^{-1},\ \,
\pi_r \mapsto q^{-\frac{(\om_r,\om_r)}{2}}X_{\om_r}\pi_r\
(r\in O'),\notag\\
& \label{taumin}
\tau_-:\ Y_b \mapsto \,Y_b, \ \, T_i\mapsto T_i\, (i\ge 0),\
\ X_{\om_r} \mapsto Y_{\om_r} X_{\om_r}
 q^\frac{(\om_r,\om_r)}{ 2},\\
&\tau_-(X_{\vth})= 
q T_0 X_\vth^{-1} T_{s_{\vth}}^{-1};\ \
\si\equal \tau_+\tau_-^{-1}\tau_+\, =\,
\tau_-^{-1}\tau_+\tau_-^{-1},\notag\\
&\si(X_b)\!=\!Y_b^{-1},\   \si(Y_b)\!=\!
T_{w_0}^{-1}X_{b^{\,\varsigma}}^{-1}T_{w_0},\ \si(T_i)\!=\!T_i (i>0),
\label{taux}
\end{align}
where automorphism $\si^{-1}$
is the 
{\sf\em DAHA-Fourier transform}. 

In the case of DAHA of
type $A_{N-1}$ or type $gl_N$, the extended ring is
$\Z[q^{\pm \frac{1}{2N}},t^{\pm 1/2}]$,
formed by
polynomials in terms of $q^{\pm \frac{1}{2N}}$ and
$t^{\pm1/2}.$ All fundamental weights are
minuscule in this case, which simplifies 
the usage of the automorphisms and anti-involutions.
\vskip 0.2cm

{\bf Further properties.}
The action of $\si$  on $Y_r$ for $r\in O'$ is as follows: 
 $\si(Y_r)=\tau_{-}^{-1}\tau_+\tau_-^{-1}(Y_r)=
q^{-(\om_r,\om_r)}Y_r^{-1}X_r Y_r$  Also,
$\si(\pi_r)=T_{u_r}^{-1}X_{\varsigma(r)}^{-1}$ and 
$\si(Y_r)=T_{u_r}^{-1}X_{\varsigma(r)}^{-1}T_{u_r}$.
See formulas
(3.2.16) and (3.2.22) from \cite{C101}.
Let us provide the justification. First,
$\si(\pi_r)=T_{w_0}^{-1}X_{\varsigma(r)}^{-1}T_{w_0}T_{u_r}^{-1}=
T_{u_r}^{-1}X_{\varsigma(r)}^{-1}.$ Second, we represent 
$w_0=v u_{r}$ for  
$v=w_0 u_{\varsigma(r)}$. Then, $T_{w_0}=T_v T_{u_r}$, where
$v(\om_{\varsigma(r)})=
\om_{\varsigma(r)}$, which gives that
 $T_v$ commutes with $X_{\varsigma(r)}$. 
%See (\ref{ururstar}) and  
%formula (\ref{diamsi}) below. 
\vskip 0.2cm

Concerning the conjugation by  $T_{w_0}$, one has: 
$T_{w_0}^{-1}T_iT_{w_0}=T_{\varsigma(i)}$
for $i>0$, $T_{w_0}^{-1}T_0T_{w_0}=T_0$ and
 $T_{w_0}^{-1}\pi_r T_{w_0}=\pi_{\varsigma(r)}$. 
Generally,
$\si^2(H)=T_{w_0}^{-1} \varsigma(H) T_{w_0}$, where 
the involution $\varsigma$ is  naturally
extended to an automorphism
of $\HH\ni H$:
$$
X_b\mapsto X_{b^\varsigma},\, Y_b\mapsto Y_{b^\varsigma},\,
T_i\mapsto T_{i^\varsigma},\, \pi_r\mapsto \pi _{r^\varsigma},\ 
b\!\in\! P,\, i\!\ge\! 0,\, r\!\in\! O'.
$$

We obtain that {\sf\em projective\,} $PSL_2(\Z)$ due to Steinberg 
acts in $\HH$; it is  
generated by $\tau_{\pm}$ with the
relations $\tau_+\tau_-^{-1}\tau_+=\si=
\tau_-^{-1}\tau_+\tau_-^{-1}$.  The notation will
be $PSL_{\,2}^{\wedge}(\Z)$; it is isomorphic to the braid 
group $B_3$. Note that
$\si \tau_{\pm}=\tau_{\mp}^{-1}\si$.
\vskip 0.2cm

The following important automorphism conjugates $q$ and $t$ and
sends
\begin{align}\label{aut-eta}
&\eta: X_b\mapsto X_b^{-1}, Y_b\mapsto 
T_{w_0} Y_{w_0(b)}T_{w_0}^{-1}, T_i\mapsto T_{i}^{-1} (0\le i\le n),\\
\eta(\pi_r)&=\pi_r \text{ for } r\in O', \text{  and  } \eta(Y_r)=
T_{u_{r^\varsigma}} Y_{r^\varsigma}^{-1} T_{u_{r^\varsigma}}^{-1}\sim
X_r^{-1}Y_r X_r, 
\notag
\end{align}
where $u_{r^\varsigma}=u_r^{-1}$, and $\sim$ means ``up to 
$q^\bullet t^\bullet$". The conjugation sends $q\mapsto q^\star=q^{-1}, 
t_\nu \mapsto t_\nu^\star=t_\nu^{-1}$ and  
for their fractional powers. The previous automorphisms
fixed $\ t_\nu,\ q$
(and their fractional powers).
\vskip 0.2cm

Let us justify the last relation. Using (\ref{uiomr}), we go
from $T_{w_0}$ to $T_{u_{r^\varsigma}}$. Next,
we apply the anti-involution $\vph$ (below) to $\pi_r^{-1}X_r\pi_r\sim
X_{r^\varsigma}$:\, 
 $\vph(\pi_r)Y^{-1}_r (\vph(\pi_r))^{-1}\sim Y_{r^\varsigma}$ and
 $\vph(\pi_r)^{-1}Y^{-1}_{r^\varsigma}\vph(\pi_r)=Y_r$. Finally,
$X_r=\vph(\pi_r)^{-1}T_{u_{\varsigma(r)}}^{-1}$, 
$T_{u_{\varsigma(r)}}=X_r^{-1}(\vph(\pi_r))^{-1}$, and we
arrive at
$T_{u_{r^\varsigma}} Y_{r^\varsigma}^{-1} T_{u_{r^\varsigma}}^{-1}\sim
X_r^{-1}Y_r X_r$. One can use here $\si$ instead of $\vph$. 
\vskip 0.2cm

Let us list the matrices corresponding to the automorphisms 
above upon the natural projection
onto $PGL_2(\Z)$.
The matrix {\footnotesize
$\begin{pmatrix} \al & \be \\ \ga & \de\\ \end{pmatrix}$}
will then represent the map $X_b\mapsto X_b^\al Y_b^\ga,
Y_b\mapsto X_b^\be Y_b^\de$ for $b\in P$ when
$\,t^{\frac{1}{2}}=1=q^{\frac{1}{2(\nn+1)}}$.
 One has:
\smallskip

\centerline{
$\!\!\!\tau_+\rightsquigarrow$
{\tiny
$\begin{pmatrix}1 & 1 \\0 & 1 \\ \end{pmatrix}$},\
$\tau_-\rightsquigarrow$
{\tiny
$\begin{pmatrix}1 & 0 \\1 & 1 \\ \end{pmatrix}$},\
$\si\rightsquigarrow$
{\tiny
$\begin{pmatrix}0 & 1 \\-1 & 0 \\ \end{pmatrix}$},\
$\eta\rightsquigarrow$
{\tiny
$\begin{pmatrix}-1 & 0 \\ 0 & 1 \\ \end{pmatrix}$}.\
}
\vskip 0.1cm

\noindent
So $\eta$ is from   $PGL_{\,2}^{\wedge}(\Z)$,
{\em projective\,} $PGL_2(\Z)$.

\vskip 0.2cm

{\bf Anti-involutions.} The following one is
the key in the definition of {\sf\em coinvariant}.
It fixes $\ t_\nu,\ q$
and their fractional powers, and sends:
\begin{align}
&\vph:\ 
X_b\mapsto Y_b^{-1},\, Y_b\mapsto X_b^{-1},\,
T_w\mapsto T_{w^{-1}}\, (w\in W);
\label{starphi}
\end{align} 
In particular, $\pi_r\mapsto 
T_{u_r}^{-1}X_r^{-1}, T_0\mapsto (X_\vth T_{s_\vth})^{-1}.$
For $b\in P$:
\begin{align}\label{vphtaupm}
\vph \tau_+\vph\!=\!\tau_-,\, \vph \si\!=\!\si^{-1}\vph,\,
\vph\si^{-1}(Y_b)\!=\!Y_b, \,\vph\bigl(\tau_+^{-1}(Y_b)\bigr)\!=\!
\tau_+^{-1}(Y_b),
\end{align}
which is direct from the definitions.
Also, for $i\ge 0$ and $r\in O$:
\begin{align}
&\vph(\tau_+(T_i))=\tau_+(T_i),\ 
\vph(\tau_+(\pi_r))= (\tau_+(\pi_r))^{-1}=\tau_+(\pi_{r^\varsigma}).
\label{vphtau}
\end{align} 
For the sake of completeness, let us justify (\ref{vphtau}).
We need  to check the first formula only for $i=0$:\,
$\tau_+(T_0)=q^{-1}X_\vth T_{s_\vth} Y_\vth^{-1}$ is obviously
$\vph$-invariant. For the 2{\footnotesize nd}:  $\pi_r=Y_r T_{u_r}^{-1}=
\pi_{r^\varsigma}^{-1}=T_{u_{r^\varsigma}}Y_{r^\varsigma}^{-1}$.
Applying $\vph$, we obtain the identities 
$T_{u_{r^\varsigma}}^{-1}X_r^{-1}=X_{r^\varsigma}T_{u_r}$, \, 
$X_r T_{u_{r^\varsigma}}=T_{u_r}^{-1} X_{r^\varsigma}^{-1}$ \, and
\begin{align*}
\tau_+(\pi_r)=&\,
q^{-\frac{(\om_r,\om_r)}{2}}X_r\pi_r\,=
q^{-\frac{(\om_r,\om_r)}{2}}X_rY_rT_{u_r}^{-1}\\
=&\,
q^{\frac{(\om_r,\om_r)}{2}}\pi_r X_{r^\varsigma}^{-1}
=q^{\frac{(\om_r,\om_r)}{2}}
Y_rT_{u_r}^{-1}X_{r^\varsigma}^{-1}=
q^{\frac{(\om_r,\om_r)}{2}}
Y_r X_r T_{u_{r^\varsigma}}.
\end{align*}
Therefore  $\vph(X_rY_rT_{u_r}^{-1})=
T_{u_{r^\varsigma}}^{-1}X_r^{-1}Y_r^{-1}$ and we obtain the required.
See formula (3.2.12) in \cite{C101}.

\comment{
\vskip 0.2cm
The  following anti-involution $\star$ results directly from
the group nature of the DAHA relations. Let 
$
H^\star= H^{-1} \for H\in \{T_{\hw},X_b, Y_b, \pi_r, q, t_\nu\}.
$
To be exact, it is naturally extended to the fractional
powers of $q,t$:
$$
\star:\ t^{\frac{1}{2}}_\nu \mapsto t_\nu^{-\frac{1}{2}},\
q^{\frac{1}{2\mm}}\mapsto  q^{-\frac{1}{2\mm}}.
$$
It commutes with any (anti-)automorphisms of $\HH$. 
This anti-involution is standard in the theory 
of the DAHA polynomial representation $\mathscr{X}$ (below).
In this paper, the {\sf\em anti-involutions} $\Diamond_l$ 
are key, where  $l\in \Z$. The other DAHA automorphisms
are mostly needed to
introduce them and justify their main properties. They  
preserve $q,t_\nu$ and send:
\begin{align}\label{diamondef}
&  \Diamond: X_b\!\mapsto\! T_{w_0}^{-1}X_{-w_0(b)}T_{w_0},\,  
Y_b\!\mapsto\! Y_b,\, T_w\!\mapsto\! T_{w^{-1}},\, 
\pi_r\!\mapsto\! T_{w_0}^{-1}
\pi_rT_{w_0},
% \\
%\Diamond_l=& q^{l x^2/2}\circ\Diamond\,\circ q^{-lx^2/2}:
%X_a\mapsto X_a^{\Diamond},\ \,
%Y_b\mapsto q^{l x^2/2}Y_b q^{-lx^2/2}=\tau_+^l(Y_b),\notag
\end{align}
where $b\in P, w\in W, r\in O$. Here, formally $\Diamond(q^{lx^2/2})=
q^{lx^2/2}$; we use
that $x^2$ is $W$-invariant and $\varsigma$-invariant.
Thus, $\Diamond_l$ is the composition
$\tau_+^l\circ \Diamond$. We note that
$\Diamond=\vph\si^{-1}$, 
$\Diamond\circ \tau_{\pm}=\tau_{\pm}^{-1}\circ \Diamond$
and $\Diamond\circ \si=\si^{-1}\circ\Diamond.$

Chapter 3 of \cite{C101} is actually the theory of 
$\vph,\star,\Diamond_{\pm 1}$ and the corresponding
symmetric forms in the polynomial representation and its
Fourier-dual, which is the space generated by delta-functions
at the points $\pi_b(-\rho_k)=b-u_b^{-1}(\rho_k)$ for $b\in P$.
\vskip 0.2cm

Let us provide the counterpart of the symmetries
from (\ref{vphtau}) for $\Diamond$:
\begin{align}
&\Diamond(\si(T_i))=\si(T_i) (i\ge 0),\ 
\Diamond(\si(\pi_r))= (\si(\pi_r))^{-1}=\si(\pi_{r^\varsigma}).
\label{diamsi}
\end{align} 

The first relation is not immediate only for 
$\si(T_0)=T_{s_\vth}^{-1}X_\vth^{-1}.$ One has:
$\Diamond(\si(T_0))=T_{w_0}^{-1}X_\vth^{-1} T_{w_0}T_{s_\vth}^{-1}=
T_{s_\vth}^{-1}X_\vth^{-1} T_{s_\vth}T_{s_\vth}^{-1}=
T_{s_\vth}^{-1}X_\vth^{-1}.$ We use that
$T_{w_0}^{-1}X_\vth^{\pm 1} T_{w_0}=
T_{s_\vth}^{-1}X_\vth^{\pm 1} T_{s_\vth}$, which follows
from (3.2.22) in \cite{C101}, and can be check directly using
that $w_0=u s_\vth$ for $u$ such that $u(\vth)=\vth$; indeed,
$w_0(\vth)=-\vth=s_\vth(\vth)$. We obtain that
$T_{w_0}^{-1}X_\vth^{\pm 1} T_{w_0}=T_{s_\vth}^{-1}T_u^{-1}
X_\vth^{\pm 1} T_u T_{s_\vth}$, where $T_u$ commutes with 
any polynomial of $X_\vth$. 

The second equality is justified as follows. One has: 
$\Diamond(\si(\pi_r))=\Diamond(T_{u_r}^{-1}X_{\varsigma(r)}^{-1})=
T_{w_0}^{-1}X_r^{-1}T_{w_0} T_{u_{\varsigma(r)}}^{-1}=
T_{u_{\varsigma(r)}}^{-1} X_{r}^{-1}$ due to 
$T_{w_0}^{-1}X_{\varsigma(r)}^{\pm 1}
T_{w_0}=T_{u_r}^{-1}X_{\varsigma(r)}^{\pm 1}T_{u_r}.$ 
Alternatively, one can use here and above 
that $\Diamond
=\vph\si^{-1}$.
}

%\comment{ %AHA ANTI-INVOLUTION

\vskip 0.2cm
{\bf Relation to AHA.}
Let us discuss the formula for the 
anti-involution $\dag$ of DAHA generalizing the standard AHA 
anti-involution
for $\h=\h_Y$ generated by $T_i$ for $i\ge 0$ 
and $\pi_r$ for $r\in O'$. 

It is 
defined by the relations
$T_i^\dag = T_i$ for $i\ge 0$, $\pi_r^\dag=\pi_r^{-1}$ for $r\in O$,
and $X_b^\dag= X_b$ for $b\in P$. The former two formulas
are from AHA, and  the latter gives the required
extension to DAHA. 

This anti-involution is directly related to 
the anti-involution $\Diamond$, one of the key in DAHA theory:
\begin{align}\label{diamondef}
&  \Diamond: X_b\!\mapsto\! T_{w_0}^{-1}X_{-w_0(b)}T_{w_0},\,  
Y_b\!\mapsto\! Y_b,\, T_w\!\mapsto\! T_{w^{-1}},\, 
\pi_r\!\mapsto\! T_{w_0}^{-1}
\pi_rT_{w_0},
% \\
%\Diamond_l=& q^{l x^2/2}\circ\Diamond\,\circ q^{-lx^2/2}:
%X_a\mapsto X_a^{\Diamond},\ \,
%Y_b\mapsto q^{l x^2/2}Y_b q^{-lx^2/2}=\tau_+^l(Y_b),\notag
\end{align}
where $b\in P, w\in W, r\in O$. 
Namely,  $\dag=\varsigma\circ T_{w_0}\circ \Diamond$,
where by $T_{w_0}$, we mean the conjugation 
$T_{w_0} (\ldots) T_{w_0}^{-1}$,
and  $b^\varsigma\equal -w_0(b)$ for $b$, 
which is applied to the indices of 
$X_b, Y_b$. Accordingly, 
$T_i^\varsigma =T_{i^\varsigma}$ for $\varsigma$
acting naturally in the extended Dynkin diagram. 

\comment{
For instance, the
relation $\Diamond(Y_b)=Y_b$ from (\ref{diamondef}) gives  that 
$Y_b^\dag= T_{w_0} Y_{b^\varsigma} T_{w_0}^{-1}$ 
for $b\in P$. Let us employ this formula to 
$T_0=Y_\vth T_{s_\vth}^{-1}$. 
One has: $\Diamond(T_0)=T_{s_\vth}^{-1} T_0T_{s_\vth}= 
T_{w_0}^{-1} T_0 T_{w_0}$. Then we conjugate by $T_{w_0}$ and, indeed,
obtain that $T_0^\dag=T_0$. 
We use here formulas (3.2.22) from \cite{C101}, including 
the commutativity of $T_{w_0}$ with 
$T_{s_\vth}$ and $T_0$.
Note that 
$T_{w_0}^{-1}T_i T_{w_0}=
T_{i^\varsigma}$ for $i>0$. 
}

Harmonic analysis of the regular representation
of AHA or its spherical part is, generally, 
the decomposition of the functional
$\langle T_{\hw}\rangle=\de_{\hw,id}$
in terms of the characters of irreducible 
representations. 

Any functionals $\xi$ satisfying 
$\xi(AB)=\xi(BA)$ for $A,B\in \h_Y$ is
 $\dag$-invariant in AHA.
 This is different for $\dag$ in $\HH$;\,
$\dag$-invariant functionals must vanish at $T_{\hw}X_b-
X_b T_{\hw^{-1}}$ for $\hw\in \hW$ and $b\in P$. 

The main example
in this paper is the {\sf\em coinvariant}, which is by definition
$\vph$-invariant and acting via the projection of $\HH$
onto the polynomial representation.
These properties determine it uniquely up to proportionality
(for any $q,t$, not only generic). 
In DAHA theory, the spherical
AHA representation becomes the polynomial 
representation.

%} %AHA ANTI-INVOLUTION

\subsection{\bf  Polynomial representation}\label{sec:polynom}
The theory of polynomial representations is based on the 
{\sf\em PBW Theorem}.
We need it for the following order of the generators:

\begin{theorem}[{\sf PBW for DAHA}]\label{PBWDAHA}
Every element in $\HH$ can be uniquely written in the form
\begin{equation}\label{pbwcdaha}
\sum_{a,w,b}C_{a,w,b}\,X_{a}T_{w}Y_b \text{ for } 
C_{a,w,b}\in \C_{q,t},\ a,b\in P,\, w\in W, \text{ where }
\end{equation}
 $\C_{q,t}=
\C[q^{\pm \frac{1}{2\mm}},t_\nu^{\pm 1/2}]$. In fact,
$\Z_{q,t}=\Z[q^{\pm \frac{1}{2\mm}},t_\nu^{\pm 1/2}]$ is
 sufficient.
 \sq
%\vskip -1.1cm\sq
\end{theorem}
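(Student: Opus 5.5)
The proof splits into spanning and linear independence, following the standard route from \cite{C101}.

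\emph{Spanning.} Let $V\subset\HH$ be the $\Z_{q,t}$-span of the monomials $X_aT_wY_b$. I will show $V$ is a left ideal containing $1$, hence $V=\HH$. It suffices to check stability under left multiplication by the generators $X_c$, $T_i$ for $1\le i\le \nn$, and $Y_c$, since these generate $\HH$: one recovers $T_0=Y_\vth T_{s_\vth}^{-1}$ up to $q^\bullet$ factors, and $\pi_r=Y_rT_{u_r}^{-1}$.
\begin{itemize}
\item Stability under $X_c$ is trivial.
\item For $T_i$ with $i>0$, use the Demazure--Lusztig form of cross-relations (i)--(ii):
\[
T_iX_a=X_{s_i(a)}T_i+(t_i^{1/2}-t_i^{-1/2})\frac{X_a-X_{s_i(a)}}{1-X_{\al_i}},
\]
where the fraction is a Laurent polynomial in $X$. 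The quadratic relation then reduces $T_iT_w$ to the span of $\{T_{w'}\}$.
\item For $Y_c$, use the affine Hecke algebra $\h_Y=\lan T_w,Y_b\ran$. The Bernstein presentation gives $\h_Y=\oplus_w T_w\,\Z_{q,t}[Y]$, so the only task is to move $Y_c$ past $X_a$. The $X\leftrightarrow Y$ symmetry under $\vph$ (or $\si$) turns the relations for $T_i$, $i>0$, into the mirror formula for $T_iY_b$. The genuinely new relation is for $T_0$ or $\pi_r$ against $X_a$, which is (i)--(iii).
\end{itemize}
For the $Y$ step I will use $Y_r=\pi_rT_{u_r}$. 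Relation (iii) moves $\pi_r$ past $X_a$ at the cost of $X_{\pi_r(a)}$ and a power of $q$. Moving $T_0$ past $X_a$ creates $X$-factors and lower $T_0$-terms. The resulting words are then re-ordered inside $\h_Y$ by induction on the length $l(\hw)$ and on a dominance-type order on $a$.

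\emph{Independence.} I will construct the polynomial representation $\sX=\Z_{q,t}[X_b]$, in which $X_b$ act by multiplication, $T_i$ by Demazure--Lusztig operators $t_i^{1/2}s_i+(t_i^{1/2}-t_i^{-1/2})(X_{\al_i}-1)^{-1}(s_i-1)$, and $\pi_r$ through the action of $\hW$ on $X$. This is the induced representation $\Ind_{\h_Y}^{\HH}(\hbox{triv})$. Since $q$ is generic, one can pass to the fraction field and then specialize. Faithfulness then reduces to showing that the elements $X_aT_wY_b$ act by operators $X_a\,\hw(\cdot)$ whose leading terms, for $\hw=wb\in\hW$, are linearly independent over $\Z_{q,t}[X]$. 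I will realize $Y_b$ as difference operators whose leading part is the translation $b$, up to invertible coefficients in $X$. Then the $X$-linear independence of the distinct elements of $\hW$ acting on $\Z_{q,t}[X]$ (Artin/Dedekind independence of automorphisms, valid since $\hW$ acts faithfully when $q$ is not a root of unity) gives independence over the field of fractions. The result then holds over $\Z_{q,t}$ as well, since the module in question is free.

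The main obstacle will be the spanning step for $Y_c$ against $X_a$: controlling the lower terms produced by $T_0$ and $\pi_r$ so that the induction terminates. I would handle it by running the induction on $\hw\in\hW$ in the Bruhat order together with the length $l(\hw)$.
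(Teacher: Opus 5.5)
The paper does not prove this theorem; it quotes it with an end mark as the standard PBW theorem (cf.\ \cite{C101}), so I judge your outline on its own.

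\textbf{Spanning.} This half is correct, and easier than you expect. Relations (i)--(ii) hold for $i=0$ as well, and (iii) handles $\pi_r$. Hence every generator $g\in\{T_0,\ldots,T_\nn,\pi_r\}$ of $\h_Y$ satisfies $gX_a\in\sum_c X_c\,\h_Y$. Induction on word length then gives $\h_Y\,\Z_{q,t}[X]\subset\Z_{q,t}[X]\,\h_Y$, with no Bruhat or dominance induction needed. Note also that $Y_r=\pi_rT_{u_r}$ only covers minuscule $r$.

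\textbf{Where the gap is.} The gap is in the independence step. You take the leading term of $T_wY_b$ to be $wb$ because $T_w$ and $Y_b$ have leading terms $w$ and $b$. Leading terms multiply only when lengths add, and $l(wb)=l(w)+l(b)$ fails in general. For the same reason, writing $Y_b=T_{b_1}T_{b_2}^{-1}$ with $b=b_1-b_2$, $b_1,b_2\in P_+$, does not give you the leading term of $Y_b$, since typically $l(b)<l(b_1)+l(b_2)$.

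Already in $A_1$ this breaks. Take $Y=\pi T$ and $\om=\pi s$. Then $Y^{-1}=T^{-1}\pi^{-1}$ has nonzero components at both $s\pi^{-1}=-\om$ and $\pi^{-1}=s\cdot(-\om)$, whereas $T_sY^{-1}=\pi^{-1}$ exactly.
\begin{itemize}
\item In the termwise expansion of $T_s\cdot Y^{-1}$, the Bruhat-larger element $s\pi^{-1}$ arises twice and the two contributions cancel. No leading-term bookkeeping predicts this.
\item Order $\hW$ by translation part first and then by Bruhat order on the $W$-part. In that order $Y^{-1}$ and $T_sY^{-1}$ have the same top term $s\cdot(-\om)$, so triangularity fails.
\end{itemize}
You specify neither the order nor a proof of triangularity, so the independence of $\{X_aT_wY_b\}$ is not established.

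\textbf{Repair.} Work with the Iwahori--Matsumoto basis instead.
\begin{itemize}
\item For a reduced expression $\hw=\pi_rs_{i_1}\cdots s_{i_l}$, the operator $T_{\hw}=\pi_rT_{i_1}\cdots T_{i_l}$ equals $c_{\hw}(X)\,\hw+\sum_{\hat{u}<\hw}(\cdots)\,\hat{u}$ with $c_{\hw}\neq0$. This holds because each $T_i$ is a combination of $s_i$ and $1$ with nonzero $s_i$-coefficient, and proper subwords give strictly shorter, Bruhat-smaller elements.
\item Take a Bruhat-maximal $\hw$ with nonzero coefficient and apply Artin--Dedekind independence of the distinct automorphisms $\hw$ of $\Q(q^{1/2\mm},t_\nu^{1/2})(X)$. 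This gives independence of $\{X_aT_{\hw}\}$.
\item Transfer to $\{X_aT_wY_b\}$ using that $\{T_{\hw}\}$ and $\{T_wY_b\}$ are both $\Z_{q,t}$-bases of the abstract affine Hecke algebra (Iwahori--Matsumoto and Bernstein--Lusztig; you already invoke the latter).
\end{itemize}

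\textbf{Circularity.} You cannot obtain the polynomial representation as $\Ind_{\h_Y}^{\HH}$ here. Identifying that induced module with $\Z_{q,t}[X]$ is itself a consequence of PBW, which is exactly how the paper introduces $\sX$. What you need is the independent verification, or a citation, that the Demazure--Lusztig operators, the $\pi_r$, and multiplication by $X_b$ satisfy the defining relations of $\HH$.
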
 
\vskip 0.5cm

This theorem readily results in the definition of the 
{\sf\em polynomial 
representation} of $\HH$ in  
$\mathscr{X}\equal\C_{q, t}[X_b]=\C_{q,t}[X_{\om_i}]$. 
%where, recall,  
%$\C_{q,t}$ is the field 
%of rational functions in terms of $q^{1/m},t^{1/2}$
%As a linear space, $\mathscr{X}$ is generated
%by $\{X_{b}\,|\,b\in P\}$. 
It is Ind$_\h^{\text{\footnotesize $\HH$}}\,\C_+$, 
where $\C_+$ is the $1$-dimensional 
module of $\h_Y$ such that $T_{\hw}\mapsto t^{l(\hw)/2}\equal
\prod_\nu t_\nu^{l_\nu(\hw)/2}$, and
$X_{b}$ act by multiplication. For instance, one has
for $b\in P_+$\,:\, 
$t^{l(b)/2}=\prod_\nu t_\nu^{(\rho^\vee_\nu,b)}=
\prod_\nu q^{k_\nu (\nu\frac{\rho_\nu}{\nu},b)}=
q^{(\rho_k,b)}.$ 

\vskip 0.2cm
Then
$T_i (i\ge 0)$ and $\pi_r (r\in O')$ act in $\mathscr{X}$
as follows: 
\begin{eqnarray}\label{pitpolyn}
&\pi_{r}\mapsto \pi_{r},\ \, 
T_{i}\mapsto t_i^{1/2}s_{i}+\dfrac{t_i^{1/2}-t_i^{-1/2}}
{X_{\al_{i}}-1}(s_{i}-1)\text{ for } t_i=t_{\al_i}. 
\end{eqnarray}
Recall that 
$s_{0}(X_{b})=X_{b}X_{\vth}^{-(b, \vth)}q^{(b, \vth)}$
for maximal short $\vth$. 
The images of $T_i$ for $i>0$ are 
{\sf\em Demazure-Lusztig operators}. 
\vskip 0.2cm

{\bf The coinvariant.} We begin with an arbitrary anti-involution
  $\varkappa$ of $\HH$ that fixes
 fractional powers of $q,t_\nu$. Let $\varrho$ be any 
$\C_{q,t}$-linear map  $\HH \to \C_{q,t}$,
such that  $\varrho(H^\varkappa)=\varrho(H)$.
Then the pairing
$\{A,B\}_\varrho\equal \varrho(
A^{\varkappa}\, B)
\,=\,\{B,A\}_\varrho$ is $\kappa$-invariant:\ 
$\{HA,B\}_\varrho=\{A,H^\varkappa B\}_\varrho.$ 

%We will stick here to $\mathscr{X}$; later
%$Y$-induced modules will be considered. 

\begin{definition}\label{def:coinv}
A {\sf\em Shapovalov-type
anti-involution} $\varkappa$ of $\HH$ with respect to $Y$
is such that $T_w^{\varkappa}\!=\!T_{w^{-1}}$ for $w\in W$, and 
the following property holds: for any $H\in \HH$,\,
the decomposition $H=\!\sum c_{awb}\! Y_a^{\varkappa} 
T_{w} Y_b$ exists and 
is unique. Then the corresponding $\varrho$ is unique
if its restriction to $\h_Y$, generated by
$Y_b$ and $T_w$, is given. Namely,
$\varrho(H) =
\,\sum c_{awb}\, \varrho(Y_a)
\varrho(T_{w}) \varrho(Y_b)$. 
See \cite{ChM}, Section 2.6. \sq
\end{definition}

\vskip 0.2cm

The {\sf\em coinvariant} $\{\cdot \}$ in this paper will be 
for $\vph$, which is of Shapovalov-type. It sends
$T_{i}\mapsto t_{i}^{1/2}$ for $i\ge 0$,
and $\pi_r\mapsto 1$. 

Explicitly, we represent
$
H=\sum_{a,w,b} c_{a,w,b}\, X_a T_{w} Y_b \for w\in W,
a,b\in P
$
due to  Theorem \ref{PBWDAHA}, and the
{\sf\em coinvariant\,} is:\,
\begin{align}\label{evfunct}
\{\,\}:\ X_a \ \mapsto\  q^{-(\rho_k,a)},\
Y_b \ \mapsto\  q^{(\rho_k,b)},\
T_i \ \mapsto\  t^{1/2}.
\end{align}
Equivalently, $\{H\}= H(1)(q^{-\rho_k})$, where $H\in \HH$
acts on $1$ in $\mathscr{X}$. Recall that
$q^{-\rho_k}=t^{-\rho}$ in the $ADE$-cases.

The defining symmetry of the coinvariant, which is
$\{\,\vph(H)\,\}\,=\,\{\,H\,\}$, 
readily follows from (\ref{evfunct}).
See \cite{ChD2} for its further properties.
We will use that
\begin{align}\label{eta-vph}
&\{\,\eta(H)\,\}=\{H\}^\star \text{ for }
q^\star=q^{-1}, t^\star=t^{-1}, \text{ and } 
\{\,\varsigma(H)\,\}=\{\,H\,\},
\end{align}
where $\eta$ is from (\ref{aut-eta}), and
$\varsigma$ is naturally extended to $\HH$:
\begin{align}\label{varsigmaXY}
\varsigma(X_b)\!=\!X_{\varsigma(b)}, \
\varsigma(Y_b)\!=\!Y_{\varsigma(b)}, \
T_i^\varsigma\!=\!T_{\varsigma(i)},\ 1\le i\le \nn.
\end{align}

The main application of $\mathscr{X}$ is the theory of
Macdonald polynomials. Originally, the symmetric
Macdonald polynomials $P_b$ for $b\in P_+$ were defined.
The key development was the theory of {\sf\em 
non-symmetric Macdonald polynomials} $E_b$ for $b\in P$
due to Heckman, Opdam, Macdonald, and the author; see 
\cite{Ch4,C101}. The Macdonald conjectures for $\{E_b\}$
became  entirely conceptual and with simple justifications.
The passage from $E_b$ to $P_b$  is via the $t$-symmetrization.
The key tool in the theory of  $E$-polynomials is the usage 
of {\sf\em intertwining operators}. For instance,
the {\sf\em HHL-formula}
(Haglund-Haiman-Loehr) was an impressive demonstration of
the power of the non-symmetric approach. 
\vskip 0.2cm

Generally, the polynomials $\{E_b, b\in B\}$
are unique (up to proportionality) eigenfunctions of
the operators $\{L_f\equal f(Y_1,\ldots, Y_n),
f\in \Q[X]\}$
acting in $\Q_{q,t}[X]:$
\begin{align}
&L_{f}(E_b)\ =\ f(q^{-b_\#})E_b\, \hbox{\ for\ }\,
b_\#\equal b- u_b^{-1}(\rho_k),
\label{Yone} 
\end{align}
where 
$X_a(q^{b})=
q^{(a,b)}$, and
$u_b$ is the
element $u\in W$ of {\sf\em minimal\,} length such that
$u(b)\in P_-$. Their coefficients are in $\Q(q,t_\nu)$. 

We need $E_b$ only for $b\in P_+$ in this paper.
Then $E_b=X_b$ modulo lower terms, and 
\begin{align}\label{nontosymdomx}
&\Pi_R P_{b}^\circ=\mathscr{P}_+(E_{b}^\circ)\for
\mathscr{P}_+\!\equal\!\sum_{w\in W}
t_{\sht}^{l_{\sht}(w)/2}
t_{\lng}^{l_{\lng}(w)/2}T_w,
\end{align}  
for the  
Poincar\`e polynomial $\Pi_R=\mathscr{P}_+(1)$,
$E_b^\circ=E_b/E_b(q^{-\rho_k})$, and
 $P_b^\circ=P_b/P_b(q^{-\rho_k})$. See, for instance, formula (3.4) 
in \cite{ChD2}. 
\vskip 0.2cm

{\bf EHA and $\nabla$-operator.}
Let us comment on the relation of the theory of
$\nabla$ to $E$-polynomials, 
especially to the formulas for 
the action of $PSL^\wedge_2(\Z)$ on $E_{\om_r}=X_r$ for
minuscule  $\om_r$. The same kind of relation  is between 
DAHA superpolynomials from \cite{CJ} and further papers and  
EHA-superpolynomials from \cite{GL2}. 
\medskip

Following the connection  of EHA, {\sf\em Elliptic Hall
Algebra}, to spherical $\HH$ due to \cite{SV} and other papers,
let $E^{(k)}_{m,n}\equal \hga_{n,m}(X_{\om_k})$ in
 $\HH$ for the root system  $gl$, where
$X_{\om_k}=\mathbb{X}_1\cdots \mathbb{X}_k$ for $\mathbb{X}_i=
X_{\mathbbm{e}_i}$ as above (the same for $Y_{\om_k}$).
These  $E^{(k)}_{m,n}$  are products 
of $X_{\om_k}$ and $Y_{\om_k}$ up to some powers of $q$.
Replacing  $X_{\om_k}$ by the corresponding 
{\sf\em monomial symmetric} functions, we arrive at
$P_{m,n}^{(k)}$ from the theory of EHA.
\medskip

Obviously, $\tau_-(E^{(k)}_{m,n}) =E^{(k)}_{m+n,n}$, which is
a counterpart of
the formula $\nabla P_{m,n} \nabla^{-1}=P_{m+n,n}$ for the
operator $\nabla$ in the theory of {\sf\em modified Macdonald
polynomials} $\tilde{H}_\mu$ for Young diagrams $\mu=(\mu_i)$.
Its main property (actually, the
definition) is that $\nabla( \tilde{H}_\mu)=q^{c}t^{c'} 
\tilde{H}_\mu$ for $c=\sum_i (i-1)\mu_i$ and $c'$ defined 
for the transpose of $\mu$ (using the standard notation).

This matches our formula for the action
of $\tau_-$ in $\mathscr{X}$:
$\dot{\tau}_-(E_b)=q^{-(b,b)/2}t^{-(b,\rho)}E_b$, where  $b\in P_+$.
 Our factor
$q^{-(b,b)/2}$ is close to $q^{c}t^{c'}$ for the Young diagram
$\mu$ corresponding to $b$. Also, \cite{CJ} is connected
with \cite{Neg}, more specifically, with his $D$-operators.

Thus,
 $\dot{\tau}_-$ is a counterpart
of $\nabla$, which can be of importance for
the {\sf\em Shuffle Conjecture}  and its variants. 
There is a relation to \cite{CaM, Mel2}, at least at the
level of formulas. 
Concerning this conjecture  for the rectangles
$\upsilon\rr\times \upsilon\ss)$ (now a theorem),
 see \cite{BHM,GMO}. This is 
the case of algebraic cables 
$C\!ab(\upsilon\rr\ss+1,\upsilon)T(\rr,\ss)$, 
when  the conductors
$\c$ of the corresponding plane curve singularities
are monomial, $I_\la$\, for the Young diagrams $\la$ from
\cite{BHM,GMO}. See Theorem 
\ref{thm:cond} below.

The Shuffle Conjectures 
always come with explicit combinatorial formulas, which are
closely related to parking functions and generalized Catalan
numbers; see, for example, \cite{GMV1,GMV2}.
We note that {\sf\em superduality}, which is under 
 $q\leftarrow t^{-1}$ in DAHA parameters,  can be (sometimes)
seen directly from such formulas.
For us, the {\sf\em superduality}
is a general theorem (or a conjecture).

Let us note that, the {\sf\em superduality} under
$q\leftrightarrow t^{-1}$ of DAHA superpolynomials
requires the $q\leftrightarrow t$ symmetry
of  the polynomials $\tilde{H}_\mu$, i.e. some symmetric theory.
This is for general $\mu$;
we expect that the symmetric theory  is not really needed
for  minuscule $\om_r$ due to $E_{\om_r}=X_{\om_r}$.

\setcounter{equation}{0}
\section{\sc DAHA-Jones theory}
\subsection{\bf Iterated torus knots}\label{sec:ITER-KNOTS}
Torus knots $T(\rr,\ss)$ are defined for any nonzero
integers 
assuming that $gcd(\rr,\ss)=1$. In the standard torus $T^2$,
it is a path with $\rr$ horizontal turns (along the torus)
and $\ss$ vertical ones.  
There are obvious symmetries:
 $\,T(\rr,\ss)=T(\ss,\rr)=T(-\rr,-\ss)$, where 
we use ``$=$" for the {\sf\em ambient isotopy equivalence}.
Also, $\,T(\rr,\ss)=\unknot\,$\, if $\rr=1$ or $\ss= 1$, and
changing $\rr\mapsto -\rr$
results in the {\sf\em mirroring} of $T(\rr,\ss)$. 
\medskip

Generally, the DAHA
construction below is for any iterated torus links. For instance,
$\rr,\ss$ can be negative and the condition $gcd(\rr,\ss)=1$ 
can be omitted (which we will not do). The motivic theory is
only for {\sf\em algebraic links}, which requires strict 
positivity of  $\rr,\ss$, and the (strict) positivity of linking
numbers if multibranch plane curve singularities are
considered. See \cite{EN,ChD1,ChD2,ChW}.
\medskip

Following  \cite{ChD1}, 
the DAHA construction of invariants of iterated torus {\sf\em knots}
is based one their {\sf\em $\tax$-presentations}. Algebraically, such
a presentation is a sequence of  pairs of integers $[\rr_i,\ss_i]$
from

\centerline{
$
\vec{\rr}=\{\rr_1,\ldots \rr_\ell\}, \
\vec{\ss}=\{\ss_1,\ldots \ss_\ell\} \hbox{\, such that\,}
gcd(\rr_i,\ss_i)=1,
$
}

\noindent
where $\ell$ is the  {\sf\em length} of the iteration.
For algebraic knots, $[\rr_i,\ss_i]$ will 
be interpreted below as 
{\sf\em characteristic\,} or {\sf\em Newton's pairs\,}
for the corresponding plane curve singularities. They  
are not topological invariants. The following sequences are.
%The necessary and sufficient
%condition for being algebraic is $\rr_i,\ss_i>0$,
%which will be imposed in this paper.
%\smallskip

\vskip 0.2cm
{\bf Cabling parameters.} 
The {\sf\em cabling\,} $C\!ab(\aa,\bb)(K)$ of 
any oriented
knot $K$ in (oriented) $S^3$ is defined as follows;
see, e.g., \cite{EN} and references therein.
We consider a small $2$\~dimensional torus
around $K$ and put there the torus knot $T(\bb,\aa)$
in the direction of $K$,
which is $C\!ab(\aa,\bb)(K)$. The parameter
$\aa$ is the number of
turns around $K$, $\bb$ is that along $K$.
For instance,\, $C\!ab(a,0)K=\unknot\ $ and $C\!ab(a,1)K=K$.

This procedure depends on
the order of numbers $\aa$ and $\bb$, the orientation of $K$,
and its {\sf\em framing}. We take the natural
zero-framing  when  the parallel copy of the knot has
zero linking number with a given knot.

Using this operation, iterated torus knots (also called
satellite knots)
are as follows.
Given two sequences
$\vec\aa=(\aa_1,\ldots,\aa_\ell),\vec\rr=(\rr_1,\ldots,\rr_{\ell})\}$, 
called  {\sf\em cable presentation}, 
we begin with  $T(\rr_1,\aa_1)=C\!ab(\aa_1,\rr_1)(\unknot\,)$
(note that we transpose $\aa$ and $\rr$ here), and then set:
\begin{align}\label{Knotsiter} 
\t(\vec\rr,\vec\ss)=C\!ab(\vec{\aa},\!\vec{\rr})(\unknot\,)=
\Bigl(C\!ab(\aa_\ell,\rr_\ell)\cdots \bigl(C\!ab(\aa_2,\rr_2)
(T(\rr_1,\!\aa_1))\bigr)\Bigr),
\end{align}
where the connection with the $\tax$-presentation is as follows:
\begin{align}\label{Newtonpair}
\aa_1=\ss_1,\,\aa_{i}=\aa_{i-1}\rr_{i-1}\rr_{i}+\ss_{i}\,\
(i=2,\ldots,\ell).
\end{align}
See,  e.g., \cite{EN}. The $\tax$-presentation will be the
one used later. However, to prove the topological invariance
of the DAHA construction (a theorem) the passage to the 
cable presentation is necessary. 

\Yboxdim7pt
Recall that knots are considered up to
{\sf\em ambient isotopy\,}. 
Generally, they are colored by dominant weights, which  
will be Young diagrams in what will follow; uncolored knots
are those for $\om_1=\yng(1)\,$. 
\medskip

{\bf Algebraic knots.}
They are defined for
(isolated) plane curve singularities 
presented by polynomial equations $F(x,y)=0$ in a neighborhood 
of $0=(x=0,y=0)$. Their intersections
with a small $3$-dimensional sphere in
$\C^2$ around $0$ is the corresponding {\sf\em algebraic knot\,}.

Accordingly, $\rr_i$ and $\ss_i$, the
{\sf\em Newton's pairs},  must be all positive, which is
necessary and sufficient. The corresponding equation is 
\begin{align}\label{yxcurve}
y = c_1\,x^{\ss_1/\rr_1}
(1+c_2\,x^{\ss_2/(\rr_1\rr_2)}
\bigl(1+c_3\,
x^{\ss_3/(\rr_1\rr_2\rr_3)}
\Bigl(\ldots\Bigr)\bigr)) \hbox{\, at\, } 0,
\end{align}
where the parameters $c_i\in \C$ must be
sufficiently general.

\comment{
\begin{proposition}\label{PROP:stab-values}
Given two Young diagrams $\la$ and $\mu$,\,
the values $P_\la(q^{\mu+\rho_k})$ are $a$\~stable.
Namely,  
there exists a universal rational function in terms
of $\,q,t,a\,$ times a certain  
power of $a^{1/2}$ and power of $t^{1/2}$ 
such that 
 $P_b(q^{c+\rho_k})$ for $A_\nn$ is its value 
at  $a=-t^{\nn+1}$.
Here $n\ge 0,\, \la=\la(b),\mu=\la(c)$ are the Young
diagrams for  $b,c\in P_+$ for $A_\nn$,
$\nn+1$ is no smaller 
than the number of rows in $\la$ and in $\mu$,
 and $P_{m\om_{\nn+1}}=1$ in $A_{\nn}$ by definition. 
Also, the $\aa$-stabilization holds for 
$P_\la^\circ(q^{\mu+\rho_k})$. \sq
\end{proposition}
}

\subsection{\bf Definition and key features}
The {\sf\em DAHA-Jones polynomials} below
will be uniformly defined for any (reduced,
irreducible) root systems $R$. To be more exact, this is for 
the twisted affine root systems  $\tilde{R}$ defined above.
See also \cite{CJJ} for this construction in the case of $C^\vee C_1$.
We will mostly need $R=A_{n}$ or $R=gl_{\nn+1}$ in this paper.

The {\sf\em combinatorial data} will be $\{\vec{\rr},\vec{\ss}\}$,
and $b\in P_+$, which determines the color. We assume that 
$gcd(\rr,\ss)=1$ (they can be negative).

The notation $H\!\!\Downarrow\, \equal H(1)$ will be used,
 where the
action of $H\in \HH$ is in the polynomial representation.
Torus knots $T(\rr,\ss)$ are represented 
by the matrices $\ga[\rr,\ss]=\ga_{\rr,\ss}\in PSL_{\,2}(\Z)$
with the
first column $(\rr,\ss)^{tr}$ ($tr$ is the transposition). 
Let $\hat{\ga}_{\rr,\ss}\in PSL_{\,2}^{\wedge}(\Z)$ be
the pullback of $\ga_{\rr,\ss}$ to projective
$PSL_{\,2}(\Z)$. Namely, let $\rr/\ss=[a_1;a_2,\cdots,a_{2p}]$,
the {\sf\em even}  continued fraction, where $a_1$ can
be negative.  Then, $\hga=
\tau_+^{a_1}\tau_-^{a_2}\cdots \tau_-^{a_{2p}}.$ See examples
before Theorem \ref{thm:Htor}. Accordingly,
$\hga_i$ will be $\hga[\rr_i,\ss_i]$.

The following is formula (2.12) from \cite{ChD1}:
\begin{align}\label{jones-ditx}
& \j_{\vec\rr,\vec\ss}
(b;q,t)\! =\!
\Bigl\{\hat{\ga_{1}}\Bigr(
\cdots\Bigl(\hat{\ga}_{\ell-1}
\Bigl(\bigl(\hat{\ga}_\ell(E_b)/
E_b(t^{-\rho})\bigr)\!\Downarrow
\Bigr)\!\Downarrow\Bigr) \cdots\Bigr)\Bigr\}.
\end{align}
Here $b\in P_+$ and $E_b$ are
{\sf\em nonsymmetric Macdonald polynomials} $E_b$.
They can be replaced by their $t$-symmetrization,
the corresponding {\sf\em symmetric} Macdonald polynomials $P_b$,
which will not change  $\j$ due to the $T_w$-invariance of
the coinvariant $\{\cdots\}$. We need only $E_b$ in this paper.
\smallskip

{\bf Polynomiality etc.}
The following theorem and related statements are
mostly from \cite{CJ,CJJ,ChD1}.

\begin{theorem}\label{THM-integr-Jones}
The invariants  $\j\,$ 
are {\sf\em polynomials} in terms of $q,t$
up to a factor $q^\bullet t^\bullet$,
where the powers $\bullet$ can be rational.
Modulo such factors,
it does not depend
on the particular choice of the lifts \,$\ga\in PSL_2(\Z)$
to $\hat{\ga}\in PSL_{\,2}^{\wedge}(\Z)$, though we will
always use specific $\hga_{\rr,\ss}$ introduced above.

Up to the $q^\bullet t^\bullet$\~equivalence, the following
{\sf\em hat-normalization} $\hat{\j}\,$ 
is well-defined. The latter must be
 a $q,t$\~polynomial not divisible by
$\,q\,$ and by $\,t\,$ with integral
coefficients of its $q,t$\~monomials such that 
the total $gcd$ of these coefficients is $1$. Also, 
the coefficient of the minimal pure
power of $t$ must be positive. For algebraic knots colored
by rows $m\om_1$, this normalization becomes
$\hat{\j}(q\!=\!0,t\!=\!0)=1$. \sq
\end{theorem}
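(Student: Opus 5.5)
The plan is to follow the route of \cite{CJ,CJJ,ChD1}. The statement splits into three parts: polynomiality up to $q^\bullet t^\bullet$; independence of the choice of the lifts $\hga$; and well-definedness of the hat-normalization. I would prove them in that order by induction on the length $\ell$ of the iteration in (\ref{jones-ditx}).

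\textbf{Step 1 (one torus step).} First I treat a single step, $\bigl(\hga(E_b)/E_b(t^{-\rho})\bigr)\!\Downarrow$. The element $\hga$ is a word in $\tau_\pm$.
\begin{itemize}
\item $\tau_+$ acts in $\mathscr{X}$ via the Gaussian $q^{x^2/2}$, i.e.\ on the level of coinvariants.
\item $\tau_-$ acts diagonally on the $E_c$: $\dot{\tau}_-(E_c)=q^{-(c,c)/2}t^{-(c,\rho)}E_c$.
\end{itemize}
So everything reduces to expansions $E_b\,E_c=\sum_d C^{d}_{bc}E_d$ and the evaluation $E_d(t^{-\rho})$, in the normalized form $E_d^\circ$. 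The key input is the evaluation formula from \cite{Ch4}. It makes $E_d(t^{-\rho})$ an explicit product of $(1-q^\bullet t^\bullet)$-factors. The claim is that the denominators of the normalized Pieri-type coefficients cancel against these factors. I would establish this cancellation via the duality $E_b^\circ(q^{c_\#})=E_c^\circ(q^{b_\#})$ and the $\vph$-symmetry of $\{\cdot\}$, exactly as in \cite{CJ}.

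\textbf{Step 2 (induction on $\ell$).} The result of each $\Downarrow$ is a polynomial in $X$ whose coefficients are polynomial up to $q^\bullet t^\bullet$. I expand it in the $E_c$ and apply the next $\hga_{i}$ term by term. At the outermost level the coinvariant is taken, using (\ref{evfunct}). The point to control is that the renormalization by $E_b(t^{-\rho})$ happens only at the deepest level, so intermediate denominators must still cancel. This is the main obstacle. I would try first the argument of \cite{ChD1}: rewrite the iterated expression via the DAHA-Fourier transform $\si$ and the symmetry $\{\vph(H)\}=\{H\}$, so that each level becomes a pairing of two normalized polynomials $E^\circ$. Each such pairing is integral by the one-step case.

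\textbf{Step 3 (lifts and normalization).} Different lifts of $\ga\in PSL_2(\Z)$ to $PSL^\wedge_2(\Z)$ differ by the central element $\si^2$ and its powers, together with the kernel of the projection. On the polynomial representation, $\si^2$ acts by $T_{w_0}^{-1}\varsigma(\cdot)T_{w_0}$. Its effect is absorbed because the coinvariant is $T_w$-invariant and satisfies $\{\varsigma(H)\}=\{H\}$ by (\ref{eta-vph}); the remaining ambiguities change the result only by $q^\bullet t^\bullet$.

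Once polynomiality up to $q^\bullet t^\bullet$ holds, the hat-normalization is well defined: divide out the maximal monomial, the content, and the sign. For algebraic knots colored by $m\om_1$, the value $\hat{\j}(0,0)=1$ follows from positivity of $\rr_i,\ss_i$, by tracking the lowest-degree term through Step 2.
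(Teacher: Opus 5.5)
The paper does not prove this theorem; it is quoted from \cite{CJ,CJJ,ChD1}. Your outline therefore has to carry the argument itself, and it has a genuine gap exactly where the content lies. The cancellation of denominators is asserted in Step~1 but never derived, and both mechanisms you offer for it are false as stated.

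The inductive hypothesis of Step~2 (each $\Downarrow$ is a polynomial in $X$ whose coefficients are polynomial up to $q^\bullet t^\bullet$) already fails at the innermost level. In type $A_1$ with $X=X_{\omega}$, one has $E_{2\omega}=X^2+\frac{q(1-t)}{1-qt}$ and $E_{2\omega}(t^{-\rho})=t^{-1}\frac{1-qt^2}{1-qt}$, hence
\[
E^\circ_{2\omega}=\frac{t(1-qt)}{1-qt^2}\,X^2+\frac{qt(1-t)}{1-qt^2}.
\]
If the last pair of an iterated presentation has first entry $1$, then $\hat\gamma_\ell=\tau_-^{\mathsf{s}_\ell}$ acts diagonally after $\Downarrow$. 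The innermost $\Downarrow$ is then $q^\bullet t^\bullet E^\circ_{2\omega}$, which is not integral.

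Your proposed remedy is that every level becomes a pairing of two normalized polynomials and that such pairings are integral. This also fails. By (\ref{Yone}), $\{E^\circ_c(Y^{-1})\,E^\circ_b(X)\}=E^\circ_c(q^{b_\#})$, and for instance $E^\circ_{2\omega}(q^{(2\omega)_\#})=\frac{qt(1-t+qt-q^2t^2)}{1-qt^2}$. Duality $E^\circ_b(q^{c_\#})=E^\circ_c(q^{b_\#})$ and the $\varphi$-symmetry of $\{\cdot\}$ only give symmetries of such values; they cannot give integrality.

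The reduction in Step~1 to Pieri coefficients and evaluations is also unjustified. $\tau_-$ is diagonal on $E_c$ only after applying $\Downarrow$, not on the DAHA elements $\tau_-(E_c(X))$ to which $\tau_+$ is subsequently applied. Moreover, $\tau_+$ is not implemented on $\mathscr{X}$ at all: it is conjugation by the Gaussian, which lies outside $\mathscr{X}$.

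What does work, and covers every case actually used in this paper, is the minuscule case. There $E_{\omega_r}=X_{\omega_r}$, and $E_{\omega_r}(t^{-\rho})$ is a monomial. The DAHA relations, $\tau_\pm$, the operators (\ref{pitpolyn}) and the coinvariant (\ref{evfunct}) are all defined over $\mathbb{Z}_{q,t}$ (allowing fractional powers). So every intermediate $\Downarrow$ lies in $\mathbb{Z}_{q,t}[X_b]$, and integrality is immediate.

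For general $b$ you need a genuine divisibility argument, and the outline has none. The only pairings that are automatically integral have an honest integral polynomial on one side: $\{h(Y^{-1})E_b(X)\}=h(q^{b_\#})\{E_b\}$ for $h\in\mathbb{Z}_{q,t}[X_b]$. You would have to show that the iterated expression can be brought to such a form, or prove the divisibility directly.

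Two smaller points.
\begin{itemize}
\item Changing a lift by $\sigma^{2k}$ at a level $i\geq 2$ leaves a power of $T_{w_0}$ acting on a non-symmetric intermediate polynomial when $E_b$ is used, so it is not absorbed by a scalar. With $P_b$ it is absorbed, because $\hat\gamma(P_b(X))$ commutes with $T_i$ for $i>0$, so all intermediate images are symmetric.
\item The normalization $\hat{\mathcal{J}}(q=0,t=0)=1$ for algebraic knots colored by $m\omega_1$ is asserted rather than derived.
\end{itemize}
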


%\vskip -1.5cm

{\bf Topological symmetries.}
The polynomial $\hat{\j}$ defined in
Theorem \ref{THM-integr-Jones}\,
depends
only on the topological type of
the  corresponding iterated torus knot.
For instance, the pairs with  $\rr_i=1$ can be omitted,
 and the transposition
$[\rr_1,\ss_1]\mapsto $ $[\ss_1,\rr_1]$  (only
for $i=1$) does not influence
$\hat{\j}$. Also, let us mention that 
{\sf\em mirroring\,} of iterated torus knots 
results in  
%changing $\aa_i\mapsto -\aa_i$ in (\ref{Newtonpair}), and 
$q\mapsto q^{-1}, t\mapsto t^{-1}, \aa\mapsto \aa^{-1}$
in the DAHA-Jones polynomials
and superpolynomials (later), possibly up to $q^\bullet t^\bullet$.
\vskip 0.2cm

The justification of such symmetries 
for torus knots is in  Theorem 1.2 from
\cite{CJJ}. It suffices to check that 
DAHA-Jones polynomials coincide for $T(\rr,\ss)$,  
$T(\ss,\rr)$ and $T(-\rr,-\ss)$, and they
are trivial for $T(\rr,1)$. We used in \cite{CJJ}
the anti-involutions and automorphisms above, and
the invariance of the coinvariant with respect to
the involution $\eta$.

For {\sf\em iterated} torus knots, it
suffices to check additionally
that the $\j$\~polynomials for
$C\!ab(m\rr+\ss,\rr)T(m,1)$ and $T(m\rr+\ss,\rr)$ 
are the same since $T(m,1)=\unknot$\,. The corresponding
DAHA fact is the commutativity of  $\tau_-^m$
with $\Downarrow$, which simply means that $\tau_-$ acts
in the polynomial representation. 
See \cite{ChD2} concerning the iterated torus links. 

We note that the elements
$\hat{\ga}_{1}\Bigr(
\cdots\Bigl(\hat{\ga}_{\ell-1}
\Bigl(\bigl(\hat{\ga}_\ell(E_b)/
E_b(t^{-\rho})\bigr)\!\Downarrow
\Bigr)\!\Downarrow\Bigr) \cdots\Bigr)$, refined
{\sf\em knot operators}, 
are invariants of the corresponding iterated torus 
knots considered in $T^2$, in
$T^2\times [0,\ep]$ for small $\ep>0$ to be exact. One level down,
these operators applied to $1$ (in the polynomial representation) 
are invariants of such knots considered 
in the solid torus. Finally, applying the coinvariant $\{\cdots\}$ 
results in $\j$-polynomials, which are 
invariants of the corresponding knots in $S^3$. 
\smallskip

{\bf Specialization $q\!=\!1$.} It is worth mentioning,
that 
\begin{align}\label{q-1-prod}
&\j_{\,\vec\rr,\,\vec\ss}
\,\bigl(b;\,q\!=\!1,t
\bigr)\!=\!
\hbox{\small$\prod$}_{p=1}^\nn \j_{\,\vec\rr,\,\vec\ss}\,
(\om_p;\,q\!=\!1,t)^{\be_p}
\end{align}
for $b=\hbox{\small$\sum$}_{p=1}^\nn \be_p \om_p\in P_+$.
The $\j$\~polynomials, those without the hat-normalization,
 are used
here; see formula (2.18) 
in \cite{ChD1}.

The same formula at $q\!=\!1$ holds for superpolynomials
$\h$ defined below. Namely, 
$\h_K(b;\,q\!=\!1,t,\aa)\!=\!
\hbox{\small$\prod$}_p \h_K
(q\!=\!1,t,\aa; \om_p)^{\be_p}$ for iterated torus knots $K$, where
$\be_p$ is the number of $p$-columns in the diagram
$dgrm(b)$ representing $b\in P_+$.

\subsection{\bf DAHA superpolynomials}\label{sec:sup-def}
Following \cite{CJ,GoN,CJJ,ChD1,ChD2}, the construction from
Theorem \ref{THM-integr-Jones} and other statements above can
be extended to the {\sf\em DAHA-superpolynomials\,},
the result of the stabilization  of
$\hat{\j}^{A_\nn}$, which are hat-normalized
$\j$-polynomials for $A_\nn$.

The $\aa$\~stabilization for torus knots in type $A$
was announced in \cite{CJ} with the reference to  Lemma 4.4 in
\cite{SV}; see also formula (4.1) there.  Its full proof was 
published in \cite{GoN} based on 
the same lemma (stabilization of $D_m$). The same argument 
applied to iterated torus knots provides
the $\aa$-stabilization
for $\mathfrak{H}^{daha}$-superpolynomials (below).
 
%Both approaches use \cite{SV}.
% later, the $a$\~stabilization
%was extended to arbitrary iterated knots and links.
The superduality conjecture was stated in \cite{CJ}
(let us also mention \cite{GS})
and proven in \cite{GoN} for torus knots; see also
\cite{CJJ} for an approach based on the
generalized level-rank duality. The justifications
of the $\aa$\~stabilization and
the superduality was then extended to
arbitrary iterated torus knots in \cite{ChD1},
and iterated torus links in \cite{ChD2}.
%\smallskip

\vskip 0.2cm
{\bf Definition.}
The sequences $\vec\rr,\,\vec\ss$ of length $\ell$
are as above. We will
use the DAHA-Jones polynomials $\hat{\j}$, 
those under the {\sf\em hat-normalization}.
As above, $\mu=dgrm(b)$ is the Young diagram
for $b=\hbox{\small $\sum$}_{p=1}^\nn \be_p \om_p
\in P_+$; namely,  $\be_p$ is the
number of $p$-columns in $\mu$.

\smallskip

We will consider $P_+\ni b=$ $\sum_{i=1}^\nn \be_i \om_i$
for $A_\nn $ as
(dominant) weights for {\sf\em any\,} $A_\mm$ 
with $\mm\ge \nn-1$, setting $\om_{\nn}=0$ for 
$A_{\nn-1}$ (when $\mm=\nn-1$).
%The integral form of $P_b$ in (\ref{jones-hat})
%and (\ref{jones-bar}) will be $J_{\la}$ from
%(\ref{P-arms-legs})
%for $\la=\la(b)$ in the next theorem.
See \cite{CJ,GoN,CJJ,ChD1}.

%\smallskip

\begin{theorem}\label{STABILIZ}\cite[Theorem 2.1]{ChD1}
Given $b\in P_+$ and $(\vec\rr,\,\vec\ss)$ as above,
there exists a
unique polynomial $\h_{(\vec\rr,\,\vec\ss)}(q,t,\aa;\mu)$
in $\Z[q,t^{\pm 1},\aa]$ for $\mu=dgrm(b)$ 
such that for any  $\mm\!\ge\! \nn\!-\!1$ ,
\begin{align}\label{jones-sup-hat}
&\h(q,t,\aa\!=\!-t^{\mm+1})\,=\,
\hat{\j}^{A_\mm}(q,t).
\end{align}
One sufficiently large
$\,\mm$\, fixes $\,\h\,$
uniquely. Here $\h(\aa\!=\!0)$
is automatically 
hat-normalized. In particular,
$\h(q\!=\!0,t\!=\!0,\aa\!=\!0)=1$ for $b=m \om_1$, when the
corresponding $\mu$ is the row with $m$ boxes.   
 \sq
\end{theorem}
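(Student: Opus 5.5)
Throughout, $\mu=dgrm(b)$ is fixed. I would follow the Gorsky--Negu\c{t} strategy \cite{GoN}, as adapted in \cite{ChD1}, which reduces $\aa$-stabilization to the stability of DAHA operators as $\mm$ grows. I would first pass to $gl_{\mm+1}$; recall that this changes $\j$ only by a factor $q^\bullet t^\bullet$. The target is then to show that for $A_\mm$, with $\mm\ge\nn-1$, the unnormalized $\j^{A_\mm}$ from (\ref{jones-ditx}) equals $A(q,t,\aa)$ evaluated at $\aa=-t^{\mm+1}$, times a monomial factor. Here $A$ is a universal expression independent of $\mm$.

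\textbf{Step 1 (innermost layer).} Write $\hga_\ell(E_b)\!\Downarrow$ via the product $\tau_+^{a_1}\tau_-^{a_2}\cdots$. In the polynomial representation, $\tau_-$ acts diagonally on the basis $\{E_c\}$ (equivalently on $\{P_c\}$ after symmetrization) with eigenvalues $q^{-(c,c)/2}t^{-(c,\rho)}$. The operator $\tau_+$ is multiplication by the Gaussian, which should be expressed through Pieri-type operators.

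\textbf{Step 2 (stable form).} In the symmetric setting, these operators are the images of EHA elements $P_{m,n}$. By Lemma 4.4 of \cite{SV} (stabilization of $D_m$), they are compatible with the restriction maps $\Lambda_{\mm+1}\to\Lambda_{\mm}$. Their matrix coefficients in the Macdonald basis $P_\la$ are rational in $q,t$ and depend on $\mm$ only through $t^{\mm+1}$. Iterating over the layers of the $\tax$-presentation, each $\Downarrow$ keeps us inside symmetric polynomials. So the whole knot operator applied to $1$ is a finite sum $\sum_\la c_\la(q,t,t^{\mm+1})P_\la$, where the diagrams $\la$ are bounded independently of $\mm$.

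\textbf{Step 3 (coinvariant and normalization).} The coinvariant sends $P_\la$ to $P_\la(t^{-\rho})$. These evaluations, and $E_b(t^{-\rho})$ as well, are given by Macdonald's evaluation formula as products over boxes of factors of the form $(1-q^{i}t^{j}\aa^{\pm})$, with $\aa=-t^{\mm+1}$ up to sign. Combining Steps 1--3 gives a universal rational function $A(q,t,\aa)$ whose specializations are the $\j^{A_\mm}$.

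\textbf{Step 4 (polynomiality and uniqueness).} By Theorem \ref{THM-integr-Jones}, each specialization is a polynomial after hat-normalization. The normalizing monomial $q^\bullet t^\bullet$ must be shown to be of the form $\aa^\bullet$ times a factor independent of $\mm$; I would track it through the Gaussian factors. A rational function in $\aa$ that is a Laurent polynomial at infinitely many points $\aa=-t^{\mm+1}$, with degrees bounded uniformly in $\mm$, is itself a polynomial. This yields $\h\in\Z[q,t^{\pm1},\aa]$; integrality follows from Gauss' lemma together with the $\gcd$ normalization. Uniqueness is immediate, since a polynomial in $\aa$ is determined by infinitely many values, or by one value once $\mm$ exceeds the $\aa$-degree. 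The claims about $\h(\aa=0)$ and the normalization $\h(0,0,0)=1$ for rows follow from the corresponding properties in Theorem \ref{THM-integr-Jones}.

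The main obstacle is Step 2 for iterated knots. After the first $\Downarrow$, the intermediate result must be re-embedded into DAHA and acted on by $\hga_{\ell-1}$, and the stability of the resulting matrix coefficients in $t^{\mm+1}$ has to be proven uniformly across layers. I would handle this by working entirely in the stable spherical DAHA, i.e.\ EHA, in the limit $\mm\to\infty$, where all $\tau_\pm$ actions are defined on $\Lambda$ with $\aa$ as a formal parameter.
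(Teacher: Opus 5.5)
Your Steps 1--3 follow the route the paper relies on: the paper gives no argument of its own, citing \cite{ChD1}, where stabilization rests on Lemma 4.4 of \cite{SV} as in \cite{GoN}. They plausibly yield a universal rational function $A(q,t,\aa)$. The gap is in Step~4, where you pass from $A$ to $\h\in\Z[q,t^{\pm1},\aa]$. You invoke the principle that a rational function of $\aa$ whose values at $\aa=-t^{\mm+1}$ are Laurent polynomials for infinitely many (even all) $\mm$ must lie in $\Z[q,t^{\pm1},\aa]$. This principle is false.

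For instance, $\frac{1+\aa}{1-t}$ specializes to $1+t+\cdots+t^{\mm}$ for every $\mm$. It has $\aa$-degree one and the same linear growth of $t$-degree as a genuine superpolynomial, yet its $\aa$-coefficients are not Laurent polynomials. (The $t$-degree of $\hat\j^{A_\mm}$ itself grows with $\mm$, so no uniform degree bound is available to rescue the principle.) The example is not artificial: up to a monomial it is $P_{\om_1}(t^{-\rho})$, the unreduced unknot. More generally, the denominators $1-q^{a}t^{l+1}$ of Macdonald's evaluation formula with arm length $a=0$, which enter your Step~3, are of exactly this type; compare the factor $1/(1-t)$ in (\ref{2p+1form}).

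Your specialization argument does exclude some denominators. If $A=N/D$ with $D$ irreducible and involving $q$, or with $D$ a rational prime, then the elements $-t^{\mm+1}$ remain pairwise distinct modulo $D$. Hence $D\mid N(q,t,-t^{\mm+1})$ for infinitely many $\mm$ forces $D\mid N$; this is the correct content of your Gauss-lemma remark. The argument fails precisely for cyclotomic factors $\Phi_j(t)$, since $-t^{\mm+1}$ takes only finitely many values modulo $\Phi_j(t)$.

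So you need a genuinely additional input showing that such $t$-cyclotomic denominators cancel in the reduced stable expression, normalized by $E_b(t^{-\rho})$. That cancellation is the real content of the polynomiality claim. One possible way is superduality (\ref{iter-duality}), if it is available as an identity of stable rational functions. It turns a $\Phi_j(t)$-denominator of $\h(\mu)$ into a $\Phi_j(q)$-denominator of $\h(\mu^{tr})$, which your argument applied to $\mu^{tr}$ rules out. Otherwise you need a manifestly polynomial formula.

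Separately, your uniqueness remark is off. A single value determines $\h$ once $\mm+1$ exceeds the $t$-spread of its $\aa$-coefficients, not once $\mm$ exceeds its $\aa$-degree: $1+\aa$ and $1-t^{\mm+1}$ agree at $\aa=-t^{\mm+1}$.
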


The polynomials $\h$ depend only on the isotopy
class of the corresponding iterated torus links.
All symmetries for the $\j$\~polynomials
hold for $\h$, including the product
formula at $q\!=\!1$ from (\ref{q-1-prod}).
\vskip 0.2cm

We conjectured in \cite{ChD1} that
%\vskip -0.5cm
\begin{align}\label{deg-a-jj}
\hbox{deg}_\aa\h
=&
\min\{\,\rr_1,\ss_1\}\, \rr_2 \cdots
\rr_{\ell}\,|\mu| -|\mu|,
\end{align}
where $|\mu|$ is the number of boxes in $\mu$.
A somewhat weaker statement was  justified. This is
compatible with the specialization $q\!=\!1$, which gives that
the $a$\~degree is no smaller than that in (\ref{deg-a-jj}).

The right-hand side of this
formula is the multiplicity
of the corresponding singularity generalized to the colored
case. For instance, $deg_\aa=|\mu|(\ss-1)$ for torus knots
$T(\rr,\ss)$ colored by $\mu$, where $\ss<\rr$. 
\vskip 0.2cm

{\bf Superduality.} 
This is one of the key properties
of superpolynomials. It was conjectured in \cite{GS,CJ} and
proven in the DAHA setting in \cite{GoN,ChD1} that  
\begin{align}\label{iter-duality}
\h(\mu; q,t,\aa)=
q^{\bullet}t^{\bullet}
\h(\mu^{tr}; t^{-1},q^{-1},\aa),
\end{align}
where $\mu^{tr}$ is the transposition of $\mu$.
Its extension to iterated torus links is in \cite{ChD2}.
For instance, 
$\h(1^m; q,t,\aa)=
q^{m\de}t^{m^2\de}
\h((m);t^{-1},q^{-1},\aa)$ for the $m$-column $1^m$
and the $m$-row $(m)$. Here $\de$ is the {\sf\em genus} of the
corresponding knot; $\de=(\rr-1)(\ss-1)/2$ for $T(\rr,\ss)$,
where $\rr,\ss>0$. 

\vskip 0.2cm
{\bf The case of trefoil}.
Let us calculate $\h_{3,2}$ for uncolored trefoil.
We begin with the case of $A_1$. 

In this case,
$\HH$ is generated by $X^{\pm1},Y^{\pm1},T$ subject to  
group relations $TXTX=1=TY^{-1}TY^{-1}, Y^{-1}X^{-1}YXT^2=q^{-1/2}$
and the quadratic one 
$(T-t^{1/2})(T+t^{-1/2})=1$. The action of $\tau_{\pm}$ is:
\begin{align*}
&\tau_+\!:\!
Y\!\mapsto\! q^{-\frac{1}{4}}XY,\ X\!\mapsto\!X,\ T\!\mapsto\!T,\ 
\tau_-\!:\! X\!\mapsto\! q^{\frac{1}{4}}YX,\  
Y\!\mapsto\!Y,\  T\!\mapsto\!T.
\end{align*}
The polynomial representation is given by the formulas
$T\mapsto t^{1/2}s\!+\! 
\frac{t^{1/2}\!- t^{-1/2}}{X^2-1}(s\!-\!1)$, 
$X\mapsto X,\ \, Y\mapsto spT$, where $ s(X)=X^{-1},\,
\, p(X)=q^{1/2}X.$
\vskip 0.2cm

By $\,\sim$\,, 
we mean ``\,\,up to $q^{\bullet}t^{\bullet}$\,\,"
in the following calculation: 
\begin{align*}
&\j_{3,2}\!=\!
\{\tau_+\tau_-^2(X)\}_{ev}\!\sim\!\{(XY)(XY)X(1)\}_{ev}\!\sim\!
\{Y(X^2)\}_{ev}\\
&=t^{-\frac{1}{2}}q^{-1}X^2-
t^{\frac{1}{2}}+t^{-\frac{1}{2}}|_{X^2\mapsto t^{-1}}\sim
1+qt-qt^2,
\end{align*}
where we use
the {\sf\em nonsymmetric} Macdonald polynomial $E_1=X$.
One has:  $Y(X)\!=\!(qt)^{-\frac{1}{2}}X$.
Recall that 
$\{H\}\!\equal\! H(1)(X\!\mapsto\! t^{-\rho})$. Thus,
$\hat{\j}_{3,2}(q\mapsto t)= 1\!+\!t^2\!-\!t^3.$

\vskip 0.2cm

For $gl_{\nn+1}$, the corresponding
$\HH$ is generated by pairwise commutative 
$\mathbb{X}_i=X_{\mathbbm{e}_i}$,  pairwise commutative
$\mathbb{Y}_i=Y_{\mathbbm{e}_i}$, and $T_k$,  
where 
$1\le i\le \nn+1$ and
$1\le k \le \nn$. One has:   
 $\tau_+(\mathbb{Y}_1)\!=\!q^{\bullet} 
\mathbb{X}_1 \mathbb{Y}_1$,\  $\tau_-(\mathbb{X}_1)\!=\!q^{\bullet}
\mathbb{Y}_1 \mathbb{X}_1$, and so on. 
The action of $\mathbb{Y}_1$ in the polynomial
representation is by
the formula 
$\mathbb{Y}_1\!=\!\pi T_{\nn}\cdots T_1$, where 
$\pi: \mathbb{X}_1\!\mapsto\! \mathbb{X}_2, 
\mathbb{X}_2\!\mapsto\! \mathbb{X}_3,
\ldots, \mathbb{X}_\nn\!\mapsto\! q^{-1} \mathbb{X}_1$. Here $T_k$ are
those for $A_\nn$, and $\mathbb{X}_{\al}=\mathbb{X}_i \mathbb{X}_j^{-1}$ 
for
$\al=\mathbbm{e}_i-\mathbbm{e}_j$. 
The formulas for arbitrary $Y_i$ are longer, but
almost equally simple \cite{C101}. 
%for any  up to $q^\bullet t^\bullet$. % but we need only $X_1, Y_1$.

Similar to $A_1$,
the polynomial $E_1=\mathbb{X}_1$ will be used.
We obtain that  $\hat{\j}^{A_\nn}_{3,2}
=1+qt-qt^{\nn+1}$ and 
$\h_{3,2}=1+qt+aq$. Indeed, the values of the latter 
at $a=-t^{\nn+1}$ are $\hat{\j}^{A_\nn}_{3,2}$.

Note that the relations
$\h_{3,2}(\aa\!\mapsto\! -t)=1$ and    
$\h_{3,2}(\aa\!\mapsto\! -t^{2})=1+qt-qt^2$ (only two)
are sufficient to fix $\h_{3,2}$ uniquely using
that $deg_a=1$. Generally, 
$deg_\aa \h_{\rr,\ss}(q,t,\aa; \mu)\!=\!
|\mu|\bigl(\min(\rr,\ss)\!-\!1\bigr)$. 
\vskip 0.2cm

The DAHA construction 
in the case of {\sf\em uncolored}  $T(2p+1,2)$ is practically
the same; we will need to calculate  $\{YX^{p+1}\}$ or
$\{\mathbb{Y}_1\mathbb{X}_1^{p+1}\}$. Indeed,
$\j_{2p+1,2}=\{\tau_+^p \tau_-^2(X)\}\sim \{X^p Y X^p Y X\}
\sim \{Y X^{p+1}\}$  for $A_1$. 

The action of the corresponding
$\hat{\ga}$ on any $X_m$ is given by 
the same formula as for $X_1$ up to $\sim$. Thus, 
adding the  color  $1^m$ is simply the switch $X_1\mapsto X_m$,
and  $\j_{2\pp+1,2}(1^m)\sim  \{Y_m X_m^{\pp+1}\}$.
 The only difference is that the
action of  $Y_m$ in $\mathscr{X}$ is given by somewhat  more involved 
formulas, but very explicit. Recall that $X_m=X_{\om_m}=
\mathbb{X}_1\cdots \mathbb{X}_m$ for $gl_{\nn+1}$, and the same
formulas are for $Y_m$.
%$\{\mathbb{Y}_1(\mathbb{X}^{\pp+1}\}$. 
\medskip

This calculation for $T(2\pp+1,2)$ will be the case of $\la=(\pp)$,
the $\pp$-row, 
in the definition of $\mathfrak{H}^{daha}_\la$ below 
for any Young diagrams $\la$. It will be a formalization of
 the procedure of
obtaining the corresponding $X,Y$-words, directly in terms of $\la$,
and the corresponding reductions inside
the coinvariant. As above, it will be upon $X,Y\mapsto X_m,Y_m$.

\comment{
Also, $\h^{daha}$ for $T(2p+1,2)$ can be obtained using 
(\ref{jones-ditx}) for 
{\scriptsize
$\ga_1=\begin{pmatrix} 1& *\\p & *\end{pmatrix}$ and 
$\ga_2=\begin{pmatrix} 2& *\\1 & *)\end{pmatrix}$.}
This is related to Conjecture
 \ref{conj:sharp-flat},(i) below:\ the connection of
(\ref{jones-ditx}) and some $\mathfrak{H}^{daha}_\la$. 
}
\vskip 0.2cm

{\bf HOMFLY-PT polynomials.}
The definition of $H\!O\!M(t,a;\mu)$
is especially simple in the uncolored case, which is for 
$\mu=\yng(1)=\om_1$.
The following {\sf\em skein  relation} is sufficient to define
them (the reduced ones):
\smallskip

\centerline{\small
$
a^{1/2}
H\!O\!M(\nwarrow\kern-10pt\nearrow\kern-10.5pt
\nearrow\kern-11pt\nearrow
\kern-11.5pt\nearrow)
\!-\!a^{-1/2}
H\!O\!M(\nearrow\kern-10.2pt\nwarrow\kern-10.7pt
\nwarrow\kern-11.2pt\nwarrow
\kern-11.7pt\nwarrow)\!=\!
(t^{1/2}\!-\!t^{-1/2})
H\!O\!M(\uparrow\uparrow),\ H\!O\!M(\bigcirc)=1.
$
}

\medskip
The coincidence of the hat-normalization of the (reduced)
$H\!O\!M(t,a)$ with $\h(q=t,t,a=-\aa)$ holds
for any colored iterated torus links; 
the parameter $\sqrt{a}$ is mostly used in
topology.
% Spherical $P_\la^\circ$ can be used for knots;
%moreover, they can be replaced without any other
%changes by nonsymmetric $E_\al^\circ$ (see above). 
This coincidence is due to the author 
for torus knots, Morton-Samuelson (iterated torus knots), and
Cherednik-Danilenko (iterated torus links); see \cite{CJ, MoS,ChD2}.
The uncolored {\sf\em superduality} for  $H\!O\!M$
becomes $t^{\frac{1}{2}}\to -t^{-\frac{1}{2}}, 
a^{\frac{1}{2}}\to a^{-\frac{1}{2}}$;
it is obviously compatible with 
the skein relation above. 
We calculated many HOMFLY-PT polynomials reductions for
$\mathfrak{H}^{daha}_\la$ defined below, mostly uncolored.
They match those available online(for knots up to $16$ crossings).
Though, mostly we focused on the HOMFLY-type reductions for 
$\mathscr{H}^{inst}_\mathscr{C}$, including
those with {\sf\em non-monomial} conductors $\mathscr{C}$.
%Some of them (relatively small) seem beyond these tables.

\vskip 0.2cm

\subsection{\bf  Coxeter knots}
We will begin with the construction of 
{\sf\em Coxeter knots} following \cite{GL2}.
They are essentially the same as in \cite{ObR2}, but
we need the approach with Young diagrams, as in \cite{GL2}.
For instance, we will prove below that $\la$ and its transpose give
the same $\mathfrak{H}^{daha}$, when the diagrams are needed.  
Let us also mention here \cite{GL1,ObR1}.
\vskip 0.2cm

{\bf Coxeter knots.}
Let $\la=(\la_1\ge \la_2\ge \cdots \ge \la_\ell>0)$. For this
Young diagram, we form the sequence of $(i,j)$-coordinates 
of the corners ($m$ of them) 
of the complement $\overline{\la}=\Z^2_+\setminus \la$ 
ordered from the bottom to the top:\, 
$(i_1\!=\!\ell,j_1\!=\!\la_\ell),\ldots, (i_m, j_m\!=\!\la_1)$.
Generally, $j=\la_{i}$. We 
follow the boxes in the border of $\overline{\la}$. Here 
$(i,j)=(0,0)$ for the $1${\footnotesize st} box
in $\Z_+^2$.
\smallskip

Accordingly, we create an ordered  product of powers
$(L_{u_i})^{v_i}$ of formal symbols
$L_u$ for $i=1,\ldots,m$  placed left-to-right. Here:\, 
$(u_1\!=\!j_1\!=\!\la_{\ell},v_1\!=\!i_1\!-\!i_2), 
(u_2\!=\!j_2\!+\!1, v_2\!=\!i_2\!-\!i_3),\ldots,
(u_m\!=\!j_m\!+\!1, v_m\!=\!i_m\!+\!1).$ Generally, 
$(u_k=j_k+1,v_k=i_k-i_{k+1})$, where $i_{m+1}\equal 0$. 

For instance,
this product will be just $L_n$ for $\la=(n)$ (a row with $n$ boxes), 
$L_1^n$ for the $n$-column $1^n$,
$L_3^2$ for $\yng(3,3)$\,, 
$L_2 L_3$ for $\la=\yng(3,2)$\,, $L_1L_2^2$ for $\yng(2,2,1)$\,,
$L_1^2L_3$ for $\yng(3,1,1)$\,, and $L_1L_3L_4$ for $\yng(4,3,1)$.
\vskip 0.2cm

Next, we define the braid $\b_\la=
\tilde{L}_{u_1}^{v_i}\cdots \tilde{L}_{u_m}^{v_m}T_1 T_2\cdots T_{u_m}$
by replacing $L_k$  with
$\tilde{L}_k=T_kT_{k-1}\cdots T_2 T_1^2T_2\cdots 
T_{k-1} T_k$.
Finally, $\k_\la$ will be its canonical closure.
For instance, $\b_\la=T_1^2(T_3T_2T_1^2T_2T_3)T_1T_2T_3$ 
and $\k_\la=$ K12n725 for $\la=(3,1,1)$, 
and $\b_\la=T_1^2(T_2T_1^2T_2)^2 T_1T_2$ and $\k_\la=$K12n242
for $\la=(2,2,1)$. Also,  $(4,3,1)$, the last diagram provided above,
gives $\k=C\!ab(13,2)T(3,2)$.
\vskip 0.2cm

Paper \cite{GL2} interprets this procedure as an embedding of
{\sf\em elliptic knots} 
associated with certain  {\sf\em
monotone} paths, those in $T^2$ times a small interval, 
into $S^3$. This is for arbitrary Young 
diagrams 
$\la$. See Proposition 7.5 there:\, the passage from 
monotone links to Coxeter links. Also, see Remark 7.10
concerning \cite{Neg} and his $D$-operators, and 
Section 7.2 there (based on  \cite{GL1})
on {\sf\em postroid links}.
\medskip

We need 
only the construction of  $W_\la$ for our 
$\mathfrak{H}_\la^{daha}$, very explicit 
products of $X$
and $Y$ in terms $\la$.
Though  Coxeter knots are constantly used
for finding the corresponding HOMFLY-PT polynomials.

Our diagrammatic  procedure is similar to that
from \cite{GL2}, but there are deviations. Mainly,
our $X,Y$-interpretation of the ``moves"  $R$ and $U$
along the border of $\overline{\la}$ 
is different from their interpretation. We adjust it to
our DAHA-based (nonsymmetric) construction. 
\medskip

We note that the braids  $\tilde{L}_k$ above were important in the 
theory of representations of affine Hecke
algebras of type $A$ and their quotients
in Theorem 3.4 from \cite{ChT}. They generalized
and simplified the Jucys-Murphy (additive) elements. More
specifically, they provided a canonical lift from HA to AHA
of the irreducible representations of non-affine Hecke
algebras of type $A$ associated with Young diagrams.

The main result of \cite{ChT} is an explicit description of
semisimple irreducible representations of AHA of type $A$, 
which appeared those associated with {\sf\em skew} Young diagrams. 
The DAHA counterpart of this theory is in Section 3.7 of \cite{C101}.
This may be of interest in the context of the present paper, 
and, possibly,  related to the
postroid links from \cite{GL1}. There are some reasons 
to relax the construction of $W$ below by allowing (certain)
negative powers of $X,Y$; see Conjecture  \ref{conj:extW}.

\medskip
Topologically, the knots $\k_\la$ are up to isotopy. For
instance, $(3,2)$ and $(2,2,1)$, $(4,1)$ and $(2,1,1,1)$
give isotopic knots.  Let us take the last Young diagram. 
The corresponding
braid is $T_1^6T_2T_1^2T_2T_1T_2=
T_1^6T_2T_1^3T_2T_1$, which has the same closure as
$T_1^7T_2T_1^3T_2$, which is exactly the classical braid
presentation for K12n242. This knot will be one of the
main examples below. We will present a conceptual proof that 
transposed diagrams give the same $\mathfrak{H}^{daha}$-polynomials,
but their coincidence for $(3,2)$ and $(4,1)$ is by inspection.
\vskip 0.2cm

{\bf From diagrams to DAHA superpolynomials.}
%We will need the following variant of
%encoding Young diagram in terms of formal $L_k$.
Given a 
Young diagrams $\la$,
we present it as a sequence of $R$-moves (one step right)
and $U$-moves (one step up), moving from the bottom to the top
along the border  of the complement
$\overline{\la}$ of $\la$. In our standard notation, we 
begin at the box 
$(i=\ell(\la), j=0)$ and ends at box  
$(i=0, j=\la_1)$.
% the first move is always $R$ and the last is $U$,  
\smallskip

We write these $R,U$ 
right-to-left, which product will be denoted by $\w=\w_\la$.
The corresponding  $X,Y$-word $W=W_\la$ is obtained
by replacing 
$R$ by $X$ and $U$ by $Y$ in $\w_\la$. Practically, we omit $\w$
and produce $W$ by reading the diagram from top to bottom,
replacing the corresponding $D,L$ by $X,Y$, and writing them 
left-to-right.
%Finally, 
%$W_{\upsilon,\la}\equal (W_\la)^\upsilon X^\upsilon$ for $\upsilon\ge 1$. 

\medskip
For instance,

$\w=UR^2U^2R$ and $W=YX^2Y^2X$  for $\la=(3,1,1)\,=\,\yng(3,1,1)$, 

$\w=U^2RUR$ \ and  $W=Y^2XYX$ \ for $\la=(2,2,1)\ =\,\yng(2,2,1)$,

$\w=URUR^2UR$,\ \,  $W=YXYX^2YX$ \  for $(4,3,1)\ =\,\yng(4,3,1)$.

\begin{definition}\label{def:frakH}
(i) Consider a root systems $R$ as in Section \ref{sec:daha}.
Let $W_{\la}[m]\equal W_{\la}(X\mapsto X_m)$, where
$X_m=X_{\om_m}$ for any
roots systems $R$ and {\sf\em minuscule} $\om_m$. Then
the DAHA-Jones polynomial associated with a Young diagram $\la$ is
$\mathfrak{J}_{\la}^{daha}[m]\equal
\bigl\{ W_{\la}[m] X_m \bigr\}$ for the DAHA 
coinvariant $\bigl\{\ldots\bigr\}$. Accordingly, 
$\hat{\mathfrak{J}}_{\la}$ is 
$\mathfrak{J}_{\la}$ 
 under the hat-normalization. 

(ii) We now take $\mathfrak{J}_{\la}^{daha}[m]$ from
 $(i)$ for the coinvariant and $X_m$ 
for the root systems $gl_N$ or $A_{N-1}$ (then all $\om_m$ are
minuscule).  Recall that 
$X_m=\mathbb{X}_1\cdots \mathbb{X}_m$
for $\mathbb{X}_i=\mathbb{X}_{\mathbbm{e}_i}$, and so are $Y_m$.
Then, the following relations uniquely determine the
 polynomial 
$\mathfrak{H}_{\la}=\mathfrak{H}_{\la}^{daha}[m]$ in terms of 
$q,t^{\pm 1}$ and $\aa$:\ \ 
$
\mathfrak{H}_{\la}(q,t,\aa=-t^N)=
\hat{\mathfrak{J}}_{\la}
\text{ for } N\ge m.$ \sq

\comment{
(iii) Generalizing $(i,ii)$, let $\la$ be any Young diagram,
$\om_m$ is minuscule, and $\upsilon \ge 1$. We set
$\mathfrak{J}_{\upsilon, \la}[m]\equal
\Bigl\{\, \bigl(W_{\la}[m] X_m\bigr)^\upsilon\, \Bigr\}$ 
In type $A$, 
there exists a unique polynomial 
$\mathfrak{H}_{\upsilon,\la}$ in terms of 
$q,t^{\pm 1},\aa$, $\upsilon$-iterated $\mathfrak{H}_\la$,
such that\ \
$
\mathfrak{H}_{\upsilon,\la}(q,t,\aa\!= -t^N)\!=\!
\hat{\mathfrak{J}}_{\upsilon,\la}(q,t)
\text{ for } N\ge m.$
}
\end{definition}

This definition is similar to Proposition 7.8 in \cite{GL2},
but there are deviations. First, our construction
is entirely in terms of DAHA. There is no 
need to consider any symmetrization and plethystic formulas.
Second, we replace $R,U$ directly
by $X,Y$ in our $W$,
which is different from the substitution in \cite{GL2},
and then we multiply $W_\la$ by $X$. Third, we
do $X_m$ for any minuscule $\om_m$. Our definition is
a transparent one-line formula: 
$\mathfrak{H}_\la[m]=\{ W_\la[m]X_m\}$ (followed by the 
stabilization).
Any properties of $\mathfrak{H}_\la$ are 
some DAHA facts. For instance, see 
Theorem \ref{thm:trans}, where we prove that $\la$ and its
transpose $\la'$ give the same $\mathfrak{H}$. 

\vskip 0.2cm

{\bf From $\mathbf{\h}^{daha}$ to $\mathfrak{H}^{daha}$.}
Let us discuss now the connections between $\mathfrak{H}$ and
the original DAHA superpolynomials $\h$ from \cite{CJ} and
further papers,  when these two
constructions intersect.

The following theorem is  direct from
the definition of  $\hga_{\rr,\ss}(X_m)$. Recall that
$\ga_{\rr,\ss}=$ {\footnotesize 
$\begin{pmatrix} \rr & *\\ \ss & *\end{pmatrix}$}. 
We assume that $gcd(\rr,\ss)=1$ and $\rr,\ss>0$.

\vskip 0.2cm
 
Here $\hga_{\rr,\ss}=\tau_+^{a_1}\tau_-^{a_1}\cdots \tau_-^{a_{2p}}$
for $\{a_i\}$ from the {\sf\em continued fraction}
$\rr/\ss=a_1+\frac{1}{a_2+\frac{1}{\cdots}}$, which must be of 
 even length. Recall that there are $2$ choices for continued fraction
of rational numbers, and  if $\rr/\ss<0$ then $a_1$ is negative
and all other $a_i$ will be positive in continued fractions.
Note that  $a_1>0$ and $\hga$
begins with a power of $\tau_+$ for positive $\rr,\ss$
if and only if $\rr>\ss$. Thus, the corresponding
product of $\tau_{\pm}$  depends only on $\rr/\ss$
and is fully determined by its continued fraction of even length. 

For instance, $\frac{2}{3}=0+\frac{1}{1+\frac{1}{1+\frac{1}{1}}}$
and $\hga_{2,3}=\tau_-\tau_+\tau_-$. However, 
$\frac{3}{2}=1+\frac{1}{2}$
and $\hga_{3,2}=\tau_+\tau_-^2$.
Both continued fractions are of even length. These cases are 
$T(2,3)$ and $T(3,2)$. Accordingly, the products for $\hga$ will
be $\tau_+^{-1}\tau_-^3$ for $T(-2,3)$ or $T(2,-3)$,
and $\tau_+^{-2}\tau_-^2$ for $T(-3,2)$ or $T(3,-2)$, 
\smallskip

It is instructional to calculate the coinvariant
$\{\hga_{-1,3}(X)\}$ for $A_1$ (or any $A_m$). It must be
$\sim 1$ since $T(-1,3)=\unknot\,$, but it is not immediate
algebraically. The continued fraction for $-\frac{1}{3}$ is
$[-1;1,1,1]$ in the classical form (recall that it must be 
of even length). Thus,
$\hga=\tau_+^{-1}\tau_-\tau_+\tau_-$, 
$\{ \hga(X) \}\sim \tau_+^{-1}(Y X Y^2 X)\sim
\{X^{-1}Y X X^{-1}Y X^{-1}Y X\} \sim \{ Y^2 X^{-1} YX\}\sim
\{Y^2 X^{-1}X\}\sim \{Y^2\}\sim 1$, indeed.
\medskip

The Young diagram $\la=\la_{\rr,\ss}$ for $T(\rr,\ss)$ is defined as 
a collection of all boxes in the rectangle $\ss\times \rr$ 
above the diagonal from the lowest most-left corner (maximum $i$)
to the top  most-right one (maximum $j$). 
 We denote the corresponding $\mathfrak{H}$
by $\mathfrak{H}_{\rr,\ss}[m]$. Generally,
it is  $\mathfrak{H}_\la[m]$, 
where  $\c=I_\la$ is the conductor $\mathfrak{c}$
of $\r$ lifted to $\c\subset \F_q[[x,y]]$. By $[m]$, we
mean the substitution  $X\mapsto X_m$ as above. 
In the case of $T(\rr,\ss)$,
$\r=\F[[x=z^{\rr},y=z^{\ss}]]$ and $\c=I_\la$ 
for the diagram $\la_{\rr,\ss}$. See below.

\begin{theorem}\label{thm:Htor}
For the superpolynomial  $\mathfrak{H}_{\rr,\ss}[m]$ defined above
and  the (original) DAHA superpolynomial 
$\h^{daha}_{\rr,\ss}$ colored by $\om_m=1^m$:\,
$$\mathfrak{H}^{daha}_{\rr,\ss}[m](q,t,\aa)=
\h^{daha}_{\rr,\ss}(q,t,\aa; \om_m).
$$
For any root system $R$ from Section \ref{se:roots}
and minuscule $\om_m$:\,

\centerline{$\hat{\mathfrak{J}}^{daha}_{\rr,\ss}[m](q,t,\aa)=
\hat{\j}^{daha}_{\rr,\ss}(q,t,\aa; \om_m)$.}
\end{theorem}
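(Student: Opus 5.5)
The plan is to compare the two DAHA elements inside the coinvariant directly: show that $\hga_{\rr,\ss}(X_m)$ and $W_{\la_{\rr,\ss}}[m]X_m$ coincide up to a factor $q^\bullet t^\bullet$ modulo operations that the coinvariant ignores. Once this holds for every root system $R$ and every minuscule $\om_m$, the $\hat{\j}$-statement follows. The $\mathfrak{H}$-statement is then immediate by $\aa$-stabilization, since both sides are determined by their values at $\aa=-t^N$ (Theorem \ref{STABILIZ} and Definition \ref{def:frakH},(ii)).

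\emph{Step 1: explicit form of $\hga_{\rr,\ss}(X_m)$.} Since $\om_m$ is minuscule, $\tau_+(Y_m)\sim X_mY_m$ and $\tau_-(X_m)\sim Y_mX_m$, by (\ref{tauplus}) and (\ref{taumin}). Hence any product $\tau_+^{a_1}\tau_-^{a_2}\cdots\tau_-^{a_{2p}}$ applied to $X_m$ is, up to $q^\bullet$, a monomial word in $X_m,Y_m$ with nonnegative exponents when $\rr,\ss>0$. I will argue by induction on the length of the even continued fraction of $\rr/\ss$. The claim is that this word is the Christoffel (lattice) word of slope $\ss/\rr$: reading the staircase path below the diagonal of the $\ss\times\rr$ rectangle, each horizontal step contributes $X_m$ and each vertical step contributes $Y_m$.

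For the inductive step, note what the two generators do. Applying $\tau_+$ substitutes $Y\mapsto XY$, which is the shear $(\rr,\ss)\mapsto(\rr+\ss,\ss)$ on the first column of $\ga$. Applying $\tau_-$ substitutes $X\mapsto YX$. These are exactly the standard morphisms generating Christoffel words, so the induction reduces to this classical combinatorial fact. The trefoil computation, where $\tau_+\tau_-^2(X)\sim XYXYX$, serves as a check of the conventions.

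\emph{Step 2: matching with $W_\la$.} By the definition preceding the theorem, $\la_{\rr,\ss}$ is the set of boxes above the diagonal, and its border is exactly the lattice path hugging the diagonal. So $W_\la[m]X_m$ reads that same path, with one extra final $X_m$ accounting for the first step. I will show that $W_\la X_m$ and the Christoffel word differ by a cyclic rotation, possibly combined with the reversal $X\leftrightarrow Y$ built from $\vph$ and $\si$.

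Two tools handle these differences inside $\{\cdot\}$:
\begin{itemize}
\item Conjugation by $X$ is absorbed because the coinvariant evaluates at $X\mapsto q^{-\rho_k}$ after acting on $1$. Concretely, $\{AX_m\}=\{X_m\cdot \text{stuff}\}$ requires the $\vph$-invariance $\{\vph(H)\}=\{H\}$, which moves a leading $X_m$ to a trailing $Y_m^{-1}$-type factor.
\item A trailing $Y_m$ acts on $1$ by the scalar $t^\bullet$. This is how the trefoil reduction $\{XYXYX\}\sim\{YX^2\}$ worked.
\end{itemize}
In practice, I will reduce both words to a canonical representative. The leading $X$-block is moved to the end using $\tau_+$-invariance of the coinvariant: $\{\tau_+(H)\}\sim\{H\}$ when $H$ is applied to $1$, by the $\tau_+$ analogue of $\vph\tau_+\vph=\tau_-$ together with $\tau_-$ acting in $\mathscr{X}$ diagonally on $E_b$.

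\emph{Expected obstacle.} The main difficulty is the bookkeeping of the cyclic rotation. The path for $\la_{\rr,\ss}$ starts at the corner $(\ss,0)$, while the Christoffel word from $\hga$ starts at the origin. Justifying that the coinvariant is invariant under this particular rotation, and not merely up to an unknown factor, will require combining $\vph$-symmetry with the $\tau_\pm$-equivariances from (\ref{vphtaupm}).

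I would first verify this for the family $T(2\pp+1,2)$, where both sides reduce to $\{Y_mX_m^{\pp+1}\}$. I would then induct via the Euclidean algorithm: replacing $\rr$ by $\rr+\ss$ corresponds to applying $\tau_+$ to the word and to inserting $X$ after each $Y$ in $W_\la$. The only nontrivial check is that these two operations commute with the final reduction in the coinvariant.
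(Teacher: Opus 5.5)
There is a genuine gap in Step~2, and the tools you list would not close it.

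\textbf{The words are not rotations of each other.} $W_\la[m]X_m$ is not a cyclic rotation of $\hga_{\rr,\ss}(X_m)$, with or without reversal, because the letter counts differ. Since $\tau_+$ and $\tau_-$ substitute $Y\mapsto XY$ and $X\mapsto YX$, the letter counts of $\hga_{\rr,\ss}(X)$ form the first column of $\ga_{\rr,\ss}$. So that word contains $\rr$ letters $X$ and $\ss$ letters $Y$. The border of $\overline{\la}_{\rr,\ss}$, however, runs from $(\ell(\la),0)=(\ss-1,0)$ to $(0,\la_1)$, not from corner to corner of the $\ss\times\rr$ rectangle. Hence $W_\la X$ contains only $\ss-1$ letters $Y$ and $\la_1+1=\rr-a_0$ letters $X$, where $a_0=\lfloor \rr/\ss\rfloor$. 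For $T(7,4)$ the two words are $XYX^2YX^2YXYX$ and $YX^2YX^2YX^2$. The correct relation, which is what the paper's direct check produces and what Corollary~\ref{cor:W7-3} records, is
\[
\hga_{\rr,\ss}(X)\sim X^{a_0}\,W_\la\,YX .
\]

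\textbf{The invariances you invoke are false.} The coinvariant is $\vph$-invariant, not a trace, so it is not invariant under rotations. In type $A_1$, $\{X\cdot YX\}\sim 1$, while its rotation $\{YX\cdot X\}=\{YX^2\}\sim 1+qt-qt^2$ is the trefoil. Your proposed mechanism, $\{\tau_+(H)\}\sim\{H\}$, also fails. Take $H=\tau_-^2(X)\sim Y^2X$: then $\{H\}\sim 1$ (the unknot $T(1,2)$), whereas $\{\tau_+(H)\}=\{\hga_{3,2}(X)\}$ is the trefoil polynomial.

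\textbf{The reductions actually needed.} Two facts finish the argument once the displayed identity is known.
\begin{enumerate}
\item Delete the leading $X_m^{a_0}$. This is immediate because $X_b$ acts by multiplication, so $\{X_bH\}=q^{-(\rho_k,b)}\{H\}$; no $\vph$ is needed.
\item Replace the terminal $Y_mX_m$ by $X_m$. This holds because $X_m=E_{\om_m}$ for minuscule $\om_m$, so $X_m$ is a $Y_m$-eigenvector: $Y_m(X_m)\sim X_m$.
\end{enumerate}
You never invoke the second fact. Your substitute, that a trailing $Y_m$ acts on $1$ by a scalar, never applies, because every word $\hga_{\rr,\ss}(X_m)$ ends in $X_m$. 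Even the trefoil reduction $\{XYXYX\}\sim\{YX^2\}$ uses $Y(X)\sim X$, not $Y(1)\sim 1$.

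Your substitution analysis of $\tau_\pm$ is the right tool for proving the displayed identity, for instance by induction along the continued fraction. With that identity, the two facts above give $\{\hga_{\rr,\ss}(X_m)\}\sim\{W_\la[m]X_m\}$ for every root system $R$. The $\aa$-stabilization then finishes the proof, as in the paper.
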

{\it Proof.} The first claim is a straightforward check that 
$\bigl\{\hga_{\rr,\ss}(X)\bigr\}$ coincides with 
$\bigl\{WX\bigr\}$ up to $q^\bullet t^\bullet$;
we will use $\sim$ for this equivalence. For the sake
of concreteness, we check it for  $T(7,4)$, a sufficiently 
large example 
that contains all steps of the verification. One has
$\frac{7}{4}=1+\frac{1}{1+\frac{1}{2+\frac{1}{1}}}$,
and $\hga_{7,4}=\tau_+\tau_-\tau_+^2\tau_-$.
Thus, $\hga_{7,4}(X)\sim \tau_+\tau_-(X^2YX)\sim
\tau_+(YXYXY^2X)\sim (XY)X(XY)X(XY)(XY)X$. Upon taking
the coinvariant, it can be replaced by $YX^2YX^2Y X^2$ due to
$Y(X)\sim X$ and  $\{X^k H\}\sim \{H\}$ for any $k$.
The corresponding $\la$ is $\yng(5,3,1)$. It generates
$\w=UR^2UR^2UR$, which results in  $W=YX^2YX^2YX$, and, finally,
$WX$ coincides with the product obtained above. 
Finally, $X\mapsto  X_m$ for 
minuscule $\om_m$. So $\lambda$ encodes taking the corresponding
coinvariant.
 Everything here and in further
similar calculations is under the {\sf\em hat-normalization}.
  \sq
\vskip 0.2cm

According to this theorem and the discussion above,
the superpolynomials from \cite{GL2} {\sf\em for torus knots}
and the construction from \cite{Neg}
are equivalent 
to the original DAHA-superpolynomials for torus knots from \cite{CJ} 
in the uncolored case. 
\vskip 0.2cm

Let us now follow the proof of $\j_{\rr,\ss}=\j_{\ss,\rr}$ 
from Theorem 1.2,(i) in \cite{CJJ} to verify that $\la$
and its transpose $\la'$ result in the same $\j$-polynomials
up to the hat-normalization. Then this holds for
 $\mathfrak{H}_\la^{daha}$. 

\begin{theorem}\label{thm:trans}.
Let $\la'$ be the transpose of $\la$. Then the corresponding
$W'$ is the reverse of $W$ for $\la$ where we substitute
$X\leftrightarrow Y$.  Let $W_m=W(X\mapsto X_m, Y\mapsto Y_m)$ for
any root system $R$ and minuscule $\om_m$, and
$W'_m=W'(X\mapsto X_m, Y\mapsto Y_m)$. Then
$\bigl\{W_m X_m\bigr\}\sim \bigl\{W'_m X_m\bigr\}$ up to
$q^{\bullet}t^{\bullet}$ for any $R$, and
$\mathfrak{H}_\la[m]=\mathfrak{H}_{\la'}[m]$ in type $A$.
\end{theorem}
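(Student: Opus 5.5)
The plan follows the proof of $\j_{\rr,\ss}=\j_{\ss,\rr}$ in \cite{CJJ}. I will use the anti-involution $\vph$ from (\ref{starphi}), the automorphism $\eta$ from (\ref{aut-eta}), $\si$, and the symmetries of the coinvariant: $\{\vph(H)\}=\{H\}$ and $\{\eta(H)\}=\{H\}^\star$ from (\ref{eta-vph}).

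\textbf{Combinatorial step.} Transposing $\la$ reflects the border path of $\overline{\la}$ in the diagonal. The $R$-moves become $U$-moves and the reading direction is reversed, so $W'$ is the reverse of $W$ with $X\leftrightarrow Y$. This is immediate from the construction of $\w_\la$.

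\textbf{Algebraic step.} Write $W_m=A_1A_2\cdots A_k$ with $A_i\in\{X_m,Y_m\}$, where $A_1=Y_m$ and $A_k=X_m$. Apply $\vph$:
\[
\vph(W_mX_m)=Y_m^{-1}\,\vph(A_k)\cdots\vph(A_1),
\]
where $\vph(X_m)=Y_m^{-1}$ and $\vph(Y_m)=X_m^{-1}$. So $\{W_mX_m\}$ equals the coinvariant of the reversed word with $X\leftrightarrow Y$, but with all exponents inverted.

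To restore positive exponents I will apply $\eta$, which sends $X_b\mapsto X_b^{-1}$ and $Y_r\mapsto q^\bullet t^\bullet X_r^{-1}Y_rX_r$, composed with $\varsigma$ (or $\si^{\pm1}$) as in \cite{CJJ}. Conjugating $X,Y$-words by $X_m$ telescopes: the inner $X_m^{\pm1}$ cancel, and the remaining boundary $X_m^{\pm1}$ are absorbed using $\{X_b H\}\sim\{H\}$ and $Y_m(1)\sim 1$, i.e.\ $H\!\Downarrow$ at the ends. The expected outcome is $\{W_mX_m\}^\star\sim\{W'_mX_m\}^\star$ up to $q^\bullet t^\bullet$, after also applying $\varsigma$, which fixes the coinvariant and sends $m\mapsto m^\varsigma$.

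I expect the main obstacle to be precisely this combination. $\eta$ conjugates $q,t$, $\varsigma$ changes $m$ to $N-m$, and the composite must return to the original $m$ and to unconjugated parameters. A cleaner route may be the composite $\vph\circ\si$ (or $\eta\circ\vph$ followed by a second $\eta$-type step, as in \cite{CJJ}), which maps $X_m\mapsto Y_m^{\pm1}$ and $Y_m\mapsto X_m^{\pm1}$ up to conjugation by $T_{u_r}$. The $T$-factors are then absorbed because the coinvariant is $T_w$-invariant on both sides: $T_w\mapsto t^{l(w)/2}$.

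I would first try $\si$: use $\si(X_b)=Y_b^{-1}$ and $\si(Y_r)=q^\bullet Y_r^{-1}X_rY_r$, whose conjugation by $Y_r$ telescopes along the word. Then finish with $\vph$ to invert the order and the exponents. The leftover $Y_m^{\pm1}$ at the right end acts on $1$ by a scalar, and the left end is handled via $\vph$-invariance.

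\textbf{Stabilization step.} The identity $\{W_mX_m\}\sim\{W'_mX_m\}$ holds for all $A_{N-1}$ (or $gl_N$) with $N\ge m$. Since $\mathfrak{H}_\la[m]$ is uniquely determined by its hat-normalized specializations at $\aa=-t^N$ (Definition \ref{def:frakH}), it follows that $\mathfrak{H}_\la[m]=\mathfrak{H}_{\la'}[m]$.
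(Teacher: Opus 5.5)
Your combinatorial and stabilization steps are fine. The algebraic step, however, has a genuine gap: no valid chain of identities from $\{W_mX_m\}$ to $\{W'_mX_m\}$ is actually produced.

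\textbf{The route you carry out.} Write $W=W[a,b]=Y^{a_1}X^{b_1}\cdots Y^{a_k}X^{b_k}$ and $1_k=(0,\ldots,0,1)$, so that $W_mX_m=W_m[a,b+1_k]$. Applying $\vph$ first gives $W_m[-(b+1_k)^{inv},-a^{inv}]$, in which both the $X_m$-exponents and the $Y_m$-exponents are negative. But $\eta$ inverts only the $X_m$-exponents. Indeed, $\eta(Y_m)\sim X_m^{-1}Y_mX_m$ is just a conjugate of $Y_m$, with the same index $m$, so your worry about $\varsigma$ and $m\mapsto N-m$ does not arise. Hence ``$\vph$, then $\eta$'' leaves the $Y_m$-exponents negative, and your ``expected outcome'' is asserted rather than derived.

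\textbf{The fallback through $\si$.} This fails outright, because the coinvariant is not $\si$-invariant, even up to $q^\bullet t^\bullet$. In $A_1$, $\{YX^2\}\sim 1+qt-qt^2$ (the trefoil), whereas $\si(YX^2)=T^{-1}X^{-1}TY^{-2}$ gives $\{\si(YX^2)\}=t^{-1/2}$.

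\textbf{Dropping boundary letters.} Only $\{X_bH\}\sim\{H\}$ and $\{HY_b\}\sim\{H\}$ are available. A $Y_m$ at the left end cannot be discarded via $\vph$-invariance, since $\{Y_bH\}=\{\vph(H)X_b^{-1}\}$.

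\textbf{The missing idea.} Allow arbitrary integer exponents in $W_m[a,b]$, and put $\vph$ \emph{between} two applications of $\eta$. Two identities are needed:
\begin{itemize}
\item From $\eta$, the telescoping $X_m^{-1}Y_m^{a_i}X_m\,X_m^{-b_i}\,X_m^{-1}Y_m^{a_{i+1}}X_m\cdots$, and $\{X_m^{-1}H\}\sim\{H\}$, one gets $\{W_m[a,b]\}^\star\sim\{W_m[a,1_k-b]\}$.
\item From $\vph$, one gets $\{W_m[a,b]\}=\{W_m[-b^{inv},-a^{inv}]\}$.
\end{itemize}
Then
\begin{align*}
\{W_m[a,b]X_m\}&=\{W_m[a,b+1_k]\}\sim\{W_m[a,-b]\}^\star
=\{W_m[b^{inv},-a^{inv}]\}^\star\\
&\sim\{W_m[b^{inv},a^{inv}+1_k]\}=\{W'_mX_m\}.
\end{align*}
The two $\star$'s cancel, and the trailing $X_m$ is exactly the $1_k$-shift produced by $\eta$.

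Your parenthetical about ``a second $\eta$-type step'' points in this direction. But the order $\eta$, $\vph$, $\eta$ is the whole content of the argument, and it is absent from your plan.
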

{\it Proof.}  The formula for $W'$ is immediate from the
definition. For instance,
consider $W_m=Y_mX_m^2Y_mX_m^2Y_mX_m$ above for $T(7,4)$,
 which is for $\la=(5,3,1)=\yng(5,3,1)$. Its transpose is
$\la'=(3,2,2,1,1)$ and $W_m'=
Y_mX_mY_m^2X_mY_m^2X_m$. We will need the following
generalization.
\medskip

Let $a=(a_1,\ldots, a_{k})$, $b=(b_1,\ldots, b_{k})$ be two
sequences of any integers,
positive or negative, and $W[a,b]=Y^{a_1}X^{b_1}\cdots Y^{a_k}X^{b_k}$.
Applying $\eta$ from (\ref{aut-eta}) and using 
the symmetry (\ref{eta-vph}):\,
 $\bigl\{W_m[a,b]\bigr\}^\star=
\bigl\{\eta (W_m[a,b])\bigr\}\sim\bigl\{(X_m^{-1}Y^{a_1}_m X_m)(X_m^{-b_1})\cdots(X_m^{-1}Y^{a_k}_m X_m)
(X_m^{-b_k})\bigr\}=$

\noindent
$\bigl\{X_m^{-1}\bigl( Y_m^{a_1}X_m^{-b_1}\cdots 
Y^{a_k}_m X_m^{1-b_k}\bigr)\bigr\}\sim$
$\bigl\{Y_m^{a_1}X_m^{-b_1}\cdots 
Y^{a_k}_m X_m^{1-b_k}\bigr\}$, where
 $X_m^{-b_i}$ is replaced by $X_m^{-1}X_m^{-b_i}X_m$
and $\{X_b H\}\sim \{H\}$ is used.
Summarizing, 
$\bigl\{W_m[a,b]\bigr\}^\star\sim\bigl\{ W_m[a,1_k-b]\bigr\}$
for $1_k=(0,\ldots,0,1)$ of length $k$.

Now, let us apply this formula to $[-b^{inv},-a^{inv}]$,
where $inv$ means that the order is reversed. We use 
that $\bigl\{W_m[a,b]\bigr\}=
\bigl\{\vph(W_m[a,b])\bigr\}= \bigl\{W_m[-b^{inv},-a^{inv}]\bigr\}$
and arrive at 
$\bigl\{W_m[a,b]\bigr\}^\star\sim\bigl\{ W_m[-b^{inv},1_k+a^{inv}]\bigr\}$.

Finally, $\bigl\{ W_m[a,-b]X_m\bigr\}\sim 
\bigl\{ W_m[-b^{inv},a^{inv}]X_m\bigr\}$. Replacing $-b\mapsto b$,
we obtain a general identity $\bigl\{\, W_m[a,b]\,X_m\, \bigr\}\sim 
\bigl\{\, W_m[b^{inv},a^{inv}]\,X_m\, \bigr\}$, 
which concludes the proof.\sq
\vskip 0.2cm

This proof can be  an indication that $\mathfrak{H}$
can be extended to $W$ with positive and negative 
powers of $X,Y$; cf. \cite{ObR2}. However, our experiments
produced by now only $1$ series of non-trivial {\sf\em superdual}  
DAHA superpolynomials $\mathfrak{H}_\la$ 
with negative powers of  $X,Y$ in the corresponding $W$; 
see Conjecture
 \ref{conj:extW}

At least, our definition of $\mathfrak{H}^{daha}$
can be modified to include torus knots $T(-\rr,\ss)$ with 
$\rr>ss>0$.  
Namely, take  $W$ for $T(\ss,\rr)$ 
and then replace there every $Y$ by $X^{-1}Y$. This can
be visualized diagrammatically:\ 
after every step up ($U$), there must be one step to the left 
($R^{-1}$), i.e., $U\cdots U$ become the ``staircases".
%Generally, it makes sense to consider arbitrary $W[a,b]$; the
%$\aa$-stabilization is granted. We have already considered 
%$\{ \pi_m \hga_{\rr,\ss}E^\circ_b \}$ for fixed $m$ in type
%$A$ (unpublished). 

\vskip 0.2cm
Let us focus  now on the rectangles $\upsilon \ss \times \upsilon \rr$
with $\upsilon>1$ and the boxes above its diagonal.
There are two Young diagrams: \, $\la^\#$\, if we allow the
boxes that touch the diagonal, and $\la^\flat$\, otherwise.
%Claim $(i)$ below is some DAHA fact, but it is 
%still a conjecture in full generality by now.

\begin{conjecture}\label{conj:sharp-flat}
The polynomials $\mathfrak{H}_{\upsilon,\la^\#}[m]$ and 
$\mathfrak{H}_{\upsilon,\la^\flat}[m]$ for $X=X_{\om_m}$
coincide with the DAHA superpolynomials
$\h_{\upsilon,\pm}(q,t,\aa;\om_m)$ defined correspondingly for
the cables $C\!ab(\upsilon \rr\ss\pm 1,\upsilon)T(\rr,\ss)$.
They are constructed  for $\om_m$,
$\ga_1=\ga_{\rr,\ss}$, and
$\ga_2=\ga_{\upsilon,\pm 1}=${\scriptsize
$\begin{pmatrix} \upsilon & *\\ \pm 1 & *\end{pmatrix}$}.
If $\upsilon=1$ here, we arrive at 
Theorem \ref{thm:Htor} because 
$C\!ab(a,1)K=K$ for any knot $K$. \sq% and arbitrary $a$.
\end{conjecture}
\comment{
(ii) Furthermore, $\mathfrak{H}_{\upsilon,\la}[m]$ and 
$\h_{\upsilon,+}(q,t,\aa; \om_m)$ above
coincide with $\mathfrak{H}_{\upsilon,\rr,\ss}[m]$ from Definition
\ref{def:frakH}, (iii). Recall that the latter superpolynomial
is the result of the $\aa$-stabilization of the hat-normalized
$\Bigl\{\bigl(W_{\rr,\ss}[m]X_m)^{\upsilon}\bigr)\Bigr\}$ for $gl_N$ or 
$A_{N-1}$ in terms of $W_{\rr,\ss}[m]$ defined 
for $\rr,\ss$ (when $\upsilon=1$).
}
\begin{conjecture}\label{conj:KhR}
Positive
$\mathfrak{H}_{\la}[m]$  coincide with the reduced 
KhR polynomial 
colored by $1^m$ for the Coxeter knot
$\k_\la$ associated with $\la$, which  is 
up to $q^\bullet t^\bullet \aa^\bullet$ and upon some 
transformation of $q,t,\aa$. Omitting the positivity,
$\mathfrak{J}_{\upsilon,\la}[m]$ are proper transformations 
of the corresponding QG-invariants when $t=q$ ($t_{\sht}=q,
t_{\lng}=q^2$ for $B,C,D$). \sq
\end{conjecture}
The positivity means non-negativity of the $q,t,\aa$-coefficients.
This conjecture for $m=1$  is related to the expected
coincidence of our
$\mathfrak{H}_{\la}[1]$ with the {\sf\em 
EHA-superpolynomials} from \cite{GL2}.
%Despite the differences
%discussed above, the constructions are parallel.
In the case of  $T(\rr,\ss)$, 
the EHA-superpolynomials do coincide with the 
uncolored ($m=1$)  DAHA-superpolynomials 
from \cite{CJ}.
The EHA-superpolynomials are expected
to be connected with Khovanov-Rozansky polynomials, but the
latter are difficult to calculated, even for K12m242.  

The advantage of the usage of EHA is that it is {\sf\em stable},
which is convenient. More importantly, according to
\cite{GL2}, the superduality (for $m=1$) can be seen directly
from the formulas they obtain. On the other hand, 
we proved in Theorem \ref{thm:trans} that 
$\la$ and its transpose result in the same 
$\hat{\mathfrak{J}}$-polynomials and 
$\mathfrak{H}$-superpolynomials for any $m$,
a convincing demonstration of the potential of our 
$\mathfrak{H}_\la[m]$. 
 
%\vskip 0.2cm

Assuming Conjecture \ref{conj:sharp-flat},
we arrive at the following 
(conditional) corollary, which gives a new approach to the
DAHA-Jones polynomials 
$\j_{\upsilon,\rr,\ss,\pm}(\om_m)=
\Bigl\{\hga_{\rr,\ss}\Bigl(\hga_{\upsilon,\pm 1}\bigl(X_m\bigr)(1)\Bigr)
\Bigr\}$. Recall that 
$\h_{\upsilon,\pm}(\om_m)$ is the result of
the $\aa$-stabilization of hat-normalized 
$\widehat{\j}_{\upsilon,\rr,\ss,\pm}(\om_m)$
in type $A$. As above,  equalities
up to $q^\bullet t^\bullet$ are denoted by $\sim$.

\begin{corollary}\label{cor:W7-3}
Given a torus knot $T(\rr,\ss)$ with $\rr>\ss>0$, let 
$\hat{W}_{\rr,\ss}\equal \hat{\ga}_{\rr,\ss}(X)$, which equals 
$X^{a_0}W_{\rr,\ss}YX$ for $a_0=Floor[\rr/\ss]$, where $W_{\rr,\ss}$
is constructed for the Young diagram $\la$ associated with $T(\rr,\ss)$.
Also, $\tilde{W}_{\rr,\ss}\equal
\hat{W}_{\rr,\ss}X^{-1}Y^{-1}XY$, and 
$\hat{W}^{[m]}_{\rr,\ss}$ and $\tilde{W}^{[m]}_{\rr,\ss}$ will be
the results of the substitution $X\mapsto X_{m}$ for a minuscule $\om_m$
(for any root system $R$).

(i) Then
 $\widehat{\j}_{\upsilon,\rr,\ss,+}(\om_m)$ coincides with
$\bigl\{\hat{\ga}_{\rr,\ss,+}(X_m^{\upsilon})\bigr\}=
\bigl\{(\hat{W}^{[m]}_{\rr,\ss})^\upsilon\bigr\}$, 
upon the
hat-normalization of the latter.

(ii) For the minus-sign,
 $\widehat{\j}_{\upsilon,\rr,\ss,-}(\om_m)\sim
\bigl\{(\tilde{W}^{[m]}_{\rr,\ss})^\upsilon\bigr\}$; notice that
the expression inside $\bigl\{\cdot\bigr\}$ is a pure power. 

(iii) Accordingly,
 $\h_{\upsilon,\pm}(\om_m)$  (defined above)
are the
$\aa$-stabilization of the  corresponding
coinvariants in $(i),(ii)$ in type $A$. 

Also, 
$\bigl\{\bigl(\hat{W}_{\rr,\ss}\bigr)^\upsilon\bigr\}$ can be 
presented 
as $\bigl\{W(YX^{a_0+1}W)^{\upsilon-1}X\bigr\}$ up to $\sim$, 
which means
that we insert $YX^{a_0+1}$ between $W$ in $\bigl\{ W^\upsilon X\}$.
Similarly, one has:\, 
$\bigl\{\bigl(\tilde{W}_{\rr,\ss}\bigr)^\upsilon\bigr\}\sim
\bigl\{W(XYX^{a_0}W)^{\upsilon-1}X\bigr\}$, i.e. we insert
$XYX^{a_0}$ for the minus-sign.

\end{corollary}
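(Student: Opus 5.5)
The plan is to start from formula (\ref{jones-ditx}) for the two-step iteration $[\rr_1,\ss_1]=[\rr,\ss]$, $[\rr_2,\ss_2]=[\upsilon,\pm1]$. By (\ref{Newtonpair}), $\aa_2=\upsilon\rr\ss\pm1$, so this is the cable $C\!ab(\upsilon\rr\ss\pm1,\upsilon)T(\rr,\ss)$. Since $E_{\om_m}=X_m$ for minuscule $\om_m$, we get
\[
\j_{\upsilon,\rr,\ss,\pm}(\om_m)\sim\Bigl\{\hga_{\rr,\ss}\bigl(p_\pm\bigr)\Bigr\},\qquad p_\pm\equal \hga_{\upsilon,\pm1}(X_m)\!\Downarrow .
\]
The basic tools, used repeatedly, are: $\tau_+$ fixes $X_m$; $\tau_-(X_m)\sim Y_mX_m$; $Y_m(X_m)\sim X_m$ and $Y_m^{\pm1}(1)\sim1$ in $\mathscr{X}$, because $X_m$ is a $Y$-eigenfunction; $\{X_m^kH\}\sim\{H\}$; and $\vph$- and $\eta$-invariance of the coinvariant, as in the proofs of Theorems \ref{thm:Htor} and \ref{thm:trans}. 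By Theorem \ref{THM-integr-Jones}, we may choose any lifts $\hga$.

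For the plus-sign I will take the even continued fraction $\upsilon/1=[\upsilon-1;1]$, i.e. $\hga_{\upsilon,1}=\tau_+^{\upsilon-1}\tau_-$. Then $\hga_{\upsilon,1}(X_m)\sim X_m^{\upsilon-1}Y_mX_m$, and applying it to $1$ gives $p_+\sim X_m^{\upsilon}$. Since $\hga_{\rr,\ss}$ is an automorphism, $\hga_{\rr,\ss}(X_m^\upsilon)=\bigl(\hga_{\rr,\ss}(X_m)\bigr)^\upsilon$. It remains to identify $\hat W_{\rr,\ss}=\hga_{\rr,\ss}(X)$ with $X^{a_0}W_{\rr,\ss}YX$. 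This is the same bookkeeping as in the proof of Theorem \ref{thm:Htor}: expand $\tau_+^{a_1}\tau_-^{a_2}\cdots$ on $X$ letter by letter, now without discarding the leftmost $X$-power and the trailing $Y$ that the coinvariant absorbed there. This gives (i).

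For the minus-sign, the first column is $(\upsilon,-1)$. I will choose a lift and compute $\hga_{\upsilon,-1}(X_m)$ as an explicit word, roughly $X_m^{\upsilon}$ with one $Y_m^{-1}$ inserted and $X_m$-conjugations around it. This word must be reduced on $1$ to $p_-$, and then $\hga_{\rr,\ss}(p_-)$ must be rewritten as $\bigl(\hga_{\rr,\ss}(X_m)X_m^{-1}Y_m^{-1}X_mY_m\bigr)^\upsilon$. The issue is that $p_-$ need not be a monomial in $X_m$, so the rewriting has to be done in $\HH$ before applying $\Downarrow$. Concretely, I plan to show the following identity inside the coinvariant:
\[
\Bigl\{\hga_{\rr,\ss}\bigl(\hga_{\upsilon,-1}(X_m)\bigr)\Bigr\}\sim\Bigl\{\bigl(\hga_{\rr,\ss}(X_m)\,\hga_{\rr,\ss}(\ga')\bigr)^{\upsilon}\Bigr\},
\]
where $\ga'$ is chosen so that $\hga_{\rr,\ss}(\ga')\sim X_m^{-1}Y_m^{-1}X_mY_m$. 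I will do this using $\tau_-^{-1}(X_m)\sim Y_m^{-1}X_m$, and then push the outer $\Downarrow$ through by the relations $\{H\,Y_m^{k}\}\sim\{H\}$, which follow from $\vph$-invariance. I expect this to be the main obstacle. If the general manipulation is messy, I will first verify it for $T(3,2)$ with $\upsilon=2$, where it is a finite computation, and then extract the pattern.

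Part (iii) is then immediate: apply Theorem \ref{STABILIZ} to the hat-normalized coinvariants from (i) and (ii). Here I assume Conjecture \ref{conj:sharp-flat} identifies the resulting superpolynomials with $\h_{\upsilon,\pm}(\om_m)$. For the final ``insertion'' formulas, I will expand $(\hat W)^\upsilon=(X^{a_0}WYX)^\upsilon$. Adjacent factors produce $\cdots WYX\cdot X^{a_0}W\cdots=W(YX^{a_0+1})W$. The leading $X^{a_0}$ is dropped by $\{X^kH\}\sim\{H\}$, and the trailing $YX$ is reduced to $X$ by $Y(X)\sim X$. This gives $\{W(YX^{a_0+1}W)^{\upsilon-1}X\}$. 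The same expansion of $\tilde W=X^{a_0}WYX\cdot X^{-1}Y^{-1}XY$ gives the junction $\cdots W\,XY\,X^{a_0}W\cdots$, since $YX\,X^{-1}Y^{-1}=1$. The final $XY$ reduces to $X$ via $Y(1)\sim1$, so the inserted block is $XYX^{a_0}$.
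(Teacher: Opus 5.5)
Your argument for (i) is correct and unconditional, which the paper's is not. With $\hga_{\upsilon,1}=\tau_+^{\upsilon-1}\tau_-$ and $Y_m(X_m)\sim X_m$ you get $\hga_{\upsilon,1}(X_m)\!\Downarrow\,\sim X_m^{\upsilon}$, hence $\j_{\upsilon,\rr,\ss,+}(\om_m)\sim\bigl\{(\hat W^{[m]}_{\rr,\ss})^\upsilon\bigr\}$ with no conjecture at all. The paper instead assumes Conjecture \ref{conj:sharp-flat} and only checks combinatorially that $\{\hat W^\upsilon\}\sim\{W_{\la^\#}X\}$ and $\{\tilde W^\upsilon\}\sim\{W_{\la^\flat}X\}$. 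Together with that identification of $W_{\la^\#}$, your (i) even gives the $\la^\#$ half of the conjecture. Your insertion formulas match the paper's word manipulations. In (iii) you do not need the conjecture, since $\h_{\upsilon,\pm}(\om_m)$ is by definition the stabilization of $\widehat{\j}_{\upsilon,\rr,\ss,\pm}(\om_m)$.

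The gap is in (ii). The identity you plan to prove, $\bigl\{\hga_{\rr,\ss}\bigl(\hga_{\upsilon,-1}(X_m)\bigr)\bigr\}\sim\bigl\{(\cdots)^{\upsilon}\bigr\}$, has no $\Downarrow$ between the two automorphisms. Since $\hga_{\rr,\ss}\hga_{\upsilon,-1}$ is a single element of $PSL_2^\wedge(\Z)$, its left-hand side is the invariant of the torus knot given by the first column of $\ga_{\rr,\ss}\ga_{\upsilon,-1}$, not of the cable. The relation $\{HY_m^k\}\sim\{H\}$ cannot restore the missing $\Downarrow$: it deletes only $Y$'s at the right end of the whole word. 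A $Y_m$ inside $\hga_{\upsilon,-1}(X_m)$ becomes $\hga_{\rr,\ss}(Y_m)$, which does not act on $1$ by a scalar, because $\hga_{\rr,\ss}$ does not preserve the left ideal annihilating $1\in\mathscr{X}$ (already $\tau_+(Y_m)\sim X_mY_m$). The failure is visible for $\upsilon=1$ with the plus-sign: $\hga_{1,1}=\tau_-$, and $\{\hga_{3,2}(\tau_-(X))\}$ is the invariant of $T(4,3)$ (first column of $\tau_+\tau_-^3$), while $\{\hga_{3,2}(\tau_-(X)\!\Downarrow)\}\sim\{\hga_{3,2}(X)\}$ is that of $C\!ab(7,1)T(3,2)=T(3,2)$. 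So your plan does not establish (ii), and checking $T(3,2)$, $\upsilon=2$ by hand would give one instance, not the general step. The paper gets (ii) only conditionally: it assumes Conjecture \ref{conj:sharp-flat} for $\la^\flat$ and uses $W_{\la^\flat}=(W\,XYX^{a_0})^{\upsilon-1}W$, after which your insertion computation yields $\{W_{\la^\flat}X\}\sim\{\tilde W^{\upsilon}\}$.

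Your direct route can be repaired in two steps.

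First, the correct form of ``rewriting before $\Downarrow$'' is $\{\hga(F)\}=\{\hga(F\!\Downarrow)\}$ for every $F$ in the subalgebra generated by the $X_b$ and the $T_i$, $i>0$. Indeed, write $F=\sum c_{a,w}X_aT_w$; then $\hga$ fixes $T_w$ and $T_w(1)=t^{\ell(w)/2}$. So the $Y$'s must be eliminated, not moved.

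Second, the paper's own even continued fraction $-\upsilon=[-\upsilon-1;1]$ gives the lift $\hga_{\upsilon,-1}=\tau_+^{-\upsilon-1}\tau_-$. Then $\hga_{\upsilon,-1}(X_m)\sim X_m^{-\upsilon-1}Y_mX_m$, and $p_-\sim X_m^{-\upsilon}$ is a monomial, exactly as in (i). Your non-monomial $Y_m^{-1}(X_m^\upsilon)$ comes from the lift $\tau_-^{-1}\tau_+\tau_-^{\upsilon-1}$. In $A_1$, $Y^{-1}(X^\upsilon)=q^{\upsilon/2}T^{-1}(X^{-\upsilon})$, which gives the same coinvariant by the remark above.

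Hence $\j_{\upsilon,\rr,\ss,-}(\om_m)\sim\{(\hat W^{[m]})^{-\upsilon}\}$. It remains to show $\{\hat W^{-\upsilon}\}\sim\{(\hat W G)^{\upsilon}\}$ for $G=X_m^{-1}Y_m^{-1}X_mY_m$.

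In $A_1$ this is immediate. We have $X^{-1}=TXT$, $G=q^{1/2}T^2$, and $\hga$ fixes $T$, so $\hat W^{-\upsilon}=T(\hat WT^2)^{\upsilon-1}\hat WT$. The outer $T$'s are then absorbed by the coinvariant.

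In general, take $u=u_{m^\varsigma}$; then $X_m^{-1}=T_uX_{m^\varsigma}T_{u^{-1}}$, by relation (i) of Definition \ref{def:daha} along a reduced word of $u$. Also, (\ref{uiomr}) with $\pi_r=Y_rT_{u_r}^{-1}$ gives $X_r^{-1}Y_r^{-1}X_rY_r\sim T_{u_r^{-1}}T_{u_r}$. The $\varsigma$-invariance of the coinvariant then finishes (ii) without the conjecture.
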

{\it Proof.} Thanks to Conjecture \ref{conj:sharp-flat}
the verification becomes purely combinatorial. Let us
provide here a sufficiently comprehensive
example that demonstrates how the general
proof goes.
 
Consider $T(7,3)$ corresponding to $\la=(4,2)=\yng(4,2)$, and 
take $\upsilon=2$. One has:\, $\w_{7,3}=U R^2UR^2$ and
$W_{7,3}=YX^2YX^2$,  
$\la^\#=(11,9,7,4,2)$, where $|\la^\#|=33=
\frac{(\upsilon \rr\ss)(\upsilon-1)}{2}+
\upsilon\frac{(\rr-1)(\ss-1)}{2}$. 
Then, $\hga_{7,3}=\tau_+^2\tau_-^3$ and 
$\hat{W}_{7,3}\sim \tau_+^2(Y^3X))\sim (X^2Y)^3X=X^2YX^2YX^2YX$.
Applying the coinvariant:
{\small \(
\{X^2YX^2YX^2YX\,\cdot\, X^2YX^2YX^2YX\}\sim
\{YX^2YX^2YX^3YX^2YX^3\}\)}, 
\noindent
which coincides with
$\bigl\{W_{\la^\#}X\bigr\}$. Indeed, $W_{\la^\#}$ is the result
of the substitution $R\mapsto X, U\mapsto Y$
in $\w_{\la^\#}=\w U R^3 \w=UR^2UR^2\,UR^3\,UR^2UR^2$
for $\w$ associated with $T(7,3)$, which 
gives $W_{\la^\#}X=YX^2YX^2YX^3YX^2YX^3$, the word
inside the last $\{\cdots\}$ above. This proves $(i)$.

Let us provide the passage from $\hat{W}$ to $\tilde{W}$
in full generality. By construction,
$W_{\la^\flat}=\bigl(W(XYX^{a_0})\bigr)^{\upsilon-1}W$,
where $W=X^{-a_0}\hat{W}X^{-1} Y^{-1}$ for $W=W_{\rr,\ss},
\hat{W}=\hat{W}_{\rr,\ss}=X^{a_0} W YX$ (up to $\sim$\,).
They were  $W=(YX^2)^2,\, \hat{W}=X^2YX^2YX^2YX$ above;
also, $\la^\flat=(11,9,6,4,2)$ for $T(7,3)$ and $\upsilon=2$.
We have:\, $W_{\la^\flat}=$

{\small  $ W(XYX^{a_0})W(XYX^{a_0})\cdots W(XYX^{a_0})W=
X^{-a_0}\hat{W}X^{-1} Y^{-1}(XYX^{a_0})$

$X^{-a_0}\hat{W}X^{-1} Y^{-1}
(XYX^{a_0})\cdots
 X^{-a_0}\hat{W}X^{-1} Y^{-1}(XYX^{a_0})X^{-a_0}
\hat{W}X^{-1} Y^{-1}.$
}
Thus, $\bigl\{ W_{\la^\flat} X\bigr\}\sim 
\bigl\{X^{a_0}W_{\la^\flat} XY\bigr\}\sim
 \bigl\{\bigl(\hat{W}X^{-1}Y^{-1}XY\bigr)^\upsilon\bigr\}.$ \sq

\medskip
The formula  $\bigl\{W(YX^{a_0+1}W)^{\upsilon-1}X\bigr\}$
presumably reflects  
what cabling means in these cases. Note that $X^{-1}Y^{-1}XY$
is invariant with respect to the action of
$PSL_2^\wedge(\Z)$. This and similar
group commutators are such because they can be
expressed  in terms of $T_1,\cdots,T_{n-1}$;
see formulas (1.4.64)-(1.4.69) from \cite{C101}. By the way,
$\bigl\{ \hat{W} X^{-1}Y^{-1}XY\bigr\}\sim \bigl\{ \hat{W}\bigr\}$
for any $\hat{W}$, so this factor influences only
the corresponding cables.

\vskip 0.2cm

{\bf Rank-two formula.} Let us provide  the DAHA superpolynomial
of rank $2$ for K12n242, which is for $\la=(3,2)$ and $\cn=2$;\,
 $\mathfrak{H}_\la[2]=$
{\small \(
1+q t+q^2 t+q t^2+2 q^2 t^2+q^3 t^2+q^2 t^3+3 q^3 t^3+q^4 t^3+q^2 t^4
+2 q^3 t^4+4 q^4 t^4+q^3 t^5+3 q^4 t^5+3 q^5 t^5+q^3 t^6+2 q^4 t^6
+4 q^5 t^6+q^6 t^6+q^4 t^7+3 q^5 t^7+2 q^6 t^7+q^4 t^8+2 q^5 t^8
+4 q^6 t^8+q^5 t^9+3 q^6 t^9+2 q^7 t^9+q^5 t^{10}+2 q^6 t^{10}
+3 q^7 t^{10}+q^6 t^{11}+3 q^7 t^{11}+q^6 t^{12}+2 q^7 t^{12}
+2 q^8 t^{12}+q^7 t^{13}+2 q^8 t^{13}+q^7 t^{14}+2 q^8 t^{14}
+q^8 t^{15}+q^9 t^{15}+q^8 t^{16}+q^9 t^{16}+q^9 t^{17}+q^9 t^{18}
+q^{10} t^{20}+\aa^4 \bigl(q^7+q^8+\frac{q^6}{t^2}+\frac{q^7}{t}
+q^8 t+q^8 t^2+q^9 t^3+q^9 t^4+q^{10} t^6\bigr)
+\aa^3 \bigl(2 q^5+4 q^6+q^7+\frac{q^4}{t^2}+\frac{q^5}{t^2}
+\frac{q^4}{t}+\frac{2 q^5}{t}+\frac{q^6}{t}+q^5 t+4 q^6 t+3 q^7 t
+2 q^6 t^2+5 q^7 t^2+q^8 t^2+q^6 t^3+4 q^7 t^3+3 q^8 t^3+2 q^7 t^4
+5 q^8 t^4+q^7 t^5+4 q^8 t^5+q^9 t^5+2 q^8 t^6+3 q^9 t^6+q^8 t^7
+3 q^9 t^7+2 q^9 t^8+q^{10} t^8+q^9 t^9+q^{10} t^9+q^{10} t^{10}
+q^{10} t^{11}\bigr)+\aa^2 \bigl(2 q^3+4 q^4+3 q^5+\frac{q^3}{t^2}
+\frac{q^2}{t}+\frac{2 q^3}{t}+\frac{2 q^4}{t}+q^3 t+5 q^4 t+7 q^5 t
+2 q^6 t+2 q^4 t^2+8 q^5 t^2+6 q^6 t^2+q^7 t^2+q^4 t^3+5 q^5 t^3
+11 q^6 t^3+3 q^7 t^3+2 q^5 t^4+8 q^6 t^4+8 q^7 t^4+q^5 t^5+5 q^6 t^5
+10 q^7 t^5+2 q^8 t^5+2 q^6 t^6+8 q^7 t^6+5 q^8 t^6+q^6 t^7+5 q^7 t^7
+9 q^8 t^7+2 q^7 t^8+7 q^8 t^8+2 q^9 t^8+q^7 t^9+5 q^8 t^9+4 q^9 t^9
+2 q^8 t^{10}+5 q^9 t^{10}+q^8 t^{11}+4 q^9 t^{11}+q^{10} t^{11}
+2 q^9 t^{12}+q^{10} t^{12}+q^9 t^{13}+2 q^{10} t^{13}+q^{10} t^{14}
+q^{10} t^{15}\bigr)+\aa \bigl(q+2 q^2+2 q^3+\frac{q}{t}
+\frac{q^2}{t}+2 q^2 t+5 q^3 t+3 q^4 t+q^2 t^2+4 q^3 t^2+7 q^4 t^2
+2 q^5 t^2+2 q^3 t^3+7 q^4 t^3+8 q^5 t^3+q^6 t^3+q^3 t^4+4 q^4 t^4
+10 q^5 t^4+4 q^6 t^4+2 q^4 t^5+7 q^5 t^5+9 q^6 t^5+q^7 t^5+q^4 t^6
+4 q^5 t^6+10 q^6 t^6+4 q^7 t^6+2 q^5 t^7+7 q^6 t^7+8 q^7 t^7+q^5 t^8
+4 q^6 t^8+9 q^7 t^8+q^8 t^8+2 q^6 t^9+7 q^7 t^9+5 q^8 t^9
+q^6 t^{10}+4 q^7 t^{10}+7 q^8 t^{10}+2 q^7 t^{11}+6 q^8 t^{11}
+q^9 t^{11}+q^7 t^{12}+4 q^8 t^{12}+3 q^9 t^{12}+2 q^8 t^{13}
+4 q^9 t^{13}+q^8 t^{14}+3 q^9 t^{14}+2 q^9 t^{15}+q^{10} t^{15}
+q^9 t^{16}+q^{10} t^{16}+q^{10} t^{17}+q^{10} t^{18}\bigr).
\)}

We can  define then the superpolynomial for  $2\om_1$ as 
its image under the 
{\sf\em superduality}, which is 
$\sim \mathfrak{H}_\la(1/t,1/q,\aa)[2]$. 
The one for $2\om_1$  is conjectured to match
 $\mathscr{H}^{inst}_\la[2]$.

The superpolynomials we construct satisfy all usual properties
of superpolynomials. For instance,
$\mathfrak{H}_\la[2](t=1)=
\bigl(\mathfrak{H}_\la[1](t=1)\bigr)^2=$ {\small 
\((1 + q + a q + 2 q^2 + 2 a q^2 + 2 q^3 + 3 a q^3 + a^2 q^3 + 2 q^4 + 
  3 a q^4 + a^2 q^4 + q^5 + 2 a q^5 + a^2 q^5)^2\)}.
\vskip 0.2cm

{\bf Nonpositive superpolynomials.}
The simplest hyperbolic knot we can manage using
$\mathfrak{H}_\la$  is for $\la=(2,2)$ and $W=Y^2X^2$. Then
the uncolored  $\mathfrak{H}_{(2,2)}$ is:
{\small 
\(1 + q t + q^2 t - q^3 t + q^2 t^2 + q^3 t^2 + q^3 t^3 + q^4 t^4 + 
 \aa^2 \bigl(q^3 - q^4 + q^4 t\bigr) + 
 \aa \bigl(q + q^2 - q^3 + q^2 t + 2 q^3 t - q^4 t + q^3 t^2 + q^4 t^2 + 
    q^4 t^3\bigr).
\)
}
It is superdual. Recall that
we take the coinvariant $\bigl\{Y^2 X^3\bigr\}$ for $X_1,Y_1$
and perform the $\aa$-stabilization. Notice negative terms here.
The DAHA superpolynomials were conjectured to be positive
for any algebraic knots colored by 
rectangles in \cite{CJ} and further papers. Nonpositivity
can be an issue when linking our superpolynomials to
reduced KhR superpolynomials.

We mention that the corresponding $\h^{inst}_{(2,2)}$ (below)
is positive but not
superdual: {\small \(
1+q t+q^2 t^2+q^3 t^2+q^3 t^3+q^4 t^4+\aa^2 q^4 t
+\aa \bigl(q+q^2 t+q^3 t+q^3 t^2+q^4 t^2+q^4 t^3\bigr)
\)}. This is the simplest counterexample to the  unrestricted  
Conjecture \ref{conj:main}. Uncolored $\sH^{inst}$ are always
positive in all (quite a few) examples we calculated, including
non-monomial conductors.
\vskip 0.2cm

There are other sources of nonpositive superdual
superpolynomials. The following one is,
in a sense, a continuation of the case of $\la^\flat$ from
Corollary \ref{cor:W7-3}.
We will use  $Y^{-1}X^{-1}YX$,  the reciprocal of $X^{-1}Y^{-1}XY$
considered there. Generally, a challenge is to allow
any negative powers of $X,Y$ in our words $W$, but the superduality
almost always fails.

\begin{conjecture} \label{conj:extW}
For any diagram $\la$ and the corresponding
$X,Y$-word $W_\la$, let $\overline{W}_{\la}=W Y^{-1}X^{-1}YX$,
and $\overline{W}_{\la}^{[m]}=\overline{W}_\la(X\mapsto 
X_m, Y\mapsto  Y_m)$.
Then the hat-normalization of 
coinvariants $\bigl\{\overline{W}_{\la}^{[m]}\bigr\}$ for
$A_{N-1}$ stabilizes to 
$\overline{\mathfrak{H}}^{daha}_\la(q,t,\aa; m)$ upon
$\aa=-t^{N}$. It is superdual for $m=1$, and its $deg{\aa}$ 
coincides with that for $\la$. It can be linked to the reduced KhR
polynomial 
of a certain knot $\overline{K}_\la$ (presumably unique such).
 At least, 
$\overline{\mathfrak{H}}^{daha}_\la(q,t=q,\aa; m)$  
coincides with the 
(hat-normalized) HOMFLY-PT polynomial of $\overline{K}$
colored by $\omega_m$; we claim its existence. \sq
\end{conjecture}
Finding $\overline{K}$ is almost entirely
based on the identification of the formal HOMFLY-PT polynomials, which
is $\overline{\mathfrak{H}}^{daha}_\la(q=t,,t,a=-\aa; m=1)$,
with the ones in the file  {\sf\em \, jhomflytable.txt\,} and 
using similar resources. .

\medskip
 Let us provide 
some examples (we have only uncolored ones).
\smallskip

(0) For $\la =\emptyset$ 
($K=\unknot$\,):\,  $\overline{\mathfrak{H}}^{daha}=1$. 
For instance, \ $\bigl\{Y^{-1}X^{-1}YX^2\bigr\}\sim
\bigl\{T^{-1}\pi X^{-1} YX^2\bigr\}
\sim\bigl\{\pi YX^2\bigr\}\sim\bigl\{X^2\bigr\}\sim 1$ for $A_1$.

\smallskip

{

\smallskip

(1) For $T(3,2)$:\,  $\la=\yng(1)\,$, $W=YX$,
 and $\overline{W}=YXG$ for $G=Y^{-1}X^{-1}YX$. Then:
$\overline{\mathfrak{H}}^{daha}_\la=$
{\small \(1-a^2 q-t+q t+a (-1+q-q t).\)}
\smallskip

(2) For $\la=\yng(1,1)$\,:\,  $\overline{W}=Y^2XG$,\,
$\overline{\mathfrak{H}}^{daha}_\la=$
{\small
\(1-t+q t-q t^2+q^2 t^2+\aa^2 (-q-q^2 t)+\aa (-1+q-2 q t+q^2 t-q^2 t^2).\)
}
\smallskip

(3) For $\la=\yng(2)$\,:\,  $\overline{W}=YX^2G$,\,$\overline{\mathfrak{H}}^{daha}_\la=$
{\small
\(1-t+2 q t-q t^2+q^2 t^2+\aa^2 (-q+q^2-q^2 t)+
\aa (-1+2 q-2 q t+2 q^2 t-q^2 t^2).\)}
\smallskip

(4) For $\la=\yng(2,1)$\,:\,  $\overline{W}=YX^2YXG$,\,
$\overline{\mathfrak{H}}^{daha}_\la=$
{\small
\(1-t+q t+q^2 t-q t^2+q^2 t^2+q^3 t^2-q^2 t^3+q^3 t^3-q^3 t^4+q^4 t^4
+\aa^3 (-q^3-q^4 t)+\aa^2 (-q-q^2+q^3-q^2 t-2 q^3 t+q^4 t-q^3 t^2
-q^4 t^2-q^4 t^3)
+\aa (-1+q+q^2-2 q t+2 q^3 t-2 q^2 t^2+q^4 t^2-2 q^3 t^3+q^4 t^3-q^4 t^4).
\)
}
}

\smallskip
(5) For $\la=\yng(3,2)$\,:\,  $\overline{W}=YXYX^2G$,\, $\overline{\mathfrak{H}}^{daha}_\la=$
{\small
\(
1-t+q t+2 q^2 t-q t^2+2 q^3 t^2-q^2 t^3+2 q^4 t^3-q^3 t^4+q^4 t^4
-q^4 t^5+q^5 t^5+\aa^3 (-q^3+q^5-q^4 t-q^5 t^2)+\aa^2 (-q-q^2+2 q^3
+q^4-q^2 t-3 q^3 t+2 q^4 t+q^5 t-q^3 t^2-3 q^4 t^2+2 q^5 t^2
-q^4 t^3-q^5 t^3-q^5 t^4)+\aa (-1+q+2 q^2-2 q t-q^2 t+4 q^3 t
+q^4 t-2 q^2 t^2-2 q^3 t^2
+4 q^4 t^2-2 q^3 t^3-q^4 t^3+2 q^5 t^3-2 q^4 t^4+q^5 t^4-q^5 t^5).
\)
}
\vskip 0.2cm
\smallskip 
Notice that the corresponding initial knots $K$ are isotopic
for (2) an (3); they are isotopic to $T(5,2($. The corresponding
$\overline{K}$ are (obviously) not isotopic.
The corresponding knots $\overline{K}_\la$ are different. Generally
$\overline{K}_\la$ are
quite nontrivial; we do not have any diagrammatic constructions
for them in terms of $\la$. In these examples, they are:
\smallskip

\centerline{
$(1,2,3):\, 5.2,\, 7.3,\, 7.6\, (Rolfsen Table),\ (4):\, 12n502,\, 
(5):\,14n21924.
$}

\vskip 0.2cm
The standard braids for knots $(1,2,3,4)$ are:\,
\smallskip

\ \ $(1):\, T_1^{-2} (T_2 T_1^{-1})^2 T_2^2, 
\ (2):\, T_1^5T_2^{-1}T_1T_2^{-1}$
\ (both rational),

\ \ $(3):\, T_1^{-4} T_2^{-1} T_1 T_2^{-2},\ \ \ \,
(4):\, T_1 T_2T_3 T_1^4T_3 T_2T_3^{-1}T_3T_1T_2$.

\section{\sc Motivic superpolynomials}\label{sec:motsup}
This section is devoted to {\sf\em motivic superpolynomials} 
$\h^{mot}_\r(q,t,\aa; \cn)$ of plane curve
singularities. We mostly follow 
\cite{ChP1,ChP2,ChW,ChQ} and prepare the reduction
formulas and the passage to 
$\mathscr{H}^{\mathbbm{n}}_\mathscr{C}$ for the
{\sf\em instanton slices} $\mathfrak{S}_\mathscr{C}$
in the next section.
Here and further  $\mathbbm{n}$
is the rank of the corresponding modules.

The main conjecture
in \cite{ChQ} was that $\h^{daha}_\l(q,t,\aa)[\si]$ for 
any {\sf\em algebraic} link $\l$ with $\kappa$ components 
 coincides with  
$\h^{mot}_{\r}(q,t,\aa)[\si]$ for 
the ring $\r$ of the corresponding plane curve singularity.

The sequence  $\si=(c_1,c_2,\cdots, c_\kappa)$ 
gives the colors $c_i\om_1$ of the 
connected components of the corresponding link
for $\h^{daha}$. On the motivic
side, $c_i$ are the ranks of the modules over the
corresponding irreducible components of the 
singularity. This is Conjecture 9.1 from \cite{ChQ}.
\medskip

We mostly consider  unibranch singularities
and (algebraic) knots in this paper, which is the 
setting of  Section 6.1 in \cite{ChQ}. 
Though rank-one multibranch singularities will be considered
in Section \ref{sec:multi} below. The reduction formulas
work for them.
 
Generally,  $\h^{daha}=\h^{mot}$
is a very general version of the Shuffle Conjecture. The 
path to this conjecture is via the usage of nonsymmetric 
Macdonald polynomials, and the weights must be minuscule 
to make $\h^{daha}$ entirely combinatorial.
See the end of Section \ref{sec:polynom} above.
\medskip

There is
another connection conjecture, Conjecture 4.5 from
\cite{ChW}; see also \cite{KTr}.
 It states that $\h^{mot}(qt,t,\aa)=L(q, t,\aa)$
for the $L$-function due to Galkin and St\"ohr extended by $\aa$.
See \cite{Gal, Sto}. Notice $q\mapsto qt$ in $\h$. It was extended
in \cite{ChQ} to rank-one multibranch plane curve singularities.
Galkin's $L$-functions are directly connected
with those from the {\sf\em ORS conjecture}
from \cite{ORS}:\, parallel constructions
for different {\sf\em motivic measures}. Let me mention
here {\sf\em Kapranov's zeta} \cite{Kap}.

The $p$-adic Galkin-St\"ohr approach is
for any Gorenstein curve singularities 
and the base fields $\F_q$ 
can be different for different components. The 
$p$-adic methods have important advantages and
are well-developed. See, for instance, 
\cite{KTs} on the usage of the
{\sf\em Waldspurger induction}.

One of the  main advantages
of the geometric approach ({\sf\em ORS Conjecture}, etc.) 
is that
the polynomial dependence on $q$ is manifest.
However, the corresponding algebraic-geometric tools (Serre-Deligne's
weight filtration) can be 
involved. Singular varieties are always challenging in
algebraic geometry, and we deal with complicated singularities.

The coincidence of $\h^{mot}$ and $L$ ($p$-adic or geometric)
is sometimes called now ``Hilb-vs-Quot"; see
\cite{KTr}. To be more exact, it is ``Hilb" versus Compactified
Jacobians. The substitution $q\mapsto qt$ remains mysterious here
(at least, to the author). 
\medskip

We extend the Coincidence Conjecture   $\h^{daha}=\h^{mot}$  to
 {\sf\em instanton slices} in this paper. However, the definition
of $L$-functions is unknown for the latter; so  we do not have
by now any $\h^{mot}=L$ or {\sf\em Hilb vs. Quot} for instanton
superpolynomials.
%\medskip

Concerning the connection conjectures  with
the Khovanov-Rozansky polynomials from \cite{Kh, KhR1, KhR2}
and many other papers, the motivic superpolynomials were conjectured
to be connected with {\sf\em reduced} KhR-polynomials, at least for
uncolored algebraic knots, and this is expected for
the superdual instanton superpolynomials $\sH^{inst}$ introduced
and studied 
in this paper. For algebraic uncolored links in the ``Hilb"
case  this is the {\sf\em ORS Conjecture}.

\subsection{\bf Unibranch singularities}\label{sec:basic}
For any base field $\F$, plane curve singularities are
with the {\sf\em singularity rings} 
$\r=\F[[x,y]]/(F(x,y))$, where
$F(x,y)\in \F[[x,y]],$ 
$F(0,0)=0$ and  $(F(x,y))\equal F(x,y)\F[[x,y]]$.
 The {\sf\em unibranch} ones
are for irreducible $F(x,y)$, assumed absolutely  
irreducible (over the algebraic closure  $\overline{\F}$ of $\F$)
in this work,

Equivalently, let $\o\equal \F[[z]]$, where $z$  is
the {\sf\em uniformizing parameter}. Then plane curve singularities
are for $\r=\F[[x,y]]\subset \o$, where the generators
$x,y\in \r$ are in the maximal ideal $\mathfrak{m}_\o\!=\!(z)\!=\!z\o$ 
of $\o$, and the localization of
$\r$ must be the whole $\F((z))$. 
We also assume that  $\r/\mathfrak{m}_\r=\F$ for
the maximal ideal $\mathfrak{m}_\r\!=\!\r \cap (z)$ of $\r$.

Note that the notation  $\r=\F[[x,y]]$ means that elements in $\r$
are {\sf\em series} in terms of the generators $x,y$. 
We will use the approach via $\o$; this is necessary for the
theory of $\h^{mot}$ in contrast to the $Hilb$-theory, where the
ideals of $\r$ are sufficient. The usage of the equation $F(x,y)=0$ 
will not be really needed in this paper.

\vskip 0.2cm
The simplest invariants of a singularity are the
{\sf\em multiplicity}, which is  $mult=$
dim$\,\F[[z]]/\F[[z]]\mathfrak{m}_\r$, and
the {\sf\em Serre number} $\de=dim\,\F[[z]]/\r$,  the arithmetic
genus.

\vskip 0.2cm
{\bf Valuation semigroup.}
It is one of the key in the theory of unibranch singularities.
The definition of is as follows:
$\Ga\equal\bigl\{\,\upsilon_z(f), 0\neq f\in \r\subset \o=\F[[z]]
\,\bigr\}$, where
$\nu_z$ is the valuation, the order of $z$. Then
$\de=|G|$ for the {\sf\em set of gaps}
$G\equal\Z_+\setminus \Ga$, and 
 $mult=\min(\Ga\setminus \{0\})$.
\vskip 0.2cm

If $M\subset \o$ is $\r$-invariants, then $\De(M)\equal \{\nu_z(f),
f\in M\}$ is $\Ga$-module, which simply means that 
$\Ga+\De\subset \De$. One has: $deg(M)\equal dim_\F(\o/M)=
|Z_+\setminus \De(M)|$, the number of the corresponding gaps.

An $\r$-module  $M\subset \o$  is called {\sf\em standard} if it 
contains an element $1+z(\cdots)$, Equivalently, $0\in \De(M)$
and $\Ga+\De=\De$ for $\De=\De(M)$. For any $M$, the module
$z^{-v}M$ is standard for $v=\min(\De(M))$.

\medskip
The {\sf\em Compactified Jacobian} of $\r$ 
is $Jac_\r=\{M=M_{st}\subset \o\}$.
It can be supplied with the structure of projective (singular)
variety, which we do not need in this paper. Moreover, our
definitions can be potentially used in characteristic zero:\ 
orders in Dedekind domains in $p$-adic fields instead of  
$\r\subset \F_q[[z]]$. The usage of algebraic geometry is 
limited here.

Following \cite{Pi}, {\sf\em Piontkowski cells}
$Jac_\De\equal\{M=M_{st}\mid \De(M)=\De\}$ are defined 
for any standard $\Ga$-modules $\De$. Some can be empty.
Actually, empty ones are always present beyond quasi-homogeneous
plane curve singularities, the {\sf\em Piontkowski phenomenon}. 
\medskip

{\bf Reciprocity map.}
The {\sf\em conductor} of $\r$ is the greatest
ideal $\mathfrak{c}=(z^\cc)=z^\cc\o$ in $\o$ that belongs to $\r$.
Then $\cc=2\de$ if and only if $\r$ is 
{\sf\em Gorenstein}, which includes plane curve singularities. 
Generally, $\de+1\le \cc\le 2\de$. Equivalently,
$|\Z_+ \setminus \Ga|=\de=
|\{\ga\in \Ga \mid 0\le \ga < 2\de-1\}$. 
\vskip 0.2cm

Assume that $\r$ is Gorenstein. Then \, $M\mapsto 
M^\hollowstar\equal\r:M=\{f\in \F((z)) \mid fM\subset \r\}$.
Generally, 
$N:M\equal \{f\in \F((z)) 
\mid fM\subset N\}\simeq M^\hollowstar:N^\hollowstar$.
For instance, $\r^\hollowstar=\r$ and $\o^\hollowstar=(z^{2\de})$.
See \cite{PS,St,Sto}.

Let us justify that  $\De(M^\hollowstar)=(\De(M))^\hollowstar$ for
\begin{align}\label{Dedual}
\De^\hollowstar\equal \{p\in \Z\mid p+\De\subset \Ga\}=
(2\de-1)-(\Z\setminus \De), 
\end{align}
where $v\pm X=\{v\pm x \mid x\in X\}.$  See Lemma 2.7 in \cite{GM}.

Any modules $M$ here can be represented in the form 
 $z^m M_{st}$ for standard $M_{st}$ and $m\in \Z$. 
For standard ones, $\Ga\subset\De\subset \Z_+$ and  
$\De^\hollowstar=(2\de-1-(\Z_+\setminus \De))\cup (2\de+\Z_+)$. 
%The ring $\r$ is assumed Gorenstein.
For instance, $\Ga^{\hollowstar}=\Ga$ and 
$(Z_+)^\hollowstar=2\de+\Z_+=\De(\o^\hollowstar)$.
Using (\ref{Dedual}), we obtain that 
 $\De(M^\hollowstar)\subset \De(M)^\hollowstar$ and 
$dim\, N/M=dim\, M^\hollowstar/N^\hollowstar$, which readily
gives the required coincidence: 
$\De(M^\hollowstar)=\De(M)^\hollowstar$.
See, e.g., (2.5),(2.6) from \cite{Sto}.

In particular,  
$\r\subset M\subset \o$ is equivalent to
$\r\supset M^\hollowstar \supset (z^{2\de})$,
which gives the passage 
from such $M$ to ideals $M^\hollowstar\subset \r$ containing $(z^{2\de})$.
%We note that $r\!k_q(M)=r\!k_q(M^\hollowstar)$ for any $M$. 
\vskip 0.2cm

This is for any Gorenstein $\r$, not only planar.
However, plane curve singularities are necessary
for the coincidence of the $\varrho$-ranks. Namely. 
$\varrho(M)=\varrho (M^\hollowstar)$ holds for any
plane curve singularities.  
 This is due to the  
{\sf\em Auslander-Buchsbaum theorem}; see Lemma 10 in \cite{ObS}.
The  Gorenstein ring  
$\mathbb{F}_q[[z^4,z^5,z^6]]$ is a counterexample \cite{ChQ}.
\vskip 0.2cm

{\bf Algebraic knots.} Let $\F=\C$. 
Recall that the link corresponding to
$\r$ is the 
intersection of $\{(x,y)\in \C^2 \mid  F(x,y)=0\}$
with the $3$-sphere 
$S_\ep^3\subset \C^2$ centered at $(0,0)$ of small
radius $\ep$.
They are {\sf\em knots} (connected) for unibranch 
singularities, which are for irreducible $F(x,y)$.

The invariants $\de$ and $mult$ are topological: they depend
only on the isotopy class of the corresponding link. 
Importantly, $\Ga$ totally determines
the topological type  
of the corresponding algebraic {\sf\em knot} 
(considered up to isotopy). Thus, 
topological invariants
of (irreducible) rings $\r$ over $\C$
are exactly those expressed in terms of $\Ga$.

Topological equivalence  
of algebraic {\sf\em links} is significantly
more ramified. Generally, {\sf\em splice diagrams} 
are needed here. See \cite{EN,ChD2}.
%\vskip 0.2cm
\vfil

{\bf Basic families.} Algebraic torus knots $T(\rr,\ss)$ 
are those for the
singularities $x^\ss\!=\!y^\rr$, where $\rr,\ss>0$ and
$gcd(\rr,\ss)\!=\!1$. The corresponding rings are
$\r=\F[[x\!=\!z^\rr,y\!=\!z^\ss]]$; they are called
{\sf\em unibranch
quasi-homogeneous} due to their invariance with respect to
the action $x\mapsto \ze^\ss, y\mapsto \ze^\rr$
for $\ze\neq 0$. For them, $mult=\min(\rr,\ss)$ and  
$\de =$ {\large$\frac{(\rr-1)(\ss-1)}{2}$.} 
The {\sf\em Piontkowski cells} are always 
nonempty $\mathbb{A}^N$ in this case. See  \cite{Pi}, which
contains explicit formulas for some other families.

The simplest ``non-torus" family 
is $\r\!=\!\C[[x=z^6\!+\!z^{7+2m}, y=z^4]]$ for $m\!\in\! \Z_+$,
when  $mult=4$ and  $\de_m\!=\!8\!+\!m$. The corresponding
cables are $C\!ab(13+2m,2)T(3,2)$; recall that 
$C\!ab(2,3)=T(3,2)$. One can replace here 
$T(3,2)$ by $T(2,3)$; the cabling parameters
are topological invariants. Note that $C\!ab(11,2)T(3,2)$
is non-algebraic. See Section \ref{sec:ITER-KNOTS} for
iterated torus knots, cabling parameters, and
Newton's pairs. 
The Piontkowski cells $Jac_\De$ are affine spaces
for this family as well, but there can be empty ones. 
\vskip 0.2cm

Generalizing, let us provide  $\Ga$ and $\de$ for 
the ring 
$\r=\F[[ x=z^{\upsilon \rr} +z^{\upsilon \rr+p}, y=z^{\upsilon \rr}]]$,
where  $\rr>\ss>0$, $p>0$, and we (always) assume that
$gcd(\rr,\ss)=1=gcd(\upsilon,p)$. The
corresponding cable for $\upsilon>1$ is 
$C\!ab(\upsilon \rr \ss\!+\!p,\,\upsilon)\,
T(\rr,\ss)$. One has:
\begin{align}\label{Ga-cab}
&\Ga=\lan \upsilon \rr,\,
\upsilon \ss,\,\upsilon \rr \ss\!+\!p\ran,\ 
2\de=\upsilon^2 \rr \ss\!-\!\upsilon (\rr\!+\!\ss)+
(\upsilon\!-\!1)p+1.
\end{align}
\vskip 0.2cm

Generally, the {\sf method of syzygies} from \cite{Pi}
provides a machinery
for proving that the cells are affine spaces and
finding their dimensions of $Jac_\De$, including the
count of the number of
empty cells. However, this can be combinatorially involved.
See \cite{ChP1}, where 
$\varrho$-ranks were added and we proved that
the corresponding cells remain affine spaces for
certain families.  Paper \cite{ChP2} was devoted to modules of  
any ranks $\cn$; it was proven there that
the cells are still affine spaces for (irreducible)
quasi-homogeneous singularities. See also \cite{GMO}, 
which developed Piontkowski
formulas in the case of $p=1$, including the count
of empty cells; the $\varrho$-ranks 
and $\cn>1$ were not discussed there.

Papers \cite{ChP1,ChP2} contained a lot of examples of non-affine
cells, which were conjectured there to be always configurations of
affine spaces. They can be of
various kinds (and even disconnected). Some of them are 
of Euler characteristic $0$; for instance,
$\mathbb{A}^N\setminus \mathbb{A}^{N-1}$ 
and empty ones.
Technically, they disappear 
when $q=1$ (over the field with $1$ element). Such cells for $p=1$
were discussed in  \cite{GMO}.
\medskip

Let us mention here
\cite{KTs}, where the Euler characteristic was identified 
with the count of certain {\sf\em Dyck paths}; see Proposition 4.12.
%( for $\upsilon\ge 1$ above). 
In the approach of the present
paper, this result and the considerations in \cite{GMO} can be interpreted
as follows. 

The {\sf\em Gr\"obner cells} (below) for $p=1$  are all non-empty
because each contains the corresponding monomial ideal (exactly one).
This may be used to prove that their Euler characteristics are $1$, 
i.e.
only these monomial ideals contribute as $q\to 1$. Presumably,
this holds for arbitrary {\sf\em instanton slices} with monomial
$\sC$. If $\sC$ is not monomial, there can be empty
{\sf\em Gr\"obner cells}.  A counterexample is when 
$x^2+y^3$ is added to $I_\la$ for $\la=(4,2,1)$. 
See Conjecture \ref{conj:echar1}.

%We conjecture that Euler characteristics of $G_{\mu\subset \la}$
%are always $1$ if $\sC=I_\la$, which is  for any Young diagrams $\la$. 

\vskip 0.2cm

{\bf Good reduction.}
The motivic superpolynomials will be needed for
plane curve singularities over finite 
fields $\F=\F_q$:\, for $q=p^k$ and prime $p$.
The following is the passage from the base field $\C$ to $\F_q$.

We begin with 
$\r$ over $\C$,  define it over $\Z$, which is always 
doable {\sf\em within a given isotopy type}, and then
consider $\r\otimes _{\Z} \F_q$ for $q=p^k$.  
Prime $p$ is called a {\sf\em place (prime) of good
reduction} if
$F(x,y)$ remains irreducible over $\F_q$,  
and if $\Gamma$ remains unchanged upon this reduction. 
\vskip 0.2cm

All primes $p$ are good in this sense for the
rings $\C[[x\!=\!z^\rr,y\!=\!t^\ss]]$.
It is expected that there are no primes $p$  
of bad reduction for any algebraic knots.
This means that  given any $p$, there exists
 $\r$ within a given topological type
 where this $p$ is good.
\vskip 0.2cm

For instance,
$\Z[[x\!=\!t^4,y\!=\!t^6\!+\!t^7]]$ has bad reduction at $p=2$. 
Indeed, 
$\nu_z(y^2-x^3)=14$ over $\F_2$,  which is $13$ for $p\neq 2$. However,
this singularity is equivalent over $\C$
(analytically, not only topologically) to the one for
$\Z[[t^4+t^5,t^6]]$, where $p=3$ is the
only place of bad reduction. Thus, the corresponding cable
has no bad reduction. 
%\vskip 0.2cm

\subsection{\bf Modules of any ranks}\label{sec:ranks}.
We switch from modules in $\o$ to modules in 
$\Om\equal \o^{\oplus \cn}=\oplus_{i=1}^\cn
\o\vep_i$, a free module  over $\o$ of
rank $\cn$.  The basis
$\{\vep_1,\ldots, \vep_\cn\}$ will be fixed in this section. 

The
{\sf\em valuation} in $\Om$ will be
$\nu(\sum_{i=1}^\cn f_i \vep_i)\equal\min_{i=1}^\cn(\nu_z(f_i)+\nu^i)$,
where we take $\nu^i=\nu(\vep_i)$  assuming that 
$0=\nu^1<\nu^2<\cdots<\nu^\cn<1$. Then one has:\ 
$\nu(\Om)=\cup_{i=1}^\cn 
\{\nu^i+\Z_+\}$, $\Ga^\nu\equal \nu(\r^{\oplus \cn})=
\cup_{i=1}^{\cn}\{\nu^i +\Ga\}$.

Actually, $\nu^i$ can be arbitrary here such
that  $\nu^i-\nu^j\not\in \Z$ for $i\neq j$. Different choices
of $(\nu_i)$ result in ``wall crossing". 
\medskip

For any $\r$-submodule
$M\subset \Om$, 
$dim_{\F}(\Om/M)=|G^\nu(M)|$, where $G^\nu(M)$, 
the {\sf\em set of gaps},
is defined as $\nu(\Om)\setminus\De^{\!\nu}(M)$ 
for $\De^{\!\nu}(M)\equal \nu(M)$. The latter set
belongs to $\nu(\Om)$ and is a  $\Ga$-modules:\,
$\Ga+\De^{\!\nu\!}\subset \De^{\!\nu\!}$.

\medskip
{\sf\em Standard modules}
are now defined as $\r$-invariant $\F$-subspaces $M\subset \Om$
such that $\o\,M=\Om$. 
Equivalently, the last condition means that 
$\De^{\!\nu}(M)$ must be  standard, where
a $\Ga$-module  $\De^{\!\nu}\subset \nu(\Om)$ is called standard
if $\{\nu^i\}\subset \De^{\!\nu}$. The number of 
standard $\De^{\nu}$
is $(std\,_\Ga)^\cn$,
where $std\,_\Ga$ is the number of standard $\Ga$-modules
$\De\subset \Z_+$ in rank one.

Let  $dev(M)\equal \de\, \cn-deg(M)$ for $deg(M\equal \,
dim_{\F}(\Om/M)=|D^\nu(M)|$. Here
$D^\nu(M)\equal\De^{\!\nu}(M)\setminus \Ga^\nu$, which is 
the set of {\sf\em added gaps} from $\Ga^\nu$ to $\De^\nu(M)$.   
In particular, $|G^\nu|=\de\, \cn=dev(\Om)=$\,
$dim_{\F}\Om/\r^{\oplus \cn}$
for $G^\nu\equal D^\nu(\Om)=\cup_{i=1}^{\cn}\{\nu^i +G\}$,
where $G=\Z_+\setminus \Ga$ as above. 
\medskip

{\bf Invertible modules.} They are, by definition,
 standard ones with $\cn$ generators
over $\r$. We will use symbol  $\Im$ for them (instead of $M$).

There are $3$ other (equivalent) definitions:\
(a)\, $\De^{\!\nu}(M)=\Ga^\nu$,
which is obviously the smallest standard $\Ga$-module;
(b)\, they  are standard of $dev=0$ (deviation from $\r^\cn$);
(c)\, they are  standard and $deg(M)=\de\,\cn.$ 
\vskip 0.2cm

It is easy to see directly that {\sf\em invertible modules} $\Im$ 
depend on
 $\de \cn^2$ free parameters. Indeed, we can make the generators
in the form $f^i=\vep_i+ \sum_{ijk} c_{ijk}z^k\vep_j$,
where $k\in D=\Z_+\setminus \Ga$, and $1\le j\le \cn$ are arbitrary.
This gives $\de \cn^2$ free parameters. 
See Lemma \ref{lem:inv} below for a generalization. 

Also, one can identify the
set  $\{\Im\}$ with $GL_\cn(\o)/GL_\cn(\r)$
under the action of $GL_\cn(\o)$ on elements of $\Om$ considered
as vectors:\ linear combinations of $\vep_i$ with coefficients 
in $\o$. Then $GL_\cn(\r)$ is the stabilizer of $\r^\cn$. It 
suffices  to consider  here elements in $GL_\cn(\o)$ 
(and in $GL_\cn(\r)$) that are $\bf 1$ modulo
$zM_{\cn}(\o)$, which gives the required $\de\cn^2$. 
\medskip

This dimension formula is a singularity counterpart of the
famous dimension formula $(g-1)\cn^2+1$ for the space of 
stable 
bundles of rank $\cn$ over a smooth projective 
curve of genus $g\ge 2$.
We do not have ``global sections"
and therefore do not divide  by the action of $GL_\cn(\F)$, which explains
missing $-1$ in our formula.
\medskip

We note that theory of torsion-free sheaves of 
ranks greater than $1$ over {\sf\em singular} 
(projective) curves is quite involved even for
simple singularities and small ranks $\cn$. The problem is
that the resulting varieties are highly singular, the definition
of (semi)stable sheaves is involved and so on. The restriction
to ``pure" singularities is much simpler, though the corresponding
Quot-schemes can be (still) quite involved. However, we can employ
here various additional tools. 
\medskip 

The following generalization of the dimension formula $\de\cn^2$ 
and 
Lemma 6.1 from \cite{ChQ}, is
important for our construction.
\medskip

Let 
$\{e_g, g\in D^\nu \}$ be a system of any elements $e_g\in M$
such that $\nu(e_g)=g$, where $D^\nu=D^\nu(M)$ for a given standard $M$.
They generate $M$ as an $\r$-module.
For any {\em invertible} submodule
$\Im\subset M$, its  $\r$-generators can be taken as follows:
$\ep^i=e_{\nu^i}+\sum_{g\ge 1}a_g^i e_g$, 
where  $1\le i \le \cn$,\, $g\in D^\nu$ and  $a_g^i\in \F$. 
Recall that $0\le \nu^i<1$ for $1\le i\le \cn$. Here
$|D^\nu|=dev(M)$ and we employ the Nakayama Lemma. Claim $(ii)$
below readily follows from the identity
$\bigl|\{\ga\in \Ga \mid 1\le \ga < 2\de\}\bigr|=\de-1$, which holds
for any Gorenstein $\r$. 

\medskip
\begin{lemma}\label{lem:inv}
(i) The parameters $\{a_g^i\}$ make 
the set $\{\Im\subset M\}$ an 
affine space $\mathbb{A}^{dim}$ (over $\F$), where $dim=dev(M)\,\cn$.
This  identification  
depends on the choice of  $\{e_g, g\in D^c(M)\}$.  
In particular, all invertible modules, which are those for 
$M=\Om$ above, form the affine space  $\mathbb{A}^{\de \cn^2}$. 

(ii) Inside any given invertible $\Im$, the number of free
parameters (from $\F$) for the 
systems of generators  $\tilde{\ep}^{\,i}=\ep^i+z(\cdots)$ considered
modulo $z^{2\de}\Om$  equals $(\de-1)n^2$. Here  $\le i\le \cn$
and this number is the same as that for $\Im =\r^\cn$; use
the natural action of $GL_{\cn}(\o)$. 
\sq
\end{lemma}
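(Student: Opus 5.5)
The plan is to handle $(i)$ by a triangularity argument in the basis $\{e_g\}$, and then to reduce $(ii)$ to a dimension count that uses the Gorenstein identity quoted before the lemma.

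For $(i)$, I fix a standard $M\subset\Om$ and the elements $\{e_g,\ g\in D^\nu(M)\}$ with $\nu(e_g)=g$. Since $\Ga^\nu+D^\nu$ together with $\Ga^\nu$ exhausts $\De^{\!\nu}(M)$, every $f\in M$ can be reduced successively by leading valuations to an $\F$-combination $\sum_g c_g e_g$ modulo $\r$-multiples. This gives that the $e_g$ generate $M$ over $\r$.

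Now let $\Im\subset M$ be invertible. By definition (a), $\De^{\!\nu}(\Im)=\Ga^\nu$, so $\Im$ contains elements of valuation $\nu^i$ for each $i$. By Nakayama, $\cn$ such elements generate $\Im$. Since $\nu^i$ are the minimal valuations of $M$ in the class $\nu^i+\Z_+$, I can normalize the $i$-th generator to $e_{\nu^i}+\sum_{g\ge1}a^i_g e_g$. The point is that the terms of $\r$-multiples of higher valuation lie in $\Ga^\nu$-shifted positions, and these can be absorbed by changing the generators.

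The key step is uniqueness of the normalized form. Two normalized systems that generate the same $\Im$ must differ by an element of $\Im$ whose valuation lies in $D^\nu(M)$, that is, off $\Ga^\nu=\De^{\!\nu}(\Im)$. This is a contradiction unless the difference is zero. Conversely, any choice of $a^i_g$ yields an $\r$-module $\Im$ with $\De^{\!\nu}(\Im)=\Ga^\nu$. To see this I would show that valuations in $D^\nu$ cannot appear, by a leading-term argument ordering $D^\nu$ by value. Then $\Im$ is standard with $dev=0$.

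The number of parameters $a^i_g$ is $\cn\cdot|D^\nu(M)\cap\{g\ge 1\}|$. The $e_{\nu^i}$ account for the values $\nu^i$, which are not counted in $D^\nu$ relative to $\Ga^\nu$. This gives $dim=dev(M)\,\cn$. The case $M=\Om$ gives $|D^\nu|=\de\cn$, hence $\mathbb{A}^{\de\cn^2}$, which is consistent with $GL_\cn(\o)/GL_\cn(\r)$.

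For $(ii)$, I fix $\Im$ and count the systems $\tilde\ep^{\,i}=\ep^i+z(\cdots)$ in $\Im$ modulo $z^{2\de}\Om$. Via $GL_\cn(\o)$, whose elements are $\mathbf 1$ mod $z$, one can transport $\Im$ to $\r^\cn$, so it suffices to treat $\Im=\r^\cn$. Each correction term is an element of $\r^\cn$ of positive valuation, taken modulo $z^{2\de}\o^\cn\subset\r^\cn$. Here the conductor is $z^{2\de}\o$ by the Gorenstein property.

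Per component, the free coefficients are the $\ga\in\Ga$ with $1\le\ga<2\de$, and there are $\de-1$ of these by the quoted identity. With $\cn$ generators and $\cn$ components this gives $(\de-1)\cn^2$.

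The main obstacle I expect is the converse direction in $(i)$: showing that an arbitrary choice of $a^i_g$ does not accidentally produce extra valuations, for instance through cancellations between $\r$-combinations. I would handle it by showing that the map $\r^{\cn}\to\Im$ is injective on leading terms, using that the $\nu^i$ lie in distinct classes mod $\Z$.
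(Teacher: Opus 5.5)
Your proposal is correct and follows essentially the paper's route: for (i), the normal form $\ep^i=e_{\nu^i}+\sum_{g}a^i_g e_g$ obtained by leading-term reduction and Nakayama; for (ii), transporting $\Im$ to $\r^\cn$ by an element of $GL_\cn(\o)$ congruent to $\mathbf 1$ modulo $z$, then using the Gorenstein count $\bigl|\{\ga\in\Ga\mid 1\le\ga<2\de\}\bigr|=\de-1$ per coordinate. You spell out the uniqueness and converse steps that the paper leaves implicit, and the converse goes through because every tail term $r_ia^i_ge_g$ has value at least $\nu_z(r_i)+1>\nu_z(r_i)+\nu^i$, so that $\nu\bigl(\sum_i r_i\ep^i\bigr)=\min_i\bigl(\nu_z(r_i)+\nu^i\bigr)\in\Ga^\nu$; note also that the $\r$-generators of $M$ are the $e_{\nu^i}$ together with the $e_g$, $g\in D^\nu$, not the $e_g$ alone.
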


%\vskip 0.2cm
%Finally, the superpolynomial of rank $\cn$ of  $\r$ over $\F=\F_q$
%is as follows:

\medskip
{\bf Superpolynomials (any ranks).}
We will now define the motivic superpolynomials for
any ranks $\cn$:
\begin{align}\label{sup-rank}
&\h^{mot}_\r(q,t,\aa; \cn)\equal
\sum_{M=M_{st}} t^{deg(M)} (1+\aa q^{\cn})
\cdots (1+\aa q^{\varrho(M)-1}),\\
&\varrho(M)=dim_\F (M/\mathfrak{m}_\r M),\,
\text{ and the sum is over standard } M.\notag
\end{align}
%and the summation is over all standard $M\subset \Om=\o^{\cn}$.
\noindent
This sum is finite here because standard $M$ contain elements
$\vep_i+ z(\cdots)$ for any $1\le i\le \cn$, and
are trapped between  $\mathfrak{c}^\cn$ for the
conductor $\mathfrak{c}$ of $\r$ and $\Om=\o^\cn$. The space
$\Om/\mathfrak{c}^\cn$ is of dimension $2\de\cn$ over $\F_q$.

The dependence on $q$ is via the count of $M_{st}$. Formula
(\ref{sup-rank})
does not give that $\h^{mot}_\r(q,t,\aa;\cn)$ is a $q$-polynomial, 
but this is conjectured.
\vskip 0.2cm

We mention Corollary 1.11, Theorem 8.12  and Proposition 8.14 
in \cite{KTs}, 
where the number
of $\F_q$-points of Compactified Jacobians for (any) unibranch
plane curve
singularities was proven to be a polynomials in terms of $q$ 
and a topological invariant of a singularity. This is the case 
of  $\h^{mot}(q,t\!=\!1,\aa\!=\!0)$. Theorem 
\ref{thm:general-semiroot-conductor} addresses arbitrary
$t,\aa$.
\vskip 0.2cm

{\bf Reduction formula.}
We will use the natural action of $GL_\cn(\o)$ in $\Om=\o^\cn$,
and in the variety  $\mathbb{J}_\cn\equal \{M_{st}\}$. Also, let
$GL_\cn^{(k)}\equal \{\Phi=\mathbf{1}+ z^{k}Mat_{\cn}(\o)\}$ and
$\u\equal GL^{(1)}_\cn(\o)/GL_\cn^{(2\de)}(\o)$.
Consider the variety  
$\tilde{\mathbb{J}}^{ext}_\cn$ of
pairs $\bigl\{M, \tilde{\Psi}=\bigl(\tilde{\psi}^{i}\bigr)\bigr\}$ 
modulo 
the action of $GL_\cn^{(2\de)}(\o)$, 
where $M$ is standard,  
$\psi^{i}=\vep_i+z(\cdots)\in M$ for $1\le i\le \cn$, and
$\tilde{\psi}$ is $\psi$ modulo $z^{2\de}\Om$. These
elements generate an invertible module
$\Im\subset M$, and any invertible $\Im$ has such
a basis by definition. The group
 $\u$ acts in $\tilde{\mathbb{J}}^{ext}_\cn$.
\vskip 0.2cm

There are $2$ natural surjective 
projections 
from $\tilde{\mathbb{J}}_\cn^{ext}$.
The $1${\footnotesize st}  one is\, $\pi_1:\tilde{\mathbb{J}}^{ext}
\to \{M'\supset \r^\cn\}$,  
sending $(M,\tilde{\Psi})\mapsto M'=\Psi^{-1} M\supset \r^\cn$, 
where we consider $\tilde{\Psi}$ and its lift 
$\Psi$ as (invertible) matrices. The fibers are isomorphic to $\u$,
so they are $\mathbb{A}^{dim_1}$ for $dim_1=(2\de-1)\cn^2$.
This image is $\{M \mid \r^\cn\subset M\}$; these $M$ are automatically
standard. 

The $2${\footnotesize nd} is the forgetting
map $\pi_2: (M,\Psi)\mapsto M$. It is fiber at 
$M\in \tilde{\mathbb{J}}^\cn$
is an affine space of dimension $dim_2(M)=dev(M)\cn+(\de-1)\cn^2=
(\cn\de-deg(M))\cn+(\de-1)\cn^2=\de \cn^2 -deg(M)\cn$, where we
use  Lemma \ref{lem:inv},(i,ii). For this projection:\

\begin{align*}
&\h^{ext}_\r(q,t,\aa;\cn)\equal
\sum_{\{M,\tilde{\Psi}\}\in \tilde{\mathbb{J}}^{ext}} t^{deg(M)} 
(1+\aa q^{\cn})\cdots (1+\aa q^{\varrho(M)-1})\\
&=q^{(2\de-1)\cn^2}\,
\sum_{M=M_{st}} \Bigl(\frac{t}{q^\cn}\Bigr)^{deg(M)} 
(1+\aa q^{\cn})\cdots (1+\aa q^{\varrho(M)-1}).
\end{align*}

Using the first projection  $\pi_1$:\, 
\begin{align*}
&\h^{ext}_\r(q,t,\aa;\cn)=q^{(2\de-1)\cn^2}\h^{red}_\r
(q,t,\aa;\cn),
 \text{ where we set }\\
&\h^{red}_\r(q,t,\aa;\cn)\equal
\sum_{\r^\cn\subset M} t^{deg(M)} 
(1+\aa q^{\cn})\cdots (1+\aa q^{\varrho(M)-1}).
\end{align*}

Combining these expressions for $\h^{ext}$, we obtain the
identities
$$ \h^{mot}(q, \frac{t}{q^\cn}, \aa)=\h^{red}(q,t,\aa),\ 
\h^{mot}(q, t , \aa)=\h^{red}(q,\frac{1}{t q^\cn},\aa),
$$
which hold for any (irreducible) Gorenstein $\r$.

The last step is employing the {\sf\em reciprocity map},
which is now 
$M^\hollowstar\equal \{\psi\in \F_q((z))^\cn \mid (\!(\psi,\phi)\!)\in \r 
\text{ for any } \phi\in M \}$, where we use the standard 
{\sf\em dot product} $(\!( \sum_i a_i\vep_, \sum_i b_i\vep_i)\!)=
\sum_ia_ib_i$. It satisfies all reciprocity properties for 
Gorenstein $\r$, and the relation 
$\varrho(M)=\varrho (M^\hollowstar)$ (only) for plain curve singularities 
 due to the  
{\sf\em Auslander-Buchsbaum theorem}. 

The embedding $\r^{\cn}\subset M\subset \Om=\o^\cn$ give that 
$M^\hollowstar$ can be  arbitrary $\r$-invariant 
submodules $M'$  in $\R^\cn$ containing  $\mathfrak{c}^\cn$
for the conductor $\mathfrak{c}$ of $\r$. Setting
$deg(M')=dim_\F(\r^\cn/M')$, let us introduce:

$$ 
\h^{dual}(q, t, \aa)\equal\sum_{\mathfrak{c}^\cn\subset M'
\subset \r^\cn}
t^{deg(M')} (1+\aa q^{\cn})\cdots (1+\aa q^{\varrho(M')-1}).
$$

Using that $deg(M)=dim_\F(\Om/M)=
\de \cn -dim_\F(\R^\cn/M^\hollowstar)$, we obtain the 
final version of the {\sf\em reduction formula}:

\begin{align}\label{redforranks}
\h^{mot}(q,t,\aa)=t^{\de\cn} q^{\de \cn^2} \h^{dual}(q,1/(t q^{\cn}),\aa),
\end{align}
which holds as such for any unibranch plain curve singularities
and arbitrary ranks $\cn$, but must be considered at $\aa=0$
for Gorenstein $\r$, if they are not from plain curve singularities.  
Making $\aa=0$ in this case  can be avoided by 
adjusting the definition of $\varrho$ for $M'$, but this
makes the definition of $\h^{dual}(q, t, \aa)$ not strictly
inside of $\r^\cn$. We note that {\sf\em superduality}, generally,
does not hold for Gorenstein curve singularities (even in relatively
simple examples), though
holds for (uncolored) $L$-functions due to Galkin and St\"ohr.

\subsection{\bf Multibranch singularities} \label{sec:multi}
The generalization of our reduction formulas to 
{\em non-unibranch} plane singularities is quite doable,
but we will consider only the case of rank $1$ in this
paper. This is the case of {\sf\em Affine Springer fibers} of type $A$
for square-free characteristic polynomials, corresponding
to uncolored algebraic {\sf\em links} in topology.  
See \cite{ChQ} for, possibly, non-square-free
characteristic polynomials. 
\medskip

{\bf Adding idempotents.}
The ring will be now $\r\subset \o\equal 
\oplus_{i=1}^\kappa \o_i e_i$,
where $\o_i=\F[[z_i]]$ and $e_i e_j=\de_{ij} e_i$,
and $\kappa$ is the number of (absolutely)
irreducible components
of a singularity.
We set
$e\equal\sum_{i=1}^\kappa e_i$ (the unit in $\o$),
 $z\equal \sum_{i=1}^\kappa z_i e_i$ and identify $z_i$ with
$z e_i$. 
Generally, $f_i$ will be the projection $f e_i$ for any $f\in \o$. 
As above, $\r$ contains $1=e$ and have $2$ generators: 
$x=\sum_{i=1}^\kappa x_i$ and 
$y=\sum_{i=1}^\kappa y_i$ in $\mathfrak{m}_{\o}=z\o$. 
Also, the localizations of the projection 
$\r_i$ of $\r$ must be $\F((z_i))$, and $\r_i/\mathfrak{m}_i=\F$
for $\mathfrak{m}_i=\r_i\cap z_i\o_i$, the maximal ideals of $\r_i$.

By construction, 
$\prod_{i=1}^\kappa
F_i(x,y)=0$, where $F_i(u,v)$ are irreducible polynomials over $\F$,
assumed irreducible over its algebraic closure $\overline{\F}$, 
such that 
$F_i(x_i,y_i)=0$.  The polynomiality of $F_i(u,v)$ 
can be achieved by deforming $x_i$ and $y_i$ 
if necessary (the Weierstrass Preparation Theorem). 
The product $F(u,v)=
\prod_{i=1}^\kappa F_i(u,v)$ 
must be {\em square-free}, i.e. $F_i$ and $F_j$ are 
assumed non-proportional
for $i\neq j$. This is standard for curve singularities.
The general case (any ranks $\cn_1,\ldots, \cn_\kappa$) 
is when $\r$-modules $M$ are considered in $\Om=\o^{\cn_1}\oplus 
\o^{\cn_2}\oplus\cdots\oplus \o^{\cn_\kappa}$, for $\r$
and $\o$ as above, acting in $\Om$ diagonally.  See \cite{ChQ}. 

The connection conjecture
$\h^{daha}(q,t,\aa)=\h^{mot}(q,t,\aa)$ is stated
there in full generality, with comprehensive
discussion and many example. The conjecture 
$\h^{mot}(qt,t,\aa)=L(q,t,\aa)$ is stated there for
any multibranch singularities, but only for rank one.
\vskip 0.2cm

Let
$\Ga\equal\{\nu(f), \r\ni f\neq 0\}$, where 
$\nu(e_i)=\nu^i$ and $\nu(f)\equal\min_{i=1}^\kappa
\{\nu_i(f_i)+\up^i\}$ for any $f\in \o$.  We assume that 
$\up^1=0<\up^2<\cdots<\up^\kappa<1$. Actually, they can be any 
numbers from $\R_+$ such that $\up^i-\up^j\not\in \Z$ for $i\neq j$.
Let $\de\equal\text{dim}_{\F}(\o/\r)=|\nu(\o)\setminus \Ga|$. 
Note that
$|\nu(\mathfrak{m}_\o)\setminus \nu(\mathfrak{m}_\r)|=\de-\kappa+1$, 
where
$\mathfrak{m}_\o\equal\oplus_{i=1}^\kappa \mathfrak{m}_{\o_i}$ 
for $\mathfrak{m}_{\o_i}=z_i\o_i\subset \o_i$ and $\mathfrak{m}_\r\equal
\r\cap \mathfrak{m}_\o$. 
\vskip 0.2cm

As above, the 
{\sf\em conductor} $\mathfrak{c}$ is the greatest 
$\o$-ideal that belongs to $\r$. It is $\o$-generated by 
$z_i^{n_i}\in  \r$ for 
minimal $n_i>0$ such that $z_i^{n_i+j}\in  \r$ for any $j\ge 0$.
 Equivalently, $n_i$ are minimal such that  
$n_i+\up_i+\Z_+\subset \Ga$. Then 
dim\,$_\F(\o/\mathfrak{c})=\sum_{i=1}^\kappa n_i=2\de$.
% which
%is for any plane curve
%singularities
%(the defining property of Gorenstein rings). 

\subsection{\bf Multibranch reduction}\label{sec:sup-links}
The  equation $F(u,v)=0$ that
gives the corresponding singularity can be now reducible,
but its irreducible factors must be pairwise non-proportional.
The corresponding link for $\F=\C$ is 
$\{F(u,v)=0\}\cap S^3_\ep$ in $\C^2$
with the coordinates $u,v$; it has $\kappa$ components.
Its isotopy type gives the topological type of the singularity.
It is fully determined by the connected components and their
pairwise linking numbers.

The passage from $\C$ to $\F_q$ is basically the same as in the 
unibranch case. Namely, we pick $x,y\in \Z[[z]]$ within
a given topological type and then switch to 
$\F_q$ for $q=p^m$ provided that $p$ is a prime of {\sf\em
good reduction}. By definition, good $p$ are such that 
the corresponding $F_i$ remain irreducible and 
pairwise non-proportional over $\F_q$. Also, the  semigroups 
$\Ga_i$ for $\r_i$
and the pairwise intersection numbers 
 must remain unchanged.
It suffices to assume that $\{\de_i\}$ (for the components $\r_i$)
and the total $\de$ remain unchanged. The latter is given in
terms of $\{\de_i\}$ and the pairwise intersection numbers.

Good $p$  are necessary to relate 
the motivic superpolynomials (in our approach) 
to the DAHA superpolynomials and the KhR-polynomials.
\vskip 0.2cm

{\bf Standard modules.} Let $\F=\F_q$ and $\r\subset \o$ are 
as above. The {\sf\em standard modules} $M\subset \o$ are $\r$-invariant
such that $M_i=M e_i$ are standard in $\o_i$. Equivalently,
$\o M=\o$. This means that for any 
$1\le i\le \kappa$, there exists some $\phi\in M$ such that 
$\phi_i-e_i\in z_i\o$, where $\phi_i$ is the projection of
$\phi$ onto $\o_i$.  Such $\phi$ can be different
for different $e_i$, but it is simple to check by induction 
that for any standard $M$, there 
exists one $\phi$ serving all $e_i$, i.e., such 
that $\phi\in \o^*\cap M$, 
where $\o^*=\prod_{i=1}^\kappa \o_i^*$ (the group of
invertibles in $\o$).

A special case is when  $\r_i$ are all isomorphic to $\r_\circ$.
Then $\r\cong \r_\circ$ and our $\o$ (in this case) is actually
$\Om_\circ=\o_{\circ}^{\oplus \kappa}$ from Section 
\ref{sec:ranks}. They are isomorphic as $\r$-modules. 
The  difference (and a significant one) is
that the standard modules for $\Om_\circ$ must satisfy
stronger relations. There must exist $\phi^{(i)}$
for every $1\le i\le \kappa$, such that
$\phi^{(i)}=e_i$ modulo $z\Om$; the basic vectors were
denoted $\vep_i$ for $\Om_\circ$. 
Such modules  are standard for $\r\subset \o$ (with
$\kappa$ branches), but not 
the other way round. Recall that one 
 $\phi$  can ``serve"
all $e_i$ for standard $M$ for $\r\subset \o$.

\vskip 0.2cm

The uncolored {\sf\em motivic
superpolynomials} (when $\nn=1$)  are as follows:
$$
\h^{mot}\equal\sum_{M} t^{\hbox{\tiny dim}(\o/M)}
\prod_{j=1}^{\varrho(M)-1}(1+\aa q^j)
\text{ summed over standard } M.
$$
Here $\varrho(M)=$\,dim\,$_{\F_q} M/\mathfrak{m}_\r M$ and
\,dim\,$(\o/M)=$\,dim\,$_{\F_q}(\o/M)$, which we generally
denote by $deg(M)$. One has:\, 
$deg(M)=
|\nu(\o)\setminus \De(M)|$, where $\De(M)\equal\{\nu(f) \mid
f\in M\}$ contains $\Ga$ for standard $M$.

Note that the action of $\Ga$ on  $\De(M)$ is given
by the ``twisted" formulas $(\nu^i+\ga_i)+(\nu^i+x_i)=
(\nu^i+\ga_i+x_i)$, which is due to our usage of $\{\nu^i\}$.
 One has: $\Ga+\De=\De$ in this (twisted) sense
for standard $\De$. As above, standard $M$ contain 
the conductor $\mathfrak{c}$. Indeed,
 $M=\r M\supset \mathfrak{c}M\supset
\mathfrak{c}\,(1+z(\cdot))=\mathfrak{c}.$ This gives
that  there are finitely many standard modules for $\F=\F_q$. 

\comment{
Extending the corresponding definition in the unibranch case, 
let $\j(\De)\equal \{ M \mid \De(M)=\De\}$ for standard 
$\Ga$-modules $\De$.  They are quasi-projective schemes over 
$\F_q$. We conjecture that they are {\sf\em affine configuration 
spaces}, which is based on extensive calculations.
}
%\vskip 0.2cm

Let us emphasize  that evaluations and gaps heavily depend on
$\{\nu^i,1\le i\le \kappa\}$. See Lemma 2.6 in \cite{ChP2} 
concerning the passage from one such set to another. It is
not always convenient to make $\nu_i<1$. For instance, 
let $n_1\ge n_2\ge \cdots\ge n_\kappa$ for the conductor 
$\mathfrak{c}=\oplus_{i=1}^\kappa z^{n_i}\o$, 
and $\nu^i=n_1-n_i-\ep_i$ for small sufficiently general
$\ep_i>0$, where $i\ge 2$ and $\nu^1=0$ as above. 
Then $\nu(\mathfrak{c})=\{v\in \nu(\o) \mid v>n_1-1\}$.
\vskip 0.2cm

{\sf\em Invertible modules} now
are standard of $\varrho$-rank $1$, equivalently,
those with one $\r$-generator, which is 
$\phi=1+\sum_{i=1}^{\kappa-1}\al_i e_i+\sum_{g} a_g g$
for $g\in G\equal\nu(\mathfrak{m}_\o)\setminus 
\nu(\mathfrak{m}_\r)$. The parameters $\al_i\in \F_q^*$
and $a_g\in \F_q$
are free and such modules constitute a group scheme
 $\mathbb{G}_m^{\kappa-1}\times
\mathbb{G}_a^{\de-\kappa+1}$ over $\F$. 
Their contribution to $\h^{mot}$ is
$(q-1)^{\kappa-1} q^{\de-\kappa+1} t^{\de}$. 

The greatest $\varrho$-rank is that 
for $\o$,
which is $m_1+\cdots+m_\kappa$ for 
$m_i=\min\bigl\{\Ga_i-\nu^i\setminus \{0\}\bigr\}$,
where $\Ga_i$ are the valuation
semigroups for $\r_i$. They are the $\varrho$-ranks
of the corresponding components, which are the multiplicities
of the irreducible singularities plus $1$. 
%Note that, generally,
%$\o$ is not  a unique standard module of maximal $\varrho$-rank. 
%\vskip 0.2cm
\medskip

{\bf Reduction formula.}
The following formula reduces $\h^{mot}$ to considering 
$M$ containing $\r$; such $M$ are standard automatically.
Let $\overline{M}\equal (M+\mathfrak{m}_\o)/\mathfrak{m}_\o$
for standard $M$; it is a vector spaces over $\F_q$ of dimension
$\upsilon_M=|\De(M+\mathfrak{m}_\o)\setminus \De(\mathfrak{m}_\o)|$.
One has: $\upsilon_\o=\kappa, \upsilon_\r=1$. 
Also, we set $\overline{U}=\o^*/(\F_q^*+\mathfrak{m}_\o)\cong
(\F_q^*)^{\kappa-1}$. 
The {\sf\em Generalized Jacobian} of $\r$ is 
$U\equal \o^*/\r^*$, which is isomorphic
to $\overline{U}\times  \F_q^{\de -\kappa+1}$ as a group.

Let $I_M=\{\phi\in M\cap \o^*\} \mod \r^*$ and
$I_{\overline{M}}= \{\overline{M}\cap \overline{\o}^*\} \mod \F_q^*$.
The latter  is a set of the elements in the group $(\F^*)^\kappa$
modulo $\F_q^*$ 
that belong to $\overline{M}$, the image of $M$ in $\F^\kappa$. 
%Since $M$ are standard, $I_M$ and $I_{\overline{M}}$
%are never empty. 

The groups
$U$ and $\overline{U}$ act in $I_M$ and $I_{\overline{M}}$ respectively. 
One has $I_{\overline{M}}\cong (\F_q^*)^{\upsilon_M-1}$ for 
sufficiently general $M$, but the set $I_{\overline{M}}$ can be 
smaller than this. 
Generally, finding such sets 
leads to interesting linear algebra problems. 
This can be reformulated as finding subsets of elements in vector
subspaces $V\subset \F_q^\kappa$ with {\sf\em all nonzero} 
projections 
onto the basic vectors; see, e.g., \cite{Fr}.
We set  $i_{\overline{M}}\equal |I_{\overline{M}}|$, which is 
$(q-1)^{\upsilon_M-1}$ for sufficiently general  $M$.

As in the unibranch case, the key 
is the {\em\sf extended Compactified Jacobian},
the variety $\j^{ext}$ of
pairs $(M, \vph)$, where $M$ is standard and 
$\vph\in I_{M}$. 

There are $2$ natural surjective 
projections 
from $\j^{ext}$.
The $1${\footnotesize st}  is\, $\j^{ext}\to \{M'\supset \r\}$,  
sending $(M,\vph)\mapsto M'=M\vph^{-1}\supset \r$, which is
with the fibers of size 
$(q-1)^{\kappa-1}q^{\de-\kappa+1}$.
The $2${\footnotesize nd} one is the forgetting
map $(M,\vph)\mapsto M$ with the fibers of size 
$q^{dev(M)-\up_M+1}i_{\overline{M}}$.
Recall that $dev(M)=\de-deg(M)$; we use a counterpart of
 Lemma \ref{lem:inv} for links.
%$(q-1)^{\kappa-1-u_M}q^{\hat{d}}$. 
Combining these maps, we
reduce the calculation of $\h^{mot}$ to the following weighted
sum over the image of the
first projection:
%\begin{align}\label{M-one}
%\h^{mot}\!=\!\sum_{M\supset \r}(qt)^{deg(M)}
%\Bigl(\frac{(q-1)}{q}\Bigr)^
%{u_M}\,(1\!+\!aq)\cdots(1\!+\!aq^{\varrho(M)-1}).
%\end{align}
\begin{align}\label{M-one}
\h^{mot}\!=\!\sum_{M\supset \r}(qt)^{deg(M)}
\Bigl(\frac{\,(q-1)^{\kappa-1}}{q^{\kappa-\up_M}\,i_{\overline{M}}}\Bigr)\,
(1\!+\!\aa q)\cdots(1\!+\!\aa q^{\varrho(M)-1}).
\end{align}
This formula is helpful practically, especially for $\kappa=1$,
when the weights $w_M=
\Bigl(\frac{\,(q-1)^{\kappa-1}}{q^{\kappa-\up_M}\,
i_{\overline{M}}}\Bigr)$ are trivial.
Theoretically, we will combine it with
the {\sf\em reciprocity map}, which  reduces 
the calculation of $\h^{mot}$ to the corresponding  
{\sf\em weighted instanton superpolynomials}.
%See \cite{ChG} and Section \ref{sec:unibr} around
%formula (\ref{Dedual}) (a comment on the reduction formula).
\vskip 0.2cm

Namely, the ideals $I=M^\hollowstar$ for $\r\subset \o$,
are arbitrary such that 
$\mathfrak{c}\subset I\subset \r$ for the conductor
$\mathfrak{c}$ of $\r$. The definition of the
{\sf\em reciprocity map} $M\mapsto M^\hollowstar$ and its
properties remains the
same as in the unibranch (irreducible) 
case; this is for any Gorenstein rings.

Setting $deg(I)\equal dim_{\F_q}(\r/I)$, we arrive at
the  formula:\,
{\small
\begin{align}\label{redmult}
&\h^{mot}\!=\!(qt)^\de \sum_{\mathfrak{c}\subset I\subset \r}
(qt)^{-deg(I)}
\Bigl(\frac{\,(q-1)^{\kappa-1}}{q^{\kappa-\up_M}\,
i_{\overline{M}}}\Bigr)\,
(1\!+\!\aa q)\cdots(1\!+\!\aa q^{\varrho(I)-1}).
\end{align}
}

The weights $w_M$ can be determined
in terms of $I=M^\hollowstar$ and  
$\mathfrak{c}^\#\equal (\mathfrak{m}_\r)^\hollowstar=
\{f\in loc(\r) \mid zf\in \mathfrak{c}\}$
for the localization $loc(\r)$ of $\r$. For instance,
$\up_M=dim_{\F_q}I^\#$ for
$I^\#=\mathfrak{c}^\#/\bigl(I\cap \mathfrak{c}^\#\bigr)$.
%and
%$i_{\overline{M}}$ is the cardinality of the set 
%of vectors in $I^\#$ with some preimages in
%$\mathfrak{c}^\#$ with nonzero projections onto basic vectors. 
\vskip 0.2cm

In the next section we will reformulate this formula and the
previous reduction formula, switching to the preimages of
$M^\hollowstar$ in $\F_q[[x,y]]$ and instanton sums. 
The preimages $\c$ of
the conductors $\mathfrak{c}$ in the multibranch case,
generally,  coincide with those for the corresponding
unibranch deformations. So the resulting instanton sums can be over the
same sets of modules upon this passage; the difference is the
usage of the {\sf\em weights} in the multibranch
case.  Let us give an example.
\vskip 0.2cm

The simplest uncolored link is the 
Hopf $2$-link $T(2,2)$,
which is for $\r=\F_q[[1=e_1+e_2,x=z_1,y=z_2]]\subset \o=
\F_q[[e_1,e_2,z_1, z_2]]$. Then, standard $M$ can be $\o$ of 
$\rho$-rank $2$ or
$(e_1+\om e_2)\r$ for $\om\in \F_q^*$. This gives
 $\mathcal{H}^{mot}_{2,2}=
(1+\aa q)+(q-1)t$. 

The conductor is $\mathfrak{c}=(z)=\o z$.
Its lift to $\a=\F_q[[x,y]$ is $\c=\mathfrak{m}_\a=x\a+y\a$,
which is the same as for $\F_q[[x=z^3,y=z^2]]$. There are
only $2$ modules $M$ such that $\r\subset \o$, which are $\r$ and
$\o$. Accordingly, $M^\hollowstar$ 
are $\r$ and $\mathfrak{m}_\a$,
which are the same as for  $\F_q[[x=z^3,y=z^2]]$. So the only
difference is the usage of weights: $qt$ is replaced by $(q-1)t$
in $\h^{mot}_{3,2}=(1+\aa q)+ qt$, since
 $\kappa=2$, $\upsilon=1$ and 
$i_{\overline{M}}=1$ for $M=\r$. 

\vskip 0.2cm
Paper \cite{ChQ} contains many examples of links, including
colored ones; see Section 7.3 there. The reduction formula can be
extended to the most general case, when 
the ranks are arbitrary for different irreducible
components (the case of the most general ASF in type $A$),
 but the corresponding
linear algebra becomes involved. The corresponding $GL$-groups
(of different ranks) must be used then
instead of $U$ and $\overline{U}$. We  omit this generalization
in this paper.

\section{\sc Instanton slices}
We interpret the {\sf\em reduction formulas}
(\ref{redforranks}) and (\ref{M-one}) via 
{\sf\em instanton slices}, which are, generally,
when the {\sf\em conductors} can be, generally, 
{\sf\em arbitrary} $\a$-submodules in $\a^\cn$ for
$\a\equal \F[[x,y]]$, any ideals in $\a$ for $\cn=1$.

\subsection{\bf Main results}\label{sec:mainresults}
Generally,  for any $\a$-invariant 
submodule  $\mathscr{C}\subset \a^\cn$, the corresponding
{\sf\em instanton
slice} is $\mathfrak{S}^\cn_{\mathscr{C}}\equal 
\{ \m \subset \a^\cn\, |\, \mathscr{C}\subset \m\}$, where 
$\a$-submodules $\m$ will be always assumed of finite
codimension:\ $deg(\m)\equal dim_\F(\a^\cn/\m)<\infty$.
We will call  $\sC$ {\sf\em conductors},
{\sf\em  the same name will be used\,}. They 
will be mostly of finite codimension, but we will 
consider ``free theory", where it can be $\{0\}$ or of infinite
codimension. 

The rank will be always denoted by $\cn$, and the 
notation 
$\mathfrak{m}_\a$ will be used for the 
maximal ideal $x\a+y\a$ of $\a$.  Also,
$\varrho(\m)= dim_\F \m/\mathfrak{m}_\a \m$, the number
of generators of $\m$, and  the base field will be $\F=\F_q$. 

The {\sf\em instanton superpolynomials} are defined as follows:\,
\begin{align}\label{instH}
&\sH^{inst}_{\sC}(q,t,\aa; \cn)=\sum_{\m\in \mathfrak{S}^\cn_\sC} t^{deg(\m)}
(1+\aa q^\cn)\cdots(1+\aa q^{\varrho(\m)-1}).
\end{align}

Informally, the class of conductors  $\sC$  
is assumed ``perfect" if all properties (known and
expected) of superpolynomials hold for $\sH^{inst}_\sC$
(or are conjectured). The key examples so far 
are $\sC=\c^\cn$ for $\c$ corresponding to
plane curve singularities, and  for monomial $\c=I_\la$, provided
the superduality for the corresponding $\sH^{inst}_{\sC}$, which
restrict ``admissible" $\la$ (they must be general enough).
\vskip 0.2cm

We will  re-establish now the reduction theorems above 
using the corresponding instanton slices.
Let $\r\subset \o$ be the singularity ring of a unibranch plane curve
singularity, $\cn\ge 1$ is any rank, and  $\c\subset \a$ be 
the full preimage in $\a$ of the
conductor $\mathfrak{c}$ of $\r$.

\comment{
\begin{align}\label{redforranks-1}
\h^{mot}(q,t,\aa;\cn)=t^{\de\cn} q^{\de \cn^2} \h^{dual}(q,1/(t q^{\cn}),\aa),
\end{align}

\begin{align}\label{redmult-1}
&\h^{mot}\!=\!(qt)^\de \sum_{\mathfrak{c}\subset I\subset \r}
(qt)^{-deg(I)}
\Bigl(\frac{\,(q-1)^{\kappa-1}}{q^{\kappa-\up_M}\,i_{\overline{M}}}\Bigr)\,
(1\!+\!\aa q)\cdots(1\!+\!\aa q^{\varrho(I)-1}),
\end{align}
}

\begin{theorem}\label{thm:mainred}
(i) Let $\r$ be an irreducible plane curve
singularity, $\c$ the lifted conductor of $\r$,
%$\mathfrak{c}=(z^{2\de})$ of $\r$ to $\a$, 
and $\sC=\c^\cn$ for $\cn\ge 1$.  Then 
\begin{align}\label{hmotinst}
\h_\r^{mot}(q,t,\aa;\cn)=
q^{\de \cn^2} t^{\de \cn} \sH^{inst}_\sC (q,1/(tq^\cn),\aa;\cn).
\end{align}

(ii) The same holds for $\cn=1$, $\sC=\c$ and 
multibranch $\r$, where 
\centerline{
$
\h_\r^{mot}(q,t,\aa)=
\!(qt)^\de \sum_{\c\subset \i\subset \a}
(qt)^{-deg(\i)}\, w_{\i}\,
(1\!+\!\aa q)\cdots(1\!+\!\aa q^{\varrho(\i)-1}),
$
}
and the lift of 
$(\mathfrak{m}_\r)^\hollowstar=
\{f\in loc(\r) \mid zf\in \mathfrak{c}\}$,
where the weights $w_{\i}$ must be taken from
(\ref{redmult}) or recalculated using a proper 
localization of $\a$ and  
the corresponding preimages of $M^\hollowstar$.
\end{theorem}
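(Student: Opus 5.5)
The plan is to derive (\ref{hmotinst}) from the reduction formula (\ref{redforranks}) by showing that $\h^{dual}(q,t,\aa;\cn)$ equals $\sH^{inst}_{\c^\cn}(q,t,\aa;\cn)$. Since (\ref{redforranks}) already gives $\h^{mot}(q,t,\aa)=t^{\de\cn}q^{\de\cn^2}\h^{dual}(q,1/(tq^\cn),\aa)$, nothing else is needed. The identification goes through the surjection $\pi:\a=\F[[x,y]]\to\r$, $x\mapsto x$, $y\mapsto y$, extended componentwise to $\pi^{\cn}:\a^\cn\to\r^\cn$. Its kernel is $(F)\a^\cn$, and $\c^\cn=(\pi^{\cn})^{-1}(\mathfrak{c}^\cn)$ by the definition of the lifted conductor.

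The first step is the lattice correspondence. $\r$-submodules $M'$ with $\mathfrak{c}^\cn\subset M'\subset\r^\cn$ correspond bijectively to $\a$-submodules $\m$ with $\c^\cn\subset\m\subset\a^\cn$, via $\m=(\pi^\cn)^{-1}(M')$ and $M'=\pi^\cn(\m)$. Here $\r$-invariance of $M'$ is the same as $\a$-invariance of its preimage, because $\a$ acts on $\r^\cn$ through $\pi$. The preimage automatically contains $\ker\pi^\cn\subset\c^\cn$, so all such $\m$ are of finite codimension. The second step is the degree match: $\a^\cn/\m\cong\r^\cn/M'$, hence $deg(\m)=deg(M')$.

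The third step, which I expect to be the only substantive one, is the match of generator counts, $\varrho(\m)=\varrho(M')$. One has $\m/\mathfrak{m}_\a\m\to M'/\mathfrak{m}_\r M'$, and this map is surjective since $\pi(\mathfrak{m}_\a)=\mathfrak{m}_\r$. Its kernel is the image of $\ker\pi^\cn=F\a^\cn$ in $\m/\mathfrak{m}_\a\m$, and one must show this image is zero, i.e. $F\a^\cn\subset\mathfrak{m}_\a\m$. The argument uses that $\m\supset\c^\cn$ and that $F\in\mathfrak{m}_\a\c$. For the latter, write $F$ through the partial derivatives; the standard fact is that the Jacobian ideal $(F_x,F_y)$ maps into the conductor for plane curves (the Dedekind/Gorenstein relation $\mathfrak{c}=F_y\cdot\r^\hollowstar$-type statement). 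Euler-type relations or Weierstrass preparation then express $F\in x\c+y\c$. I would first try the explicit route: $F\in(x,y)\cdot(F_x,F_y)$ up to units in quasi-homogeneous cases, and in general use that $\mathfrak{c}$ contains the image of $(F_x,F_y)$ together with Teissier's $F\in\overline{\mathfrak{m}_\a(F_x,F_y)}$. If needed, replace this by the more robust statement that $\varrho$ computed as $\dim \mathrm{Tor}_0$ is unchanged because $F$ is a non-zero-divisor lying in $\mathfrak{m}_\a\c^\cn\subset\mathfrak{m}_\a\m$. This containment $F\in\mathfrak{m}_\a\c$ is the main obstacle; it is presumably also the reason why only plane, rather than arbitrary Gorenstein, $\r$ is allowed.

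With these three steps the summands of $\h^{dual}$ and $\sH^{inst}_{\c^\cn}$ match term by term, and (\ref{hmotinst}) follows from (\ref{redforranks}). For part (ii) the same correspondence $\c\subset\i\subset\a$ versus $\mathfrak{c}\subset I\subset\r$ is applied to (\ref{redmult}). Degrees and $\varrho$ transfer as above, again using $F=\prod F_i\in\mathfrak{m}_\a\c$. The weights $w_M$ are defined in terms of $I=M^\hollowstar$ and $\mathfrak{c}^\#$, so they pull back verbatim to $\i$ via the preimage of $(\mathfrak{m}_\r)^\hollowstar$. This gives the stated formula with $w_\i\equal w_M$.
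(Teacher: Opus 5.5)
Your route is the paper's. By (\ref{redforranks}) everything reduces to $\h^{dual}=\sH^{inst}_{\c^\cn}$, that is, to the preimage correspondence $\mathfrak{c}^\cn\subset M'\subset\r^\cn\leftrightarrow\c^\cn\subset\m\subset\a^\cn$, which preserves degrees. The paper leaves $\varrho(\m)=\varrho(M')$ implicit, and you are right that it is equivalent to $F\in\mathfrak{m}_\a\c$ (for $\m=\c^\cn$ it is exactly this).

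The weak point is your justification of that containment, since the setting is $\F=\F_q$. Teissier's $F\in\overline{\mathfrak{m}_\a(F_x,F_y)}$ is a characteristic-zero statement. It also needs $\mathfrak{m}_\a\c$ to be integrally closed; this is true, because $\c=\{g:\nu_z(g)\ge 2\de\}$ is complete and Zariski's product theorem applies, but you do not say it. Euler's relation covers only quasi-homogeneous $F$. The $\operatorname{Tor}_0$ reformulation presupposes the containment rather than proving it.

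Here is a direct argument. Choose coordinates with $\nu(y)=m$, the multiplicity, and assume $\chi\nmid m$ (the assumption of Theorem \ref{thm:general-semiroot-conductor}).
\begin{itemize}
\item Since $\mathfrak{c}=z^{2\de}\o\cong\o$, one has $\varrho(\mathfrak{c})=\dim_\F\o/\mathfrak{m}_\r\o=m$.
\item The map $\c/\mathfrak{m}_\a\c\to\mathfrak{c}/\mathfrak{m}_\r\mathfrak{c}$ is onto with kernel $(F\a+\mathfrak{m}_\a\c)/\mathfrak{m}_\a\c$. So it suffices to show that $\c$ needs at most $m$ generators.
\item By Hilbert--Burch, any $\mathfrak{m}_\a$-primary ideal $I$ has at most $\operatorname{ord}(I)+1$ minimal generators. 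So it is enough to find an element of order $m-1$ in $\c$.
\item Let $P=x^m+\cdots$ be the Weierstrass polynomial of $F$. The extension $\F((z))/\F((y))$ is totally ramified of degree $m$ with $\chi\nmid m$, hence separable. Dedekind's formula $P_x\,\o=\mathfrak{c}\,\mathfrak{D}_{\o/\F[[y]]}$ then gives $P_x\in\c$.
\item The initial form of $P_x$ contains $m\,x^{m-1}\neq 0$, so $\operatorname{ord}(P_x)=m-1$.
\end{itemize}
Hence $\c$ has exactly $m$ minimal generators, so $F\in\mathfrak{m}_\a\c$. This completes your third step and with it part (i). Part (ii) goes the same way, with $m$ the total multiplicity, $y$ transversal to all branches, and the analogous separability assumption on each branch.
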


{\bf Using flags.}
The definitions of 
$\h_\r^{mot}(q,t,\aa)$ and 
$\sH^{inst}_\sC (q,t,\aa;\cn)$ were as in (\ref{sup-rank})
and (\ref{sup-rank}). A more conceptual 
definition of the former
is via admissible $\ell$-flags. As in \cite{ChQ} and prior
works:\, 
 $\vec{M}=
\{M_0\!\subset\! M_1\!\subset \cdots \subset\! M_{\ell}
\!\subset\! \o^\cn\}$
for standard $M_i$,
where $deg(\vec{M})=deg(M_0)$, and we have the following
presentation:

\begin{align}\label{mot-flag}
&\h_\r^{mot}(q,t,\aa;\cn)=\sum_{\vec{M}_{st}}
t^{deg \vec{M}}\,\aa^{\ell(\vec{M})}.
\end{align}

For reducible $\r$, this definition is getting
involved, but we mostly
consider irreducible 
singularities in this paper.
The admissibility is essentially as in {\sf\em Nested
Hilbert Schemes}: $\mathfrak{m}_\r M_\ell\subset M_0$.
It has an important combinatorial reformulation in terms of
$\Delta$-sets. See \cite{ChQ} for details and the
following application,  
the {\sf\em $2${\footnotesize nd} decomposition theorem}:

\begin{align}\label{2ndHfomula}
\h_\r^{mot}(q,t,\aa)\!=\! \sum_{M\subset \Om} t^{deg(M)}\,
\Bigl(1\!+\!\frac{a}{t}\Bigr) \Bigl(1\!+\!\frac{a}{t}q\Bigr)\cdots 
\Bigl(1\!+\!\frac{a}{t}q^{\varrho^\Diamond(M)-1}\Bigr),
\end{align}
where the summation is over standard $M\subset \Om=\o^\cn$,
and $\varrho^\Diamond(M)=dim_{\F_q}(M^\Diamond/M)$ for
$M^\Diamond\equal\{f\in \Om \mid \mathfrak{m}_\r f
\subset M\}$.  

For instance, this formula readily
gives that $\h_\r^{mot}(q,t,\aa=-t;\cn)=1$ because
$\Om$ is the only module with $\varrho^\Diamond=0$.
As an exercise, check for $T(4,3)$ that\ \, 
$\h^{mot}_{4,3}(\cn=1)\,=\,$\ 
{
\( 1\, \lan\text{\small\it only } \o\,\ran+
q^3t^3\,(1+\frac{\aa}{t})\,
\lan\text{\small\it  invertibles} \ran+
\,(1+\frac{\aa}{t})\,(1+\frac{\aa q}{t})\,q^2t^2\, 
\lan\,\De=\Ga\cup \{5\}\,\ran+
(q+q^)t\,(1+\frac{\aa}{t})\,
 \lan\,\De \text{\small\it\,  misses\, } 1 
\text{\small\it\,  or } 2\ran.
\)
}
We use here the semigroup $\Ga=\lan 0,3,4\ran$ and $\De=\De(M)$.

\medskip
This approach is equally applicable to
$\sH^{inst}_{\sC}$. The flags are 
 $\vec{\m}=
\{\m_0\!\subset\! \m_1\!\subset \cdots \subset\! \m_{\ell}
\!\subset\! \a^\cn\}$
for $\a$-submodules  $\sC\subset M_i\subset \a^\cn$,
where $deg(\vec{M})=deg(M_0)$, and $\m_0\supset 
\mathfrak{m}_\a\m_{\ell}.$ We obtain that:
\begin{align}\label{inst-flag}
&\sH_\sC^{inst}(q,t,\aa;\cn)=\sum_{\vec{\m}}
t^{deg \vec{\m}}\,\aa^{\ell(\vec{\m})}\\
\label{2nd-inst-fomula}
= \sum_{\m\subset \a^\cn}& t^{deg(\m)}\,
\Bigl(1\!+\!\frac{a}{t}\Bigr) \Bigl(1\!+\!\frac{a}{t}q\Bigr)\cdots 
\Bigl(1\!+\!\frac{a}{t}q^{\varrho^\Diamond(\m)-1}\Bigr),
\end{align}
where the last summation is over  $\sC\subset \m\subset \a^\cn$,
and $\varrho^\Diamond(\m)=dim_{\F_q}(\m^\Diamond/\m)$ for
$\m^\Diamond\equal\{f\in \a^\cn \mid \mathfrak{m}_\a f
\subset \m\}$. As above, the {\sf\em 2{\footnotesize nd} 
decomposition}  begins with $1$.   
\medskip

{\bf Main Conjecture.}
Part $(i)$ of the following conjecture
is based on  the Coincidence Conjecture
9.1 from \cite{ChQ} restricted to irreducible $\r$ and,
correspondingly, to 
algebraic knots. We switch there from 
$\cn\om_1=(\cn)$ to $\om_\cn=1^\cn$ using the superduality
of the DAHA superpolynomials:\ 
\vskip 0.1cm

\centerline
{$\h^{daha}_\k(q,t,\aa; \om_\cn)=
q^{\de \cn^2} t^{\de\cn}\h^{daha}_\k(t^{-1},q^{-1},\aa; \cn \om_1).$
}

\vskip 0.2cm

{\sf\em Superduality} for $\sH^{inst}$ for $\cn=1$, needed in
$(ii)$ below, is defined as 
the invariance of
its $q,t,\aa$-expansion under the map:
\vskip 0.1cm

\centerline{
$\aa\mapsto \aa,\ 
q^i t^j\mapsto q^i t^{\de-j+i},\  \text{ where }\de=|\la|.
$
}
\noindent
Equivalently, $t^\de\sH^{inst}(qt,1/t,\aa)=\sH^{inst}(q,t,\aa)$
(for $\cn=1$). 
%See Section \ref{sec:2row} for examples
%and counterexamples.

\begin{conjecture}\label{conj:main}
Let $\r$ be the ring of a unibranch plane curve singularity,
and $\k$ the corresponding algebraic knot.

(i) Combining Conjecture 9.1 from \cite{ChQ} with
formula (\ref{hmotinst}) above:\ 
%\begin{align*}
%&\h^{daha}_\k(t^{-1},q^{-1},\aa)(\om_\cn)=
%\sH^{inst}_\sC (q,1/(tq^\cn,\aa;\cn).
%\end{align*}
\begin{align}\label{daha-sH}
&\h^{daha}_\k(q,t,\aa;\, \om_\cn)=
\sH^{inst}_\sC (t^{-1},q t^\cn,\aa;\cn), \\ 
&\sH^{inst}_\sC (q,t,\aa;\,\cn)=
\h^{daha}_\k(t q^\cn,1/q,\,\aa;\, \om_\cn).\label{daha-sH-inv}
\end{align}

(ii) Switching to
$\mathfrak{H}^{daha}_\la (q,t, \aa)[\cn]$, they are conjectured to be
superdual for $\cn=1$ and any Young diagrams
$\la$. Consider $\sH^{inst}_\sC[\cn]=\sH^{inst}_\sC (q,t,\aa;\,\cn)$ 
for $\sC=I_\la^\cn$. Then 
the superduality of  $\sH^{inst}_\sC[1]$ 
is conjecturally necessary and sufficient for the following version
of (\ref{daha-sH}):
\begin{align}\label{daha-la}
&\sH^{inst}_\sC (q,t,\aa;\,\cn)=
\mathfrak{H}^{daha}_\la(t q^\cn,1/q,\,\aa)[\cn];
\text{ letting $\cn=1$\,: }\\ \label{daha-la-one}
&\mathfrak{H}_\la^{daha}(q,t,\aa)=
\h^{inst}_\sC(q,t,\aa)\equal 
(q t)^\de \mathscr{H}_\sC^{inst}(q, 1/(qt),\aa).
\end{align}
%\vskip -0.7cm

(iii) The coefficients of  $\sH^{inst}_\la(q,t,\aa;\cn)$ are
conjectured to be non-negative for any $\la$. Accordingly, 
the claim is that (\ref{daha-la}) and (\ref{daha-la-one})
hold  if and only if all coefficients of
$\mathfrak{H}^{daha}_\la[\cn=1]$ are non-negative.
\sq
\end{conjecture}

{\bf Comments.} Formula (\ref{daha-la-one}) is the
main connection (uncolored) conjecture in this paper.
Conjecture \ref{conj:supd}
concerns  the combinatorial conditions for $\la$ 
that ensure the superduality of $\sH^{inst}_\la$ or, 
equivalently (due to $(iii)$), the positivity of
$\mathfrak{H}^{daha}_\la$ for $\cn=1$, and their
coincidence. 

The recalculation from $\sH^{inst}$ to
$\h^{inst}$ in (\ref{daha-la-one}) makes 
the conjectured superduality as in DAHA:\, 
under  $q\leftrightarrow  t^{-1}$. The DAHA superpolynomials
$\h^{daha}$ are known to be superdual (for iterated torus links).
\smallskip

It makes sense to provide a complete deduction for $(i)$:
\begin{align*}
&q^{\de \cn^2} t^{\de\cn}\h^{daha}_\k(t^{-1},q^{-1},\aa; \om_\cn)=
\h^{daha}_\k(q, t,\aa; \cn \om_1)\\
&=\h^{mot}_\r(q,t,\aa; \cn)
=q^{\de \cn^2} t^{\de \cn} \sH^{inst}_\sC (q,1/(tq^\cn),\aa;\cn),
\end{align*}
where the first equality is a DAHA theorem, and the last one
is Theorem \ref{thm:mainred},(i). Then we cancel coinciding 
$q^\bullet t^\bullet$ and change the variables.

%Assuming $(ii)$, the superduality for $\sH^{inst}$ is
%a simple recalculation of that for $]mathfrak{H}^{daha}$. 
\medskip

When $\la$ are associated with $T(\rr,\ss)$, then
(\ref{daha-la}) is equivalent to (\ref{daha-sH}). The same equivalence
is expected  
for $C\!ab(\upsilon \rr\ss\pm 1, \upsilon)T(\rr,\ss)$
when $\rr>\ss>0$ and $gcd(\rr,\ss)=1$. 
\medskip

In this conjecture, $(i)$ is as close as possible to the 
{\sf\em Shuffle Conjecture}. Indeed,
the calculation of $\h^{daha}_\k(q,t,\aa;\, \om_\cn)$ upon the
usage of nonsymmetric Macdonald polynomials requires only
$X_\cn=X_{\om_\cn}$, the action of $PSL_2^{\wedge}(\Z)$ in DAHA, which
is very explicit, and the {\sf\em coinvariant}. Only the latter
is not entirely combinatorial.
\medskip

In the right-hand side expressed via  $\h^{mot}$
for torus knots and when the coincidence is known,
the approach and explicit formulas 
from \cite{Pi} can be used.  This is only  when 
the {\sf\em Piontkowski cells} $Jac_\De$ are affine spaces;
otherwise the considerations become significantly more involved.
The instanton formulas and {\sf\em Gr\"obner} cells
 can be certainly used too, and this is the necessity when
$\sC=I_\la$ for arbitrary ``admissible" Young diagrams $\la$. 
\medskip

%Notice that the conjecture states that 
%if {\sf\em superduality} holds for $\cn=1$, then 
%(\ref{daha-la}) holds for any $\cn$. 

%Furthermore, we expect that (\ref{daha-la}) and $(\ref{daha-la-one})$
%hold (for any $\cn$) if and only if all coefficients of
%$\mathfrak{H}^{daha}_\la[\cn]$ are non-negative. This positivity is
%expected for any $\sH^{inst}_\la[\cn]$. 
%\vskip 0.2cm

%The latter
%seems not too difficult to check, and it holds 
%for the {\sf\em EHA-superpolynomials} from \cite{GL2}. 

\vskip 0.2cm
 
{\bf Any surface singularities.} A natural setting 
for Theorem \ref{thm:mainred} and its justification (the
reduction formulas) is as follows. Let $\b$ be a local
(complete) ring of any isolated singularity (no restriction
here), and $\r\subset \o=\F[[z]]$ be
a Gorenstein ring  presented as a quotient of $\b$,
provided that the localization $loc(\r)$ is $\F((z))$, 

\begin{theorem}\label{thm;gen-sing}
We define $\h^{mot}$ and $\sH^{inst}$ using exactly
the same formulas as above, but now letting  $\aa=0$
(unless $\r$ is for a plane curve singularity).  Then 
\begin{align}\label{hmotinst-gen}
\h_\r^{mot}(q,t;\cn)=
q^{\de \cn^2} t^{\de \cn} \sH^{inst}_\sC (q,1/(tq^\cn);\cn),
\end{align}
where $\sC$ is the lift of $\mathfrak{c}^\cn$ to $\b$
and $\cn\ge 1$ is arbitrary. \sq
\end{theorem}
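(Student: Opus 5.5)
The plan is to run the rank-$\cn$ reduction argument of Section \ref{sec:ranks} verbatim, checking at each step which ingredient really uses planarity, and then to lift from $\r$ to $\b$ exactly as in Theorem \ref{thm:mainred}. Since $\aa=0$, we have $\h^{mot}_\r(q,t;\cn)=\sum_{M_{st}}t^{deg(M)}$. We first consider the extended variety $\tilde{\mathbb{J}}^{ext}_\cn$ of pairs $(M,\tilde\Psi)$ modulo $GL^{(2\de)}_\cn(\o)$, where $\tilde\Psi$ is a system of $\cn$ generators $\psi^i=\vep_i+z(\cdots)\in M$ of an invertible submodule.

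The projection $\pi_1$ sends $(M,\tilde\Psi)$ to $\Psi^{-1}M$. Its fibers are torsors over $\u=GL^{(1)}_\cn(\o)/GL^{(2\de)}_\cn(\o)$, of cardinality $q^{(2\de-1)\cn^2}$. This uses only that the conductor is $z^{2\de}\o$ and that every standard $M$ contains $\mathfrak{c}^\cn$, and both facts hold for Gorenstein $\r$. The forgetting map $\pi_2$ has fibers $\mathbb{A}^{dev(M)\cn+(\de-1)\cn^2}$ by Lemma \ref{lem:inv}. Part (i) of that lemma is Nakayama's lemma together with the gap count, and part (ii) uses only $|\{\ga\in\Ga\mid 1\le\ga<2\de\}|=\de-1$, which is exactly the Gorenstein symmetry of $\Ga$. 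Counting $\F_q$-points in two ways then gives $\h^{mot}(q,t/q^\cn)=\h^{red}(q,t)$, where $\h^{red}$ sums $t^{deg(M)}$ over $\r^\cn\subset M\subset\Om$.

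Next we apply the reciprocity map $M\mapsto M^\hollowstar$ built from the dot product. For Gorenstein $\r$ it is an inclusion-reversing bijection between $\{\r^\cn\subset M\subset\o^\cn\}$ and $\{\mathfrak{c}^\cn\subset M'\subset\r^\cn\}$, and it satisfies $deg(M)=\de\cn-dim_\F(\r^\cn/M')$. This step needs the duality $N:M\simeq M^\hollowstar:N^\hollowstar$ and the dimension identity, both of which are Gorenstein facts. The Auslander--Buchsbaum equality $\varrho(M)=\varrho(M^\hollowstar)$, which is the only planar input, is not needed because $\aa=0$. We therefore obtain (\ref{redforranks}) at $\aa=0$.

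Finally we pass to $\b$. Fix the surjection $\b\to\r$ with kernel $K$, and let $\sC$ be the full preimage of $\mathfrak{c}^\cn$ in $\b^\cn$, so that $\sC\supset K^\cn$. Submodules $\mathfrak{c}^\cn\subset M'\subset\r^\cn$ that are $\r$-invariant correspond bijectively to $\b$-submodules $\sC\subset\m\subset\b^\cn$, because $\b$ acts on $\r^\cn$ through $\r$. This correspondence preserves codimension: $\b^\cn/\m\cong\r^\cn/M'$. Substituting $\m$ into the previous identity gives (\ref{hmotinst-gen}), with the same bookkeeping of $q^{\de\cn^2}t^{\de\cn}$ and $t\mapsto1/(tq^\cn)$ as in Theorem \ref{thm:mainred}.

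The main obstacle is the second step. There one must check that Lemma \ref{lem:inv}, in particular part (ii) on the number of free generator parameters modulo $z^{2\de}\Om$, and the finiteness and affineness of the fibers of $\pi_2$ hold for a general Gorenstein $\r$ that need not be generated by two elements. I would first verify that the proof uses only the semigroup count $\de-1$ and the $GL_\cn(\o)$-action. A secondary point is that $\r$ may not be generated by the images of two coordinates. This affects nothing above, since the lifting step only requires that $\b\to\r$ be surjective.
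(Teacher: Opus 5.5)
Your proposal is correct and follows the paper's own implicit argument. You run the rank-$\cn$ reduction of Section~\ref{sec:ranks}: the two projections of the extended variety, Lemma~\ref{lem:inv}, and the reciprocity map. You note, as the paper does right after (\ref{redforranks}), that only the Auslander--Buchsbaum equality $\varrho(M)=\varrho(M^\hollowstar)$ uses planarity, and it is irrelevant at $\aa=0$. Finally you lift $\{\mathfrak{c}^\cn\subset M'\subset\r^\cn\}$ bijectively and degree-preservingly to $\{\sC\subset\m\subset\b^\cn\}$ through the surjection $\b\to\r$.

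Your check that Lemma~\ref{lem:inv}(ii) needs only $|\{\ga\in\Ga\mid 1\le\ga<2\de\}|=\de-1$ (i.e.\ $\cc=2\de$) and never two-generation of $\r$ is exactly what the paper relies on.
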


Suitable rings $\b$ here are 
surface cyclic quotient singularities; see \cite{Reid}.
The simplest is $\b=\F[[x=u^2,y=uv,z=v^2]]$, where $xz=y^2$ (the case
of the rational double point), which is the ring of invariants of 
$\a=\F[[u,v]]$
under the action  $u\mapsto -u, v\mapsto-v$. Let $\cn=1$.
The modules $\m$ for $\b$ are then ideals 
$\i\subset \a$ invariant under this action. Accordingly, 
the monomial ideals
are $I_\la$ for {\sf even}
Young diagrams:\, when their outer corners $(i,j)$ are all
{\sf\em even}. The latter means that $i+j$ are even
for our usual coordinates of the boxes; $i,j\ge 0$. 
In this presentation,
the degree of $deg(\i)$ is the number of {\sf\em even}
 boxes in $\la$ such that 
$\i^0=I_\la$ for (even) $\la$. 
\medskip

Presenting instanton sums for isolated  surface singularities
via curve singularities 
 can be a valuable tool. There is a significant progress here
for the $ADE$-singularities. 
but only for their Euler characteristic (when $q=1$);
See \cite{To,GNS,Nak}. Adding $q$ remains quite a challenge.
\smallskip

Let $\r=\F[[u=z^4,w=z^5,v=z^6]]$. Then $\Ga=\{0,4,5,6,8,9,10,\ldots\}$
$\de=4, \mathfrak{c}=(z^8)$, and $\c=I_\la$ for the even diagram
$\la=(4,3,2,1)$.
The number of even boxes there, we write  $(i,j)\in \la$,
equals  $\de$. All 
{\sf\em Gr\"obner cells} are affine spaces $\mathbb{A}^{dim}$
in this case. 
\smallskip

The corresponding  even(!) $\mu\subset \la$,  $deg$ and 
$dim$ are as follows:  (1) $\mu=\emptyset,\  deg=0, dim=0$;

 (2) $(4,3,2,1), \ deg=4, dim=0$; \  (3)  $(4,1),\  deg=2, dim=2$; 

(4) $(2,1),\  \ \ \ \ \ \, deg=1, dim=0$; \   (5) $(2,2),\  deg=2, dim=1$; 

(6) $(2,1,1,1),\  deg=2, dim=0$; \ (7) $(4,3),\  deg=3, dim=2$; 

(8) $(4,1,1,1),\  deg=3, dim=1$; \ (9) $(2,2,2,1),\  deg=3, dim=0$. 

\smallskip
Accordingly, $\sH^{inst}(\aa=0)= 1+t^4+q^2t^2+t+ qt^2+t^2+q^2t^3+qt^3+t^3$.
Then $\h^{inst}$, $\sH$ recalculated to $\h$,
is obtained by $q^i t^j\mapsto 
q^{\de+i-j} t^{\de-j}$, which gives:\  $q^4 t^4+ 1+q^4 t^2+q^3t^3+q^3t^2+
q^2t^2+q^3t+q^2t+qt$, where we follow the list above.
  Finally,  $\h^{inst}=
1+qt+q^2t+q^2t^2+q^3t+ q^3t^2+q^3t^3+q^4t^2+ q^4t^4$.
It coincides with the corresponding $\h_\r^{mot}(\aa=0)$, but
$(qt)^4\, \h(1/t,1/q,\aa\!=\!0)\neq \h(q,t,\aa\!=\!0).$
It is supposed to be close to the DAHA superpolynomial $\h^{daha}_\k$
of $\k=C\!ab(5,2)T(3,2)$, which is superdual. The following two terms
from $\h$ are beyond $\h^{daha}_\k$ : $q^3t+q^4t^2$. The term $q^4 t^2$
destroys the superduality. The corresponding $\mu$-diagrams are
$\yng(4,3)\,$ and $\yng(4,1)\,$. It is challenging to modify
$\h^{mot}$ or $\sH^{inst}$ to ``eliminate" them. 

The whole uncolored $\h^{daha}_\k$ for this $\k$ is 
{\small
\(1 - \aa^3 q^5/t + q t + q^2 t + q^2 t^2 + q^3 t^2 + q^3 t^3 
+ q^4 t^4 + 
 \aa^2 \bigl(q^3 - q^5 - q^4/t + q^4 t\bigr) + 
 \aa \bigl(q + q^2 - q^4 + q^2 t + 2 q^3 t + q^3 t^2 
+ q^4 t^2 + q^4 t^3\bigr).
\)
}
See Propositions 4.2, 4.3 from \cite{ChW} and formula
(\ref{H-m-full}) in this paper. The presence of negative terms
can be an obstacle to its motivic-instanton interpretation.
However, we managed $\k=C\!ab(11,2)T(3,2)$ in this paper.
It is supposed to be somehow connected with 
$\h^{mot}$  for (Gorenstein)
$\r=\F[[z^4,z^6,z^{11}]]$. However, $\h^{mot}$ 
has ``extra terms" and is not superdual for this $\r$. 
So it was surprising  that  
$\sH^{inst}_\la$ for $\la=(3,2,1,1)$,  upon the recalculation
to $\h^{inst}$ as above, does coincide with the
corresponding (uncolored) $\h_\k^{daha}=$
{\small 
\(
1 + q t + q^2 t + q^3 t + q^2 t^2 + q^3 t^2 + 2 q^4 t^2 + q^3 t^3 + q^4 t^3 + 
 2 q^5 t^3 + q^4 t^4 + q^5 t^4 + q^6 t^4 + q^5 t^5 + q^6 t^5 + q^6 t^6 + 
 q^7 t^7 + \aa^3 \bigl(q^6 + q^7 t\bigr) + 
\aa^2 \bigl(q^3 + q^4 + q^5 + q^4 t + 2 q^5 t + 2 q^6 t + q^5 t^2 
+ 2 q^6 t^2 + q^7 t^2 + q^6 t^3 + q^7 t^3 + q^7 t^4\bigr) + 
 \aa \bigl(q + q^2 + q^3 + q^2 t + 2 q^3 t + 3 q^4 t + q^5 t + q^3 t^2 + 2 q^4 t^2 + 
   4 q^5 t^2 + q^6 t^2 + q^4 t^3 + 2 q^5 t^3 + 3 q^6 t^3 + q^5 t^4 + 
   2 q^6 t^4 + q^7 t^4 + q^6 t^5 + q^7 t^5 + q^7 t^6\bigr).
\)
}

\vskip 0.2cm
\subsection{\bf Gr\"obner cells}
Let us begin with the following example. We take $\sC=I_\la$ for
$\la=(3,3,2,1)$ from Figure 
\ref{9-diag}, which is from \cite{ChG}.  Then 
$\sH^{inst}_{\sC}(q,t,\aa)=$
{\small \(
1+t+t^2+q t^2+t^3+q t^3+q^2 t^3+t^4+q t^4+2 q^2 t^4+t^5+q t^5
+2 q^2 t^5+t^6+q t^6+2 q^2 t^6+q^3 t^6+t^7+q t^7+2 q^2 t^7+t^8+q t^8
+q^2 t^8+t^9
+\aa \bigl(q t+q t^2+q^2 t^2+q t^3+2 q^2 t^3+q^3 t^3+q t^4
+2 q^2 t^4+3 q^3 t^4+q t^5+2 q^2 t^5+4 q^3 t^5+q^4 t^5+q t^6+2 q^2 t^6
+4 q^3 t^6+2 q^4 t^6+q t^7+2 q^2 t^7+4 q^3 t^7+2 q^4 t^7+q t^8+2 q^2 t^8
+3 q^3 t^8+q^4 t^8+q t^9+q^2 t^9+q^3 t^9\bigr)
+\aa^2 \bigl(q^3 t^3+q^3 t^4+q^4 t^4
+q^3 t^5+2 q^4 t^5+q^5 t^5+q^3 t^6+2 q^4 t^6+2 q^5 t^6+q^3 t^7+2 q^4 t^7
+3 q^5 t^7+q^3 t^8+2 q^4 t^8+2 q^5 t^8+q^3 t^9+q^4 t^9+q^5 t^9\bigr)
+\aa^3 \bigl(q^6 t^6+q^6 t^7+q^6 t^8+q^6 t^9\bigr).
\) }

Note that
the coefficient of $\aa^3$ is $q^6$ times the contribution of $I_\mu$
for $\mu=\la$, and for $3$ more diagrams obtained by removing 
the boxes from $(3,3,2,1)\setminus (3,2,1)$. They have $4$
outer corners, which gives $\varrho=4$. 

This superpolynomial is {\sf\em superdual}:\, invariant under
$\aa\mapsto \aa$, and 
$q^it^j\mapsto q^i t^{i-j+\de}$, where $\de=|\la|=9$
(in the instanton parameters $q,t$). 
\vskip 0.2cm

Our calculations are based on {\sf\em Gr\"obner cells}. 
Their definition depends on the {\sf Gr\"obner ordering} of
the variables $x,y$. Let $1\prec x\prec y$, which
means that $x^N$ is smaller than $y$ for any $N>0$, 
and any positive power of $x$ is greater than $1$.
Let $f^0$ be the leading (smallest) monomial in
$f$, and    $\i^0\equal \{ f^0\, \mid\, f\in \i\}$ for
any ideal $\i\subset \a$ of finite degree. Then
$\i^0$ is a monomial ideal,  and
$deg(\i^0)=deg(\i)$. 

Monomial ideals have a very simple description.
They are $I_\la$ for
Young diagrams $\la$:\, the modules linearly generated
by $x^i y^j$, where  $(i,j)$ are the coordinates (indices)
of \ $\yng(1)\,\in \overline{\la}=\Z_+\setminus \la$.
Recall that $i,j\ge 0$, where $i$ is the row number, and $j$ is
the column number. The monomials for the outer corners of $\la$,
i.e. for the corners  of $\overline{\la}$,
are {\sf\em Gr\"obner generators} of $I_\la$. Their number equals
$\varrho(I_\la)$. 

We set $dgrm(\i)\equal\la$ for $\la$ obtained from $\i^0=I_\la$. Then 
$\i$ is generated as an ideal by 
$f_{ij}=x^i y^j+\sum_{kl} C_{ij}^{lk} x^k y^l$,
where $(i,j)$ are outer corners of $\la$, $(i,j)\in \la$,
and $k>i$. However, the actual number of generators can be smaller.
\vskip 0.2cm

Finally, {\sf\em Gr\"obner cells} are defined for
Young diagrams $\mu$ as follows:

\centerline{
$G_{\mu}(\sC)\equal\{ \i \,|\, \sC\subset \i, \i^0=I_\mu\}$,
\  $G^{(r)}_{\mu}(\sC)=\{ \i\in G_{\mu}(\sC) \mid
\varrho(\i)=r\}$. }

For monomial  $\sC=I_\la$, the notation will be  $G_{\mu\subset \la}$ and 
$G^{(r)}_{\mu\subset \la}$.

\begin{figure}
{\noindent
\thicklines
%{\makebox(20,20){}}
{\makebox(20,20){}}
{\makebox(20,20){}}
{\makebox(20,20){}}
{\makebox(20,20){}}
{\makebox(20,20){j=0}}
{\makebox(20,20){j=1}}
{\makebox(20,20){j=2}}
{\makebox(20,20){j=3}}
{\makebox(20,20){}}
{\makebox(20,20){}}
{\makebox(20,20){}}
{\makebox(20,20){}}\\
\thicklines
{\makebox(20,20){}}
{\makebox(20,20){}}
{\makebox(20,20){i=0}}
{\framebox(20,20){$1$}}
{\framebox(20,20){$y$}}
{\framebox(20,20){$y^2$}}
{\makebox(20,20){$y^3$}}
{\makebox(20,20){}}
{\makebox(20,20){}}
{\makebox(20,20){}}\\
\thicklines
{\makebox(20,20){}}
{\makebox(20,20){}}
{\makebox(20,20){i=1}}
{\framebox(20,20){$x$}}
{\framebox(20,20){$xy$}}
{\framebox(20,20){$xy^2$}}
{\makebox(20,20){}}
{\makebox(20,20){}}
{\makebox(20,20){}}
{\makebox(20,20){}}\\
\thicklines
{\makebox(20,20){}}
{\makebox(20,20){i=2}}
{\framebox(20,20){$x^2$}}
{\framebox(20,20){$x^2y$}}
{\makebox(20,20){$x^2y^2$}}
{\makebox(20,20){}}
{\makebox(20,20){}}
{\makebox(20,20){}}\\
\thicklines
{\makebox(20,20){i=3}}
{\framebox(20,20){$x^3$}}
{\makebox(20,20){$x^3y$}}
{\makebox(20,20){}}
{\makebox(20,20){}}
{\makebox(20,20){}}\\
{\makebox(20,20){i=4}}
{\makebox(20,20){$x^4$}}
{\makebox(20,20){}}
{\makebox(20,20){}}
{\makebox(20,20){}}
{\makebox(20,20){}}
\vskip -0.4cm
\caption{$\la=\{3,3,2,1\}$}
\label{9-diag}
}
\vskip -0.3cm
\end{figure}

\vskip 0.2cm
For $\la=(3,3,2,1)$  as above, the cell
$G_{\mu\subset \la}$ is not an affine space only for one
 $\mu=(3,2,1)$, when $|G_{\mu\subset \la}|=2q^2-q$. It is 
a union of two $\mathbb{A}^2$ intersected at  $\mathbb{A}^1$. 
Thus, if $\aa=0$ (disregarding $\varrho$-ranks), 
we need only to know the dimensions of the remaining cells. 
The cells $G^{(r)}_{\mu\subset \la}$ can be 
more complicated; at least, they are expected to be always
birationally isomorphic to unions of affine spaces.

Nonaffine $G_{\mu\subset \la}$ may occur for $\la$ associated
with torus knots. For instance,
there are several of them for  $\la=(4,3,2,1)$, which diagram 
is associated with $T(6,5)$. They are
of the same type as above in this case:\, 
$2\mathbb{A}^N\setminus \mathbb{A}^{N-1}.$
This is different from the {\sf\em Piontkowski cells} $Jac_\De$
for quasi-homogeneous (irreducible) plane curve singularities,
which are always affine spaces. However, {\sf\em Gr\"obner
cells} have various advantages.

\vskip 0.2cm
{\bf Free rank-one theory.}
This is when the conductor is $\{0\}$.
The corresponding {\sf\em Gr\"obner cells} are then $G_\la\equal 
\{\i \mid \i^0=I_\la\}$. They are always affine spaces of dimension
$|\la|-\ell(\la)$, where $\ell(\la)$ is the {\sf\em length} of $\la$,
the size of the $1${\footnotesize st} column, which is well-known.

Let us state a corollary of the explicit description of
such cells from Theorem 2.7 in \cite{ChG} 
(mainly due to A.Conca and G.Walla).
For every outer corner $(i,j)$, the
corresponding generator $f_{i,j}$ of $\i$ is 
$x^iy^j+\sum_{k,l}C_{ij}^{kl} x^ky^l$, where $(k,l)\in \la$ and $l>j$.
They generate $\i$, but the  actual
$\varrho$-rank $\varrho(\i)$ is generally smaller than their number.

\begin{corollary}\label{cor:grob-gen}
The following $C$-coefficients are independent
free parameters of $\,G_\la$. Let $(i',j')$ be the next outer
corner after $(i,j)$ (with $j'>j$). Then we keep $C_{ij}^{kl}$ only
when $(k,l+j'-j)\not\in\la$, and such $C$  give the required 
parametrization of this cell. The other coefficients
are expressed polynomially in terms of these ones. \sq
\end{corollary}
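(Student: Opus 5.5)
The plan is to deduce the statement from the explicit description of $G_\la$ in Theorem 2.7 of \cite{ChG}, which is the Conca--Valla parametrization through Hilbert--Burch matrices. Order the outer corners of $\la$ as $(i_0,j_0),(i_1,j_1),\ldots,(i_m,j_m)$ with $j_0<j_1<\cdots<j_m$. Then $I_\la$ is generated by the $x^{i_s}y^{j_s}$, and its monomial Hilbert--Burch matrix $H^0$ is $(m+1)\times m$. Its only nonzero entries are $y^{j_{s+1}-j_s}$ on the diagonal and $-x^{i_s-i_{s+1}}$ just below it, and the signed maximal minors of $H^0$ recover the corner monomials. By \cite{ChG}, every $\i\in G_\la$ is given by the maximal minors of a perturbation $H=H^0+N$. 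The entries of $N$ are polynomials whose monomial supports are subject to explicit degree constraints, and the cell is the affine space of such $N$.

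\textbf{Step 1: interpret the free coefficients as S-pair data.} For consecutive corners $(i,j)$ and $(i',j')$, with $j'>j$ and $i'<i$, form the S-pair $S=y^{j'-j}f_{ij}-x^{i-i'}f_{i'j'}$ for the order $1\prec x\prec y$. The tail term $C^{kl}_{ij}x^ky^l$ of $f_{ij}$ becomes $C^{kl}_{ij}x^ky^{l+j'-j}$ in $S$. If $(k,l+j'-j)\in\la$, this monomial is standard. Buchberger's criterion, i.e.\ the reduction of $S$ to zero modulo the generators, then forces the coefficient to be a polynomial in coefficients attached to smaller monomials. If $(k,l+j'-j)\notin\la$, the monomial lies in $I_\la$, so it gets absorbed and imposes no constraint. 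I will match these unconstrained coefficients bijectively with the entries of $N$ in column $s$, where $(i,j)=(i_s,j_s)$, via the standard identification of the column of syzygy coefficients with the reduced S-pair. This identifies the proposed set of $C_{ij}^{kl}$ with the Conca--Valla coordinates.

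\textbf{Step 2: show that the remaining coefficients are determined polynomially.} I will order all coefficients $C_{ij}^{kl}$ by the monomial order of $x^ky^l$ together with the corner index. The S-pair conditions from Step~1 form a triangular system: each constrained coefficient equals a polynomial in the free coefficients and in constrained coefficients that come earlier in this order. Induction along the order then expresses every coefficient polynomially in the free ones.

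\textbf{Step 3 (main obstacle): show that arbitrary values of the free coefficients are attained.} Every choice of free parameters has to extend to an actual ideal $\i$ with $\i^0=I_\la$. I will not verify the Buchberger closure directly. Instead I will use the Hilbert--Burch direction of \cite{ChG}: for any admissible $N$, the maximal minors of $H^0+N$ generate an ideal with initial ideal $I_\la$, and reducing these minors to normal form gives generators of exactly the shape $f_{ij}$ stated above. The substantive point to check is that the normal-form reduction is compatible with the degree constraints, so that it sends the entries of $N$ to the free $C$'s by a unitriangular, hence invertible polynomial, change of coordinates. As a consistency check, I will confirm that the number of free $C$'s equals $|\la|-\ell(\la)$, the dimension of $G_\la$. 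This count is a sum over consecutive corner pairs of boxes $(k,l)\in\la$ with $l>j$ and $(k,l+j'-j)\notin\la$. These boxes telescope along the border of $\la$, and the sum equals $|\la|$ minus the first column, i.e.\ $|\la|-\ell(\la)$.
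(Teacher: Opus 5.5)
Your Steps 1--2 are sound in outline. The assertion that actually constitutes the corollary, however, is not proved: that \emph{every} choice of the kept coefficients occurs, i.e.\ that they are independent free parameters.

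Step 3 replaces it by the claim that the Hilbert--Burch entries of $N$ and the kept $C$'s differ by a unitriangular change of variables, and you leave this unchecked. It is not automatic. The degree bounds of the Conca--Valla canonical form are stated for a specific normalization: every power of $x$ listed, subdiagonal $-x$, and a global lex order in $K[x,y]$. They are not stated for your minimal $H^0$ with entries $-x^{i_s-i_{s+1}}$ and the smallest-term order in $\F[[x,y]]$, and you never say which bounds you are using.

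Step 1 also misstates the triangular structure. The coefficient of the standard monomial $x^ky^{l+j'-j}$ in the normal form of $S$ has linear part $C^{kl}_{ij}-C^{k-(i-i'),\,l+j'-j}_{i'j'}$, and the second coefficient sits on a \emph{larger} monomial. For example, take $\la=(3,3,1)$ with $f_{30}=x^3+a_1y+a_2y^2+b_1xy+b_2xy^2$, $f_{21}=x^2y+c_1y^2+c_2xy^2$, $f_{03}=y^3$. The S-pair gives $a_1=0$ and $b_1=c_1$, where $c_1$ is attached to $y^2\succ xy$.

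The gap closes with your own Steps 1--2; no Hilbert--Burch coordinates are needed.

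(a) Every term of $S=y^{j'-j}f_{ij}-x^{i-i'}f_{i'j'}$, and every term created while reducing it, has $y$-degree $>j'$ and $x$-degree $<i$. Hence only generators of later corners are used. The normal form is supported on the standard monomials $x^ky^m$ with $(k,m)\in\la$ and $m>j'$. Via $m=l+j'-j$, these correspond bijectively to the non-kept $C^{kl}_{ij}$.

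(b) The condition at $x^ky^{l+j'-j}$ is the linear part above plus terms of degree $\ge2$. It is homogeneous of $y$-weight $l-j$ for the torus action, and every coefficient has positive $y$-weight. So the nonlinear terms involve only coefficients of later corners and $C^{k_2l_2}_{ij}$ with $l_2<l$. Process the corners from the last one backwards, and within a corner by increasing $l$. Then the conditions are uniquely solvable for arbitrary kept values, and no equation is left on the kept ones.

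(c) The syzygies of the corner monomials are generated by consecutive pairs, namely the columns of $H^0$. The ideal generated by the $f_{ij}$ contains $y^{\la_1}$ and $f_{\ell(\la),0}=x^{\ell(\la)}+y(\ldots)$, hence a power of $\mathfrak{m}_\a$, so Buchberger's criterion applies. Thus the $f_{ij}$ form a standard basis with $\i^0=I_\la$, and uniqueness of the reduced basis gives injectivity.

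Your count $\sum_s\bigl(i_s(j_{s+1}-j_s)-(i_s-i_{s+1})\bigr)=|\la|-\ell(\la)$ is correct, and it then serves only as a check. Alternatively, if you grant $G_\la\cong\mathbb{A}^{|\la|-\ell(\la)}$ from \cite{ChG}, Steps 1--2 already embed $G_\la$ as a closed subset of an affine space of the same dimension, which forces equality.
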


The $\varrho$-rank is not constant in $G_\la$, 
and the corresponding $G^{(r)}_{\la}=\{\i\in G_\la \mid \varrho=r\}$
can be involved, though  always (conjecturally) birationally
isomorphic to unions of affine spaces. 

\vskip 0.2cm

It is of importance to calculate
$mc(\la)$, the Young diagram for the
{\sf\em monomial conductor} of $G_\la$, 
the greatest monomial ideal inside {\sf\em all}
$\i\in G_\la$. For instance, let $\la=(3,3,2,1)$, the
one from Figure \ref{9-diag}.  It is from \cite{ChG}. Then 
$mc(\la)=(3,3,2,2,2,1,1,1)$. 
\vskip 0.2cm

{\sf\em Free theory} above is of course very different from
that in the presence of conductors. The dimensions
of {\sf free cells} grow when $|\mu|$ grows, and no superduality
can be expected.  However, the free formulas do occur
in $\sH^{inst}_\sC$ from (\ref{instH}) when
$\i^0$ are sufficiently small. The condition $mc(\mu)\subset \la$
for $\sC=I_\la$ is necessary and sufficient for
$G_{\mu}\subset G_{\mu\subset \la}$.
\medskip

{\bf Adding one box.}
As an example, let $\la=(3,3,2,1)$ from 
Figure \ref{9-diag}. 
Then  $G_\la=\mathbb{A}^5$, and 
$\varrho$-ranks are  $2,3,4$. 

We set 
$gr_r=|G^{(r)}_{\la}|$. Then $gr_1=0$, $gr_2=q^5-2q^4+q^3$,
$gr_3=2q^4-q^3-q^2$, $gr_4=q^2$. The sum of all $gr_r$ is 
(of course) $q^5$. Note that $I_\la$ is not the
only module of $\varrho=4$; they form a $2$-parametric family.
%Also, the cell $G^{(2)}_{\la}$ is when we
%remove from $\mathbb{A}^5$ two  $\mathbb{A}^4$ intersected
%at $\mathbb{A}^3$.

%\medskip
Using Nakayama lemma, the total number of submodules
in  $I_{\la}$ of codimension $1$ is (generally)
$\sum_{i}\frac{q^i-1}{q-1}\, gr_i $\,. This sum is surprisingly
simple in this case:  $q^5+3 q^6$. Similar to the considerations
in \cite{NY2}, this is because there are $4$ 
extensions of $\la$ by adding $1$ box at one of the outer corners
with the following outcomes.
Adding a horizonal box does not change $G_\la$, and
addling one at the bottom multiplies its size by $q$. This is
general, and can be extended to any ranks $\cn$.
\vskip 0.2cm

\subsection{\bf Any ranks}
The key objects will be now $\a$-invariant submodules
$\m\subset \a^\cn$, where $\a=\F[[x,y]]$.
 From the view point of $\F[x,y]$, we
consider torsion free modules over $\mathbb{A}^2$ 
supported at $(0,0)$. In what follows, we will fixed the basis
$(e_1,\ldots, e_\cn)$ in $\a^\cn$.

{\bf Gr\"obner cells.} They will be for the
following ordering: 
$1\prec x\prec e_2/e_1\prec e_3/e_2\prec \cdots\prec y$.
For example, $e_1\prec x^m e_1\prec e_2, \ 
x^m e_\cn\prec y e_1\prec ye_2$ for any $m\ge 0$, and so on. Informally,
we put $e_i$ between $x$ and $y$. 

Monomials now are $x^iy^j e_k$, and $f^0$ is the lowest monomial
of $f\in \a^\cn$ for the order above.
 Accordingly, $\m^0$ is a monomial ideal, which
can be canonically represented as $I_{\vec{\la}}$, where
$\vec{\la}=\{\la^1,\cdots, \la^\cn\}$ (ordered), and $\la^i$ is the Young
diagram for an ideal $I[i]\subset \a$ of the coefficients of $e_i$.
Then, $deg(\m)\equal dim_\F( \a^\cn/\m)=|\vec{\la}|\equal
\sum_{i=1}^\cn |\la_i|$. The multi-diagram $\vec{\la}$ 
depends on the ordering, but the latter formula always holds. 
For instance, $(\la^i)$ will be different
if we put $\{e_i\}$ before $x$ or after $y$. 

We will continue
using the notation $dgrm(\m)=\vec{\la}$ if $\m^0=I_{\vec{\la}}$.
Then,  {\sf\em Gr\"obner cells} are 
$G_{\vec{\la}}=\{\m\subset \a^\cn \mid dgrm(\m)=\vec{\la}\}.$
They are affine spaces. See here and below \cite{Nek,Nak1,NY1,NY2},
 and \cite{CGM} for the parabolic Hilbert schemes.
Let us calculate the dimension of $G_{\vec{\la}}$ using
Theorem 3.2 from \cite{NY1}.

Consider the dual action of the torus 
$U=\{(u_1,u_2,\cdots, u_\cn)\}$ with  $u_i\in \F^*$ on
$(e_i)$, and the action of the torus $T=\{(t_1,t_2)\}$ 
on $x,y$: $t_1(x)=t_1x,\, t_2(y)=t_2 y$ ( trivial otherwise). 
They act in 
(smooth) $M(\cn,m)=\{\m\subset \F[x,y]^\cn\, | \,deg(\m)=m\}$,
and in its tangent spaces 
at monomial $I_{\vec{\la}}$ such that  $|\vec{\la}|=m$. 
Let us fix $\vec{\la}$. Then 
the tangent space is decomposed with respect to $1$-dimensional
 $U\times T$-modules corresponding to the terms in the
following sum: 

\Yboxdim4pt
\begin{align}\label{instsum}
\sum_{\al,\be=1}^\cn u_\al u_\be^{-1}\Bigl(\sum_{\yng(1)\in \la^\al}
t_1^{-l_\be(\yng(1))} t_2^{a_\al(\yng(1))+1}+
\sum_{\yng(1)\in \la^\be}
t_1^{l_\al(\yng(1))+1} t_2^{-a_\be(\yng(1))}\Bigr).
\end{align}
\Yboxdim5pt
Here $a(\yng(1))$, the {\sf\em arm-length},  is the number of 
boxes {\sf\em after} $\,\yng(1)\,$ until the
end of its  row in $\la$. We extend it 
to boxes outside $\la$; the general formula will be $\la_i-j-1$ if 
$(i,j)$ are the indices of $\yng(1)$\,. Recall that
$i,j\ge 0$ and the first box has indices $(0,0)$.
Thus,  it is zero if $\yng(1)$ is the last in its row 
of $\la$, and negative 
if $\yng(1)\not\in \la$. We will set $\la_k=0$ if
$k>\ell(\la)$. The
{\sf\em leg-length} is defined for the columns
instead of the rows.
\vskip 0.2cm

The terms in this sums are the corresponding characters.
Their total number is $2m\cn$ which equals 
the dimension of  $M_\cn^m$ ($\al$ and $\be$ are not
ordered and can coincide). 
Let $|\la|^\dag \equal |\la|-\ell(\la)$. 
\vskip 0.2cm

Matching  our Gr\"obner ordering, we make 

\centerline{
$t_2\gg t_1$ and $t_2\gg  u_{\cn}/u_{\cn-1}\gg\cdots \gg u_2/u_1\gg t_1$,
}

\noindent
and then tend  them  to $\infty$. The dimension of $G_{\vec{\la}}$
will be then the number of terms  in (\ref{instsum})
that approach $0$  in this limit.
\vskip 0.2cm

\Yboxdim4pt
Due to $t_2^{a_\al(\yng(1))+1}$,
\Yboxdim5pt
 only the $2${\footnotesize nd}
sum with $\yng(1)\in \la^\be$ there
can contribute, and only in the following cases:\,
either (1)  when  $a_\be(\yng(1))>0$, or 
(2) $a_\be(\yng(1))=0$ provided that $u_\al/u_\be\to 0$ (which
means that $\al<\be$). The $2${\footnotesize nd}  case
is due to $t_1^m u_\al/u_\be\to 0$ for $\al<\be$ and any $m>0$.

We obtain that given $\be$,
%the pair $(\be,\be)$ will contribute $|\la^\be|^\dag$, 
the pairs $(\al <\be)$ contribute $|\la^\be|$,
and the pairs $(\al\ge \be)$ contribute $|\la^\be|^\dag$.
The total contribution of $\la^\be$ will be then 
$(\be-1)|\la^\be|+(\cn -\be+1)|\la^\be|^\dag$.
The absolute total, which gives the required dimension is:

\begin{align}\label{dim-f}
dim\, G_{\vec{\la}}=\bigl(\cn &|\la^1|^\dag+(\cn-1)|\la^2|^\dag
+\cdots
+|\la^\cn|^\dag\bigr)\\
+& \bigl(|\la^{2}|+2|\la^{3}|+\cdots+
(\cn-1)|\la^\cn|\bigr)=\notag\\
\cn\bigl(|\la^1|^\dag+\cdots+|\la^\cn|^\dag\bigr)&+
\bigl(\ell(\la^{2})+2\ell(\la^{3})+\cdots+
(\cn-1)\ell(\la^\cn)\bigr).\notag
\end{align}

\vskip 0.2cm
\Yboxdim7pt

It readily gives the following product formula, which 
is basically Corollary 3.10 from \cite{NY1} (we add $\ell$).
Consider free instanton sums 
from (\ref{instH}), those  for $\sC=\{0\}$, where 
$ \mathscr{H}^{inst}_{\text{\tiny $\le 
\ell$}}(q,t,\aa; \cn)$ means that the summation is
over $\vec{\la}$ where  the length of all diagrams $\la^i$
are  $\le\ell$. Then:
\begin{align}\label{prod-free-0}
&\mathscr{H}^{inst}_{\text{\scalebox{.7}{$\le$\,}} \ell} 
(q,t,\aa=0; \cn)
=\prod_{i=1}^\ell\,\prod_{j=1}^{\mathbbm{n}}
\frac{1}{1-q^{\mathbbm{n}i-j} t^i}.
\end{align}

{\bf Explicit calculations.} The following examples are instructional.
Generally, 
$dim\, G_{\la^1,\la^2}=2|\la^1|^\dag+|\la^2|^\dag+|\la^2|$
for $\cn=2$. 
Let $\la^1=(3)=\yng(3)\,,
\la^2=(2,1)=\yng(2,1)$. Then, $dim\, G= 2\cdot 2+1+3=8$. Let
us obtain this dimension directly.

The generators of any $\m$ such that $\m^0=I_{\{\la^1,\la^2\}}$
are:\, 

$f_1=x e_1+a_1 ye_1+ a_2y^2e_1+(\al_1+\al_2 x+\al_3 y)e_2,
\ f_2=y^3 e_1,$ 

$g_1=x^2e_2+b_1ye_2+(\be_1y+\be_2y^2)e_1,\,
g_2=x y e_2+ \ga_1 y^2 e_1,\, g_3=y^2 e_2.
$

\comment{

f1=x e1+a1 y e1+ a2 y^2 e1+(al1+al2 x+al3 y) e2;
f2=y^3 e1; 
g1=x^2 e2+b1 y e2+(be1 y+be2 y^2)e1;
g2=xy e2+ ga1 y^2 e1; g3=y^2 e2;

a1 = 1; a2 = 2; al1 = 1; al2 = 2; al3 = 1; b1 = 2; 
be1 = 0; be2 = 0; ga1 = 2;
a1 = 1; a2 = 2; al1 = 0; al2 = 1; al3 = 1; b1 = 2; 
be1 = 0; be2 = 0; ga1 = 3;

(*be1 must be 0*)

xx = GroebnerBasis[{f1, f2, g1, g2, g3}, {x, y, e1, e2}];
yy=xx /. {e2^2 -> 0, e2^3 -> 0, e2^4 -> 0, e2^5 -> 0, 
e1^2 -> 0,  e1 e2 -> 0};

{e2 y^2, 0, e1 y^3, 
 e1 x + e2+ 2 e2 x + e1 y + e2 y + 2 e1 y^2, e2 x y + 2 e1 y^2, 
 e2 x^2 + 2 e2 y}

}

There are $9$ parameters here, and one relation as follows.
Considering $y g_2$ and using $f_2$, we obtain that $xy^2e_1$
belongs to $\m$. Next, 
$yg_1-xg_2-b_1g_3=(\be_1 y^2+\be_2 y^3-\ga_1 xy^2)e_1\in \m$, where
$ y^3e_1, x y^2e_1\in \m$. Hence, $\be_1 y^2\in \m$, which would
change $\la^1$ unless $\be_1=0$. This gives $8$, which was
double-checked 
by the usage of the Gr\"obner basis software. 
\vskip 0.2cm

Let us switch the order of these diagrams. Now
$\la^1=(2,1)=\yng(2,1)$, and $\la^2=(3)=\yng(3)$\,,
and the dimension must be  $7$. One has:

$f_1=x^2 e_1+a_1ye_1+(\al_1 +\al_2 y+\al_3y^2)e_2,\,
f_2=xye_1+ (\be_1y+ \be_2y^2) e_2,$

$ f_3=y^2 e_1+ \ga_1 y^2 e_2,\,  
g_1=xe_2+c_1 ye_2+c_2 y^2 e_2+  \de_1 y e_1,\, g_2=y^3e_2.$

There are $10$ parameters here. Considering $y^2g_1$, we obtain
that $xy^2e_2\in \m$. Next, $yf_2-xf_3=\be_1y^2e_2-\ga_1xy^2e_2$,
which gives that $\be_1=0$. Now, $yf_1-xf_2=a_1y^2e_1+
(\al_1y+\al_2y^2)e_2$ modulo $\m$. Using that  
$y^2e_1=f_3-\ga_1y^2e_2$, we obtain that 
$(\al_1y+(\al_2-\ga_1a_1)y^2)e_2\in \m$, which gives that $\al_1=0,\,
\al_2=\ga_1 a_1$. Thus, we have $10$ initial parameters and
$3$ relations in this case. 
\vskip 0.2cm

%Formula (\ref{dim-f}) will give the denominator of
%$\sH^{inst}$ in the
%following theorem. 

%%% e<x<y NEW PART BY CHATGPT

% Suggested insertion in Section 5.3, after the discussion of the ordering
% x<e<y and formula (5.9)/(dim-f).
%
% Here and throughout the paper, ell(lambda) is the number of rows of lambda.
% The finite-level condition for the second ordering is transported from the
% original ordering; see (admissible-second) below.

\vskip 0.2cm

{\bf Changing the ordering.}
Let us consider now the Gr\"obner ordering
 $ e\prec x\prec y,$ which means that
$1\prec e_2/e_1\prec\ldots,\prec e_\cn/e_{\cn-1}\prec x\prec y$.  

In terms of the
action of torus $U=\{(u_1,\cdots,u_\cn)\}$, $t_1$ and $t_2$, 
\[
t_2\gg t_1\gg u_\cn/u_{\cn-1}\gg\cdots\gg u_2/u_1.
\]
Then we tend all these parameters to $\infty$.  This is different
from the ordering $x\prec e\prec y$ used in (\ref{dim-f}); the
multi-diagram $\vec{\la}$ attached to a given module will generally
change. % when we pass from one ordering to the other. 
The cells remains affine spaces $\mathbb{A}^N$; their dimensions
are as follows.

We continue using the tangent character (\ref{instsum}).  Let
$s=(i,j)$ be a square 
\Yboxdim5pt
$\yng(1)\,$
\Yboxdim7pt
in $\la^\be$, where $i,j\geq 0$, and denote by
$l_\al(s)$ its {\sf\em extended leg-length} with 
respect to $\la^\al$.  Let
$Ends(\la^\be)$ be the set of the last squares in the rows of
$\la^\be$.  For $1\leq \al,\be\leq\cn$, set
\begin{align}\label{Nab-second}
N_{\al\be}(\vec{\la})\equal
\#\Bigl\{s\in Ends(\la^\be)\ \Bigm|\ &l_\al(s)+1<0,
\ \hbox{or}\notag\\[-2mm]
&l_\al(s)+1=0\ \hbox{ and }\al<\be\Bigr\}.
\end{align}
For $s\in\la^\be$ which is not the last square in its row, the
factor $t_2^{-a_\be(s)}$ in the second sum of (\ref{instsum}) tends
to zero for every $\al$.  This gives\, 
$\cn\!\sum_\be|\la^\be|^\dag$.  Recall that 
$|\la|^\dag \equal |\la|-\ell(\la)$. 

For $s\in Ends(\la^\be)$, its
$t_2$-power is zero.  The sign of $l_\al(s)+1$ then decides the
limit; in the equality case, it is decided by $u_\al/u_\be$, which
tends to zero exactly when $\al<\be$. Thus,
\begin{align}\label{dim-f-second}
dim\, G^{\,e\prec x\prec y}_{\vec{\la}}
=\cn\bigl(&|\la^1|^\dag+\cdots+|\la^\cn|^\dag\bigr)
+\sum_{\al,\be=1}^{\cn}N_{\al\be}(\vec{\la}).
\end{align}

For $\cn=2$, this formula can be written without 
{\sf\em leg-lengths}.  We
set missing rows equal to zero and index rows from $0$.  Then
\begin{align}\label{dim-f-second-rank2}
dim\, G^{\,e\prec x\prec y}_{\la^1,\la^2}
=2\bigl(|\la^1|^\dag+|\la^2|^\dag\bigr)+C_{12}+C_{21},
\end{align}
where
\begin{align}\label{C12C21}
C_{12}&=\#\{i\geq0:\ \la^2_i>\la^1_i\},\notag\\
C_{21}&=\#\{i\geq1:\ \la^1_i>\la^2_{i-1}\}.
\end{align}
\smallskip

For instance, let $\la^1=(3)=\yng(3)$ and
$\la^2=(2,1)=\yng(2,1)$, considered above as an example. Here
$|\la^1|^\dag=2$, $|\la^2|^\dag=1$.  The last square in the
second row of $\la^2$ satisfies
$l_1(s)+1=0$, and $1<2$; it is the only additional contribution.
Thus $C_{12}=1$, $C_{21}=0$, and
\[
dim\, G^{\,e\prec x\prec y}_{(3),(2,1)}=2(2+1)+1=7.
\]
Formula (\ref{dim-f}) for  $x\prec e\prec y$ gave
$8$ for the same ordered pair. 

%If we switch the diagrams, $\la^1=(2,1)$ and $\la^2=(3)$, then
%again $C_{12}=1$, $C_{21}=0$, and
%\[
%\dim G^{\,e\prec x\prec y}_{(2,1),(3)}=2(1+2)+1=7.
%\]
%In contrast, formula (\ref{dim-f}) gives $7$ for this ordered pair.
%Thus the second ordering is not obtained merely by switching the
%diagrams in (\ref{dim-f}).

Since, the instanton sums do not depend on the Gr\"obner ordering,
we arrive at the following non-trivial combinatorial identity:
\begin{align}\label{sum-second}
&\mathscr H^{e\prec x\prec y}_{\infty}(q,t,0;\cn)
=\sum_{\vec{\la}}
q^{\,\dim\, G^{\,e\prec x\prec y}_{\vec{\la}}}
\,t^{|\vec{\la}|}\notag\\
&=\sum_{\vec{\la}}
q^{\,\cn\sum_\be|\la^\be|^\dag+
\sum_{\al,\be}N_{\al\be}(\vec{\la})}\,
t^{\sum_\be|\la^\be|}
=\prod_{i=1}^\infty\,\prod_{j=1}^{\mathbbm{n}}
\frac{1}{1-q^{\mathbbm{n}i-j} t^i}.
\end{align}
No bound is imposed on the lengths of $\la^i$.  The finite
level $\ell$ requires one additional care.  The same module can have
different leading diagrams for the two Gr\"obner orders.  Therefore,
after changing the order, the condition $\ell(\vec\nu)\leq\ell$ must
be transported with the change of diagrams. We construct in Appendix
the map\, $\mathcal U$\, transforming $\vec{\la}$ that 
sends the dimension formula for
$e\prec x<y$ to that for the main $x\prec e\prec y$. 
Then the restriction of the 
 length of $\,\mathcal U(\vec{\la})$ by $\ell$ gives the required
product.

 We note that the ordering $x\prec y\prec e$ is quite
doable too, but it is less challenging combinatorially.

\subsection{\bf Generating functions}
We will use below
the  limits  of motivic superpolynomials
in arbitrary ranks $\mathbbm{n}$ 
for quasi-homogeneous singularities
$\r_{\rr,\ss}=\F_q[[x=z^\rr,y=z^\ss]]$,
when  $\rr\to \infty$ and
 $gcd(\rr,\ss)=1$. Concerning
their general (motivic) theory, see  Theorems 3.1 and 3.2 from \cite{ChP2}.
 The 1{\footnotesize st} theorem is for $a=0$, and 
the 2{\footnotesize nd} is its extension to any $a$. 
There are many explicit formulas available, mostly
for $\h^{daha}$.

In the limit $\rr\to \infty$,
they coincide  with  {\sf\em free instanton sums}
from (\ref{instH}) for $\sC=\{0\}$
and  $\ell(\vec{\la})\le \ss-1$.
 The notation
$\mathscr{H}^{inst}_{\text{\scalebox{.7}{$\le$\,}} \ell}$ will
be used for $ \mathscr{H}^{inst}_{\text{\tiny $\ell(\vec{\la})\le 
\ell$}}(q,t,\aa; \cn)$, where 
$\ell(\vec{\la})=\max_i\{\ell(\la^i)\}$ for 
$\vec{\la}=\{\la^1,\cdots, \la^\cn\}$,
i.e. 
the lengths of all diagrams in $\vec{\la}$ are  $\le\ell$. 
By definition,  
$\mathscr{H}^{inst}_{\text{\scalebox{.7}{$\le$\,}} \ell}$
is $Lim^{\ (\le \ell)}_{M\to\infty}\,  \sH^{inst}_{\vec{\la}_M}$, 
where this limit is over any sequences $\vec{\la}_M$ such
that $\ell(\vec{\la}_M)\le \ell$ and all diagrams 
$\la_M^i$ contain any 
rectangles $\ell\times L$ 
for sufficiently large $L$ as $M\to \infty$.

Recall that we consider the sums over 
$\a$-submodules $\m\subset \a^\mathbbm{n}$, $dgrm(\m)=
\vec{\la}=\{\la^{1},\ldots,
\la^{\mathbbm{n}}\}$, where $I_{\vec{\la}}$ is the module 
$\m^0$ of leading terms of $\m$,
$deg(\m)=|\vec{\la}|=\sum_{i=1}^\mathbbm{n} |\la^{i}|$, 
and $\ell(\vec{\la})=\max_i \{\ell(\la^{i})\}.$

\begin{conjecture}\label{conj:stbtotor}
(i) In the notation above,
\begin{align}\label{prod-free}
&\mathscr{H}^{inst}_{\text{\scalebox{.7}{$\le$\,}} \ell} 
(q,t,\aa; \cn)=
=\prod_{i=1}^ \ell\,\prod_{j=1}^ \mathbbm{n}
\frac{1+\aa q^{\mathbbm{n}(i+1)-j} t^i}{1-q^{\mathbbm{n}i-j} t^i}.
\end{align}

(ii) Let  $\ss=\ell+1$. We set
$\h_{\rr,\ss}(q,t, \aa; \cn)^\dag
\equal 
t^{\mathbbm{n}\de}\h^{mot}_{\rr,\ss}
(q,1/(q^\mathbbm{n}t),\aa;\cn )$
for the motivic superpolynomial for 
$\r_{\rr,\ss}=\F_q[[x=z^\rr,y=z^\ss]]$, where $\rr>\ss$ and
$gcd(\rr,\ss)=1$. Then 
$\mathscr{H}^{inst}_{\text{\scalebox{.7}{$\le$\,}} \ell} =
\lim _{\rr\to \infty}\h_{\rr,\ss}(q,t,\aa;\cn)^\dag$, and
\begin{align}\label{prod-lim}
&\lim _{\rr\to \infty}\h_{\rr,\ss}(q,t,\aa;\cn)^\dag
=\prod_{i=1}^ \ell\,\prod_{j=1}^ \mathbbm{n}
\frac{1+\aa q^{\mathbbm{n}(i+1)-j} t^i}{1-q^{\mathbbm{n}i-j} t^i},
\end{align} 
which formula is equivalent to  (\ref{prod-free}) \sq
\end{conjecture}

The equivalence
of $(i)$ and $(ii)$ is due to Theorem \ref{thm:mainred},(i) above.
Part $(i)$ was carefully checked numerically by us, the
author and ChatGPT. The right-hand side is a rational
function, which is very convenient. 
\vskip 0.2cm

The case of $\aa=0$ in (\ref{prod-free})
is formula (\ref{prod-free-0}). To incorporate $\aa$, 
the inductive procedure
from \cite{NY2} can be used. Namely, we
add $1$ box  to $\vec{\la}$ ``horizontally" (no change of $\ell$)
and  ``vertically" (then $\ell$ increases by $1$). The corresponding
(free) Gr\"obner
cells, including the $\varrho$-ranks, are connected, which is
outlined in the 
example right after Corollary
\ref{cor:grob-gen}; this is general.   We omit the details.
\vskip 0.2cm

The following theorem  ``almost" proves this conjecture.
Namely, it gives $(ii)$ modulo the coincidence of $\h^{mot}$ 
with $\h^{daha}$ for  families of torus knots with 
the corresponding lifted conductors  $\c=I_\la$  approaching
$\ell\times \infty$. 
This can be considered  proven for uncolored torus knots, but
we need any ranks $\cn$.
Equivalently,  conjectural formula  (\ref{daha-la}) for such families
$\la$ is sufficient.

\begin{theorem}\label{thm:daha-free}
Setting $\mathfrak{H}^{daha}_\la(q,t,\aa)[\cn]^\ddag\equal
\mathfrak{H}_\la^{daha}(t q^\cn, 1/q,\, \aa)[\cn]$:
\begin{align}\label{prod-lim-daha}
&lim_{M\to\infty}^{\ (\le \ell)}\, 
\mathfrak{H}^{daha}_\la(q,t,\aa)[\cn]^\ddag
=\prod_{i=1}^ \ell\,\prod_{j=1}^ \mathbbm{n}
\frac{1+\aa q^{\mathbbm{n}(i+1)-j} t^i}{1-q^{\mathbbm{n}i-j} t^i},
\end{align}
where $lim_{M\to\infty}^{\ (\le \ell)}$ is now for one $\la$.
This formula is equivalent to  (\ref{prod-free})
provided conjectural formula (\ref{daha-la}).
\end{theorem}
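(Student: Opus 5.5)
The plan is to compute the left-hand side of (\ref{prod-lim-daha}) directly in DAHA and recognize the evaluation formula for nonsymmetric Macdonald polynomials. First, I reduce to one convenient family. The limit $lim^{\ (\le \ell)}_{M\to\infty}$ is over diagrams containing $\ell\times L$ rectangles with $L\to\infty$. I will take $\la_M$ to be the rectangle with $\ell$ rows of length $L=M$. Then $W_\la=Y^\ell X^{L}$, and Definition \ref{def:frakH} gives
$$\mathfrak{J}_\la[\cn]=\bigl\{Y_\cn^{\ell}X_\cn^{L+1}\bigr\}.$$
By Theorem \ref{thm:trans}, this equals, up to $q^\bullet t^\bullet$, the coinvariant for the transposed word, $\bigl\{Y_\cn^{L}X_\cn^{\ell+1}\bigr\}$. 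The second form is the useful one: now the growing exponent sits on $Y_\cn$, which is diagonal on nonsymmetric Macdonald polynomials, while the fixed polynomial $X_\cn^{\ell+1}$ depends only on $\ell$ and $\cn$.

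Second, I expand $X_\cn^{\ell+1}=\sum_b c_b E_b$ in the $E$-basis, which is a finite sum with $b$ in the $W$-orbits of weights below $(\ell+1)\om_\cn$. Formula (\ref{Yone}) then gives
$$\bigl\{Y_\cn^L X_\cn^{\ell+1}\bigr\}=\sum_b c_b\, q^{-L(\om_\cn,b_\#)}E_b(q^{-\rho_k}).$$
I will then apply the hat-normalization and the $\aa$-stabilization in $gl_N$, followed by the substitution $\ddag$: $q\mapsto tq^\cn$, $t\mapsto 1/q$. After this, the factors $q^{-L(\om_\cn,b_\#)}$ become monomials whose degree in the new variables grows linearly in $L$, and this growth is strictly positive unless $b$ is extremal.

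So as $L\to\infty$, only the leading $b$ survives, normalized to the constant term $1$. My candidate is the rectangle weight $(\ell+1)\om_\cn$ itself, or its antidominant conjugate, whichever has minimal $(\om_\cn,b_\#)$. The limit should then be the ratio
$$c_b\,E_b(q^{-\rho_k})\Big/\bigl(\text{normalizing term}\bigr).$$
For a rectangle $b$, the coefficient $c_b$ and the evaluation $E_b(q^{-\rho_k})$ are given by the product formulas for nonsymmetric Macdonald polynomials from \cite{Ch4}. These are products over boxes of the $\cn\times(\ell+1)$ rectangle of factors of the shape $(1-q^{\bullet}t^{\bullet})^{\pm1}$. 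Stabilizing $t^N=-\aa$ and substituting $\ddag$, I expect these factors to reorganize into $\prod_{i\le\ell,\,j\le\cn}(1+\aa q^{\cn(i+1)-j}t^i)/(1-q^{\cn i-j}t^i)$. The numerators would come from the $t^N$-dependent part of the evaluation, and the denominators from the Cauchy-type sum over the lower $b$ in the geometric limit.

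The main obstacle is this limit step. I must show that, after $\ddag$, the terms with $b\neq$ the leading weight do not simply vanish but resum. The denominators $1/(1-q^{\cn i-j}t^i)$ indicate that infinitely many contributions accumulate: as $L$ grows, the number of surviving low-degree terms grows. So the limit is a geometric series, not a single term. I would first test this for $\ell=1$ against (\ref{lim2p+1}) and against the $\aa=0$ product (\ref{prod-free-0}). If the single-term picture fails, I would instead keep $X_\cn^{L+1}$ growing and use the untransposed form $\{Y_\cn^\ell X_\cn^{L+1}\}$. There I would exploit that $Y_\cn^\ell$, in the polynomial representation, is a finite combination of $\pi$-shifts and $T_w$. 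This produces sums of $q$-geometric type whose $L\to\infty$ limit gives the denominators explicitly, while the $\aa$-numerators come from the $t$-evaluation at $q^{-\rho_k}$.

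Finally, the equivalence of (\ref{prod-lim-daha}) with (\ref{prod-free}) under (\ref{daha-la}) is formal. By definition, $\mathscr{H}^{inst}_{\le\ell}$ is the same limit of $\sH^{inst}_{I_{\la_M}^\cn}$. By (\ref{daha-la}), each $\sH^{inst}_{I_{\la_M}^\cn}$ equals $\mathfrak{H}^{daha}_{\la_M}[\cn]^\ddag$. Passing to the limit, and noting that the limit does not depend on the chosen sequence, completes the argument.
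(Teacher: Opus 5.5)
Your first plan is the paper's proof. You reduce to the rectangle $\ell\times M$, use Theorem~\ref{thm:trans} to pass to $\bigl\{Y_\cn^M X_\cn^{\ell+1}\bigr\}$, expand $X_\cn^{\ell+1}=X_{(\ell+1)\om_\cn}$ in nonsymmetric Macdonald polynomials, use that $Y_\cn$ is diagonal on them, and keep only $E_{(\ell+1)\om_\cn}$, whose coinvariant is given by (\ref{ev-rec}). The gap is that you never prove the limit step. Worse, you predict it fails and attribute the denominators $(1-q^{\cn i-j}t^i)^{-1}$ to a resummation of the lower terms; that diagnosis is wrong, and the single-term picture holds.

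The missing argument is short. By (\ref{Yone}), the eigenvalue of $Y_{\om_\cn}^M$ on $E_c$ is $q^{-M(\om_\cn,c)}$ times a power of $t$ coming from $\rho_k$. Under $\ddag$ ($q\mapsto q^\cn t$, $t\mapsto 1/q$) that $t$-power becomes a pure power of $q$, so the $t$-exponent is exactly $-M(\om_\cn,c)$. The relevant statistic is therefore $(\om_\cn,c)$, not $(\om_\cn,c_\#)$. By triangularity, $X_{(\ell+1)\om_\cn}=E_{(\ell+1)\om_\cn}+\sum_c c_cE_c$. The leading coefficient is $1$, so no product formula is needed for it. Every other term has $c_+<(\ell+1)\om_\cn$ strictly, so the antidominant conjugate you hesitated about never appears. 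Moreover, the $\al_\cn$-coefficient of $(\ell+1)\om_\cn-c_+$ is positive: otherwise the first $\cn$ parts of $c_+$, each at most $\ell+1$, would sum to $(\ell+1)\cn$ and force $c_+=(\ell+1)\om_\cn$. Hence $(\om_\cn,c)\le(\om_\cn,c_+)\le(\ell+1)\cn-1$ in $gl_N$. Relative to the leading term, each lower contribution thus carries $t^{\ge M}$, times a power of $q$ and an $M$-independent rational factor, and it tends to $0$ $t$-adically.

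The whole product, numerators and denominators alike, then comes from (\ref{ev-rec}) with $L=\ell+1$, $M=\cn$. Up to $t^\bullet$ this is $\prod_{i=1}^{\ell}\prod_{j=1}^{\cn}(1-q^it^{j+N-\cn})/(1-q^it^j)$. The map $\ddag$ sends $q^it^j\mapsto q^{\cn i-j}t^i$ and $q^it^{j+N-\cn}\mapsto q^{\cn(i+1)-j}t^i\,q^{-N}$, while $\aa=-t^N$ becomes $\aa=-q^{-N}$. This gives (\ref{prod-lim-daha}) up to the hat-normalization.

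The geometric series $(1-q^{\cn i-j}t^i)^{-1}$ accumulate only on the instanton side, from summing over Gr\"obner cells; on the DAHA side nothing resums. Your proposed $\ell=1$ test would have shown this: in passing from (\ref{2p+1form}) to (\ref{lim2p+1}), only the $k=0$ term survives, and it already carries the full denominator. The fallback through $\{Y_\cn^\ell X_\cn^{L+1}\}$ is unnecessary and, as sketched, is not yet an argument. Your final paragraph on the equivalence with (\ref{prod-free}) under (\ref{daha-la}) is fine.
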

{\it Proof.}
We will begin with some general discussion of the $\aa$-zeros of 
the right-hand side of (\ref{prod-lim-daha}), which is of importance
but not really
necessary for the proof below.

First of all, the relation 
$\sH^{inst}(\aa=-q^{-\cn})=1$, which is direct from
the definition, matches that in the product in the right-hand side. 
In the
DAHA setting, it becomes 
$\h^{daha}(q,t,\aa;\cn)^\dag (\aa=-q^{-\cn})=1$,
which holds for any iterated torus knots. 

Let us obtained the latter using  the superduality:
$\h^{daha}(q,t,\aa;\cn \om_1)=(q^{\cn}t)^{\cn\de}
\h^{daha}(1/t,1/q,\aa;\om_\cn)$. Employing $\dag$, one has:
$\h^{daha}(q,t,\aa;\cn)^\dag =$

$
t^{\cn\de}\h^{daha}(q,1/(q^\cn t),\aa;\cn \om_1)=
t^{\cn\de}\Bigl(q^\cn \frac{1}{q^{\cn} t}\Bigr)^{\cn\de}
\h^{daha}(1/t,1/q,\aa;\om_\cn)$,

\noindent
where the right-hand side indeed vanishes  at $\aa=-(1/q)^\cn$.
\medskip

Next, let us employ the formula 
$\h^{daha}(q,t,\aa=-t;\cn\om_1)=1$.
We obtain that 
$\h^{daha}(q,t,\aa=-q^{-\cn}t^{-1};\cn\om_1)^\dag=t^{\cn\de}$.
Since $\de\to \infty$ as $\rr\to \infty$,
$\sH^{inst}(\aa=-q^{-\cn}t^{-1})=0$ in this limit,  which 
obviously matches the
product formula. 
\vskip 0.2cm

Generalizing, 
$t^{\cn\de}\h^{daha}(q,\frac{1}{q^\cn t},
\aa\!\!=\!\!-\bigl(\frac{1}{q^\cn t}\bigr)^m;
\cn\om_1)\, =\, $
 $t^{\cn\de}\hat{\j}^{A_{m-1}}(q,\frac{1}{q^\cn t})$
for the hat-normalized DAHA-Jones polynomial $\hat{\j}^{A_{m-1}}(q,t)$ 
for the root system $A_{m-1}$ (or $gl_m$). 
This gives that $\aa\!\!=\!\!-\bigl(\frac{1}{q^\cn t}\bigr)^m;$ are  zeros
of $\sH^{inst}$ for  $m=1,\ldots, \ss-1$,
 which matches the product formula. 

Continuing,
one can use that $\hat{\j}^{A_{m-1}}(q,t)$ are significantly
reduced in this range of  $m$ versus the corresponding 
superpolynomials, which reduction grows as $\rr$ increases. 
We need only to monitor the smallest powers of
$\,t\,$ in these polynomials, which is not too difficult.
\smallskip

For instance, $\h^{daha}(\aa=-t^m)^\dag=t^k(\cdots)$ for $m=\ss-1$
where $k>\ep\de$ for some $\ep>0$, which depends  only on $\ss$,
which gives the required $0$ as $\rr\to \infty$. 
We use here directly  
$\{ \tga_+k\tga_+^\ss (X_{\cn \om_1}) \}$ for $T(k\ss+1,\ss)$.
This argument proves the conjecture
for $\cn=1$; compare with \cite{HS}. Generally, a more systematic
analysis of  $\hat{\j}^{A_{m-1}}(q,t)$ is needed here if the DAHA
superpolynomials are used instead of $\h^{mot}$.

We note that the connection conjectures
$L(q/t,t,\aa)=\h^{mot}(q,t,\aa)=\h^{daha}(q,t,\aa)$,
the DAHA-superduality, and the $L$-superduality
(which is the Hasse-Weil functional equation) were used
in \cite{ChQ} to obtain several (up to $5$) different
decompositions of $\h$ parallel to that in terms 
of $\prod_{i}(1+\aa q^i)$. For algebraic {\sf\em knots}, 
it can be in terms of 
$\prod_{i}(1+q^i \aa/t)$ and   $\prod_i (1+(q/t)^i a)$.
Also, there are $2$ more due to the {\sf\em  superduality}. 
%$q\leftrightarrow 1/t$.
%They all can be used here (at least conditionally). 
%\medskip

\comment{
Generally, we think that the direct analysis of $\h^{mot}$
similar to that in \cite{Pi} (for $\aa=0, \cn=1$) will be
possible in the limit $\rr\to \infty$. There are some formulas
of this kind in his paper; we use 
the formulas from \cite{ChP2}, which are for torus
knots and {\sf\em any} ranks $\cn$.

Thus, the direct approach to the justification of
(\ref{prod-lim}) is possible without any 
relation to DAHA superpolynomials. The latter is
known (with minor reservations) for uncolored torus knots,
but we need any ranks $\mathbbm{n}$. Also,  
the recurrence relations for families $T(k\ss+1,\ss)$ are 
doable when $k\to \infty$, which are for both, $\h^{daha}$
and $\h^{mot}$.
}
\vskip 0.2cm

{\it Proof begins here.} %We do not use the 
%discussion of the $\aa$-zeros.
First, it suffices to consider in (\ref{prod-lim-daha})
$\la=\{\nu,\ldots,\nu)$  for
the rectangle diagram $\nu=\nu_M=\ell\times M$.  Using
Theorem \ref{thm:trans},  $\mathfrak{H}^{daha}_\nu=
\mathfrak{H}^{daha}_{\nu'}$ for $\nu'=\nu^{tr}=M\times \ell$.
The latter is the result of the $\aa$-stabilization 
of the hat-normalized 
$\bigl\{ Y_\cn^M (X_\cn^{\ell+1})\bigr\}$. We present:\,
$X_\cn^{\ell+1}=E_{(\ell+1)\om_\cn}+\{\,\text{ lower terms}\,\}$,
where $E_\la$ are {\sf\em nonsymmetric Macdonald polynomials} 
and $\{\,$ lower terms$\,\}$ for $\la\in P_+$ 
means a sum of $E_\mu$ with some coefficients
for  $\mu_+<\la_+$ in the usual sense for dominant weights,
 where $\mu_+=W(\mu)\cap  P_+$ for the Weil group $W$ (it is unique
such).

The spectrum of $Y$-operators  at $E_\la$ is well-known; 
see \cite{C101} and formula (1.29) in \cite{ChD2}.
Generally, 
$Y_a(E_b)\,=\,q^{-(a,b+w_b^{-1}(\rho_k))}E_b$
for $a,b\in P$,  where
$w_b$ for $b\in P_+$ is the element of maximal 
length  in the 
centralizer of $b\,$ in $W$, and $q^{\rho_k}=t^\rho$.

Here $a=M\om_\cn$ and 
$P_+\ni b_+<\ell \om_\cn$. Remember that $q\mapsto q^{\cn}t$
and $t\mapsto 1/q$ under $\ddag$.  
We obtain that the eigenvalues of the lower terms vs. 
the $1${\footnotesize st} one 
have extra positive $t$-factors that tend to $0$ as $M\to \infty$.
Check that $\ell \om_\cn- w(b_+)$ must contain $p\al_\cn$ with
$p>0$ for any $w\in W$ in its decomposition in terms of simple roots. 
Thus, only $E_{(\ell+1)\om_\cn}$ contributes to the limit
(upon the $\ddag$-transformation). 

Recall that the hat-normalization, 
necessary in the definition of $\mathfrak{H}^{daha}_\la$,
is up to powers of $q,t$. In our case only one $E_a$ contributes 
to the limit, which gives that any $M$ will give the same.
 What matters is only the coinvariant 
$\bigl\{E_{(\ell+1)\om_\cn}\bigr\}$, which is given by the
evaluation formula for {\sf\em nonsymmetric} Macdonald polynomials
from \cite{Ch4} in type $A$. 
\smallskip

Let us provide this formula for the Young diagram $\la=L\times M=
(M,\ldots,M)$ ($L$ times), corresponding to $L\,\om_M$,
and the root system $A_{N-1}$ for $N\ge M\ge 0,\, L\ge 1 $
(this $M$ is not from the limit above):
\begin{align}\label{ev-rec}
\bigl\{ E_\la \bigr\}= E_\la(t^{-\rho})=
\frac{1}{t^{L(N-M)M/2}}\,\prod_{i=1}^{L-1}\,\prod_{j=1}^M
\frac{(1-q^i t^{j+N-M})}{(1-q^i t^j)},
\end{align} 
where empty products are $1$.
It can be immediately presented in terms of $\aa=-t^N$, but we
will do this only after the $\ddag$-transformation. 

Making here $L=\ell+1, M=\cn$, we obtain that 
\begin{align*}
&\bigl\{ E_\la\bigr\}^\ddag= 
q^{(\ell+1)(N-\cn)\cn/2}\,\prod_{i=1}^{\ell}\,\prod_{j=1}^\cn
\frac{(1-(q^\cn t)^i (1/q)^{j+N-M})}{(1-(q^\cn t)^i (1/q)^j)}\\
&=q^{(\ell+1)(N-\cn)\cn/2}\,
\prod_{i=1}^ \ell\,\prod_{j=1}^ \mathbbm{n}
\frac{1+\aa q^{\mathbbm{n}(i+1)-j} t^i}{1-q^{\mathbbm{n}i-j} t^i}
\text{ for } \aa=-1/q^N.
\end{align*}
This is (\ref{prod-free}) up to $q^{\bullet}$
(up to the hat-normalization). \sq

\vskip 0.2cm

{\bf The case of ${\bf T(2p+1,2)}$}.
Let us discuss the simplest $\ell=1$, when
$\ss=2, \rr=2\pp+1$ and $\de=\pp$. The general formula for superpolynomials
of $T(2\pp+1,2)$ was justified in \cite{CJJ} using the counterpart
of DAHA for the root system $C^\vee C_1$. It was predicted in
physics papers, and 
let us mention Habiro's
formula (around 2000)
 for $\pp\!=\!1, q=t, \aa=-t^2$. 
We  set $\mathbbm{n}=n$
for the sake of readability.
\medskip

Using
$(x;q)_n\!=\!(1\!-\!x)\cdots (1\!-\!x q^{n-1}),$
the formula reads:
\begin{align}\label{2p+1form}
\h_{2\pp+1,2}(\,&q,t,\aa; n\om_1) \,=
\frac{(q;q)_n}{(-\aa;q)_n\,(1-t)}\sum_{k=0}^n (-1)^{n-k}
(qt)^{\frac{n-k}{2}}\notag\\
\times\!\Bigl(
(q^{\frac{n(n\!+\!1)}{2}-\frac{k(k\!+\!1)}{2}})
&\bigl(\!\frac{\hbox{\small\em t}}{\hbox{\small\em q}}\bigr)
^{\!\!\frac{n\!-\!k}{2}}\Bigr)^{2\pp\!+1}
\,\frac{(t;q)_k\,(-\aa;q)_{n\!+\!k}\,
(-\aa/t;q)_{n\!-\!k}\,(1\!-\!q^{2k}t) }
{(q;q)_k(qt;q)_{n\!+\!k}\,(q;q)_{n\!-\!k}}\notag\\
&=\frac{(q;q)_n}{(-\aa;q)_n\,(1-t)}\sum_{k=0}^n (-1)^{n-k}
q^{\pp(n^2-k^2)}t^{(n-k)(\pp+1)}
\\
\times\!\Bigl(
q^{\frac{n(n\!+\!1)}{2}-\frac{k(k\!+\!1)}{2}}
&
\Bigr)
\,\frac{(t;q)_k\,(-\aa;q)_{n\!+\!k}\,
(-\aa/t;q)_{n\!-\!k}\,(1\!-\!q^{2k}t) }
{(q;q)_k(qt;q)_{n\!+\!k}\,(q;q)_{n\!-\!k}}.\notag
%\h_{3,2}(n\om_1)\! =\!\!
%\sum_{k=0}^n q^{nk}&t^k \frac{(q;q)_n(-\aa/t;q)_k}
%{(q;q)_k(q;q)_{n-k}},\ (x;q)_n\!=\!(1\!-\!x)\cdots 
%(1\!-\!x q^{n-1}).\notag
\end{align}
Here  $\bigl(t^{(n-k)(\pp+1)}\bigr)^\dag=
t^{n\pp}(q^n t)^{-(n-k)(\pp+1)}=q^{-n(n-k)(\pp+1)}t^{k\pp-n+k}$.
Therefore, only $k=0$ contributes to the limit, and this
sum reduces to:
\begin{align*}
&\frac{(q;q)_n(-\aa;q)_{n}\,
(-\aa/t;q)_{n}\,(1\!-\!t)}{(-\aa;q)_n\,(1-t)(qt;q)_{n}\,(q;q)_{n}}(-1)^{n}
q^{\pp\,n^2}t^{n(\pp+1)} 
 \times\!
q^{\frac{n(n\!+\!1)}{2}}\\
&\,\,\\
&=\frac{\,
(-\aa/t;q)_{n} }
{(qt;q)_{n} }(-1)^{n} q^{\pp\,n^2}t^{n(\pp+1)} 
 \times\!
q^{\frac{n(n\!+\!1)}{2}}\equal \Pi\,.
\end{align*}

Applying $\dag$ to the latter, we obtain (\ref{prod-lim}) 
(for $\ell=\ss-1=1)$:
\begin{align}\label{lim2p+1}
&\Pi^\dag=t^{\pp n} \frac{\,
(-\aa(tq^n);q)_{n} }
{(\,q/(tq^n);q\,)_{n} }(-1)^{n} q^{\pp\,n^2}(tq^n)^{-n(\pp+1)} 
 \times\!
q^{\frac{n(n\!+\!1)}{2}}\\
= &\frac{\,
(-\aa(tq^n);q)_{n} }
{(t^{-1}q^{1-n};q)_{n} }(-1)^{n} t^{-n} 
 \times\!
q^{-\frac{n(n\!-\!1)}{2}}=\prod_{i=1}^1\,\prod_{j=1}^ n
\frac{1+\aa q^{n(i+1)-j} t^i}{1-q^{ni-j} t^i},
\end{align}
where\ \   
$ q^{\frac{n(n\!-\!1)}{2}} (-1)^n\, t^n\, (\,t^{-1}q^{1-n};q\,)_{n}\ =\ 
(-1)^n
q^{\frac{n(n\!-\!1)}{2}}\, t^n\, (1-t^{-1}q^{1-n})$ $\times
(1-t^{-1}q^{2-n})\cdots
(1-t^{-1})=(1-t)(1-tq)\cdots (1-t q^{n-1}).
$
%\eject

\comment{ %%% T(3,2)
\vskip 0.2cm
{\bf The case of T(3,2).}
Let us show how formulas above can be obtained using
the passage to instanton slices. We will consider only 
$\r=\F_q[[x=z^3,y=z^2]]$ for any rank $n$. The case  of
any $T(2\pp+1,2)$ is quite doable as well. The 
calculations are especially
simple for $T(3,2)$ because $\c=\mathfrak{m}_\a=x\a+y\a$ and the
modules $\m$ such that $\mathfrak{m}_\a^n\subset \m\subset \a^n$
are one-to-one with $\F_q$-subspaces $V\subset \F_q^n$, the
images of $\m$. Let $codim(V)=dim_{\F_q}(\F_q^n/V)$. Then
$\varrho(\m)=n+codim(V)$, and the number of  $V$ for a fixed
$k=codim(V)$ is {\tiny $\qbin{n}{n-k}$,} which gives 
formula $\sH^{inst}_n=\sum_{k=0}^n t^k\,\text{\tiny$\qbin{n}{n-k}$}
\,(-\aa q^n;q)_k$. Finally, 
$\h^{mot}_n(q,t,\aa)=(q^n t)^n\sH^{inst}_n(q,1/(q^n t),\aa)=
\sum_{k=0}^n (q^n t)^k\,\text{\tiny$\qbin{n}{k}$}
\,(-\aa q^n;q)_k$ in the case of $T(3,2)$,
 which is formula right above (6.49) in \cite{ChQ}. 
Formula (6.49)
is the super-Habiro one (see there).
} %%%T(3,2)

\setcounter{equation}{0}
\section{\sc Two-row diagrams, etc}\label{sec:2row}
\subsection{\bf Two basic embeddings}
\Yboxdim7pt
We begin with two basic operations and the corresponding
relations for  the cells in the instanton slices.
In this section, and always when  {\sf\em superduality}
is discussed,  only the modules of rank $\mathbbm{n}=1$
are considered.

As above, $\a=\F[[x,y]]$ and  $deg(\i)=dim_{\F}\a/\i$
for ideals $\i\subset \a$ (always of finite codimension).
Generally, the {\sf\em conductor}
$\sC$ can be any ideal  of finite degree, but 
we will consider here only $\c=I_\la$
for monomial ideals $I_\la$
associated with Young diagrams $\la$.
The notation is $\la=(\la_1\ge \la_2\ge \cdots
\ge\la_i\ge \cdots
\la_\ell>0)$, where $\ell=\ell(\la)$ is the 
length of $\la$.  Recall that $I_\la$ is the $\a$-module generated
by the monomials $x^i y^j$ for the {\sf\em outer corners}
 $(i,j)$ of the complement $\overline{\la}$ of $\la$ in $Z_+^2$.
We will denote this set by  $corn(\overline{\la})$.
The indices $(i,j)$ are for the rows and columns, with the 
pair $(0,0)$ representing  (the initial)  $\yng(1)$\,.
\Yboxdim5pt
Also, $I_\emptyset =\a$,  
$|corn(\overline{\sf\emptyset})|=1$, 
$I_{\yng(1)}=m_{\a}=
\lan x,y\ran$,
the maximal ideal of $\a$, and $|corn(\overline{\yng(1)})|=2$.
\medskip

As above, $\i^0$ will be the  $\a$-ideal generated by the leading (lowest)
terms of $f\in \i$. It is monomial:\, 
$\i^0=I_\la$ for a certain $\la$. We set $dgrm(\i)=\la$.
Importantly,  $deg(I)=deg(I^0)=|\la|$.
We continue to use the standard Gr\"obner ordering:
 $y\succ x^m\succ 1$ for
any $m\ge 1$.

%The corresponding ring is $\a/I_{\la}$, and
%its {\sf\em superpolynomial} will be denoted by $\mathscr{H}_\la$.
The {\sf\em instanton slices} for $\la$ are  $\mathfrak{S}_\la=
\{\i\, \mid\,  I_\la\subset \i
\subset \a \}$, where $\i$ are ideals. The corresponding 
{\sf\em instanton superpolynomial} is defined as
$\mathscr{H}_\la=\sum_{\i\in \mathfrak{S}_\la} t^{deg(\i)}(1+\aa q)\cdots 
(1+\aa q^{\varrho(\i)-1})$ for 
$\varrho(I)\equal dim_{\F} \i/(m_\a \i)$; recall that $\F=\F_q$.  
The {\sf\em Gr\"obner cells} for the diagrams $\mu\subset \la$ are 
$G_\mu \equal \{\i \in \mathfrak{S}_\la \mid dgrm(\i)=\mu\}$ and
$G_\mu^{(r)}\equal\{\i \in G_{\mu\subset\la} \mid \varrho(\i)=r\}$. 
\vskip 0.2cm

Note that
$\varrho(\i)$ here is no greater than 
$|corn(\overline{\mu})|$, the number of outer corners of $\mu$.
 The equality is for $\i=I_\mu$, but
there can be other  ideals $\i\in G_{\mu\subset \la}$ of such top 
$\varrho$-rank. The latter
is in the range from $1$ (only 
for empty $\mu$) to $\rho+1$ for the greatest possible
triangle subdiagram $(\rho,\rho-1,\ldots,2,1)\subset \la$, so it 
can be greater than $\varrho(I_\la)=|corn(\overline{\la})|$.

We obtain that $\mathfrak{S}_\la$ is the disjoint union of 
$G_{\mu\subset \la}^{(r)}$
for $\mu\subset \la$ and $1\le r \le \rho+1$ for $\rho$ as above.
It will be technically convenient below to use the notation 
$G^{(r)}[\mu\subset \la]$ instead of $G^{(r)}_{\mu \subset \la}$. 
Accordingly, we will use 
$\sH[\la]$ instead of $\sH_\la$, and set 
$\sH^{(r)}[\mu\subset \la]\equal\bigl|G^{(r)}[\mu\subset \la]\bigr|$
(or in the absence of $(r)$). Recall that
$\cn=1$; $I$ will be used instead of $\i$.

Assuming that 
$\mathscr{H}^{(r)}[\mu\subset\la]$ is a $q$-polynomial, 
its top degree
will be denoted by  $dim_q(G_{\mu\subset \la}^{(r)})$, or  
by $dim_q(G_\mu)$ if the $\varrho$-rank 
$r$ is disregarded. 
This will be mostly used
when $G$-cells are pure affine spaces 
$\mathbb{A}^N$; then $|G|=q^N$, and $dim_q=N$. 
We set $dim_q(\emptyset)=-1$.

Generally, $G_{\mu\subset\la}^{(r)}$ 
are conjectured to be unions of the components
{\sf\em birationally} isomorphic to configurations of
affine spaces. Then,
$|G_{\mu\subset \la}^{(r)}|$ will be, indeed, a
$q$-polynomials with integral coefficients.
\vskip 0.2cm

There are several natural relations between $\mathscr{H}$
for embedded Young diagrams. For instance, let us define
the union $\la\, \vec{\cup}\, \nu $ by considering the
Young diagrams as 
collections of columns and placing the diagram 
$\nu$ after $\la$. We assume that
$\ell(\nu)$ (the size of its 1{\footnotesize st} column)
is no greater than the size of the last column of $\la$.

The following proposition is essentially sufficient
to analyze the {\sf\em superduality} of $\sH^{inst}$ in the case 
of $2$-row diagrams.

\begin{definition}\label{def-superdual}
For $\de\in \Z_+$, a polynomial $\mathscr{H}=
\sum_{k,l,m}c_{k,l,m}\, \aa^k q^l t^m$ is called 
{\sf\em $\de$-superdual} if it is invariant  under 
$\Xi_\de:\, \aa^k\, t^l q^m \mapsto \aa^k\,t^{m+\de-l} q^m$, where 
$c_{k,l,m}\mapsto c_{k,l,m}$ (they will be integral coefficients). 
Equivalently, the {\sf\em superduality} means: 
\begin{align}\label{supdua}
&\mathscr{H}(q,t,\aa)= \Xi_\de\bigl(\mathscr{H}(q,t,\aa)\bigr)\equal
t^\de\, \mathscr{H}(q\mapsto qt, t\mapsto 1/t, \aa\mapsto \aa).
\end{align} 
\end{definition}

\begin{proposition}\label{prop:shift-emb}
(i) Consider $\mu\subset \la$ 
as subdiagrams of  $\la'=\la\, \vec{\cup}\, \nu $, provided that
the last column of $\mu$ is no smaller than $\ell(\nu)$.
Then the degrees $deg(I)=|\mu|$ for the ideals 
$I\in G_{\mu\subset \la}$ and their $\varrho$-ranks $\varrho(I)$ 
do not change
upon the natural embedding $G_{\mu\subset \la}\, \to \,
G_{\mu\subset \la'}$, which is due to $I_{\la'}\subset I_\la$,
 and   
$\bigl|\,G^{(r)}_{\mu\subset \la}\,\bigr|=
\bigl|\,G^{(r)}_{\mu\subset \la'}\,\bigr|$
for any $\varrho$-ranks $r$.

(ii) Let $\de=|\la|$,
$\la^\#=\nu\,\vec{\cup}\,\la$, and $\de^\#=\de+|\nu|$.
Accordingly, let 
$\mu^\#=\nu\, \vec{\cup}\, \mu$ for $\mu\subset \la$, and
 $\kappa=
\bigl|\,corn(\overline{\mu}^{\,\#})
\setminus corn(\overline{\mu})\,\bigr|$ (the number of
extra outer corners of
$\mu^\#$).
%We set $G^{(r),\,\#}_{\mu\subset\la}=G^{(r)}_{\mu^\#\subset\la^\#}$.
Then $\bigl|\,G^{(r+\kappa)}_{\mu^\#\subset \la^\#}\,\bigr|=
\bigl|\,G^{(r)}_{\mu\subset \la}\,\bigr|$.  In particular,
\begin{align}\label{lamda-sharp}
&\Xi^\#\,\bigl(\sum_{\mu} \mathscr{H}
[\mu^\#\subset \la^\#](\aa=0)\bigr)=
\Xi\,\bigl(\sum_{\mu} \mathscr{H}[\mu\subset\la](\aa=0)\bigr),
\end{align}
where
$\Xi,\Xi^\#$ are those for $\de,\de^\#$.
\end{proposition}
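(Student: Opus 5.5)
Both parts rest on the Gr\"obner description of ideals: an ideal $I$ with $I^0=I_\mu$ is generated by the elements $f_{ij}=x^iy^j+\sum C^{kl}_{ij}x^ky^l$, one for each outer corner $(i,j)$ of $\mu$, with tails supported on boxes of $\mu$ (i.e. outside $I_\mu$). The conditions cutting out $G_{\mu\subset\la}$ are the syzygy (S-polynomial) conditions together with $I_\la\subset I$. For the latter it suffices to require each Gr\"obner generator $x^ay^b$ of $I_\la$ to reduce to zero modulo the $f_{ij}$.

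\textbf{Part (i).} I would show that $G_{\mu\subset\la}=G_{\mu\subset\la'}$ as sets of ideals. The inclusion $\subseteq$ is immediate from $I_{\la'}\subset I_\la$. For the reverse inclusion, note that the corners of $\overline{\la'}$ differ from those of $\overline{\la}$ only in columns $j\ge\la_1$, i.e. to the right of $\la$. The hypothesis that the last column of $\mu$ has height at least $\ell(\nu)$ means that $x^{i}y^{j}\in I_\mu$ for all $j\ge \mu_1$ with $i<\ell(\nu)$, and the box structure of $\mu$ in columns $\ge\mu_1$ is empty. Then for $I\in G_{\mu\subset\la'}$ one checks that the monomials $x^ay^b$ generating $I_\la$ but not $I_{\la'}$, namely $y^{\la_1}$ times powers of $x$ up to $\ell(\nu)$, lie in $I$. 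The plan is to reduce them by the generator $f_{ij}$ attached to the corner of $\mu$ in the row/column range of $\nu$: multiplying by powers of $y$ pushes tails into columns where $\mu$ has no boxes below row $\ell(\nu)$, so the tails lie in $I_\mu$, and an induction on the Gr\"obner order kills them. Degrees and $\varrho$ are intrinsic to $I$, so they do not change. I expect this step to need care in the direction of the order $x\prec y$, since tails have $l>j$.

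\textbf{Part (ii).} Prepending $\nu$ amounts to a shift. I would use the map $I\mapsto I^\#:=y^{\nu_1}I+J_\nu$, where $J_\nu$ is the monomial ideal cutting out $\nu$ in the first $\nu_1$ columns (generated by $x^{\ell(\nu)}$-type corners of $\nu$ together with the appropriate $x^iy^j$). Concretely, $I^\#$ is the set of $f\in\a$ whose ``column $<\nu_1$'' part lies in $I_\nu$ and whose shift $y^{-\nu_1}(\cdot)$ lies in $I$. I would show the following: this is a bijection $G_{\mu\subset\la}\to G_{\mu^\#\subset\la^\#}$; $(I^\#)^0=I_{\mu^\#}$; and $I_{\la^\#}\subset I^\#$ if and only if $I_\la\subset I$. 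Surjectivity should follow from part (i)-style reductions: any ideal with leading diagram $\mu^\#$ must contain the monomials at the $\kappa$ new corners (the corners of $\nu$ below the last row of $\mu$). Here one uses that the tails $x^ky^l$ with $l>j$ fall into $I_{\mu^\#}$, so the generators in the $\nu$-part are pure monomials. For the $\varrho$-ranks, $I^\#/\mathfrak m_\a I^\#$ splits as $I/\mathfrak m_\a I$ plus the $\kappa$ extra monomial corners, which gives $r\mapsto r+\kappa$. The main obstacle, as I expect, is proving that these new corner monomials are always minimal generators and never absorbed by $\mathfrak m_\a I^\#$. I would attack it by comparing leading terms.

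Finally, (\ref{lamda-sharp}) at $\aa=0$ follows from the cardinalities: $\deg I^\#=\deg I+|\nu|$, so $\sH[\mu^\#\subset\la^\#]=t^{|\nu|}\sH[\mu\subset\la]$. Applying $\Xi^\#$ with $\de^\#=\de+|\nu|$ gives $t^{\de+|\nu|}t^{-|\nu|}(\cdots)(qt,1/t)$, which matches $\Xi$ applied to the $\la$-sum.
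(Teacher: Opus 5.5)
Your part (i), the set-level bijection $I\mapsto I^\#=y^{\nu_1}I+J_\nu$ in part (ii), and your deduction of (\ref{lamda-sharp}) at $\aa=0$ follow the paper's route, in more detail. The paper's version is: ``the additional generators are the monomial generators of $I_\nu$ with the last power of $y$ omitted.'' Two local repairs are needed.

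First, the $\nu$-part generators of $I^\#$ are pure, but not because their tails fall into $I_{\mu^\#}$; those tails sit on boxes of $\mu^\#$. They are pure because these corners lie in rows $\ge c\ge\ell(\la)$, where $c$ is the height of the last column of $\nu$. Hence they already belong to $I_{\la^\#}\subset I^\#$.

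Second, in (i) the hypothesis on the last column of $\mu$ is not needed. The reduced element of $I$ with leading term $y^{\mu_1}$ has no tail, since $\mu$ has no boxes in columns $>\mu_1$. So $y^{\la_1}\a\subset y^{\mu_1}\a\subset I$, and $I_\la\subset I_{\la'}+y^{\la_1}\a$.

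The genuine gap is the $\varrho$-rank step, where you claim $I^\#/\mathfrak m_\a I^\#$ splits as $I/\mathfrak m_\a I$ plus $\kappa$ new corners. This step cannot be repaired, because the rank-refined claim is false in general. The obstacle you anticipate does not occur: reducing modulo $y^{\nu_1}\a$ shows the corners of $J_\nu$, all of $y$-degree $<\nu_1$, are never absorbed. The problem runs the other way. One has $\mathfrak m_\a J_\nu\cap y^{\nu_1}\a=x^cy^{\nu_1}\a$, and $x^c\in I_\la\subset I$. Hence $y^{\nu_1}I\cap \mathfrak m_\a I^\#=y^{\nu_1}(\mathfrak m_\a I+x^c\a)$, and
\[
\varrho(I^\#)=|corn(\overline\nu)|-1+\mathrm{dim}_{\F}\, I/(\mathfrak m_\a I+x^c\a).
\]
This equals $\varrho(I)+\kappa$ in two cases:
\begin{itemize}
\item $c>\ell(\la)$, since then $x^c\in\mathfrak m_\a I$;
\item $c=\ell(\mu)$, since then $x^c$ is a minimal generator of $I$ that dies, which accounts for the ``$-2$'' in $\kappa$.
\end{itemize}
If instead $\ell(\mu)<\ell(\la)=c$, the shift is only $\kappa-1$ whenever $x^c\notin\mathfrak m_\a I$.

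A concrete counterexample: take $\nu=(1,1)$, $\la=(2,1)$, $\mu=(2)$, so $\la^\#=(3,2)$, $\mu^\#=(3,1)$ and $\kappa=1$.
\begin{itemize}
\item $G_{\mu\subset\la}$ consists of the ideals $(x+\alpha y,y^2)$, all of rank $2$, so $|G^{(2)}_{\mu\subset\la}|=q$.
\item $G_{\mu^\#\subset\la^\#}$ consists of $I^\#=(x^2,xy+\alpha y^2,y^3)$. For $\alpha\neq0$, the elements $x^2y$ and $x(xy+\alpha y^2)$ of $\mathfrak m_\a I^\#$ give $xy^2\in\mathfrak m_\a I^\#$, and then $y^3\in\mathfrak m_\a I^\#$. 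So $\varrho(I^\#)=2$ for $\alpha\neq 0$.
\end{itemize}
Thus $|G^{(3)}_{\mu^\#\subset\la^\#}|=1\neq q$. This agrees with Theorem \ref{thm:2-row}(ii), which gives $\min(\lceil (3-1)/2\rceil-1,\,2-1)=0$.

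The paper's own proof also just asserts that the rank increases by $\kappa$. What is actually provable is the rank-free cardinality statement, hence (\ref{lamda-sharp}) at $\aa=0$, which is all that Theorem \ref{thm:2-row}(iii) uses. The rank shift also holds under the extra hypothesis $c>\ell(\la)$ or $c=\ell(\mu)$.
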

{\it Proof.} The first claim is simple.
In $(ii)$, the diagram  $\mu^\#$
considered inside $\la^\#$ results in the $G$-set with the
same number of $\F_q$-points  as for 
$\mu$ considered inside $\la$. Indeed, the additional generators are 
the monomial generators of $I_\nu$, where 
the last one (a power of $y$) is omitted.
Adding them
does not influence the relations between  the coefficients
of the prior generators, which are for $\mu$ considered inside $\la$,
and increases the rank by $\kappa$.
However, now $deg(\mu^\#)=deg(\mu)+|\nu|$, which results
in $\Xi^\#:\, t^{l+|\nu|} q^m \mapsto 
t^{m+\de^\#-l-|\nu|} q^m=
t^{m+\de-l} q^m= 
\Xi( t^l q^m)$ for the products $t^l q^m$ from 
$\mathscr{H}[\la]$, and we must increase the $\varrho$-ranks of
the ideals $I\subset \a$ by the corresponding $\kappa$ when
going from $\la$ to $\la^\#$.
Here $\kappa=|corn(\overline{\nu})|-1$ if the
last column of $\nu$ is greater than the 1{\footnotesize st} 
one in $\mu$,  
and with $-2$ instead of $-1$ if they are of the same size. \sq
\Yboxdim7pt
\vskip 0.2cm

\subsection{\bf Classification results}
We switch now to the $2$-row case; let  $\la=(b\, \ge\, a\ge 0)$. 
Accordingly,
its subdiagrams will be $\mu=(v\, \ge\, u\ge 0)$, where
$v\le b, u\le a$. We will include $1$-row diagrams (when $a=0=u$),
but the diagram $\mu=(0,0)$ corresponding to $I=\a$ will be
excluded, the only ideal $I$ of $\varrho$-rank $1$. 
Below, $Floor[x]$ is the integer part of $x$ and 
$Ceiling[x]=\min(n\ge x, n\in \Z).$ Also, $I$ is used instead of $\i$.

\begin{theorem} \label{thm:2-row}
(i) For $\la, \mu$ as above, $G_{\mu\subset \la}$
is an affine space and  
$$
dim_q(G_{\mu\subset \la})=\min \bigl(Floor[\frac{v-u}{2}],a-u\bigr),
$$
In full detail,  this $dim_q$ equals
$Floor[\frac{v-u}{2}]$ when $a\ge Floor[\frac{v+u}{2}]$, and
$a-u$ when $a\le Floor[\frac{v+u}{2}]$.

(ii) The $\varrho$-ranks of any ideals 
$I_{\la}\subset I \varsubsetneqq \a$ can be
$2$ or $3$. For $\varrho=3$,  $G^{(3)}_{\mu\subset\la}$ is
an affine space with
$dim_q=\min\bigl(Ceiling[\frac{v-u}{2}]-1, a-u\bigr)$ provided that $u>0$.
This space is empty if $dim_q=-1$ or $u=0$. 
%The whole $G_{\mu\subset\la}$ is of $\varrho$-rank $3$ 
%for $u>0$ if $v\!-\!u$ is odd  and 
%$a\ge Floor[\frac{u+v}{2}]$.

%Ceiling[\frac{v-u}{2}]-1$

(iii) The corresponding superpolynomial
$\mathscr{H}_{\la}(q,t,\aa)$ is {\sf\em superdual} in
the sense of Definition \ref{def-superdual} if and only 
if $b\ge 2a-1$, which is for 
$\de=|\la|=a+b$ in Definition \ref{def-superdual}.
\end{theorem}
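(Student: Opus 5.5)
The plan is to describe every ideal $I\in G_{\mu\subset\la}$ by an explicit normal form of Gr\"obner generators and then read off both the cell and the $\varrho$-rank from that normal form. For $\mu=(v\ge u)\subset\la=(b\ge a)$, the outer corners of $\overline{\mu}$ are $y^v$ (row $0$), $xy^u$ (row $1$) and $x^2$ (row $2$). Since $x^2\in I_\la\subset I$, the third generator can be taken to be exactly $x^2$. Reducing tails modulo $x^2$ and the leading ideal, the other two generators become $f_2=y^v$ and
$$f_1=xy^u+\sum_{u<l<v}c_l\,y^l,\qquad c_l\in\F .$$
All remaining conditions should come from two requirements. First, $I$ must contain $I_\la$: the only new condition is $xy^a\in I$. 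Second, $I^0=I_\mu$ must hold exactly, which is governed by the single syzygy coming from $x f_1$.

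For part (i), we compute both conditions.
\begin{itemize}
\item $xy^a=y^{a-u}f_1-\sum c_l y^{l+a-u}$, so $xy^a\in I$ forces $c_l=0$ for $l<v-a+u$.
\item In $xf_1=x^2y^u+\sum c_l\,xy^l$, substitute $xy^l=y^{l-u}f_1-\sum_{l'}c_{l'}y^{l+l'-u}$. This shows that $I^0=I_\mu$ iff $\sum_{l+l'=m+u}c_lc_{l'}=0$ for every $m<v$.
\end{itemize}
Let $l_0$ be the smallest index with $c_{l_0}\neq0$. The lowest term of the quadratic condition is $c_{l_0}^2\,y^{2l_0-u}$, so it forces $2l_0-u\ge v$. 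Once $2l_0\ge v+u$ holds, all the products vanish modulo $y^v$ automatically. Hence the cell is the affine space with coordinates $c_l$ for
$$\max\bigl(\lceil (v+u)/2\rceil,\ v-a+u\bigr)\le l\le v-1 .$$
Counting these indices gives $\min(Floor[\frac{v-u}{2}],a-u)$, as claimed in (i).

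For part (ii), we use Nakayama's lemma: $\varrho(I)$ is the number of generators among $f_1,y^v,x^2$ that are not redundant modulo $\mathfrak m_\a I$.
\begin{itemize}
\item If $u=0$, then $f_1=x+\cdots$ and $x^2\in\mathfrak m_\a f_1+\mathfrak m_\a y^v$, so $\varrho=2$.
\item If $u>0$, then $x^2$ is never redundant. The generator $y^v$ is redundant exactly when the coefficient of $y^v$ in the reduction of $xf_1$, namely $\sum_{l+l'=v+u}c_lc_{l'}$, is nonzero.
\end{itemize}
By the lower bound $2l_0\ge v+u$, this coefficient equals $c_{(v+u)/2}^2$ when $v-u$ is even and vanishes when $v-u$ is odd. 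Therefore $G^{(3)}_{\mu\subset\la}$ is the coordinate subspace on which $c_l=0$ for all $2l\le v+u$. Its dimension is $\min(Ceiling[\frac{v-u}{2}]-1,a-u)$, and $G^{(2)}$ is the complement. This also shows that only the ranks $2$ and $3$ occur.

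For part (iii), we sum the cells to obtain the closed formula
$$\sH_\la=1+\sum_{\mu\neq\emptyset}t^{u+v}\Bigl(|G^{(2)}|(1+\aa q)+|G^{(3)}|(1+\aa q)(1+\aa q^2)\Bigr)$$
and compare it with its image under $\Xi_{a+b}$, separately in each $\aa$-degree. The first reduction is to apply Proposition \ref{prop:shift-emb},(i): the terms with $a\ge Floor[\frac{v+u}{2}]$ do not depend on $b$ beyond $v\le b$, so the problem reduces to matching the boundary region $a<Floor[\frac{v+u}{2}]$ against its $\Xi$-image. Under $\Xi_{a+b}$, the term $q^dt^{u+v}$ goes to $q^dt^{a+b+d-u-v}$. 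So we look for an involution on pairs $(\mu,\text{monomial})$, roughly $(v,u)\mapsto(b+a-v+\dots,\ \dots)$, realized as reflecting the ``$c_l$-window'' inside the strip $[u,v]$. Sufficiency should follow once this bijection is verified when $b\ge 2a-1$. For necessity, when $b\le 2a-2$ we will exhibit an explicit extremal coefficient that breaks the symmetry: the $\aa^2$-part at $\mu=\la$ or at $\mu=(b,a-1)$, compared with its dual monomial. I expect the main obstacle to be this bookkeeping of the bijection across the two regimes of the $\min$, and in particular the parity split between $Floor$ and $Ceiling$ in the $\aa$-terms. I will first test the bijection against the case $(3,2)$ and against $(2,2)$, which is the known failure, before writing it in general.
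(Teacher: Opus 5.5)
Parts (i) and (ii) are correct and follow the paper's argument. You use the same normal form: $x^2$, $f_1=xy^u+\sum_{u<l<v}c_ly^l$, and $y^v$. You get the same two constraints: $l_0\ge v-a+u$ from $xy^a\in I$, and $2l_0\ge v+u$. The paper obtains the latter from the purity of $xy^{u+p}$ and then of $y^{u+2p}$; your identity $y^{-u}\bigl(\sum_l c_ly^l\bigr)^2\in I$ packages this more cleanly. You also use the same rank criterion, $\varrho(I)=2$ iff $2l_0=v+u$. Your count $\min(\lceil\frac{v-u}{2}\rceil-1,a-u)$ is the right one; the paper's proof text drops the $-1$, but the statement has it. There are two edge slips. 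For $u=v$, the generator $xy^u=x\cdot y^v$ is itself redundant, so $\varrho=2$, although your ``coordinate subspace'' would be the whole one-point cell. For $u=0$ with $2l_0=v$, you only have $x^2\in(f_1,y^v)$, not $x^2\in\mathfrak m_\a f_1+\mathfrak m_\a y^v$; this still suffices.

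The gap is (iii), which is the actual content of the theorem. You leave it as a plan, and its concrete ingredients do not work.
\begin{itemize}
\item The involution is never defined. Proposition~\ref{prop:shift-emb},(i) also gives no reduction to the region $a<\lfloor\frac{u+v}{2}\rfloor$. By (i), every cell dimension is independent of $b$, while $\Xi_{a+b}$ is not. Moreover, that region is dual to terms outside it: for $\la=(5,2)$ its terms $qt^6,t^6,t^7$ are dual to $qt^2,t,1$, which come from $(2,0),(1,0),\emptyset$.
\item Your necessity witnesses fail whenever $a<b\le 2a-2$. Take $\la=(4,3)$, $\de=7$. The $\aa^2$-term $q^3t^7$ of $\mu=\la$ is dual to $q^3t^3$ from $(2,1)$. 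The $\aa^2$-term $q^3t^6$ of $(4,2)$ is dual to $q^3t^4$ from $(3,1)$. The failure sits elsewhere: $\mu=(4,0)$ gives $q^2t^4$ at $\aa^0$, with dual $q^2t^5$, while the degree-$5$ cells $(4,1),(3,2)$ have dimensions $1,0$. Proposition~\ref{prop:psi} uses exactly such witnesses, $\mu=(b,0)$ or $(b-1,0)$.
\item ``Separately in each $\aa$-degree'' leaves the $\aa^1$-coefficient $q(\sH_0-1)+q^2\sH^{(3)}$ untreated. Here $\sH_0=\sH_\la(\aa=0)$ and $\sH^{(3)}=\sum_\mu|G^{(3)}_{\mu\subset\la}|\,t^{|\mu|}$.
\end{itemize}

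Here is what fills the gap.

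\textbf{Reduction to $\aa=0$.} Your own formulas give $\dim G^{(3)}_{(v,u)\subset\la}+1=\dim G_{(v,u-1)\subset\la}$ for $1\le u<v$. The subdiagrams not of the form $(v,u-1)$ have $0$-dimensional cells, one in each degree $0,\dots,\de$. Hence $\sH_0=1+t+\cdots+t^{\de}+qt^{-1}\sH^{(3)}$. So $\aa^0$- and $\aa^2$-superduality are equivalent and together imply the $\aa^1$ one, and everything reduces to $\aa=0$. This is what the paper's ``formally sufficient'' amounts to.

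\textbf{Sufficiency at $\aa=0$.} The paper does not build a global involution; it inducts on $a$.
\begin{itemize}
\item Apply Proposition~\ref{prop:shift-emb},(ii) with a prepended $2$-column: $(b,a)\mapsto(b+1,a+1)$ and $(v,u)\mapsto(v+1,u+1)$. This preserves cells and turns $\Xi_{\de}$ into $\Xi_{\de+2}$. By induction, $\Xi_{\de+2}(\sH_0[\la^\#])=\sH_0[\la]+\Xi_{\de+2}(\text{one-row part of }\la^\#)$.
\item Re-embed the subdiagrams of $\la$ into $\la^\#$ via $\mu^\flat$, which equals $\mu$ if $a\ge\lfloor\frac{u+v}{2}\rfloor$ and $(v-1,u+1)$ otherwise. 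This preserves degree and cell dimension.
\item Check that the $b+2$ leftover subdiagrams of $\la^\#$ contribute exactly that dual one-row part. This is where $b+1\ge 2(a+1)-1$ enters.
\end{itemize}
Necessity then comes from Proposition~\ref{prop:psi}.
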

{\it Proof} of {\bf (i)}. The generators of  $I$ with $dgrm(I)=\mu$ are
$f_1=x^2$ (only) if $u>0$, 
 $f_2=x y^u+\al_p y^{u+p}+\cdots+\al_{v-u-1} y^{v-1}$, where $\al_p$
is the first potentially nonzero coefficient when 
$1\le p \le v-u-1$, and $f_3=y^v$. If $\al_p\neq 0$, then
the leading term of $x f_2$ is $\al_p xy^{u+p}$. By ``potentially",
we mean that $\al_0\neq 0$
 does not contradict the condition $I_0=I_\mu$.
However,
$x^2\in I$, and this  gives that $x y^{u+p}\in I$. We use here 
and below that
any  series from $1+\mathfrak{m}_{\a}$ is invertible in $\a$,
where $\mathfrak{m}_{\a}= x\a+y\a$ is the maximal ideal of $\a$. 

Generally,  $x y^{u+p}$ must belong to $I$ modulo the ideal 
$(y^{u+p+1})$ due to $I_0=I_{\mu}$,  but this argument gives that 
it is {\sf\em pure}, i.e. sits inside $I$ without any 
higher additional terms. 
Also, $x y^a$ is {\sf\em pure} since $I_{\la}\subset I$.
\medskip

Next, $y^p f_2= x y^{u+p}+\al_p y^{u+2p}$ modulo $(y^{u+2p+1})$,
which gives that $y^{u+2p}$ is pure in $I$. Similarly,
$y^{a-u} f_2= x y^a+\al_p y^{p+a}$ modulo $(y^{p+a+1})$, which
gives that $y^{p+a}$ is pure too. These observations result in
the inequalities $u+2p\ge v$ and $p+a\ge v$; otherwise
$f_3=y^v\in I$ will not be the smallest power of $y$ in $I$. 
There are no other conditions for $\{\al_i\}$.
Finally, $1\le p \le v-u-1$,
$p\ge Ceiling[\frac{v-u}{2}]$, and $p\ge v-a$. 

Intersecting
these $2$ ranges of indices,  we obtain
that the coefficients $\al_p,\ldots, \al_{v-u-1}$ are free
parameters of ideals $I$ such that $I_\la\subset I$ and
 $dgrm(I)=\mu$, where 
$p=\max\bigl(Ceiling[\frac{v-u}{2}], v-a\bigr)$ as above. 

Hence:\, 
$dim_q(G_{\mu\subset \la})= 
\min\bigl(v-u-Ceiling[\frac{v-u}{2}], v-u-(v-a)\bigr)$, which equals
$\min\bigl(Floor[\frac{v-u}{2}], a-u\bigr)$
due to the general relation $Floor[\frac{n}{2}]+Ceiling[\frac{n}{2}]=n$ 
for $n\in \Z$. 
This concludes $(i)$.

Note that $dim_q(G_{\mu\subset \la})$ does not depend on
$b$, which matches Claim $(i)$ in Proposition  \ref{prop:shift-emb}.
Its dependence on $a$ is only via $a-u$, which matches 
Claim $(ii)$ there. 
\medskip

{\bf (ii).} 
%The ideals $I$ from 
%$G^{(2)}_{\mu\subset\la}$ (of $\varrho$-rank $2$) are exactly
%those with $u=0$ or when $p$ obtained above satisfies
%$u+2p=v$ or $p+a=v$. 
The $\varrho$-rank of $I$ is always $2$ when  $u=v$ or $u=0$. 
Indeed, the generators of $I$ are  $f_3$ and either 
$f_1$ or $f_2$ in these special cases.
If $v>u>0$, the calculation above
demonstrates that $f_3=y^v$ belongs
to $\a f_1 +\a f_2$ in $I\in G_{\mu\subset\la}$
and $\varrho(I)=2$ if and only if $u+2p'=v$ for $p'\ge p$,
where $p$ is that above. Accordingly, the $dim_q$ 
of $G^{(3)}_{\mu\subset \la}$, which is an affine space,
 is $\min\,\bigl(Ceiling[\frac{v-u}{2}],a-u\bigr)$
for $u>0$; this cell becomes empty if $u=0$. Note that
we used in $(i)$ that $y^v$ is
an $\a$-linear combination of $f_1,f_2$ and $xy^a$,
but this does not give that 
 $\varrho=2$ because $x y^a$ is involved.
This completes $(ii)$.
\medskip

{\bf (iii).} The proof will be by induction. Let us begin
with $\mathscr{H}_0=\mathscr{H}(\aa=0)$, i.e. we will disregard
the $\varrho$-ranks. Assume, first, that $b\ge 2a-1$ and that
the superduality
holds for $\la=(b\ge a\ge 0)$.
We consider
$\la^\#=(b^\#=b+1, a^\#=a+1)$ as in Proposition 
\ref{prop:shift-emb},(ii). 
Accordingly, $\mu^\#=(v^\#=v+1,u^\#=u+1)$
for $\mu=(v\ge u\ge 0)$. Then, 
$\Xi^\#\,\bigl(\mathscr{H}_0[\la^\#]\bigr)=
\Xi\,\bigl(\mathscr{H}_0[\la]\bigl) +\Xi^\#\,\bigl( \S_0^\#\bigr)$
for $\S_0^\#=1+\Si_{i=1}^{b+1} |G_{(i,0)\subset \la^\#}|\,t^i$,
which is the contribution of $1$-line diagrams to $\mathscr{H}_0[\la^\#]$.
Recall from this proposition that $\Xi=\Xi_\de$ for $\de=a+b$,
and $\Xi^\#=\Xi_{\de^\#}$ for $\de^\#=\de+2$. 

\medskip
Using Part $(i)$ of this theorem, 
 $\S_0^\#=1+\Si_{i=1}^{b+1} q^{m(i)}t^i$ for
$m(i)= \min\, \bigl(Floor[\frac{i}{2}],a+1\bigr)$. Explicitly,
 $\S_0^\#=(1+t)+q(t^2+ t^3)+\cdots+q^a(t^{2a}+t^{2a+1})+
q^{a+1}(t^{2a+2}+\cdots+t^{b+1}),$
where the last sum is $0$ if $b<2a+1$, and 
 $\Xi^\#\,\bigl(\S_0^\#\bigr)= 
(t^{\de+2}+t^{\de+1})+q(t^{\de+1}+t^{\de})+\cdots+
+q^a(t^{\de+2-a}+ t^{\de+1-a})+
q^{a+1}(t^{1+\de-a}+\cdots+t^{2+a+\de-b}).$ For instance,
$\Xi^\#(q^{a+1}t^{b+1})=q^{a+1}t^{a+1+\de+2-b-1}=q^{a+1}t^{2+a+\de-b}=
q^{a+1}t^{2a+2}$, though the last sum is present only when $b\ge 
2a+1$, equivalently, $b^{\,\#}\ge 2a^\#-1$.
\vskip 0.2cm

Let us assume that $b^{\,\#}\ge 2a^\#-1$ and deduce
the {\sf\em superduality} for $\la^\#$ from that for $\la=(b,a)$.
Notice that there are $b^\#+1$ terms in 
$\Xi^\#\,\bigl(\S_0^\#\bigr).$

We will consider now $\mu$ as subdiagrams of $\la^\#$. These
subdiagrams are those in $\la^\#$ without the last boxes in its
$2$ rows.  This will not alter $deg$ but may increase $dim_q$
by $1$ from $\min\, \bigl(Floor[\frac{v-u}{2}],a-u\bigr)$ to 
$\min\, \bigl(Floor[\frac{v-u}{2}],a+1-u\bigr)$, 
which happens if and only if
$a<Floor[\frac{v+u}{2}]$. In  particular, $v\ge u+2$ must hold
for this due to $u\le a$. 

To avoid this change of $dim_q$, we define
$\mu^\flat$ as $\mu$ if $a\ge Floor[\frac{v+u}{2}]$,
and as $(v-1,u+1)$ if $a<  Floor[\frac{v+u}{2}]$.
Then $deg(\mu^\flat)=deg(\mu)$, $dim_q(G_{\mu^\flat\subset \la^\#})=
dim_q(G_{\mu\subset\la})$,
and $\mathscr{H}_0[\mu\subset \la]$ is a sum of all terms
of $\mathscr{H}_0[\mu'\subset \la^\#]$ for $\mu'$ represented as
$\mu^\flat$ for $\mu\subset \la$. 

The map inverse to $\mu\mapsto \mu^\flat$ is
as follows. The inequalities above become 
$a^\#=a+1 > Floor[\frac{v'+u'}{2}]$ and 
$a^\#=a+1 \le Floor[\frac{v'+u'}{2}]$. Accordingly,
$\mu=(v,u)=(v',u')$ if the $1${\footnotesize st} inequality 
holds subject to $u'\le a=a^\#-1, v'\le b=b^\#-1$, 
and $\mu=(v,u)=(v'+1,u'-1)$ subject to $v'\le b-1=b^\#-2$
if the $2${\footnotesize nd} holds. 

The following $b^\#+1=b+2$ diagrams $\mu'=(v',u')$ cannot be
obtained by the first or the second map from the definition
of $\mu^\flat$:
$$
\{ (b+1,i), (b,i) \mid 0\le i \le a+1 \} \text{ and }
\{ (b-i,0) \mid 1\le i \le b-2a-2 \}.
$$

It is direct to see that 
their contribution to $\mathscr{H}_0[\la^\#]$ is {\sf\em exactly}
$\Xi^\#\,\bigl(\S_0^\#\bigr)= 
%\sum_{i=0}^{a^\#-1} q^i\,(t^{\de^\#-i}+t^{\de^\#-i-1})+
%q^{a^\#}\bigl(\sum_{i=\de^\#+a^\#-b^\#}^{\de^\#-a^\#}
%t^i\bigr)$
\sum_{i=0}^{a} q^i\,(t^{\de+2-i}+t^{\de+1-i})+
q^{a+1}\bigl(\sum_{i=\de+2+a-b}^{\de+1-a}
t^i\bigr)$ obtained above. Then we use Part $(ii)$ of
Proposition 
\ref{prop:shift-emb} and  obtain that the condition
$b\ge 2a-1$ is sufficient for {\sf\em superduality}
for $a=0$. 
The same calculation gives that it is necessary.
We will omit this. Proposition \ref{prop:psi} below gives
more than this:\, a potential way to calculate all  $\mu$
that destroy the superduality when this inequality
does not hold. 

The case of $\aa=0$ and the superduality for the 
ideals $I$ with $\varrho(I)=3$ 
are formally sufficient to justify it for the whole 
$\mathscr{H}[\la](q,t,\aa)$, i.e. for any $\varrho$-ranks.
The sum of the corresponding $t^i q^j$-terms times $q^3$ 
 will be the top $\aa$-coefficient of the latter superpolynomial, 
that for $\aa^2$. We use now the formulas from Part $(ii)$ of Theorem 
\ref{thm:2-row}. A technical simplifications is that
only diagrams $\mu$ of maximal possible $\ell(\mu)=2$ are 
sufficient to consider (we disregard pure rows). \sq

\begin{proposition}\label{prop:psi}
(i) In the case of $2$-row diagrams $\la=(b\ge a\ge 0)$ and its
subdiagrams $\mu=(v,u)$,  the map
$\phi: \mu \mapsto q^i t^j$ for $j=|\mu|$ and
 $i=dim_q(G_{\mu\subset \la})$ is injective. The corresponding
algebraic formula  can be
naturally extended to an injective map for any $u,v\in \Z$.
Accordingly, $q^it^j$ in
$\mathscr{H}[\la](a=0)$ are with coefficients $0$ or $1$. The same
holds for $G_{\mu\subset \la}^{(r)}$ and $\mathscr{H}^{(r)}[\la]$, where
(by definition) the $\varrho$-rank $r$ is fixed.

(ii) Using explicit
formulas from  Theorem \ref{thm:2-row}, the superduality map
can be naturally extended to an involution $\psi: (v,u)\mapsto
(v',u')$ well-defined for any $u,v\in \Z$. Let
$B_\la=\{(v,u) \mid \psi(v,u)\not\subset \la$\}, which is non-empty
if $b<2a-1$. Namely, let $\mu=(b,0)$ for
$\la=(b,\,Ceiling[b/2]+i)$, where 
 $2\le i \le Floor[b/2]$ (to ensure that $b<2a-1$).
 Then $\mu'=\psi(b,0)=(b+Floor[i/2],\,Ceiling[i/2])$, 
and it is obviously not in $\la$.
 If $i=1$ here, then
$\psi(b-1,0)=(b+1,1)$  for odd $b$ and $\psi(b,0)=(b+1,0)$ for
even $b$. This consideration covers all $\la$ such that $b<2a-1$.
\end{proposition}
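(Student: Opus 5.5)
The plan is to work entirely with the explicit formula from Theorem \ref{thm:2-row},(i): for $\mu=(v,u)$ with $0\le u\le a$, $u\le v\le b$, we have $\phi(\mu)=q^{i}t^{j}$ with $j=u+v$ and $i=\min\bigl(Floor[\frac{v-u}{2}],a-u\bigr)$. Only this formula is used; the same arguments then run for $G^{(3)}$, with $Ceiling[\frac{v-u}{2}]-1$ replacing $Floor[\frac{v-u}{2}]$ from Part (ii). Since $\varrho=2$ is the complement inside $G_{\mu\subset\la}$, it is handled by subtraction. The pure rows $u=0$ have $\varrho=2$ only.

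For (i), fix $j=u+v$ and write $d=v-u$, so $d\equiv j \pmod 2$ and $u=(j-d)/2$. Then $i=\min\bigl(Floor[d/2],\,a-(j-d)/2\bigr)$. As $d$ increases by $2$ within this parity class, the first argument increases by exactly $1$ and so does the second. Hence $i$ is a strictly increasing function of $d$, so $d$, and therefore $\mu$, is recovered from $(i,j)$. The same formula makes sense for all $u,v\in\Z$ and gives the injective extension. For fixed $r=3$ the argument is identical, since $Ceiling[d/2]-1$ also increases by $1$ when $d$ increases by $2$. Coefficients $0$ or $1$ follow immediately, because $G_{\mu\subset\la}$ is a single affine space by Theorem \ref{thm:2-row}.

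For (ii), define $\psi=\phi^{-1}\circ\Xi_\de\circ\phi$ on the image. Here $\Xi_\de(q^it^j)=q^it^{i+\de-j}$ with $\de=a+b$. Concretely, given $\mu=(v,u)$, I would set $j'=i+\de-j$ and $i'=i$, and then solve $\min\bigl(Floor[d'/2],a-(j'-d')/2\bigr)=i$ for $d'\equiv j' \pmod 2$. This is possible for all integers because the function is strictly increasing with step $1$ along each parity class and unbounded in both directions, so every integer value is attained exactly once. Thus $\psi$ is a well-defined bijection of $\Z^2$, and it is an involution since $\Xi_\de$ is one.

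Then I would compute $\psi(b,0)$ for $\la=(b,Ceiling[b/2]+i)$. Here $j=b$ and $dim_q=\min(Floor[b/2],a)=Floor[b/2]$, so $j'=Floor[b/2]+a=Floor[b/2]+Ceiling[b/2]+i=b+i$. Solving for $(v',u')$ with $v'+u'=b+i$ and $dim_q=Floor[b/2]$ should give $u'=Ceiling[i/2]$ and $v'=b+Floor[i/2]$, provided the branch of the $\min$ is checked correctly. I expect this to be the fiddliest step: one must check that $a-u'\ge Floor[(v'-u')/2]$ holds at this point, and separate the parity cases of $b$ and $i$. Since $v'>b$ when $i\ge2$, we get $\mu'\not\subset\la$. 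For $i=1$, the same computation with $\mu=(b-1,0)$ or $(b,0)$, depending on the parity of $b$, yields $(b+1,1)$, respectively $(b+1,0)$.

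Coverage is then a bookkeeping step. The condition $b<2a-1$ means $a\ge Ceiling[b/2]+1$, that is, $a=Ceiling[b/2]+i$ with $1\le i\le Floor[b/2]$, using $a\le b$. Every such $\la$ therefore appears in the list, so $B_\la\neq\emptyset$ precisely in this range, consistent with Theorem \ref{thm:2-row},(iii).
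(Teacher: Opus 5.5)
Your proof of (i) is correct, and so is your construction of $\psi$. It is also cleaner than the paper's case split on the branch of the minimum. Since $Floor[\frac{v-u}{2}]=Floor[\frac{v+u}{2}]-u$, one has $\phi(v,u)=q^{m(j)-u}t^{j}$ with $j=u+v$ and $m(j)=\min(Floor[j/2],a)$. So $\phi$ is visibly a bijection of $\Z^2$, and $\psi=\phi^{-1}\Xi_\de\phi$ is an involution.

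The step that goes wrong is the one you flagged as fiddly: it does \emph{not} always produce $(b+Floor[i/2],Ceiling[i/2])$.
\begin{itemize}
\item With $a=Ceiling[b/2]+i$ you get $\phi(b,0)=q^{Floor[b/2]}t^{b}$ and $\Xi_\de\phi(b,0)=q^{Floor[b/2]}t^{b+i}$.
\item Since $Floor[\frac{b+i}{2}]<a$, the branch check is automatic, and $u'=Floor[\frac{b+i}{2}]-Floor[\frac{b}{2}]$.
\item This equals $Ceiling[i/2]$ for odd $b$, but $Floor[i/2]$ for even $b$. Hence for $b$ even and $i$ odd, $\psi(b,0)=(b+Ceiling[i/2],Floor[i/2])$.
\end{itemize}
For instance, take $b=6$, $i=3$, $\la=(6,6)$, $\de=12$. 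Then $\phi(6,0)=q^3t^6$ and $\Xi_{12}(q^3t^6)=q^3t^9=\phi(8,1)$, whereas $\phi(7,2)=q^2t^9$.

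So the formula in the statement is wrong in this parity case. The paper's proof checks only $i=2$, where both versions agree. You should prove the corrected formula rather than the stated one. The conclusion still holds: $v'\ge b+Floor[i/2]>b$ for $i\ge 2$, so $\psi(b,0)\not\subset\la$ and $B_\la\neq\emptyset$. For even $b$, the corrected formula at $i=1$ gives exactly the claimed $\psi(b,0)=(b+1,0)$.

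A smaller point concerns $\varrho=2$. Let $\mathscr{H}_0[\la]$ denote the specialization of $\mathscr{H}[\la]$ at $\aa=0$. Your ``handled by subtraction'' does not by itself give coefficients $0$ or $1$. Indeed, $|G^{(2)}_{\mu\subset\la}|=q^{d/2}-q^{d/2-1}$ when $u>0$, $d=v-u\ge 2$ is even, and $a-u\ge d/2$.

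The missing observation is that every monomial of $\mathscr{H}^{(3)}[\la]$ already occurs in $\mathscr{H}_0[\la]-1$:
\begin{itemize}
\item it is $\phi(v,u)$ itself when $d$ is odd or $a-u<d/2$;
\item otherwise it is $\phi(v-1,u+1)$, and $(v-1,u+1)\subset\la$.
\end{itemize}
Then $\mathscr{H}^{(2)}[\la]=\mathscr{H}_0[\la]-1-\mathscr{H}^{(3)}[\la]$ is a difference of $0/1$-polynomials with nested supports. Hence it again has all coefficients equal to $0$ or $1$.
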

{\it Proof.} If two diagrams $\mu$ and $\mu'$ result in
the same $q^i t^j$ then $|\mu|=u+v=|\mu'|=u'+v'$, and the inequality
$a\ge Floor[\frac{u+v}{2}]$ or $a\le Floor[\frac{u+v}{2}]$ holds for
$u',v'$ too. This inequality determines which formula gives
$dim_q$:\, $Floor[{v-u}{2}]$ or $(a-u)$. If the latter is
applicable, then obviously $u=u'$ and, therefore, $v=v'$. If the former
works, then the required coincidence is due to  identity
$Floor[\frac{v-u}{2}]=Floor[\frac{v+u}{2}]-u$ and the relation
$v+u=v'+u'$. Using Part $(ii)$ of Theorem \ref{thm:2-row}, we
can extend this reasoning to the coefficients of
any powers of $\aa$. 
\medskip

{\bf (ii).} The extension of $\psi$ to any integers is
straightforward, as well as the justification that such 
an extension 
is an involution.  There are some formulas in terms of
integer parts, which we will omit, that can be checked 
to define an involution.
\medskip

The representatives of $B_\la$ 
for $b<2a-1$ provided above are the simplest ones; they are sufficient
to complete Part $(iii)$ of Theorem  \ref{thm:2-row}.
There can be (many) other subdiagrams $\mu$ in  $B_\la$,
especially for $a=b$; the algebraic formulas are not too difficult
to find using computers. The smallest example of nonempty $B_\la$
is for $\la=(2,2)$, when $\psi(2,0)=(3,0)$. See right before
Section \ref{sec:motsup} for the discussion of the corresponding
superpolynomial; vs. the DAHA one $\mathfrak{H}_\la$, which
is selfdual.

For the sake
of readability, let us consider $\la=(b,Ceiling[b/2]+2)$,
which is for $i=2$; we take $\mu=(b,0)$ and must
check that $\mu'=\psi(b,0)=(b+1,1)$. The corresponding dimension 
for $\mu'$ is $dim_q=\min\bigr( Floor[b/2],Ceiling[b/2]+1 \bigr)=
Floor[b/2]$, and $deg=|\mu'|=b+2$. Thus, $\mu'$ formally results
in $q^{l'} t^{m'}$ 
with $l'=Floor[b/2], m'=b+2$. Similarly, $q^l t^m$ from $\mathscr{H}_0$
for $\mu$ is
with $l=Floor[b/2]$ and $m=b$. We obtain, indeed, that
$\Xi_\de(q^l t^m)=q^l t^{\de+l-m}=q^l t^{b+2} = q^{l'} t^{m'}$, where
$\de=b+Ceiling[b/2]+2$
and $\de+l-m=b+Ceiling[b/2]+2+Floor[b/2]-b=b+2$. This proves
that $B_\la \neq \emptyset$, and in a very explicit way. The case
of any $i$ (any $\la$ from Part $(ii)$) is no different.
\sq
\vskip 0.2cm

\subsection{\bf Combinatorial conjecture}
The approach  from Proposition \ref{prop:psi},  when we identify 
the subdiagrams that destroy the superduality, can help with
the following conjecture.
Given a Young diagram $\la=(\la_1\ge \la_2\ge \cdots \ge \la_\ell>0$,
let $k_i= \la_i-\la_{i+1}$ for $1\le i<\ell$ and $k_\ell=\la_\ell$.

\begin{conjecture}\label{conj:supd}
We call $\la$ {\sf\em generic} if $k_i$ are all positive and 
$\sum_{i=a}^{a+m} k_{i}\ge \sum_{i=b}^{b+m} k_i-1$ for any 
$a<b$ and any $m\ge 1$.
The superduality of $\sH_\la^{inst}$ holds
if and only if  $\la$ or its transpose $\la^{tr}$ satisfy
one of the following:\ 
(i)\,  $\la$ is generic,\,  or\ (ii)\, the 
 $1${\footnotesize st} column is the tallest
($k_\ell=1=\la_\ell$) and
the diagram obtained by its removal is generic.

The same conditions are necessary and sufficient for
the positivity of  $\mathfrak{H}_\la^{daha}$\,:\, 
the non-negativity of  all its $q,t,\aa$-coefficients.
\sq
\end{conjecture}

This was checked numerically for $\la$  with up to $4$ corners
on the DAHA side, together with  ChatGPT, 
and for $\sH^{inst}_\la$ (less systematically). 
For $5$ corners, we confirm the positivity of $\mathfrak{H}^{daha}$
for $k=\{1,1,2,1,2\}$, corresponding to $\la=(5,5,4,3,3,2,1)$, just to
give an idea of the range of our calculations. 
Corrections  for $m\ge 2$ will be not unexpected. 

\vskip 0.2cm

{\bf The case of $\la=(3,3,2).$}
The 
corresponding  $k=\{0,1,2\}$
satisfies the inequalities above, but $0$ is not allowed by the
conjecture. 

One has:\, $\mathscr{H}^{inst}_\la(\aa=0)=$
{\small $1 + t + t^2 + q t^2 + t^3 + q t^3 + 
q^2 t^3 + t^4 + q t^4 + 
 q^2 t^4 + t^5 + q t^5 + q^2 t^5 + t^6 + q t^6 + q^2 t^6 + t^7 + 
 q t^7 + t^8$}. Its instanton-superdual is 
$\Xi_\de\bigl(\mathscr{H}_0[\la]\bigr)=$
{\small $1 + t + t^2 + q t^2 + t^3 + q t^3 + t^4 + q t^4 + q^2 t^4 + t^5 + 
 q t^5 + q^2 t^5 + t^6 + q t^6 + q^2 t^6 + t^7 + q t^7 + q^2 t^7 + t^8.
$}

To see the failure of superduality,
the following  $G$-cells are sufficient:\,
those for $\mu=(3)$, its potential dual
$\mu'=(3,3,1)$ and   $\mu''=(3,2,2)$.

For $\mu$, the generators of $I\subset I_\la$ with $dgrm(I)=\mu$ 
are $x^2+\al y+\be y^2$ and $y^3$, where the parameters $\al,\be$
are free; i.e. $I_\la$ belongs to the {\sf\em monomial conductor}
of any such $I$. The corresponding term in $\mathscr{H}$ is 
$q^i t^j=q^2 t^3$; its superdual is $q^i t^{\de-j+i}=q^2 t^7$.
Thus, we need to check the $dim_q$ of the cells of degree $7$,
which are for $\mu'$ and $\mu''$. The generators of $I'$ for $\mu'$
are $f_1=x^3, f_2=x^2 y+ \al xy^2+\be y^2, f_3=y^3$. However, 
$x f_2= \be x y^2$ modulo $I_{\la}$, which results in $\be=0$.
Thus, its contribution is $q t^7$. There are no free parameters
for $\mu''$ and the corresponding contribution is $ t^7$. Neither
contribution coincides with $q^2 t^7$, 
which concludes this ``counterexample". 

The corresponding $\mathfrak{H}^{daha}_\la(\aa=0)$ equals
{\small
\(1+q t+q^2 t+q^3 t-q^4 t-q^5 t+q^2 t^2+q^3 t^2+2 q^4 t^2-q^6 t^2
-q^7 t^2+q^3 t^3+q^4 t^3+2 q^5 t^3-q^7 t^3+q^4 t^4+q^5 t^4+2 q^6 t^4
-q^8 t^4+q^5 t^5 +q^6 t^5+2 q^7 t^5-q^8 t^5+q^6 t^6+q^7 t^6+q^8 t^6
+q^7 t^7+q^8 t^7+q^8 t^8
\)}, which is superdual for $q\leftrightarrow t^{-1}$, but with
negative terms.
\smallskip
 
Let us provide $\h^{inst}_\la(\aa=0)=
(q t)^\delta \sH^{inst}_\la(,q,1/(qt),\aa=0)=$
{\small
\(1 + q t + q^2 t + q^2 t^2 + q^3 t^2 + q^4 t^2 + q^3 t^3 + q^4 t^3 + 
 q^5 t^3 + q^4 t^4 + q^5 t^4 + q^6 t^4 + q^5 t^5 + q^6 t^5 + q^7 t^5 +
  q^6 t^6 + q^7 t^6 + q^7 t^7 + q^8 t^8\)}. They are 
supposed {\sf\em not} to
coincide due to  Conjecture 
\ref{conj:supd}.

\medskip
{\bf Comment on paper \cite{ObR2}.}
Let us mention that 
the inequalities  $k_i\ge k_{i+1}-1$ (for Coxeter links)
occurred in  Conjecture 6.5.1 there
concerning vanishing certain cohomology. It was stated
that such  vanishing holds for torus knots due to 
 progress with the
{\sf\em ORS conjecture} for such knots.  No evidence or 
rationale for these inequalities above was provided 
beyond torus knots. Comparing with our inequalities\:\,
considering only pairs of neighboring ones are insufficient
(for us), the
case $k_\ell=1$ did not occur in \cite{ObR2},
 and we need to consider the
transposition of Young diagrams. So we have different situations.

% it is not clear from the paper
%what is the rationale for their conjecture for such an
%extension. 

\vskip 0.2cm
The approach in their paper
 is quite different from our one. It requires
a compactification of punctual Hilbert schemes of $\C^2$ and
cohomology of certain sheaves there. Explicit 
calculations in \cite{ObR2}
are limited to uncolored $T(2k+1,2)$. This is very basis, 
but sufficient to
see the differences. See Section 5.3 there, especially its end.
The passage to our parameters is $t=t_{\text{\scalebox{0.7}{OR}}}^2, 
\, q=q_{\text{\scalebox{0.7}{OR}}}^{-2}$; recall
that we normalize our superpolynomials to make the first term $1$.

The uncolored superpolynomials for
$K=T(2k+1,2)$ the calculations are immediate for $\h^{mot}$ and
quite simple  for $\h_K^{daha}$, which are
coinvariants $\{ Y_1(X_1^{k+1}) \}$ upon the 
stabilization of $A_{N-1}$.
%We will discuss below practical calculations of this kind.
%Adding colors $\omega_m$ is when $X_1\mapsto X_m$.

 %and simple in almost any
%approaches to superpolynomials, beginning with the original
%Khovanov's definition. 

{\sf\em Coxeter links} are mostly due to
\cite{ObR2} and some  prior papers, but the approach 
in \cite{GL2}, including their usage of Young diagrams,
is better compatible with our work. 
Importantly, the superduality ($\cn=1$) of
{\sf\em EHA-superpolynomials} from \cite{GL2}
seems simpler than in our approach. It is
conjectured for $\mathfrak{H}^{daha}_\la$,
but we do not have a proof so far. 
%As we see, the {\sf\em superduality}
%of $\sH^{inst}_{\la}$ requires  non-trivial conditions for $\la$.

\subsection{\bf The case of  K12n242} 
This is one of the most famous hyperbolic knots. 
It is the simplest 
hyperbolic one with superdual $\sH^{inst}_\la$
among those with 2-row diagrams. Recall that the knot for
$\la=(2,2)$ is hyperbolic but with non-superdual  $\sH^{inst}_\la$.
 The diagrams $(k, 2k)$
and $(k, 2k+1)$ correspond to  $\F[[x=z^{3k+1}, y=z^3]]$
and $\F[[x=z^{3k+2}, y=z^3]]$, which are those for torus
knots $T(3k+1,3)$ and $T(3k+2,3)$.
Also, $\la=(k)$ serve $T(2k+1,2)$. Any other $2$-row diagrams give
hyperbolic knots, i.e. they are new in the DAHA theory and 
our prior motivic theory of superpolynomials. For $\cn=1$, 
one has:\ 
%Let us provide $\mathscr{H}^{inst}_\la$ for $\la=(3,2)$; \ 
$\mathscr{H}^{inst}[(3,2)]=$
{\small
\begin{align}\label{K12n242}
1+ &(1\!+\!\aa q)(t + t^2 + q t^2 + q t^3 + q t^4)+
(1\!+\!\aa q)(1\!+\!\aa q^2)(t^3 + t^4 + t^5)\notag\\
=1&+t+t^2+q t^2+t^3+q t^3+t^4+q t^4+t^5
+\ \aa^2 (q^3 t^3+q^3 t^4+q^3 t^5)\notag\\
&\ +\ \aa (q t+q t^2+q^2 t^2+q t^3+2 q^2 t^3+q t^4+2 q^2 t^4
+q t^5+q^2 t^5).
\end{align} 
}

The corresponding   HOMFLY-PT polynomial
in the normalization compatible with 
the standard databases is $H\!OM[(3,2)]=\, $
\(
z^4/a^{14}+(4 z^2)/a^{14}+3/a^{14}-z^8/a^{12}
-(9 z^6)/a^{12}-(27 z^4)/a^{12}-(31 z^2)/a^{12}-11/a^{12}+z^{10}/a^{10}
+(10 z^8)/a^{10}+(36 z^6)/a^{10}+(57 z^4)/a^{10}+(39 z^2)/a^{10}+9/a^{10}.
\)
For instance, the substitution $z \mapsto \imath M, a \mapsto 
\imath L^{-1}$ can be used to find this polynomial (up to a mirror)
 in the file
{\sf\em \, jhomflytable.txt\,} available online, which contains 
HOMFLY-PT polynomials for prime links with up to 16 crossings, about
2.5M.
K12 means that the number of crossing is $12$.  Actually, it will 
find {\sf\em m12.242n, m12.725n} etc.
The number of crossing cannot be smaller than the degree of $z$,
but this is just the lowest estimate. The passage to 
HOMFLY-PT polynomials from the  DAHA ones is generally
$q^{-|\la|}\h^{daha}(q,t\mapsto q,\aa\mapsto -a^2)$, where
$z=q^{1/2}-q^{-1/2}$.
We use Tchebyshev polynomials to switch from $q$ to $z$.
\vskip 0.2cm

{\bf Relation to DAHA.}
%Let us check it for K12n242 for $\cn=1$
Recall that Theorem \ref{hmotinst-gen}
combined with the Coincidence Conjectures 
$\h^{mot}=\h^{daha}=\mathfrak{H}^{daha}$ motivated 
the (conjectural) formula (\ref{daha-la-one}), which is:\,
\begin{align}\label{sH=frak}
\mathfrak{H}_\la^{daha}(q,t,\aa)=
\h^{inst}_\la(q,t,\aa)\equal 
(qt)^\de \mathscr{H}_\la^{inst}(q, 1/(qt),\aa),
\end{align}
where $\de=|\la|=5$. Since $\h^{mot}$ is not applicable
to non-algebraic knots, we have only this connection conjecture
for K12n242 (for $[3,2]$).
  It holds,  and in all cases
we considered.   Namely,
\vskip 0.2cm

$\mathfrak{H}_\la^{daha}(q,t,\aa)=$
{\small
\( 1+q t+q^2 t+q^2 t^2+q^3 t^2+q^3 t^3+q^4 t^3+q^4 t^4+q^5 t^5
+\aa^2 \bigl(q^3+q^4 t+q^5 t^2\bigr)+\aa 
\bigl(q+q^2+q^2 t+2 q^3 t+q^3 t^2+2 q^4 t^2
+q^4 t^3+q^5 t^3+q^5 t^4\bigr).
\)
}
\smallskip

Actually, there are $4$ different Young diagrams for the knot
K12n242:
% and they (really) give coinciding $\mathfrak{H}_\la^{daha}$:
{\small
$$
\mathfrak{H}_1=\mathfrak{H}_{(3,2)},\ 
\mathfrak{H}_2=\mathfrak{H}_{(2,2,1)}, \ 
\mathfrak{H}_3=\mathfrak{H}_{(4,1)}, \ 
\mathfrak{H}_4=\mathfrak{H}_{(2,1,1,1)}.
$$
}
Recall that we present $\la$ as the 
sequence of consecutive moves $R, U$ (right, up)
in the border of $\overline{\la}$ from the bottom 
to the top, writing them right-to-left, and then
replace $R$ by $X$ and $U$ by $Y$. More directly, $W$ is 
when we read the diagram top-to-bottom, and write $X,Y$ for
$L,D$ naturally (left-to-right).
Note that the procedure from \cite{GL2} is somewhat different
diagrammatically and with 
$U\mapsto Y, R\mapsto Y X Y^{-1}$. 
\vskip 0.2cm

Then,  $W^{[m]}$ is defined upon 
$X\mapsto X_m$ and $Y\mapsto Y_m$ in $W$,
where $X_{m}=X_{\om_m},\  Y_{m}=Y_{\om_m}$
for minuscule $\om_m$, and we calculate $\bigl\{W^{[m]}X_m\bigr\}$,
subject to the 
$\aa$-stabilization for $A_{N-1}$ (or $gl_N$),  where $\aa=-t^{N}$.

% with $X_{k}=X_1X_2\cdots X_k$, 
%$Y_{(k)}=Y_1Y_2\cdots Y_k$ for the standard
%generators of DAHA of type $gl_N$ corresponding
%to the standard $\{e_i\}\subset \R^N$.

Thus, the corresponding
$X,Y$-words are:
{\small
$$ W_1=YXYX^2,\, W_2=Y^2XYX ,\, W_3=YX^3YX,\, 
W_4=YXY^3X,$$
}

\noindent
where $m\ge 1$ must be the same in
$W,X$ and $Y$. We will consider $m=1$ below.
It is instructional to specialize (\ref{sH=frak}) to the case of
DAHA-Jones polynomials and to check that all four $X,Y$-words
above give coinciding coinvariants (up to proportionality).

The reduction to $A_{1}$  for 
$\mathscr{H}^{inst}_\la$ recalculated to $\h^{inst}_\la$ is:\,
{\small
$$
\h^{inst}_{\la}(\aa\!=\!-t^2)=
1 + q t + q^2 t - q t^2 + q^3 t^2 - q^2 t^3 - q^3 t^3 + q^4 t^3 - 
 q^4 t^4.
$$
}

It is straightforward to see its coincidence  
with the following $4$ coinvariants $\bigl\{W_{i}X\bigr\}$ 
corrected by 
the corresponding  $q,t$-factors:
{\footnotesize
\begin{align*}
&(1):\, q^{7/2} t^3\, Y\bigl(X Y(X^3)\bigr)(X\!\mapsto\! t^{-1/2}) 
,\ &(2):\, q^4 t^3 \,Y^2\bigl(X Y(X^2)\bigr)(X\!\mapsto\! t^{-1/2}) ,\\ 
&(3):\,q^{7/2} t^{7/2}\, Y\bigl(X^3 Y(X^2)\bigr)(X\!\mapsto\! t^{-1/2})
,\ &(4):\,  q^{9/2} t^{7/2}\, Y\bigl(X Y^3(X^2)\bigr)(X\!\mapsto\! t^{-1/2}). 
\end{align*}
}

The same coincidence holds for any $A_{N-1}$ and for 
arbitrary $X_{m},Y_{m}$. An instructional
exercise is to change $YXYX^3$ to $Y^{-1}XYX^3$. Then
$\bigl\{ Y^{-1}XYX^3\bigr\}=\bigl\{ T\pi XYX^3\bigr\}\sim
\bigl\{ \pi XYX^3\bigr\}\sim \bigl\{ \pi YX^3\bigr\}=
\bigl\{ T X^3\bigr\}\sim \bigl\{ X^3\bigr\}\sim 1$. This coinvariant
remains $\sim 1$ for any $A_{N-1}$.
%\vskip 0.2cm

\subsection{\bf The case of hooks} 
This is the simplest general 
series of hyperbolic knots for our construction. Let $\la$ be
a union of an $m$-column and an $n$-row intersected at the 
box with $i=0=j$. Thus, the diagram is $\la=(n,1,\ldots, 1)$
where $1$ occurs $(m-1)$ times. It is 
of length $\ell(\la)= m$ and $\de=|\la|=m+n-1$. Its subdiagrams
are $\mu=(b,1,\ldots, 1)$ of $deg=a+b-1$, i.e $1$ occurs $(a-1)$
times. Here $1\le b\le n$ and $1\le a\le m$. Then 
\begin{align}\label{hook-exp}
\mathscr{H}^{inst}[\la]&=1+ (1+\aa q)\,\Si_{i=1}^m \, t^i\,  +\, 
(1+\aa q)(1+\aa q^2)\,\Si_{j=1}^{n-1}\ t^{j+m}\,+\\
(1\!-\!\de_{m,1})&\,q(1+\aa q)\,\Si_{j=2}^{n}\ t^j\ +(1+\aa q)(q+\aa q^2)\, 
\Si_{i=2}^{m-1}\,\Si_{j=2}^{n}\ t^{i+j-1}.\notag
\end{align}
Here $1$ is for $\mu=\emptyset$, the $1${\footnotesize st} $\Si$ is for $b=1$, i.e.
$\mu$ is a $j$-column, and the $2${\footnotesize nd} $\Si$ is for $a=m,b>1$. In these
cases, there are no parameters and  the $\varrho$-ranks are
$1$, $2$ and $3$ respectively. The $3${\footnotesize rd}
 $\Si$ is only when $m>1$;
it counts 
$\mu$  inside the row when $b>1$. There is
one free parameter in this case and the $\varrho$-rank is $2$, which
gives the weight  $q(1+\aa q)$. 

The last double summation is
for the remaining cases. As in the previous $\Si$,  
there is always one free parameter. 
Indeed, the generators
are $f_1=x^a+\al_1 y+\al_2 y^2+\cdots+\al_{b-1} y^{b-1}$, 
$f_2=xy, f_3=y^b$ for $a<m, b>1$ in these cases. However, 
$yf_1$ must have all $0$ coefficients modulo $(y^b)$, which gives
that only $\al_{b-1}$ is a free parameter. The $\varrho$-rank is 
$2$ in these cases if and only if $\al_{b-1}\neq 0$,
since $y^b=\al_{b-1}^{-1} \, yf_1$ modulo $(x y)$. Otherwise it is $3$.
This gives the
weight  
$(q-1)(1+\aa q)+(1+\aa q)(1+\aa q^2)=
(1+\aa q)(q+\aa q^2)$.
\vskip 0.2cm

Counting the sums of $t$-powers,
\begin{align}\label{hook-rat}
&\mathscr{H}^{inst}[\la]\!=\!1\!+\! (1+\aa q)t \frac{1-t^m}{1-t}\! +\!
(1\!+\!\aa q)(1\!+\!\aa q^2)t^{m+1}\frac{1-t^{n-1}}{1-t}\\
&+(1\!-\!\de_{m,1})\,q t^2 (1\!+\!\aa q)\frac{(1-t^{n-1})}{1-t}\,
\frac{\bigl(1\!+\!\aa q t-t^{m-1}(1\!+\!\aa q)\bigr)}{1-t}.\notag
\end{align}

When $m>1$,  $(1-t)^2\mathscr{H}[\la]$ becomes
{\small
\(
1 - t + q t^2 - q t^{1 + m} - q t^{1 + n} - t^{m + n} + 
 q t^{m + n} + t^{1 + m + n} 
+  \aa^2 \bigl(q^3 t^3 - q^3 t^{2 + m} - q^3 t^{2 + n} 
+ q^3 t^{1 + m + n}\bigr)
+ \aa \bigl(q t - q t^2 + q^2 t^2 + q^2 t^3 - q^2 t^{1 + m} - q^2 t^{2 + m} - 
    q^2 t^{1 + n} - q^2 t^{2 + n} - q t^{m + n} + q^2 t^{m + n} + 
    q t^{1 + m + n} + q^2 t^{1 + m + n}\bigr)\,,
\)
}
which is obviously symmetric under $m\leftrightarrow n$.
\vskip 0.2cm

Formula (\ref{hook-rat})  is
{\sf\em superdual} (for $\de=m+n-1$), 
which is sufficient to check for 
$$
\mathscr{H}^{inst}_0[\la]= \mathscr{H}_{\aa=0}^{inst}[\la]=1+t\frac{1-t^{m+n-1}}{1-t}+q t^2 
\frac{(1-t^{n-1})(1-t^{m-1})}{(1-t)^2},
$$
and for the top $\aa$-coefficient of $\mathscr{H}^{inst}[\la]$, which is
$q^2t^3\frac{(1-t^{n-1})(1-t^{m-1})}{(1-t)^2}$. We use that
the top degree of $\aa$ is $2$ (similarly to the
case of $2$ rows). % This is immediate.
For instance,  $\Xi_\de\bigl(t\frac{1-t^{m+n-1}}{1-t}\bigr)=
t^\de t^{-1}\frac{1-t^{-m-n+1}}{1-t^{-1}}=t\frac{1-t^{m+n-1}}{1-t}$.
\vskip 0.2cm

{\bf Knot K12n725.} This well-known knot is the simplest 
{\sf\em hyperbolic} in the hook family, which is for $m=3=n$.

The one for $m=1, n=4$ is hyperbolic too, but it is isotopic
to K12n242 considered above, and $\mathscr{H}^{inst}[(4,1)]=
\mathscr{H}^{inst}[(3,2)]$. We conjecture that instanton superpolynomials
 $\mathscr{H}$ are isotopic invariants of the
corresponding knots, which is not proven by now
(for the
DAHA-EHA approaches too). At least, it is obvious that
the instanton superpolynomials became the same when we 
replace $\la$ by its transpose $\la^{tr}$, which we proved
for $\mathfrak{H}^{daha}$ (not immediate for the
EHA-superpolynomials). 

For K12n725, the corresponding  $\mathscr{H}^{inst}[(3,1,1)]$ is
as follows:
{\small
\begin{align}\label{K12n725}
1 + (1\! +\! \aa q)& (t + t^2 + q t^2 + 2 q t^3 {\bf - t^4} + q t^4) + 
(1\! +\!  \aa q) (1\! +\! \aa q^2) (t^3 + 2 t^4 + t^5)\notag\\
=\ 1 + t& + t^2 + q t^2 + t^3 + 2 q t^3 + t^4 + q t^4 + t^5 +
\aa^2 (q^3 t^3 + 2 q^3 t^4 + q^3 t^5)\notag\\
&+\ \aa (q t + q t^2 + q^2 t^2 + q t^3 + 3 q^2 t^3 + q t^4 + 3 q^2 t^4 + 
    q t^5 + q^2 t^5).
\end{align}
}
Notice $-t^4$, which is because some $G^{(2)}_{\mu \subset\la}$
are not affine spaces. There are no negative terms 
in (\ref{hook-exp}) because it was presented
using $(q+ q^2 \aa)$. 
%\vskip 0.2cm

\setcounter{equation}{0}
\section{\sc Further aspects}
\vskip 0.2cm
\subsection{\bf Deformations}
The ideals $\sC$ in {\sf\em instanton slices} 
can be arbitrary, not only monomial.
They are monomial for $\F[[z^\rr,z^\ss]]$ and
$\F[[x=z^{\upsilon \rr}+z^{\upsilon \rr+p}, y=z^{\upsilon \ss}]]$, 
for $p=1$, where $gcd(\rr,\ss)=1$. This is   
part $(ii)$ from Theorem 3.4 in \cite{ChG}.
Presumably the lifted conductors $\c$ are non-monomial for other
plane curve singularities.

Considering {\sf\em instanton slices} for non-monomial
$\sC$ beyond algebraic knots is  challenging. 
Generally, we take $I_\la$ and add extra {\sf\em non-monomial}
generators.
In quite a few examples, the superpolynomials of
the corresponding
 $\mathfrak{S}_\sC$ coincide with some $\mathfrak{S}_{\la^\circ}$
for $\sC= I_{\la^\circ}$ for $\la^\circ$ smaller than $\la$. 
Thus, we obtain examples of  {\sf\em flat} deformations
of  $\mathfrak{S}_{\la^\circ}$. This is  not always
the case. Moreover, we were not able to identify 
the $q=t$ reduction of some relatively
small superdual superpolynomials of non-monomial type
with any HOMFLY-PT polynomials 
(among knots with up to $16$ crossing). There are no general 
theoretical results
in this direction by now, but let us provide examples. 
\medskip

We begin with $\mathfrak{S}_\la$ for $\la=(4,3,1)$,
which is for the ring  $\F[[x=z^6+z^7, y=z^4]]$ and 
the algebraic cable $C\!ab(13,2)T(3,2)$.
 The corresponding
motivic superpolynomial from \cite{ChP1} is $\h^{mot}=$
{\small \(1+q t+q^2 t+q^3 t+q^2 t^2+q^3 t^2+2 q^4 t^2+q^3 t^3+q^4 t^3
+2 q^5 t^3+q^4 t^4+q^5 t^4+2 q^6 t^4+q^5 t^5+q^6 t^5+q^7 t^5+q^6 t^6
+q^7 t^6+q^7 t^7+q^8 t^8+\aa^3 (q^6+q^7 t+q^8 t^2)+\aa^2 (q^3+q^4+q^5
+q^4 t+2 q^5 t+2 q^6 t+q^5 t^2+2 q^6 t^2+2 q^7 t^2+q^6 t^3+2 q^7 t^3
+q^8 t^3+q^7 t^4+q^8 t^4+q^8 t^5)+\aa (q+q^2+q^3+q^2 t+2 q^3 t+3 q^4 t
+q^5 t+q^3 t^2+2 q^4 t^2+4 q^5 t^2
+q^6 t^2+q^4 t^3+2 q^5 t^3+4 q^6 t^3+q^7 t^3+q^5 t^4+2 q^6 t^4
+3 q^7 t^4+q^6 t^5+2 q^7 t^5+q^8 t^5+q^7 t^6+q^8 t^6+q^8 t^7)\,,\)} 
and $\mathscr{H}^{inst}=$
{\small \(1+t+t^2+q t^2+t^3+q t^3+q^2 t^3+t^4+q t^4+2 q^2 t^4
+t^5+q t^5+2 q^2 t^5+t^6+q t^6+2 q^2 t^6+t^7+q t^7+q^2 t^7+t^8
+\aa (q t+q t^2+q^2 t^2+q t^3+2 q^2 t^3+q^3 t^3+q t^4+2 q^2 t^4
+3 q^3 t^4+q t^5+2 q^2 t^5+4 q^3 t^5+q^4 t^5+q t^6+2 q^2 t^6
+4 q^3 t^6+q^4 t^6+q t^7+2 q^2 t^7+3 q^3 t^7+q^4 t^7+q t^8+q^2 t^8
+q^3 t^8)+\aa^2 (q^3 t^3+q^3 t^4+q^4 t^4+q^3 t^5+2 q^4 t^5+q^5 t^5
+q^3 t^6+2 q^4 t^6+2 q^5 t^6+q^3 t^7+2 q^4 t^7+2 q^5 t^7
+q^3 t^8+q^4 t^8+q^5 t^8)+\aa^3 (q^6 t^6+q^6 t^7+q^6 t^8)\,.\) }
\medskip

The generators of $I_\la$ are $f_1\!=\!x^4,f_2\!=\!x^3y,
f_3\!=\!x y^3,f_4\!=\!y^4$. Then:
\medskip

$(i)$\, replacing $f_3$ by  $xy^2$, we arrive at
$C\!ab(11,2)T(3,2)$ for $(4,2,1)$;

$(ii)$\, adding $x^2+y^2$, we obtain  $\mathscr{H}[\la]$ 
for the hook $\la^\circ\,=\,(4,1,1,1)$;

$(iii)$\, adding $xy+y^3$ results in $\mathscr{H}[\la^\circ]$
for the hook $\la^\circ=\, (3,1,1,1)$;

$(iv)$\, adding $x^2+ xy^2+y^3$ gives the same $\mathscr{H}$ as for
\ $(2,1,1,1,1,1)$; 

%this knot is {\sf\em m16.184868n} from {\sf\em jnonflytable.txt};

$(v)$\, replacing $x^4$ by  $x^2+xy+y^2$ gives $\mathscr{H}$ that
coincides with $\mathscr{H}'$ 

\ \ \ \ \ \ obtained  for $\la'=(4,3)$ with the 
$x^2$ in $I_{\la'}$ 
 replaced by  $x^2+ y^2$.

\medskip
Case $(v)$ is interesting.
The corresponding $\mathscr{H}$ is superdual and satisfies all ``usual"
properties of superpolynomials, but we failed to
find the corresponding HOMFLY-PT polynomial
in {\sf\em jhomflytable.txt\,},
formally obtained via $\h^{inst}$. 

The latter is {\small
\(1+q t+q^2 t+q^2 t^2+q^3 t^3+q^4 t^3+q^4 t^4+q^5 t^5+q^6 t^5
+q^6 t^6+q^7 t^7+\aa^2 (q^3+q^5 t^2+q^7 t^4)+\aa (q+q^2+q^2 t+q^3 t+q^3 t^2
+q^4 t^2+q^4 t^3+q^5 t^3+q^5 t^4+q^6 t^4+q^6 t^5+q^7 t^5+q^7 t^6)\) },
and the corresponding HOMFLY-PT polynomial is {\small \(
z^8/a^18+(8 z^6)/a^{18}+(20 z^4)/a^{18}+(16 z^2)/a^{18}+3/a^{18}
-z^12/a^{16}-(13 z^{10})/a^{16}-(65 z^8)/a^{16}-(156 z^6)/a^{16}
-(182 z^4)/a^{16}-(91 z^2)/a^{16}-13/a^{16}+z^{14}/a^{14}
+(14 z^{12})/a^{14}+(78 z^10)/a^{14}+(221 z^8)/a^{14}
+(338 z^6)/a^{14}+(272 z^4)/a^{14}+(100 z^2)/a^{14}+11/a^{14}\,.\)}
Though the $z$-degree here may be too big for $16$ crossings.
\vskip 0.2cm

We considered quite a few similar examples. For instance, by
adding $x^2+y^3$ to $I_\la$ for the Young diagram $\la=(4,2,1)$ 
corresponding
to (non-algebraic) $C\!ab(11,2)T(3,2)$ above, we obtain
the knot K14n6022 %{\sf\em m14.6022n} 
from the same pretzel/Montesinos
family as K12n242, which is $P(-2,3,7)$. This one
is $P(-2,3,9)$ (up to a mirror). Similarly, replacing in $I_\la$
for $\la=(4,2)$ the generator $x^2$ by
$x^2+xy+y^3$ results in K14n21324.
\medskip

Recalculating (formally) $\mathscr{H}^{inst}$ to
$\h^{inst}$, superduality under
$q\leftrightarrow t^{-1}$ holds for K14n6022.
 One has:\ $\h^{inst}=$
{\small \(
1+q t+q^2 t+q^2 t^2+q^3 t^2+q^3 t^3+q^4 t^3+q^4 t^4
+q^5 t^4+q^5 t^5+q^6 t^6
+\aa^2 (q^3+q^4 t+q^5 t^2+q^6 t^3)+ \aa (q+q^2+q^2 t+2 q^3 t+q^3 t^2
+2 q^4 t^2+q^4 t^3+2 q^5 t^3+q^5 t^4+q^6 t^4+q^6 t^5)\)\,.}
The conductor $\sC$ is not monomial
in this example. However, the same 
superpolynomial can be also obtained from $\sC=I_\la$ for
$\la=(2,1,1,1,1)$ or, equivalently, from its transpose
$(5,1)$.

In its non-monomial presentation above,  the {\sf\em Gr\"obner
cell} $G_{\mu\subset \la}$ for $\mu=(4)$ is {\sf\em empty}. 
Indeed, the generators of any $\i\in G_{\mu}$ are
$x+ay^2_by^2_cy^3$ and $y^4$
$\i\in G_{\mu}$ for this $\mu$.  And it must contain $x^2y, x y^2$
due to $\la$, which gives that $a=0$ and $x^2\in \i$. Using that
$x^2+y^3\in \i$, we obtain that $y^3\in \i$ and 
$dgrm(\i)$ is smaller than $\mu$; a contradiction.

%The only one empty  cell in this case. 
All other cells are non-empty
affine spaces $\mathbb{A}^{0,1}$. Let us provide the original
instanton superpolynomial:\ $\sH^{inst}=$
{\small
\(1+t+t^2+q t^2+t^3+q t^3+t^4+q t^4+t^5+q t^5+t^6
+\aa \bigl(q t+q t^2+q^2 t^2+q t^3+2 q^2 t^3+q t^4
+2 q^2 t^4+q t^5+2 q^2 t^5
+q t^6+q^2 t^6\bigr)
+\aa^2 \bigl(q^3 t^3+q^3 t^4+q^3 t^5+q^3 t^6\bigr).\)
}
\vskip 0.2cm

\begin{conjecture}\label{conj:echar1}
(i) Euler characteristics of any Gr\"obner cells 
$G_{\mu\subset \la}$ for $\cn=1$ 
are $1$ if $\,\sC=I_\la$
and  $\sH^{inst}_\la$ is superdual. The 
{\sf\em superduality} is the invariance  under
$q\leftrightarrow t^{-1}$ up to $q^\bullet t^\bullet$
for the corresponding $\h^{inst}_\la$.

(ii) At least in the uncolored case,
all $q,t,\aa$-coefficients of $\sH^{inst}_\la$ are in $\Z_+$.
It can be so, possibly, even for non-monomial conductors.
\end{conjecture}
\vskip 0.2cm

{\bf Challenge: beyond KhR?} The following example 
can be potentially a superdual superpolynomial (with all standard 
properties) that is beyond (hyperbolic) knots. We begin with 
$\la=(3,2)$ and replace the generator $x^2$ by $x^2+xy+ y^2$. 
The corresponding $\h^{inst}$ ($\sH^{inst}$ recalculated
to the $\h$-form) is {\small \(1+q t+q^2 t+q^2 t^2+q^3 
t^3+q^4 t^3+q^4 t^4+q^5 t^5 +\aa^2 (q^3+q^5 t^2) +\aa (q+q^2+q^2 
t+q^3 t+q^3 t^2+q^4 t^2+q^4 t^3+q^5 t^3+q^5 t^4)\,. \)} 
Interestingly, $x^2+y^2$ and $x^2+xy+y+y^2$ give $\sH^{inst}_\la$
correspondingly for $\la=(3,1,1)$ (K12n725) and $\la=(5)$ ($T(11,2)$).  

Let us 
provide its HOMFLY-PT polynomial (formally generated); it is 
{\small \(z^4/a^{14}+(4 z^2)/a^{14}+2/a^{14}-z^8/a^{12}-(9 
z^6)/a^{12} -(27 z^4)/a^{12}-(30 
z^2)/a^{12}-9/a^{12}+z^{10}/a^{10}+(10 z^8)/a^{10} +(36 
z^6)/a^{10}+(57 z^4)/a^{10}+(39 z^2)/a^{10}+8/a^{10}\)\,.} The 
$z$-degree here indicates that if this one is for any knot,
then $16$ crossings may be sufficient to capture it, but we
did not find it.  

The corresponding formal Jones polynomial is 
$1 + t^2 - t^5 + t^7 - t^9$ up to $t^\bullet$, and, possibly, for
$t\mapsto t^{-1}$. 
This one is not very likely to be a Jones polynomial. However,
as to our understanding, it can be such.  For instance, 
it satisfies
the following  test: \ Jones polynomials
 must become $\pm e^{2\pi \imath/3}$
upon $t\mapsto e^{2\pi \imath/3}.$ We also examined the passage
Alexander and Conway polynomials. The latter (formally generated)
is $1+13z^2+31z^4+27z^6+9z^8+z^{10}$.
%\vskip 0.2cm

\subsection{\bf  One-two cables, discussion}
Let us consider $\F[[x=z^9+z^{10}, y=z^6]]$, which is
when  $\upsilon=3, \rr=3, \ss=2, p=1$ in the family
$\r=\F_q[[x=z^{\upsilon \rr}+z^{\upsilon \rr+p}, y=z^{\upsilon \ss}]]$,
for $gcd(\rr,\ss)=1=gcd(\upsilon,p)$ and positive $\rr,\ss,\upsilon$.
The corresponding cable is 
$C\!ab(\upsilon \rr \ss+p, \upsilon) T(\rr,\ss) =C\!ab(19,3)T(3,2)$ and 
$\de=\frac{(d-1)(\upsilon-1)}{2}+\frac{(\rr-1)(\ss-1)}{2}\upsilon$ for 
$d=\upsilon \rr \ss +p=19$, which is $\de=21$. Its conductor lifted 
to $\a=\F_q[[x,y]]$
is $\c=I_\la$ for $\la=(7,6,4,3,1)$.% with $|\la|=\de=21$.
\medskip

This diagram is formed by boxes above the diagonal in the
rectangle $6\times 8$, including those touching the diagonal.
We will also consider  $\la'$,  the diagram in the same rectangle
where we remove the boxes touching the
diagonal, which is $\la'=(7,5,4,2,1)$. The corresponding 
(non-algebraic) 
$C\!ab(17,3)T(3,2)$ is with $\de=19$ boxes.
The diagrams are:
\vskip 0.2cm

\centerline{
$\la =\ \yng(7,6,4,3,1) (|\la|=21),\ \ \  
\la'=\ \yng(7,5,4,2,1) (|\la'|=19).$
}
\medskip

Combining the constructions above, there are
$5$ different ways to calculate the corresponding uncolored 
superpolynomial $\h$ for such cables with $p=1$,
and $4$ ways for $p=-1$. The reduction for $p=-1$ 
is because $\h^{mot}$ from  \cite{ChP1,ChP2, ChQ}  
cannot be used (later).
%\medskip

Generally, the DAHA superpolynomials are defined for any 
nonsymmetric polynomials $E_b$, where $b$ are
dominant weights of type $A_{N-1}$ or for $gl_N$ (which
gives the same $\h^{daha}$).  They are  
subject to the hat-normalization and the $N$-stabilization.
This work is about minuscule $b=\om_m$; let us make it $\om_1$
below (the uncolored case).

%\medskip
We will recall general constructions, but 
the DAHA formulas below will be provided for 
 $\j_{A_1}=\h^{daha}(\aa=-t^2)$, i.e. in the case of 
DAHA-Jones polynomials of type $A_1$. Calculations
are especially simple in this case:\, $\HH$ is generated by  
$X^{\pm 1},T,Y^{\pm 1}$ and  
$\bigl\{X^a\bigr\}=X^a(t^{-\rho})=t^{-1/2}$. One needs only the
formula $Y=\pi (t^{1/2}s+\frac{t^{1/2}-t^{-1/2}}{X^2-1}(s-1)$,
where $s(X)=X^{-1}, \pi(X)=q^{1/2}X^{-1}.$

\medskip
The passage to $A_{N-1}$ and  $\cn>1$
are straightforward, but this requires a computer program. 
We note that
the degree of $\aa$ in $\h^{daha}$ and $\mathfrak{H}^{daha}$
is the total number of $Y$ in $W$, which is helpful
when writing a code.
\medskip

{\bf Five approaches:\,{\mathversion{bold}$Cab(19,3)T(3,2)$}.} 

\smallskip

{\bf (i)\, } Following \cite{ChD1}, 
 $\h^{daha}(q,t,\aa)$ is, generally, the $N$-stabilization
of $\j^{A_{N-1}}=\Bigl\{\, \hat{\ga}_1\Bigl(\hat{\ga}_2\bigl(
E_b/E_b(t^{-\rho})\bigr)(1) \Bigr)\,\Bigr\}$ for $H(1)$ in the
polynomial representation of $\HH$ of type
$A_{N-1}$ for 
$\{H\}=\bigl( H(1) \bigr)(X\mapsto t^{-\rho})$.

For $C\!ab(19,3)T(3,2)$, we obtain:\,   $\hat{\ga}_1=\tau_+\tau_-^2 
\rightsquigarrow
\begin{pmatrix} 3 & *\\ 2 & * \end{pmatrix}$ 
encodes $T(3,2)=C\!ab(2,3)$,
and $\hat{\ga}_2=\tau_+^2\tau_- \rightsquigarrow
\begin{pmatrix} 3 & *\\ 1 & *\end{pmatrix}$ encodes
its cabling with $\upsilon=3$.
Since $E_b=X_1$ and we have a very special cable,
we can simplify this procedure.

\medskip

{\bf (ii)\, } We will still use the action of $PSL_2^\vee(\Z)$
but Conjecture \ref{conj:sharp-flat} makes the iteration 
unnecessary in this case (due to $p=1$). Namely,
$\h^{daha}(q,t,\aa)$ 
is conjectured to 
coincide with 
$\mathbb{H}^{daha}(q,t,\aa)\equal 
\Bigl\{\, \hat{\ga}_{1}(X_1^\upsilon)\, \Bigr\}$ upon 
the 
hat-normalization and stabilization. Here $\hat{\ga}_1
=\tau_+\tau_-^2$ as above, but now we apply $\hat{\ga}_1$
to $(X_1^\upsilon)$ instead of using $\hat{\ga}_2$.  
This is supposed to be some 
DAHA lemma, but it is a conjecture by now. For $A_1$: 

 $\mathbb{H}^{daha}(\aa\!=\!-t^2)
\!=\!q^{13} t^{13/2}\Bigl\{\, Y\Biggl(XY\biggl(X^2Y\Bigl(X
Y\bigl(X^2Y(X^2)\bigr)\Bigr)\biggr)\Biggr)\,\Bigr\}=
$
{\small
\(1 + q t + q^2 t + q^3 t + q^4 t + q^5 t - q t^2 + q^4 t^2 + q^5 t^2 + 
 3 q^6 t^2 + 2 q^7 t^2 + q^8 t^2 - q^2 t^3 - q^3 t^3 - 2 q^4 t^3 - 
 2 q^5 t^3 - 2 q^6 t^3 + 2 q^8 t^3 + 3 q^9 t^3 + q^{10} t^3 - q^5 t^4 - 
 2 q^6 t^4 - 4 q^7 t^4 - 4 q^8 t^4 - 3 q^9 t^4 + q^{10} t^4 + 
 2 q^{11} t^4 + q^{12} t^4 + q^5 t^5 + q^6 t^5 + q^7 t^5 - 2 q^9 t^5 - 
 4 q^{10} t^5 - 3 q^{11} t^5 + q^{12} t^5 + q^{13} t^5 + q^7 t^6 
+ q^8 t^6 + 
 2 q^9 t^6 + 2 q^{10} t^6 - 3 q^{12} t^6 - q^{13} t^6 + q^{11} t^7 + 
 2 q^{12} t^7 - q^{13} t^7 - q^{12} t^8 + q^{13} t^8\,.\) }
This calculation is quite doable. Recall that $Y=\pi \bigl(t^{1/2}s+\frac{t^{1/2}-
t^{-1/2}}{X^2-1}(s-1)\bigr)$, where 
$s(X)=X^{-1}, \pi(X)=q^{1/2}X^{-1}$.  Use 
$Y(X)=(qt^{1/2})^{-1}X$ and  $\{X^a H\}=
t^{-a/2}\{H\}$ for $H\in \HH$. Indeed, it coincides 
with $\hat{\j}^{A_1}$ above under the hat-normalization.

\medskip

{\bf (iii)\, } The method in $(ii)$ can be applied
only to $2$-cables with $p=\pm 1$.  More systematically and
simpler is to 
use $\mathfrak{H}^{daha}_\la$ for $\la$ provided above.
This will give the same result due to 
Corollary \ref{cor:W7-3}, where 
a model example was  $C\!ab(43,2)T(7,3)$ for $\rr=7,\ss=3$ and
$\up=2$. 

\comment{
 is similar to 
the EHA-polynomials from \cite{GL2}, where 
we employ the full (nonsymmetric)
DAHA instead of  EHA (which is its stable spherical part) and
replace their {\sf\em plethystic} formulas by a direct usage of the
{\sf\em DAHA coinvariant}. Also, our diagrammatic setting and
the usage of $X,Y$ is somewhat different.
}
\vskip 0.2cm

Generally, 
we present $\la$ as a sequence of $R$ (right)
and $U$ (up) moving from the bottom to the top
along the border of $\overline{\la}=\Z_+\setminus \la$.
In this example, this path is
 from 
$(i=\ell(\la)=5, j=0)$ to $(i=\la_1=7, j=0)$. We write these $R,U$ 
right-to-left:\, $URURRURURRUR$. Then we replace
$R$ by $X$ and $U$ by $Y$, and obtain $W_\la=YXYXXYXYXXYX$.
 Finally, 
$\mathfrak{H}^{daha}$ is $\bigl\{ W_\la X \bigr\}\sim
\{YXYX^2YXYX^2\}$ 
for the DAHA 
coinvariant $\bigl\{\ldots\bigr\}$,  under the hat-normalization
and $\aa$-stabilization.
\vskip 0.2cm

Corollary \ref{cor:W7-3} states that
 $\bigl\{ W_\la X \bigr\}$ 
coincides with $\bigl\{ \hat{W}^3\}$ up to a $q,t$-factor for 
$\hat{W}=\tau_+\tau_-^2(X)=XYXYX$. Indeed:

\centerline{
$\bigl\{ XYXYX\, XYXYX\ XYXYX \}\sim \{YXYX^2YXYX^YX^2\bigr\}$,}

\noindent
which coincides with  $\bigl\{ W_\la X \bigr\}$ above.
Let us mention  that 
$\mathfrak{H}_\la^{daha}$ for $\om_1$ conjecturally
coincide with the EHA-superpolynomials from \cite{GL2}.

\medskip

{\bf (iv)\, }  We use now motivic superpolynomials.
They are,  generally,
$\h^{mot}_{\r}(q,t,\aa)=\sum_{M_{\st}} t^{deg(M)} (1+q \aa)\cdots 
(1+q^{\varrho(M)-1}\aa)$,
where the sum is over $\r$-invariant {\sf\em standard}
modules $M=M_{st} \subset \o=\F_{q}[[z]]$. See above and
\cite{ChP1,ChP2,ChQ}. We can skip this, because
Theorem \ref{thm:mainred} provides a switch to {\sf\em instanton
superpolynomials}, which is for any algebraic knots colored by
minuscule $\om_m$.

{\bf (v)\, } One has:
{$\mathscr{H}_\c^{inst}=
\sum_{\c\subset \i\subset \a} t^{deg(\i)}(1+\aa q)\cdots 
(1+\aa q^{\varrho(\i)-1}).$}
where $\c=I_\la$ is the conductor of $\r$ lifted to 
$\a=\F_q[[x,y]$.
Then  $\h^{mot}_\r(q,t,\aa)= (qt)^\de 
\mathscr{H}_{\c}^{inst}(q, t\mapsto 1/(qt),\aa),$
and we only need the reduction to $A_1$. This is general
and deserves a special comment;
it is especially transparent for $A_1$.
%and we need only to perform the reduction to $A_1$. 

%The {\sf\em superduality} generally reads\, 
%$\h(q,t,\aa)= (qt)^\de \h(t^{-1}, q^{-1},\aa)$,
%which is known for DAHA, but is still a conjecture for motivic
%superpolynomials.

\comment{
The key conjecture in our prior papers is that
$\h^{mot}(q,t,\aa; \cn)$ coincides
with $\h^{daha}(q,t,\aa)$ defined for $b=\cn\om_1$,
where $\cn$ is the rank. This is for any plane curve
singularities; the ranks and the corresponding
colors $\cn\om_1$ in $\h^{daha}$  may depend on the 
components in the multibranch case. This is Conjecture 9.1 from
\cite{ChQ}.
It can be considered as a generalized version of the {\sf\em Shuffle
Conjecture}; see \cite{CaM,ChS,ChQ} and 
Conjecture \ref{conj:main},(i) above.
%In our case, $\cn=1$ and the singularity is unibranch. 
}

\vskip 0.2cm
{\bf Reduction to {\mathversion{bold} $A_1$.}}
We follow Corollary 5.2 from \cite{ChQ}, which 
for general (uncolored) motivic superpolynomials
$\h_\r^{mot}(q,t,\aa)$, using
$\h^{mot}=\h^{daha}$ (a conjecture)
and the superduality of the latter (a theorem):\,
This reduction becomes even more convenient to use
and better looking for
$\sH^{inst}_\c(q,t,\aa)$. One has:
\vskip 0.2cm

\centerline{
$\h^{mot}(q,t,\aa\!=\!-q^{-2})=\h^{daha}(q,t,\aa\!=\!-q^{-2})$
}

\centerline{
$=(q t)^\de \h^{daha}(1/t,1/q,\aa\!=\!-t^2)
=(q t)^\de \hat{\j}^{A_1}(1/t,1/q)$.
}
\vskip 0.2cm

 Using the definition and
the passage to $\sH^{inst}$:\, 
$\h^{mot}(q,t,\aa=-q^{-2})=(q t)^\de+
\sum_{M} t^{deg(M)}(1-q^{-1})=(qt)^\de 
(1+\sum_I (qt)^{-deg(I)}(1-q^{-1})),$
summed over standard  $M\subset \o$ 
and ideals $\c\subset I\subset \a$ of  $\varrho$-rank $2$. Thus:

%$$\hat{\j}^{A_1}(1/t,1/q)=
%1+\sum_{\c\subset I\subset \a}^{\varrho(I)=2} 
%(qt)^{-deg(I)}(1-q^{-1}).
%$$

\begin{align}\label{red-to-A1}
\hat{\j}^{A_1}(q,t)=
1+\sum_{\c\subset I\subset \a}^{\varrho(I)=2} 
(qt)^{deg(I)}(1-t).
\end{align}

%\medskip

{\bf Cables next to algebraic.}
The diagram $\la'$ above is a particular case of $\la^\flat$ in Conjecture
\ref{conj:sharp-flat}.
For any rectangle $\upsilon \ss \times \upsilon \rr$ provided
$gcd(\rr,\ss)=1$, and $\up>1$, we
considered the Young diagram obtained by the boxes {\sf\em strictly}
 above the
diagonal. 
We can still
use $\h^{daha}$ and $\mathfrak{H}^{daha}$
from $(i-iii)$ and $\mathscr{H}^{inst}$ from $(v)$
for $C\!ab(17,3)T(3,2)$ and $\la'$, but 
the superpolynomial $\h^{mot}$ is  not defined.

Now $\hat{\ga}_2=\tau_-^{-1}\tau_+\tau_-^2 \rightsquigarrow
\begin{pmatrix} 3 & *\\ -1 & *\end{pmatrix}$ in $(i)$ with the same
$\hat{\ga}_1$.  Concerning $(iii)$, the diagram $\la'$ can be
encoded as  $URRURURRURUR$, and then we replace 
$R$ by $X$ and $U$ by $Y$:\, $W'=YXXYXYXXYXYX$. 
In this example, $\h^{daha}$ coincides with
$\h^{inst}$ and $\mathfrak{H}^{daha}$, where the latter
is  $q^{12}t^{13/2}\bigl\{ W'X\,\bigr\}$. 
In particular,  $\h^{daha}(\aa\!=\!-t^2)=
\mathfrak{H}^{daha}(\aa\!=\!-t^2),$ 
and they are equal to:\,
{\small
\(1 + q t + q^2 t + q^3 t + q^4 t + q^5 t - q t^2 + q^4 t^2 + q^5 t^2 + 
 3 q^6 t^2 + 2 q^7 t^2 + q^8 t^2 - q^2 t^3 - q^3 t^3 - 2 q^4 t^3 - 
 2 q^5 t^3 - 2 q^6 t^3 + 2 q^8 t^3 + 3 q^9 t^3 + q^{10} t^3 - q^5 t^4 - 
 2 q^6 t^4 - 4 q^7 t^4 - 4 q^8 t^4 - 3 q^9 t^4 + q^{10} t^4 + 
 2 q^{11} t^4 + q^5 t^5 + q^6 t^5 + q^7 t^5 - 2 q^9 t^5 - 4 q^{10} t^5 - 
 3 q^{11} t^5 + q^{12} t^5 + q^7 t^6 + q^8 t^6 + 2 q^9 t^6 + 2 q^{10} t^6 - 
 2 q^{12} t^6 + q^{11} t^7 + 2 q^{12} t^7 - q^{12} t^8\,.
\) }
\vskip 0.2cm

{\bf Miscellaneous remarks.} 
(a) Let us mention that the conductor $\c=I_\la$  for the ring $\r$ in 
the example of $C\!ab(19,3)T(3,2)$ 
above was calculated in \cite{ChG}, as well as that 
for $p=2$ (which is not monomial). The 
formula connecting  $\h^{mot}$ and $\mathscr{H}^{inst}$ is
a combination of Proposition 3.3 there and the 
reduction formula from Section 7.2 in \cite{ChQ}, which was for 
arbitrary plane curve singularities (in rank $1$). 
\vskip 0.2cm

(b) Calculations with instanton slices are sufficiently fast.
 See Section  \ref{sec:condform} below. One needs standard
programs for {\sf\em Frobenius Coin Exchange} and the software 
for {\sf\em Gr\"obner bases}. 
%\medskip

We note that the programs for finding Gr\"obner basis and
the $\varrho$-ranks 
 must be in the variant
of the ring $\F[[x,y]]]$, i.e. for series, not polynomials,
which requires {\sf\em Singular} 
or similar, or your own code. Many thanks to ChatGPT for
moving the corresponding author's programs from Mathematica to
SAGE-Singular, and various improvements made in process of 
this transition. 
\medskip

%\vskip 0.2cm
(c) The DAHA construction generally requires all root
systems $A_N$ (or some stable theory). In the case of minuscule
weights, this  is as follow.
We consider  DAHA of type $gl_N$ with the
generators  $\mathbb{X}_i\!=\!X_{\mathbbm{e}_i}$,
 $\mathbb{Y}_i\!=\!Y_{\mathbbm{e}_i}$ 
 $(1\le i\le N)$ defined for $\mathbbm{e}_i$,
and $T_i (i\le i\le N-1)$. The 
same $X,Y$-words $W$ are used for any $gl_N$, where $X_1$ and 
$Y_1$ replace $X,Y$. The formula for $Y_1$ in the polynomial 
representation becomes $Y_1=\pi T_{n-1}\cdots T_1   $.  Here 
$T_i=t^{1/2}s_i+\frac{t^{1/2}-t^{-1/2}}{\mathbb{X}_i 
\mathbb{X}_{i+1}^{-1}-1}(s_i-1)$ for $s_i: \mathbb{X}_{i}
\leftrightarrow \mathbb{X}_{i+1}$ 
(trivial otherwise), and $\pi: \mathbb{X}_1\!\mapsto\! \mathbb X_2,\ldots, 
\mathbb X_{N-1}\!\mapsto\! \mathbb X_{N}, \mathbb X_N\!
\mapsto\! q^{-1} \mathbb X_N$. 

%The passage 
%to  $A_1$ is as follows:\, $\pi:\,X=X_1^{1/2} X_2^{-1/2} \mapsto 
%X_2^{1/2} (q^{1/2} X_1^{-1/2})= q^{1/2} X^{-1}.$ 

\vskip 0.2cm 
(d) Reduced DAHA formulas for the diagrams $\la^\#$ and $\la^\flat$
from  Conjecture \ref{conj:sharp-flat}, denoted by $\la$ and $\la'$
in $(ii)$ above, are new. Recall that $\la^\#$ and
$\la^\flat$ are formed respectively by the boxes above the 
diagonal in ``$\upsilon \ss\times \upsilon \rr$", including and 
excluding the boxes that touch it. A general problem is to interpret
the (original) DAHA construction when $\tau_+$, $\tau_-$,
$\tilde{\ga}$ and the iterations 
are applied to products of {\sf\em nonsymmetric}
Macdonald polynomials $E_b$. If they are symmetric, this is
the construction of DAHA superpolynomials for iterated torus
{\sf\em links} from \cite{ChD2}. Algebraically, considering
products of $E_b$ makes perfect sense, but no 
topological interpretation is known. The connection of 
$E_{\om_m}^\up=X_{\om_m}^\up$ 
to $2$-cables of torus knots with $p=\pm 1$ is an 
interesting example. Though this is still
a conjecture.

\comment{
Adding colors is relatively direct, but only for columns
in $\mathfrak{H}^{daha}$. Namely,  we use {\sf\em the same} 
words $W$ as above, replacing $X$ and $Y$  by   
$X_{\om_k}=X_1\cdots X_k$ and $Y_{\om_k}=Y_1\cdots Y_k$. Then we go 
from the resulting $\j_{gl_{N}}$-polynomials  
colored by $\om_k$ to the superpolynomials upon the 
hat-normalization and stabilization. This is for arbitrary Young 
diagram $\la$ in the construction of $\mathfrak{H}^{daha}$. For 
$\la$ associated with torus knots, the latter coincide with the 
corresponding  $\h^{daha}$, colored by $k$-columns. This is 
expected for their $\upsilon$-iterations. 
}

\medskip
(e) Our main conjecture, which connects $\mathfrak{H}^{daha}_\la$
and $\sH^{inst}_\la$, is directly related to the 
{\sf\em Shuffle Conjecture} in the case of torus knots.
This can help to clarify its 
justification in \cite{CaM} and further  generalizations
and justifications for  the diagrams $\la^\#$. The case of
$\la^\flat$ is new, as far as we know. 
The usage of 
(full) DAHA is the key in our approach. For torus knots,
our $\mathfrak{H}$ coincide with EHA
superpolynomials; this is conjectured for any uncolored $\la$. 
\vskip 0.2cm 

 We are grateful to Pavel Galashin for 
his explanations and supplying  us with formulas for their {\sf\em 
EHA-superpolynomials} beyond those in \cite{GL2}. They coincided 
with our $\h^{inst}$ for quite a few Young diagrams (all we 
compared). This helped us to switch to DAHA-based 
$\mathfrak{H}^{daha}$ from their EHA-based ones. Formula-wise, the 
passage from their superpolynomials to our ones is as follows. 
Their $a,q$ must be replaced by $\imath \aa^{-1/2}, q^{-1}$, and 
then our hat-normalization must be performed, which is 
multiplication by a proper $a^{\bullet} q^{\bullet}$:\, making all 
$q^i t^j a^k$ with non-negative exponents and with the constant 
term $1$. 
%MY[f_] := 
% Collect[Expand[
%   q^(Exponent[f, q])*(a^(-Exponent[f, a])*f /. {a -> I/a^(1/2), 
%       q -> 1/q})], a]

\smallskip
(f) The main purpose of the restriction to $\j^{A_1}$ above, which is for 
$\aa=-t^2$, is to make the verifications simple and direct. Though
Jones polynomials are remarkable from many viewpoints; notice
formula (\ref{red-to-A1}).
Conceptually, this is the same for any $A_{N-1}$ and
arbitrary $\om_m$, but the 
superpolynomials are generally big. 

Generally,  the $q,t,\aa$-coefficients of $\mathfrak{H}_\la^{daha}$
can be negative, but they are superdual in all examples we 
considered, which is under $q\leftrightarrow t^{-1}$. The 
superduality of $\h^{daha}$ is a general DAHA theorem. 
The superduality of  $\h^{inst}$, which is $\sH{inst}$ recalculated
to DAHA form, fails exactly when the positivity of $\mathfrak{H}_\la^{daha}$
fails, this is our Main Conjecture. 
\vskip 0.2cm

(g) Let us provide the superpolynomial
 for $C\!ab(17,3)T(3,2)$ 
 corresponding  to 
$\la'$ considered above ($m=1$). It is superdual and positive:\ 

\noindent
\smallskip 
 $\h^{daha}=\mathfrak{H}^{daha}=\h^{inst}=$ 
{\small \( 1+q t+q^2 t+q^3 t+q^4 t+q^5 t+q^2 t^2+q^3 t^2+2 q^4 
   t^2+2 q^5 t^2+3 q^6 t^2+2 q^7 t^2+q^8 t^2+q^3
   t^3+q^4 t^3+2 q^5 t^3+3 q^6 t^3+4 q^7 t^3+4 q^8
   t^3+4 q^9 t^3+q^{10} t^3+q^4 t^4+q^5 t^4+2 q^6
   t^4+3 q^7 t^4+5 q^8 t^4+5 q^9 t^4+6 q^{10} t^4+4
   q^{11} t^4+q^5 t^5+q^6 t^5+2 q^7 t^5+3 q^8 t^5+5
   q^9 t^5+6 q^{10} t^5+7 q^{11} t^5+5 q^{12}
   t^5+q^{13} t^5+q^6 t^6+q^7 t^6+2 q^8 t^6+3 q^9
   t^6+5 q^{10} t^6+6 q^{11} t^6+8 q^{12} t^6+5
   q^{13} t^6+q^{14} t^6+q^7 t^7+q^8 t^7+2 q^9
   t^7+3 q^{10} t^7+5 q^{11} t^7+6 q^{12} t^7+8
   q^{13} t^7+5 q^{14} t^7+q^8 t^8+q^9 t^8+2 q^{10}
   t^8+3 q^{11} t^8+5 q^{12} t^8+6 q^{13} t^8+7
   q^{14} t^8+4 q^{15} t^8+q^9 t^9+q^{10} t^9+2
   q^{11} t^9+3 q^{12} t^9+5 q^{13} t^9+6 q^{14}
   t^9+6 q^{15} t^9+q^{16} t^9+q^{10} t^{10}+q^{11}
   t^{10}+2 q^{12} t^{10}+3 q^{13} t^{10}+5 q^{14}
   t^{10}+5 q^{15} t^{10}+4 q^{16} t^{10}+q^{11}
   t^{11}+q^{12} t^{11}+2 q^{13} t^{11}+3 q^{14}
   t^{11}+5 q^{15} t^{11}+4 q^{16} t^{11}+q^{17}
   t^{11}+q^{12} t^{12}+q^{13} t^{12}+2 q^{14}
   t^{12}+3 q^{15} t^{12}+4 q^{16} t^{12}+2 q^{17}
   t^{12}+q^{13} t^{13}+q^{14} t^{13}+2 q^{15}
   t^{13}+3 q^{16} t^{13}+3 q^{17} t^{13}+q^{14}
   t^{14}+q^{15} t^{14}+2 q^{16} t^{14}+2 q^{17}
   t^{14}+q^{18} t^{14}+q^{15} t^{15}+q^{16}
   t^{15}+2 q^{17} t^{15}+q^{18} t^{15}+q^{16}
   t^{16}+q^{17} t^{16}+q^{18} t^{16}+q^{17}
   t^{17}+q^{18} t^{17}+q^{18} t^{18}+q^{19}
   t^{19}+\aa^5 \Bigl(q^{15}+q^{16} t+q^{17} t+q^{17}
   t^2+q^{18} t^2+q^{18} t^3+q^{19} t^4\Bigr)+\aa^4
   \Bigl(q^{10}+q^{11}+q^{12}+q^{13}+q^{14}+q^{11}
   t+2 q^{12} t+3 q^{13} t+3 q^{14} t+3 q^{15}
   t+q^{16} t+q^{12} t^2+2 q^{13} t^2+4 q^{14}
   t^2+5 q^{15} t^2+4 q^{16} t^2+2 q^{17}
   t^2+q^{13} t^3+2 q^{14} t^3+4 q^{15} t^3+5
   q^{16} t^3+4 q^{17} t^3+q^{18} t^3+q^{14} t^4+2
   q^{15} t^4+4 q^{16} t^4+5 q^{17} t^4+3 q^{18}
   t^4+q^{15} t^5+2 q^{16} t^5+4 q^{17} t^5+3
   q^{18} t^5+q^{19} t^5+q^{16} t^6+2 q^{17} t^6+3
   q^{18} t^6+q^{19} t^6+q^{17} t^7+2 q^{18}
   t^7+q^{19} t^7+q^{18} t^8+q^{19} t^8+q^{19}
   t^9\Bigr)+\aa^3 \Bigl(q^6+q^7+2 q^8+2 q^9+2
   q^{10}+q^{11}+q^{12}+q^7 t+2 q^8 t+4 q^9 t+6
   q^{10} t+7 q^{11} t+6 q^{12} t+4 q^{13} t+2
   q^{14} t+q^8 t^2+2 q^9 t^2+5 q^{10} t^2+8 q^{11}
   t^2+12 q^{12} t^2+12 q^{13} t^2+9 q^{14} t^2+4
   q^{15} t^2+q^{16} t^2+q^9 t^3+2 q^{10} t^3+5
   q^{11} t^3+9 q^{12} t^3+14 q^{13} t^3+16 q^{14}
   t^3+12 q^{15} t^3+5 q^{16} t^3+q^{17} t^3+q^{10}
   t^4+2 q^{11} t^4+5 q^{12} t^4+9 q^{13} t^4+15
   q^{14} t^4+17 q^{15} t^4+12 q^{16} t^4+4 q^{17}
   t^4+q^{11} t^5+2 q^{12} t^5+5 q^{13} t^5+9
   q^{14} t^5+15 q^{15} t^5+16 q^{16} t^5+9 q^{17}
   t^5+2 q^{18} t^5+q^{12} t^6+2 q^{13} t^6+5
   q^{14} t^6+9 q^{15} t^6+14 q^{16} t^6+12 q^{17}
   t^6+4 q^{18} t^6+q^{13} t^7+2 q^{14} t^7+5
   q^{15} t^7+9 q^{16} t^7+12 q^{17} t^7+6 q^{18}
   t^7+q^{19} t^7+q^{14} t^8+2 q^{15} t^8+5 q^{16}
   t^8+8 q^{17} t^8+7 q^{18} t^8+q^{19} t^8+q^{15}
   t^9+2 q^{16} t^9+5 q^{17} t^9+6 q^{18} t^9+2
   q^{19} t^9+q^{16} t^{10}+2 q^{17} t^{10}+4
   q^{18} t^{10}+2 q^{19} t^{10}+q^{17} t^{11}+2
   q^{18} t^{11}+2 q^{19} t^{11}+q^{18}
   t^{12}+q^{19} t^{12}+q^{19} t^{13}\Bigr)+\aa^2
   \Bigl(q^3+q^4+2 q^5+2 q^6+2 q^7+q^8+q^9+q^4 t+2
   q^5 t+4 q^6 t+6 q^7 t+8 q^8 t+7 q^9 t+6 q^{10}
   t+3 q^{11} t+q^{12} t+q^5 t^2+2 q^6 t^2+5 q^7
   t^2+8 q^8 t^2+13 q^9 t^2+15 q^{10} t^2+15 q^{11}
   t^2+10 q^{12} t^2+4 q^{13} t^2+q^{14} t^2+q^6
   t^3+2 q^7 t^3+5 q^8 t^3+9 q^9 t^3+15 q^{10}
   t^3+20 q^{11} t^3+24 q^{12} t^3+17 q^{13} t^3+8
   q^{14} t^3+2 q^{15} t^3+q^7 t^4+2 q^8 t^4+5 q^9
   t^4+9 q^{10} t^4+16 q^{11} t^4+22 q^{12} t^4+28
   q^{13} t^4+22 q^{14} t^4+9 q^{15} t^4+2 q^{16}
   t^4+q^8 t^5+2 q^9 t^5+5 q^{10} t^5+9 q^{11}
   t^5+16 q^{12} t^5+23 q^{13} t^5+29 q^{14} t^5+22
   q^{15} t^5+8 q^{16} t^5+q^{17} t^5+q^9 t^6+2
   q^{10} t^6+5 q^{11} t^6+9 q^{12} t^6+16 q^{13}
   t^6+23 q^{14} t^6+28 q^{15} t^6+17 q^{16} t^6+4
   q^{17} t^6+q^{10} t^7+2 q^{11} t^7+5 q^{12}
   t^7+9 q^{13} t^7+16 q^{14} t^7+22 q^{15} t^7+24
   q^{16} t^7+10 q^{17} t^7+q^{18} t^7+q^{11} t^8+2
   q^{12} t^8+5 q^{13} t^8+9 q^{14} t^8+16 q^{15}
   t^8+20 q^{16} t^8+15 q^{17} t^8+3 q^{18}
   t^8+q^{12} t^9+2 q^{13} t^9+5 q^{14} t^9+9
   q^{15} t^9+15 q^{16} t^9+15 q^{17} t^9+6 q^{18}
   t^9+q^{13} t^{10}+2 q^{14} t^{10}+5 q^{15}
   t^{10}+9 q^{16} t^{10}+13 q^{17} t^{10}+7 q^{18}
   t^{10}+q^{19} t^{10}+q^{14} t^{11}+2 q^{15}
   t^{11}+5 q^{16} t^{11}+8 q^{17} t^{11}+8 q^{18}
   t^{11}+q^{19} t^{11}+q^{15} t^{12}+2 q^{16}
   t^{12}+5 q^{17} t^{12}+6 q^{18} t^{12}+2 q^{19}
   t^{12}+q^{16} t^{13}+2 q^{17} t^{13}+4 q^{18}
   t^{13}+2 q^{19} t^{13}+q^{17} t^{14}+2 q^{18}
   t^{14}+2 q^{19} t^{14}+q^{18} t^{15}+q^{19}
   t^{15}+q^{19} t^{16}\Bigr)+\aa
   \Bigl(q+q^2+q^3+q^4+q^5+q^2 t+2 q^3 t+3 q^4 t+4
   q^5 t+5 q^6 t+4 q^7 t+2 q^8 t+q^9 t+q^3 t^2+2
   q^4 t^2+4 q^5 t^2+6 q^6 t^2+9 q^7 t^2+10 q^8
   t^2+9 q^9 t^2+5 q^{10} t^2+2 q^{11} t^2+q^4
   t^3+2 q^5 t^3+4 q^6 t^3+7 q^7 t^3+11 q^8 t^3+14
   q^9 t^3+16 q^{10} t^3+13 q^{11} t^3+5 q^{12}
   t^3+q^{13} t^3+q^5 t^4+2 q^6 t^4+4 q^7 t^4+7 q^8
   t^4+12 q^9 t^4+16 q^{10} t^4+20 q^{11} t^4+19
   q^{12} t^4+9 q^{13} t^4+2 q^{14} t^4+q^6 t^5+2
   q^7 t^5+4 q^8 t^5+7 q^9 t^5+12 q^{10} t^5+17
   q^{11} t^5+22 q^{12} t^5+21 q^{13} t^5+11 q^{14}
   t^5+2 q^{15} t^5+q^7 t^6+2 q^8 t^6+4 q^9 t^6+7
   q^{10} t^6+12 q^{11} t^6+17 q^{12} t^6+23 q^{13}
   t^6+21 q^{14} t^6+9 q^{15} t^6+q^{16} t^6+q^8
   t^7+2 q^9 t^7+4 q^{10} t^7+7 q^{11} t^7+12
   q^{12} t^7+17 q^{13} t^7+22 q^{14} t^7+19 q^{15}
   t^7+5 q^{16} t^7+q^9 t^8+2 q^{10} t^8+4 q^{11}
   t^8+7 q^{12} t^8+12 q^{13} t^8+17 q^{14} t^8+20
   q^{15} t^8+13 q^{16} t^8+2 q^{17} t^8+q^{10}
   t^9+2 q^{11} t^9+4 q^{12} t^9+7 q^{13} t^9+12
   q^{14} t^9+16 q^{15} t^9+16 q^{16} t^9+5 q^{17}
   t^9+q^{11} t^{10}+2 q^{12} t^{10}+4 q^{13}
   t^{10}+7 q^{14} t^{10}+12 q^{15} t^{10}+14
   q^{16} t^{10}+9 q^{17} t^{10}+q^{18}
   t^{10}+q^{12} t^{11}+2 q^{13} t^{11}+4 q^{14}
   t^{11}+7 q^{15} t^{11}+11 q^{16} t^{11}+10
   q^{17} t^{11}+2 q^{18} t^{11}+q^{13} t^{12}+2
   q^{14} t^{12}+4 q^{15} t^{12}+7 q^{16} t^{12}+9
   q^{17} t^{12}+4 q^{18} t^{12}+q^{14} t^{13}+2
   q^{15} t^{13}+4 q^{16} t^{13}+6 q^{17} t^{13}+5
   q^{18} t^{13}+q^{15} t^{14}+2 q^{16} t^{14}+4
   q^{17} t^{14}+4 q^{18} t^{14}+q^{19}
   t^{14}+q^{16} t^{15}+2 q^{17} t^{15}+3 q^{18}
   t^{15}+q^{19} t^{15}+q^{17} t^{16}+2 q^{18}
   t^{16}+q^{19} t^{16}+q^{18} t^{17}+q^{19}
   t^{17}+q^{19} t^{18}\Bigr).
\)
}

\vskip 0.2cm 
\subsection{\bf Conductors for 1-2-cables} \label{sec:condform}
Let us calculate the lifts  $\c$ of the  
conductors $(z^{2\de})$ of  the rings 
$\r=\F_q[[x=z^{\upsilon \rr}+z^{\upsilon \rr+p}, y=z^{\upsilon \ss}]]$. 
Here $\rr>\ss\ge 1, \upsilon\ge 1$, 
$gcd(\rr,\ss)=1=gcd(\upsilon,p)$, and
$\de=\frac{(\mathbbm{m}-1)(\upsilon-1)}{2}+\frac{(\rr-1)(\ss-1)}{2}\upsilon$ for 
$\mathbbm{m}=\upsilon \rr \ss +p$. The corresponding cable is 
$C\!ab(\mathbbm{m}, \upsilon) T(\rr,\ss)$. To be exact, we will 
provide the answers for $\up=1$ or if $p\pm 1$ and in the limit.
Use Theorem \ref{thm:general-semiroot-conductor} in the case of
general $p$.
We assume that $\chi=0$ or   $gcd(s,\chi)=1$ for the 
characteristic $\chi$  
of the base field $\F$.

\begin{theorem}\label{thm:cond}
(i) Due to Theorem 3.4,(ii) in \cite{ChG}, the conductor $\c$ is 
monomial for $p=1$ or when $\upsilon=1$.
 It is $I_\la$, where $\la$ is 
formed by the boxes above the diagonal from $(i=\upsilon \ss, j=0)$ to 
$(i=0, j=\upsilon \rr)$ in the rectangle $\upsilon \ss\times \upsilon \rr$, 
including 
the boxes that touch the diagonal; the total number 
of such boxes is $\de$, the first generator of $I_\la$ is $x^{\upsilon 
\ss-1}$ and the last one is $y^{\upsilon \rr-1}$. The $\varrho$-rank of 
$I_\la$, the number of its outer corner, equals $\upsilon s$, 
the multiplicity of the corresponding singularity. 

(ii) The following outer corners $(i,j)$
of $\la$ are present  for any 
$p\!\ge\! 1$: $ \bigl(\,i_b= \upsilon \ss -1 -b, 
j_b=Floor[\frac{\rr b}{\ss}]\,\bigr), 
\text{ where } 0\le b\le \ss-1$, The corresponding generators 
are $f_{b+1}= 
x^{\ss-b-1} y^{j_b} (x^\ss-y^\rr)^{\upsilon-1}$\, for\, $0\le b\le \ss-1$ 
modulo higher generators of $\c$. The $j$-ends of the other
rows, those for $\,0\le i\le (\upsilon-1)\ss -1$, tend to infinity 
as $p\to \infty$. 
\end{theorem}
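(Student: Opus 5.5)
Part (i) is quoted from Theorem 3.4,(ii) of \cite{ChG}, so I would only re-verify its numerical consequences and spend the real effort on (ii). The lifted conductor is $\c=\{f\in\a \mid \nu_z(f(x(z),y(z)))\ge 2\de\}$, because $\mathfrak{c}=(z^{2\de})$ and $\r=\a/(F)$. For (i), the box count of the diagonal diagram $\la$ in the $\upsilon\ss\times\upsilon\rr$ rectangle, including the touching boxes, should be checked against formula (\ref{Ga-cab}) with $p=1$. The steps of the staircase then give exactly $\upsilon\ss$ outer corners, which should equal $\varrho(I_\la)$ and the multiplicity $\min(\upsilon\rr,\upsilon\ss)=\upsilon\ss$.

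For (ii) I would argue by valuations. Set $g=x^\ss-y^\rr$, which for this parametrization has $\nu_z(g)=\upsilon\rr\ss+p=\mathbbm{m}$, since the leading terms cancel and the next term comes from $x^\ss$. Then
\[
\nu_z\bigl(x^{\ss-b-1}y^{j_b}g^{\upsilon-1}\bigr)=\upsilon\rr(\ss-b-1)+\upsilon\ss j_b+(\upsilon-1)\mathbbm{m}.
\]
Using $\ss j_b=\rr b-(\rr b\bmod \ss)$ together with $2\de=\upsilon^2\rr\ss-\upsilon(\rr+\ss)+(\upsilon-1)p+1$ from (\ref{Ga-cab}), I would show this is $\ge 2\de$, so that $f_{b+1}\in\c$. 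The leading monomial of $f_{b+1}$ in the ordering $1\prec x\prec y$ is $x^{\ss-b-1+(\upsilon-1)\ss}y^{j_b}=x^{i_b}y^{j_b}$, because the lowest term of $g^{\upsilon-1}$ is $x^{(\upsilon-1)\ss}$.

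It then remains to show these are genuine outer corners of $\la(\c)=dgrm(\c)$. Concretely, $x^{i_b}y^{j_b-1}$ and $x^{i_b-1}y^{j_b}$ must not be leading terms of elements of $\c$, and the key point is that this holds independently of $p$. My plan is a $\Gamma$-semigroup count. The elements of $\a$ with leading monomial $x^iy^j$ and $i\ge(\upsilon-1)\ss$ can be written as $x^{i-(\upsilon-1)\ss}y^jg^{\upsilon-1}$ plus higher terms, and their valuations are bounded above by the semiroot expansion of Theorem \ref{thm:general-semiroot-conductor}. Using that theorem's description of the conductor via the semiroots $x$, $y$, $g$, I would read off directly that the rows $i\ge(\upsilon-1)\ss$ have ends $j_b$ not depending on $p$.

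For the remaining rows, $i\le(\upsilon-1)\ss-1$, any element with leading monomial $x^iy^j$ involves $g$ only to power $<\upsilon-1$. Its valuation is therefore at most a fixed linear expression plus $(\upsilon-2)\mathbbm{m}$, whereas $2\de$ grows like $(\upsilon-1)p$. This forces $j\to\infty$ as $p\to\infty$. I expect this upper bound on valuations to be the main obstacle: cancellations between terms could in principle raise $\nu_z$ beyond the naive leading-term estimate. I would control them with the standard fact that the monomials $x^ay^bg^c$ with $a<\ss$ and $c<\upsilon$ have pairwise distinct valuations modulo $\upsilon\ss$, so no cancellation occurs. This is exactly where the assumption $\chi=0$ or $\gcd(\ss,\chi)=1$ enters.
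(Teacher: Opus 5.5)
Your route is the paper's. With $h=x^\ss-y^\rr$ (your $g$) one has $\nu_z(x^{\ss-b-1}y^jh^{\upsilon-1})-2\de=\upsilon\bigl(\ss(j+1)-\rr b\bigr)-1$. So the least admissible $j$ is $j_b=\lfloor \rr b/\ss\rfloor$, which is the paper's window $[2\de,2\de+\upsilon\ss)$, and lower powers of $h$ make the row ends grow with $p$. The paper stops there and tacitly reads $dgrm(\c)$ off generalized monomials. You correctly see that what is really needed is an upper bound on $\nu_z$ for elements with a prescribed leading monomial. The ingredient you name, no cancellation among generalized monomials, is the right one, but the step as formulated does not go through:
\begin{itemize}
\item An element with leading monomial $x^iy^j$, $i\le(\upsilon-1)\ss-1$, need not ``involve $h$ only to power $<\upsilon-1$''; take $x^iy^j+y^{j+1}h^{\upsilon-1}$.
\item An arbitrary $f\in\a$ has no expansion in $x^ay^bh^c$ with $a<\ss$, $c<\upsilon$ until you reduce it modulo $F$.
\item The values $w(a,b,c)=\nu_z(x^ay^bh^c)$ are pairwise distinct but not distinct modulo $\upsilon\ss$, since a factor $y$ shifts them by $\upsilon\ss$; only the $w(a,0,c)$ are.
\item Theorems \ref{thm:explicit-semiroot-conductor} and \ref{thm:general-semiroot-conductor} do not help directly. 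They give $F\in J$, hence generators of $\c$, and generators do not determine the leading-term ideal.
\end{itemize}

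The missing lemma is this. Suppose $f^0=x^iy^j$ with $i<\upsilon\ss$, and write $i=a_0+\ss c_0$ with $0\le a_0<\ss$. Then $\nu_z(f)\le w(a_0,j,c_0)$.

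To prove it, write $f=y^jg$ with $g(x,0)=x^iu(x)$ and $u(0)\ne0$. Weierstrass division by the distinguished $F$ of $x$-degree $\upsilon\ss$ gives $g=QF+R$ with $\deg_xR<\upsilon\ss$ and $R(x,0)\equiv x^iu(x)$ modulo $x^{\upsilon\ss}$. Repeated division by $h$ gives $R=\sum_{a<\ss,\,c<\upsilon}r_{ac}(y)\,x^ah^c$. Since $h(x,0)=x^\ss$ and $(a,c)\mapsto a+\ss c$ is a bijection onto $\{0,\ldots,\upsilon\ss-1\}$, we get $r_{a_0c_0}(0)=u(0)\neq0$. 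Distinctness of the $w(a,b,c)$, proved inside Theorem \ref{thm:explicit-semiroot-conductor} and using $\ss\ne0$ in $\F$, rules out cancellation. Hence $\nu_z(f)=\upsilon\ss j+\nu_z(R)\le w(a_0,j,c_0)$. The hypothesis $i<\upsilon\ss$ is essential, because $F^0=x^{\upsilon\ss}$ while $\nu_z(F)=\infty$.

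Since $x^{a_0}y^jh^{c_0}$ itself has leading monomial $x^iy^j$, row $i<\upsilon\ss$ of $dgrm(\c)$ ends exactly at $\min\{j\ge0:w(a_0,j,c_0)\ge2\de\}$. This needs no appeal to $F\in J$.

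The lemma also settles ``for any $p\ge1$'', which your limit argument does not. For $b\le\ss-2$, use $j_{b+1}>j_b$, which holds because $\rr>\ss$. For $b=\ss-1$ and $\upsilon\ge2$, row $(\upsilon-1)\ss-1$ ends at $\rr-1+\lceil(p+1)/(\upsilon\ss)\rceil\ge\rr>j_{\ss-1}$.

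Finally, the re-check you plan for (i) will not confirm the last generator $y^{\upsilon\rr-1}$. The same formula gives $\la_0=\lceil 2\de/(\upsilon\ss)\rceil=\upsilon\rr-\lceil\rr/\ss\rceil$ for $p=1$, e.g.\ $y^4$ for $(4,3,1)$ and $y^7$ for $(7,6,4,3,1)$.
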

{\it Proof.} For $(i)$, we need to represent $z^{2\de+k}$ modulo higher
term for
$0\le k\le \upsilon \ss -1$ in terms of $x,y$ and $\frac{1}{\ss}(x^\ss-y^\rr)
= z^{\upsilon \rr \ss +p}\,\mod \bigl(z^{\upsilon \rr \ss +p+1}\bigr)$ and their 
powers. This is straightforward;
see \cite{ChG} for some examples and \cite{GM, GMV1,GMV2} for 
related combinatorics, including 
the corresponding {\sf\em parking functions}. 

{\bf (ii).} Only $y^{a-1}x^{\ss-b-1} (x^\ss-y^\rr)^{\upsilon -1}$ 
for $ 0\le b\le \ss-1$  can 
be with the corresponding leading $x,y$-terms independent of $p\ge 1$.
Any smaller powers of $(x^\ss-y^\rr)$ result in the degrees of
$x$ or $y$ tending to $\infty$ as $p\to \infty$. We obtain the
inequality:
$$ 
2\de\leq  \ss (a-1)+\rr (\ss-b-1)+(\upsilon \rr\ss+p)(s-1)< 2\de +\upsilon \ss.
$$
Using that $2\de=(\upsilon \rr \ss +p -1)(\upsilon-1)+
\upsilon (\rr-1)(\ss-1)$, we reduce it to $1\le \ss a-\rr b\le \ss$ and
obtain that $a=Floor[\frac{\rr b}{\ss}]+1$. This is $j_b$, and
the leading monomial of $f_{b+1}$ is $x^{i_b}y^{j_b}$ for
  $i_b=\upsilon \ss-b-1$. \sq
%includes the top $x$-power  of $(x^\ss-y^\rr)^{\upsilon-1}$. \sq
\vskip 0.2cm

Let us provide the Young diagrams 
$\la=dgrm(\c_m)$ for $\rr=3,\ss=2, \upsilon=2$ and
$p=2m+1=1,3,5,7$:\ \ 
$
\yng(4,3,1),\ \ \yng(5,3,1),\ \ \yng(5,4,1),\ \  \yng(6,4,1)\,.
$
The pattern is clear. Only $\c_0$ is monomial. Following the
theorem, the generators of $\c_1$ are  
$f_1=x (x^2-y^3), f_2= y(x^2-y^3), f_3=x y^3, f_4=y^5$. However,
$x^3=z^{18}(1+z(\cdot))\in \c_1$, which has the same leading
$z$-power as $x y^3$. Thus, $\c_1$ is generated by
$x^3, y(x^2-y^3), xy^3, y^5$. Starting with $\c=\c_2$, 
 $x^3\not\in \c$, and we need to take $f_1=x(x^2-y^3)$.
For $m\ge 2$, the generators are:\ 
$ x^3-xy^2, x^2y-y^4, x y^{a}, y^{b},$
 where $b=Ceiling[\de/2]=
4+Ceiling[m/2]$, $a=3+Floor[m/2]$.
\vskip 0.2cm

This theorem gives that torus knots and the double-cables with
$p=1$ are the only ones with monomial conductors $\c$ among
algebraic $1,2$-cables. This completes Theorem 3.4,(ii) 
in \cite{ChG}.
We think  that the  conductors $\c$ are non-monomial
for any other algebraic knots.

For instance,
let us consider the simplest algebraic ``triple" cable,
which is for $\r=\F[[x=z^{12}+z^{14}+z^{15},y=z^8]]$ with
the knot $C\!ab(53,2)C\!ab(13,2)T(3,2)$. See Section 4.1 in \cite{ChQ}.
In this case, 
$\Ga=\lan 8,12,27,53\ran$,
$\de=42$, and the Newton's pairs are $\{(2,3),(2,1),(2,1)\}$.
The corresponding $\c$ is relatively simple to calculate using
standard software:\, it is with the generators {\small
\(
y^{11}, x y^9, x^2 y^8, x^3 y^6, x^4 y^4 - 2 x^2 y^7 + y^{10}, x^5 y^3, 
 x^6 y - 3 x^2 y^7 + 2 y^{10}, x^7\)}.
Use that $\nu_z(u)=26$ for $u=x^2-y^3$, and
$ \nu_z(u^2-y^5 x)=\nu_z(u^2-y^2x^3)=53$.
Two of the generators are not monomial. The corresponding Young diagram,
that for the ideal $\c^0$ formed by the leading terms,
is $\la=(11,9,8,6,4,3,1)$ % $=\yng(11,9,8,6,4,3,1)$
with $\de=42$ boxes (as it is supposed to be),
 but $\c$ is not monomial. 
\vskip 0.2cm

It is of interest to calculate the conductors and their limits
for the families of triple-cables.
\vskip 0.2cm

%Using the formulas for $\h^{daha}$ from \cite{ChW} for the
%family $r=3, s=2$ and  odd $p$, let us tend $p$ to $\infty$.
%We obtain the following nontrivial formula for {\sf\em free
%instanton sums} (without any $\c$).

\begin{corollary} \label{cor:lim-up2}
Consider relatively prime
$\rr>\ss\ge 1$, $\upsilon=2$, and $p=2m+1$. Then\,
$\de_m=(\rr\!-\!1)(\ss\!-\!1)\!+\!\rr\ss\!+\!m$\, 
is the corresponding $\de$.

(i) We set  $\h^\dag_m(q,t,\aa)\equal
t^{\de_m}\h(q,1/(qt),\aa)$  (uncolored) for the corresponding
DAHA or motivic superpolynomial. Then
\begin{align}\label{H-m-lim}
\lim_{m\to \infty} \h^\dag_m(q,t,\aa)=\h^\dag_0+
\frac{1+\aa q}{1-t}t^{\de_0+1}\h^\#_{\rr,\ss}(2\om_1),
\end{align}
where $ \h^\#_{\rr,\ss}(2\om_1)$ is the superpolynomial
for $T(\rr,\ss)$ colored by $2\om_1=\yng(2)$ upon the
substitution $t\mapsto 1/(qt)$.  

(ii) For $\rr=3,\ss=2,\upsilon=2$, consider 
$\sH^{inst}_{3,2;2}(\aa=0)\equal \sum_{I'\subset \a}
t^{deg(I')}$ subject to the following restriction.
The summation must be  over the ideals $I'$
containing $f_1=x(x^2-y^3)$ and $f_2=y(x^2-y^3)$. Then
$\sH^{inst}_{3,2;2}(\aa=0)=\frac{1+q t^2+q^2 t^3+q^2 t^4}{1-t}$.
When $\upsilon=3$, we have $f_1=x(x^2\!-\!y^3)^2, f_2=y(x^2\!-\!y^3)^2,$ 
and $\sH^{inst}_{3,2;3}(\aa=0)=\frac{(1+q^2t^3)}{(1-t)(1-qt^2)}
(1\!+\!q^3t^4\!+\!q^4t^5\!+\!q^4t^6).$ 

\end{corollary}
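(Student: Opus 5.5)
I would treat the two parts separately. Part (ii) is an instanton count, and part (i) is then a consequence of it and of Theorem \ref{thm:mainred}.

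For (ii), the starting point is Theorem \ref{thm:cond},(ii). As $p=2m+1\to\infty$, the lifted conductor $\c_m$ has two kinds of outer corners. The corners $(i_b,j_b)$, with generators $f_{b+1}=x^{\ss-b-1}y^{j_b}(x^\ss-y^\rr)^{\upsilon-1}$, stay fixed. The remaining rows run off to infinity. So the slices $\mathfrak{S}_{\c_m}$ stabilize degree by degree to the set of ideals containing $f_1,f_2$. For $\rr=3,\ss=2$ these are $f_1=x(x^2-y^3)^{\upsilon-1}$ and $f_2=y(x^2-y^3)^{\upsilon-1}$. The count then proceeds as follows.

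\begin{itemize}
\item For $\upsilon=2$, I would split the ideals $I'\supset(f_1,f_2)$ into two classes.
\item The first class is $I'\ni u=x^2-y^3$. Then $I'$ corresponds to an ideal of the ring $\F[[x,y]]/(u)=\F[[z^3,z^2]]$. By the unibranch reduction in rank one, these are counted by the (Gorenstein-dual) motivic sum for $T(3,2)$ over all modules, which is not capped by a conductor.
\item The second class is $u\notin I'$. Then $I'\supset \mathfrak{m}_\a u$, and $I'$ is determined by $I'\cap(u)$ together with its image in $\a/(u)$.
\item I would compute the Gr\"obner cells with respect to $1\prec x\prec y$ directly, as in the proof of Theorem \ref{thm:2-row},(i): write the generators, impose that the multiples by $x$ and $y$ reduce correctly, and read off the free parameters.
\end{itemize}

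The tail $t^{\deg}$ has unbounded degree but only finitely many shapes. This produces the geometric factor $1/(1-t)$ and the finite numerator $1+qt^2+q^2t^3+q^2t^4$.

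For $\upsilon=3$, the same stratification is used, but by the $u$-adic filtration $(u^2)\subset(u)$. Ideals containing $\mathfrak{m}_\a u^2$ pick up a second layer, which I expect to give the extra factor $(1+q^2t^3)/(1-qt^2)$. I would verify the small-degree coefficients against a Gr\"obner/Singular computation in the series ring.

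For (i), I would pass from motivic superpolynomials to $\sH^{inst}_{\c_m}$ via Theorem \ref{thm:mainred},(i). With $\cn=1$ this gives $\h^\dag_m=\sH^{inst}_{\c_m}$. The instanton sum then decomposes into two parts.
\begin{itemize}
\item The ideals not containing $(x^\ss-y^\rr)\mathfrak{m}_\a$-type elements are exactly those for $m=0$. These give $\h^\dag_0$, up to the stable identification of cells, in the spirit of Proposition \ref{prop:shift-emb},(i).
\item The new ideals, which appear only in the limit, contain $\mathfrak{m}_\a u$ but not $u$. Quotienting by $u$, they are parametrized by rank-$2$ data on the torus-knot ring $\F[[z^\rr,z^\ss]]$. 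The extra $u$-line contributes $t^{\de_0+1}/(1-t)$, and one more generator gives $(1+\aa q)$.
\item The remaining sum is the instanton sum for $T(\rr,\ss)$ in rank $2$. By formula (\ref{hmotinst}) with $\cn=2$, together with the DAHA superduality $\om_2\leftrightarrow 2\om_1$, this is $\h^\#_{\rr,\ss}(2\om_1)$.
\end{itemize}

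The main obstacle is this identification. One has to show that the ideals in the limit which avoid $u$ biject, with matching $\deg$ and $\varrho$, onto rank-two standard modules for $T(\rr,\ss)$. In particular the $\varrho$-ranks must shift by exactly one. I would try first to build this bijection via the map $I'\mapsto (I'\cap(u))/u \oplus I'/(u)$, and to check the shift by Nakayama. The explicit $\rr=3,\ss=2$ formulas from (ii) serve as the test case.
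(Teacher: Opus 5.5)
\textbf{Verdict.} Part (i) has a genuine gap: your decomposition of the limiting slice is false, and you are missing the key input the paper actually uses. Your direct count for (ii) with $\upsilon=2$ is a valid alternative route.

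\textbf{The gap in (i).} The paper does not decompose the slice at all. It takes the known closed formula for the whole family, (4.23) of \cite{ChW} on the DAHA side and (7.60) of \cite{ChQ} on the motivic side. After the $\dag$, $\#$ adjustments it reads
\begin{align*}
\h^\dag_m=\h^\dag_0+(1-t^m)\frac{1+\aa q}{1-t}\,t^{\de_0+1}\h^\#_{\rr,\ss}(2\om_1),
\end{align*}
and the limit follows by letting $t^m\to0$.

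You replace this input by a decomposition of $\mathfrak S_{\c_\infty}$ that already fails for $\rr=3,\ss=2$, where $u=x^2-y^3$ and $\c_\infty=\mathfrak m_\a u$.
\begin{itemize}
\item The ideal $(u,y^5)$ has degree $10>\de_0=8$. So it is new (not in $\mathfrak S_{\c_0}$), yet it contains $u$.
\item The ideal $(x^3,y)$ contains $\mathfrak m_\a u$ but not $u$. It already contains $\c_0=I_{(4,3,1)}$, so it is old.
\item Your own count in (ii) makes this systematic. In every degree $n\ge 9$ all $1+q+2q^2$ ideals are new, and $1+q$ of them contain $u$.
\item Since every $\c_m$ contains $\mathfrak m_\a u$, your description of the old ideals as those not containing such elements is vacuous.
\end{itemize}

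No bijection preserving $\deg$ can produce the new term either. By (\ref{hmotinst}) with $\cn=2$, a module of degree $d$ in the rank-two slice for $T(\rr,\ss)$ enters $t^{\de_0+1}\h^\#_{\rr,\ss}(2\om_1)$ with weight $q^{2\de-d}t^{\de_0+1-2\de+d}$, times its $\aa$-factor, where $\de=\de_{\rr,\ss}$. For $(3,2)$ at $\aa=0$ the new part is $t^7\bigl(q^2+(q+q^2)t+t^2\bigr)/(1-t)$.
\begin{itemize}
\item A geometric proof would therefore need affine fibres of dimension $2\de-d$ and a $d$-dependent degree shift.
\item It would have to preserve $\varrho$, not shift it. The product $(1+\aa q)(1+\aa q^2)\cdots(1+\aa q^{\varrho-1})$ is exactly the rank-one weight for the same $\varrho$.
\item Superduality is not needed here: rank two corresponds to $2\om_1$ directly.
\item A slice count can only give the motivic version of (i). The DAHA version in the paper comes from \cite{ChW}.
\end{itemize}
As it stands, (i) rests on an unproved and misdescribed correspondence. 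The missing ingredient is the $m$-formula above, or an actual proof of it.

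\textbf{Part (ii), $\upsilon=2$.} Your direct count differs from the paper's route. The paper inserts the explicit $\h_0$ and $\h_{3,2}(2\om_1)$ into the limit formula and identifies the limiting slice via Theorem~\ref{thm:cond}. Your count checks the number independently of (i), and it does close.
\begin{itemize}
\item Class (a) is not a sum over standard modules. It consists of all nonzero ideals of $\F[[z^2,z^3]]$ (a Hilbert-scheme count), and gives $(1+qt^2)/(1-t)$.
\item In class (b), $I'\cap(u)=\mathfrak m_\a u$ and $\deg I'=\deg\bar I+1$. Let $J$ be the preimage of $\bar I$. The ideals $I'$ are the $\a$-splittings of $0\to\F u\to J/\mathfrak m_\a u\to\bar I\to0$. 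When a splitting exists there are $q^{\varrho(\bar I)}$ of them.
\item A splitting exists exactly for principal $\bar I\neq\F[[z^2,z^3]]$ and for $\bar I=z^k\o$ with $k\ge4$. For $k=2,3$ it is obstructed by $x\cdot x-y\cdot y^2=u$.
\item This gives $(q^2t^3+q^2t^4)/(1-t)$ for class (b). Adding class (a) yields the stated formula.
\end{itemize}
For $\upsilon=3$ you only state an expectation, but the paper omits that justification as well.
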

{\it Proof.}  The key is our formula
$\sH^{inst}_{\c}(q,t,\aa)=t^{\de}
\h^{mot}(q,1/(qt),\aa)$,  where $\c$ is the corresponding
(lifted) conductor. In this case,\, $\c=\c_m$ and we can use
the DAHA formula (4.23) from \cite{ChW} and its motivic counterpart,
formula (7.60) from \cite{ChQ}. Adjusting them  to $\dag$ and $\#$:
\begin{align}\label{H-m-full}
\h^\dag_m(q,t,\aa)=\h^\dag_0+
(1-t^m)\frac{1+\aa q}{1-t}t^{\de_0+1}\h^\#_{\rr,\ss}(2\om_1).
\end{align}
Recall that  $t^m$ vanishes when making  $m\to \infty$.
\vskip 0.2cm

{\bf (ii).} In (\ref{H-m-full}), explicit
formulas  for $\h^\dag_0$ and $\h^\#_{3,2}(2\om_1)$
in this case are used from our prior papers. 
Namely, $\h_{3,2}(2\om_1)=$
{\small \(
1 + a^2 q^5 + q^2 t + q^3 t + q^4 t^2 + \aa (q^2 + q^3 + q^4 t + q^5 t)\)},
$\h_0=$
{\small \(1 + q t + q^2 t + q^3 t + q^2 t^2 + q^3 t^2 + 2 q^4 t^2 + q^3 t^3 + 
 q^4 t^3 + 2 q^5 t^3 + q^4 t^4 + q^5 t^4 + 2 q^6 t^4 + q^5 t^5 + 
 q^6 t^5 + q^7 t^5 + q^6 t^6 + q^7 t^6 + q^7 t^7 + q^8 t^8 + 
 \aa^3 \bigl(q^6 + q^7 t + q^8 t^2\bigr) + 
 \aa^2 \bigl(q^3 + q^4 + q^5 + q^4 t + 2 q^5 t + 2 q^6 t + q^5 t^2 + 
    2 q^6 t^2 + 2 q^7 t^2 + q^6 t^3 + 2 q^7 t^3 + q^8 t^3 + q^7 t^4 + 
    q^8 t^4 + q^8 t^5\bigr) + 
 \aa \bigl(q + q^2 + q^3 + q^2 t + 2 q^3 t + 3 q^4 t + q^5 t + q^3 t^2 + 
    2 q^4 t^2 + 4 q^5 t^2 + q^6 t^2 + q^4 t^3 + 2 q^5 t^3 + 
    4 q^6 t^3 + q^7 t^3 + q^5 t^4 + 2 q^6 t^4 + 3 q^7 t^4 + q^6 t^5 + 
    2 q^7 t^5 + q^8 t^5 + q^7 t^6 + q^8 t^6 + q^8 t^7\bigr), \)}
and $\de_0=8$.

\vskip 0.2cm
The description of the conductors and the formulas 
for $f_1,f_2$ are from Theorem \ref{thm:cond} and the
 example considered after it. We omit the justification
in the case of $\upsilon=3$; it is more involved.
 \sq
\vskip 0.2cm

The following corollary partially addresses  the fundamental problem
of topological invariance of motivic superpolynomials. 
 Torus knots (quasi-homogeneous
plane curve singularities) and certain  $2$-cables
were managed in \cite{ChG}. Now we have a general proof for
algebraic $2$-cables.
%This is for $2$-cables, but the method can be
%applied to any algebraic knots (coupled with the estimates
%in \cite{ChG}).

\begin{corollary}\label{cor:3gen-top}
Assuming that the semigroup of irreducible $\r$ is with
$3$ generators (including $0$),  
$\Ga=\lan \up\ss, \up\rr,
\up\rr\ss +p\ran$ for $\rr>\ss>0, \up,p>0$ such that  
$gcd(\rr,\ss)=1=gcd(\up,p)$, and $\r$ can  be presented as
$\r=\F_q[[x=z^{\up\rr}+
z^{\up\rr+p}(g), y=z^{\up\ss}]]$ for $g(z)\in (z^{\rr+p+1}),$
where $\ss \neq 0$ in $\F_q$.
Then the corresponding instanton slice $\mathfrak{S}_\c$,
$\sH^{inst}_\c$ 
and $\h^{mot}_\r(q,t,\aa)$
depend only on $\Ga$, i.e. are topological
invariants. 
\end{corollary}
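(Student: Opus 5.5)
The plan is to reduce everything to the lifted conductor $\c\subset\a=\F_q[[x,y]]$. By Theorem \ref{thm:mainred},(i), $\h^{mot}_\r$ is a monomial change of variables of $\sH^{inst}_\c$, with exponents depending only on $\de$. Since $\de$ is a function of $\Ga$, it suffices to show that the instanton slice $\mathfrak{S}_\c=\{\i\mid \c\subset\i\subset\a\}$ depends only on $\Ga$, together with its stratification by $deg$ and $\varrho$. In fact we will show more: $\c$ itself is determined, as an ideal of $\a$, by $\Ga$ (more precisely by $\rr,\ss,\up,p$) and not by the tail $g$.

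First, I will describe $\c$ via semiroots, as in Theorem \ref{thm:general-semiroot-conductor} and Theorem \ref{thm:cond}. The conductor is $(z^{2\de})$, and $\c$ is its full preimage under $\a\to\r\subset\o$. The relevant elements of $\a$ are $x$, $y$ and the semiroot $u=x^\ss-y^\rr$, up to the standard Tschirnhausen correction. By the hypothesis $\ss\ne0$ in $\F_q$, the valuation $\nu_z(u)=\up\rr\ss+p$ holds exactly as in characteristic zero. A candidate generating set of $\c$ consists of the monomials $x^iy^ju^k$ with $k<\up$ and $i<\ss$, whose valuations $\ss\up i+\rr\up j+k(\up\rr\ss+p)$ are pairwise distinct. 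These exhaust $\Z_+$ modulo the conductor by the standard semigroup (Apéry) argument. Then $\c$ is generated by those $x^iy^ju^k$ with valuation $\ge 2\de$, together with the equation $F$ of the curve. This is exactly the structure of Theorem \ref{thm:cond},(ii) for general $p$.

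The key step is to show that the perturbation $g$ does not alter $\c$. Write $x=z^{\up\rr}+z^{\up\rr+p}+z^{\up\rr+p}g$, where $\nu_z(z^{\up\rr+p}g)\ge \up\rr+2p+\rr+1$ is larger than the term $z^{\up\rr+p}$ that determines $\Ga$. I will compare $\r_g$ with $\r_0$ via the $\F_q$-algebra map $\a\to\o$ in both cases. For $f$ in the span of the monomials above, the valuations $\nu_z(f)$ are computed by the leading terms, which agree for $g$ and $g=0$. Hence the condition ``$\nu_z(f(x,y))\ge 2\de$'' cuts out the same subspace modulo $\c_0\cap\c_g$.

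More concretely, I plan to use the graded (Newton) filtration on $\a$ by the weights $\nu(x)=\up\rr$, $\nu(y)=\up\ss$, $\nu(u)=\up\rr\ss+p$. I will show that $\a/\c$ has a basis of standard monomials $x^iy^ju^k$ of valuation $<2\de$, which depends only on $\Ga$. Then I will argue that the reduction of any element of $\mathfrak m_\a^N$ (with $N$ large) into $\c$ is governed by an upper-triangular system whose pivots are valuations from $\Ga$ only. An alternative, if exact equality of $\c$ fails, is to produce an automorphism $\phi$ of $\a$, congruent to the identity modulo higher terms, with $\phi(\c_0)=\c_g$. Since $\phi$ preserves $deg$ and $\varrho$, it gives an isomorphism $\mathfrak{S}_{\c_0}\cong\mathfrak{S}_{\c_g}$.

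I expect the main obstacle to be this last step: controlling the higher-order corrections from $g$. Corrections of the form $u\mapsto u+(\text{terms in } g)$ could produce elements of $\c_g$ whose leading form differs from anything in $\c_0$. This is where the hypothesis $g\in(z^{\rr+p+1})$ is used. I will try first to show, via the inequality in the proof of Theorem \ref{thm:cond},(ii), that every such correction has valuation at least $2\de+\up\ss$. It then lies in $\mathfrak m_\a\c$, which suffices for the automorphism $\phi$ to exist by a Hensel-type successive approximation. Once $\mathfrak{S}_\c$ is identified, topological invariance of $\sH^{inst}_\c$ and of $\h^{mot}_\r$ follows by Theorem \ref{thm:mainred}.
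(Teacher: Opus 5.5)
Your reduction to $\c$ via Theorem~\ref{thm:mainred} and your description of $\c$ by semiroot monomials are on target. However, the step that actually proves the corollary is missing. The route you sketch for it is neither made precise nor needed: unspecified ``corrections'' from $g$ of valuation $\ge 2\de+\up\ss$, followed by a Hensel-type automorphism $\phi$ with $\phi(\c_0)=\c_g$.

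Your own setup already isolates the only $g$-dependent ingredient. Every $f\in\a$ is uniquely $QF+R$ with $R=\sum C_{abc}x^ay^bh^c$, $a<\ss$, $c<\up$. These monomials have pairwise distinct valuations $a\up\rr+b\up\ss+c(\up\rr\ss+p)$ (you swapped $\rr,\ss$ here). So $\nu_z(R)$ is the minimum over its terms, computed independently of $g$.

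Hence $\c=(F)+J$, where $J$ is generated by all $x^ay^bh^c$ of valuation $\ge 2\de$. This $J$ is literally the same ideal of $\a$ for every $g$, provided $h$ is the fixed polynomial $x^\ss-y^\rr$. It must not be a Tschirnhausen-corrected approximate root, which depends on $F$ and hence on $g$.

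Your remark that the valuation condition ``cuts out the same subspace'' only shows that $\c_g$ and $\c_0$ have the same intersection $S$ with the space of such $R$. One has $\c_g=(F_g)\oplus S$ and $\c_0=(F_0)\oplus S$, and these agree iff $F_g\in\c_0$ and $F_0\in\c_g$. Everything therefore hinges on $F\in J$, which your proposal never addresses.

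The missing argument is the minimum-valuation trick of Theorem~\ref{thm:explicit-semiroot-conductor}. Write the Weierstrass equation as $F=h^{\up}+\sum C_{abc}x^ay^bh^c$ with $a<\ss$, $c<\up$. Suppose some term with $C_{abc}\neq 0$ had valuation below $D=\nu_z(h^{\up})=\up(\up\rr\ss+p)$. Then the term of smallest valuation would be the only term of that valuation in the whole expansion, so it could not cancel in $F(x(z),y(z))=0$. Hence every term of $F$ has valuation $\ge D$. Since $D-2\de=\up(\rr+\ss)+p-1>0$, we get $F\in J$ and $\c=J$ exactly, for every admissible $g$.

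No automorphism and no control of $g$-corrections is required. The hypothesis $\ss\neq 0$ in $\F_q$, together with the tail having $z$-order $>p$, is used only to get $\nu_z(h)=\up\rr\ss+p$. Topological invariance of $\mathfrak{S}_\c$, $\sH^{inst}_\c$ and $\h^{mot}_\r$ then follows from Theorem~\ref{thm:mainred}, as you say.
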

{\it Proof}. The presentation in such a form is standard.
Following the proof of Theorem 
\ref{thm:cond},(ii) and Appendix \ref{app:mot-top},
let $f_k\in \a$ be {\sf\em any} monomials in terms
of $x,y$ and $h\equal x^{\ss}-y^{\rr}$ such that
$\nu_z(f_k)=k$ for  $k=2\de,\ldots,2\de+\nu_z(y)-1$. 
Then consider the ideal $J$ of formal series in
terms of such $f_k$. 
Theorem 
\ref{thm:explicit-semiroot-conductor} states that
$F\in \a$ such that $F(x,y)=0$ in $\r$ (the equation of
the singularity) belongs to $J$.
Thus, $J=\c$ and is fully determined by
the semigroup $\Ga$, which means that it is a topological
invariant. So are $\sH^{inst}_\c$ and $\h^{mot}_\r$. 

Theorem \ref{thm:general-semiroot-conductor} proves that $F\in J$
for any unibranch plane curve singularities, 
but this is insufficient for the justification
of  topological invariance of
$\h_\r^{mot}$. We note that it 
was proved in \cite{KTs} for $\aa=0,t=1$.

\subsection{\bf Weak Riemann Hypothesis} \label{sec:rh-hyper}
Concerning Number Theory aspects, let us extend the 
 {\sf\em Weak Riemann Hypothesis} from \cite{ChW} to the case of
$\la$-generated hyperbolic knots. Conjecture 4.9 there for $\aa=0$ 
in the
uncolored case and for algebraic knots $K$ was as follows.
For $\H(q,t)=\h^{daha}_K(q t, t,\aa\!=\!0)$, consider
its $t$-zeros. Riemann Hypothesis is
when all of them satisfy the relations $|t|=q^{-1/2}$.
\vskip 0.2cm

$(A)$\, RH holds for sufficiently small $q>0$ (called Weak RH);

$(B)$\, RH holds for any (uncolored) algebraic knots if  $q\le  1/2$. 
\smallskip

Part $(A)$ was justified for the corresponding
$\h^{mot}$ in \cite{ChQ}, (conjecturally coinciding with  DAHA
ones).  Consider now  
$\mathfrak{H}^{daha}_\la$ and  
put $\H^0_\la(q,t)\equal 
\mathfrak{H}^{daha}(qt,t,\aa\!=\!0)$,
 $\H^\ast_\la(q,t)\equal 
\mathfrak{H}^{daha}(qt,t,\aa\!=\!-t/q)$.
\smallskip

First of all, $\H_\la^0$  appeared
{\sf\em cyclotomic} for any $2$-row $\la$ in vast experiments.
See Section 5.4.3 in \cite{ChW} 
concerning this fact for torus knots and its connection to
the {\sf\em Shuffle Conjecture} (now a theorem). They are 
not cyclotomic for algebraic non-torus cables, but this
holds for {\sf\em any}\, $2$-row $\lambda$, which was well-checked.

Then, ChatGPT proved $(A)$ for superdual  $\sH_\la^{inst}$
for $2$-row $\la$   
 using Section \ref{sec:2row}
and the  proof of ``Weak RH" in \cite{ChQ}.
Generally, the evidence is solid for the following conjecture
(author's  code was used).

\begin{conjecture}\label{conj:rh-hyper}
Given any  $\lambda$, there exists $q_0$ such that
$|t|=q^{-1/2}$ for $q<q_0$ for
all $t$-roots of $\,\H^0_\la(q,t)$ and  $\H^\ast_\la(q,t)$ 
(i.e. RH holds).
 \sq
\end{conjecture}

Part $(B)$ for $\H^0_\la$ fails for some hyperbolic $\la$-knots. 
See the table below,
where the relevant threshold $q_0(\la)$ is the upper endpoint
of the connected RH-interval issuing from $q=0$; possible accidental
recovery near $q=1$ is disregarded.   All three $\H^0_\la$ satisfy
superduality (conjecturally, for any $\la$).
 The bound $0.5$ fails for the $1${\footnotesize st} one:

\begin{table}[ht]
\centering
\begin{tabular}{c|c|c|c}
\hline
$\lambda$ & positivity & last verified RH & verified failure \\
\hline
$(6,3,2)$   & yes & $q=0.475$ & $q=0.476$ \\
$(6,4,3,3)$ & no  & $q=0.635$ & $q=0.640$ \\
$(6,5,3,1)$ & yes & $q=0.810$ & $q=0.820$ \\
\hline \\
\end{tabular}
\caption{Experimental bounds for RH.}
\label{tab:daha-rh-experimental-bounds}
\end{table}

Equivalently, $\H$ becomes  
$P^0=1+u^2+u^4+\cdots u^{2\de}$ and
$P^\ast=1+u^{2(\de-1)}$ for $\de>1$
as $u=\sqrt{q}t$ and  $q\to 0$. 
 The first claim simply means
that (conjecturally)  
$t^\de \mathfrak{H}^{daha}(q,1/(qt),\aa)=
1+t+\cdots+t^\de \mod (q)$ for any $\la$. By the way, the $q^1$-coefficient 
here is conjecturally $\sum_{i=2}^{\de-1} t^i +\aa\sum_{i=1}^{\de} t^i$ 
for any $\la$ unless $\la=1^m$ or $(m)$.

\comment{
Let $\sH^{daha}_\la(q,t,\aa)\equal
t^\de \mathfrak{H}^{daha}_\la(q, 1/(qt),\aa)=P_0(t)+P_1(t)q+\ldots$\ .
We conjecture that $P_0=1+t+\cdots+t^\de$ and 
$P_1=t+\cdots +t^{\de-1}$ for any $\la$ and $\de=|\la|$,  
which gives Weak RH for $\aa\!=\!0$ and $\aa\!=\!-t/q$.
This matches Theorem 5.4 from \cite{ChQ} (for any plane curve
singularities).
}
\subsection{\bf Mixed characteristic}\label{sec:mixed}
Let us conclude this paper with a (direct) counterpart of
our constructions in the case of zero characteristic, and the
corresponding connection conjecture.
Generally,  $\r$ is a subring (an order) in a $p$-adic Dedekind
domain, though there are 
limitations if we want to match plane curve
singularities. We checked 
our conjecture (numerically, and partially theoretically) 
only for  $y=p\in \Z_p$, 
 $\R=\Z_p[[x]]$ in the corresponding $\mathbb O=\Z_p[[p^{1/m}]]$,
where $gcd(m,p)=1$. 

Then  $\r_{\rr,\ss}=\F_q[[x=z^{\rr},
y=z^{\ss}]]$ for $gcd(\rr,\ss)=1$, and  
$\r_{\rr,\ss,\upsilon, \rho}=\F_q[[x=z^{\up\rr}+z^{\up\rr+\rho}, y=z^{\up\ss}]]$
for $gcd(\rho,\up)=1$ become correspondingly:\,
 $\R_{\rr,\ss}=\Z_p[[x=p^{\rr/\ss}]]\subset \mathbb O=\Z_p[[p^{1/\ss}]]$ and 
$\R_{\rr,\ss,\upsilon,\rho}=
\Z_p[[x=p^{\rr/\ss}+\rho^{\rr/\ss+1+1/(\up\ss)}]]
\subset \mathbb O=\Z_p[[p^{1/(\up\ss)}]]$.

As above, the passage to instanton slices is via the lift $\mathbbm{C}$ of
the conductor of $\R$ to $\mathbb A=\Z_p[[x]].$ The monomial ideals 
$I_\lambda\subset \F_q[[x,y]]$ become now $\mathbbm{I}_\la$:\,  
we replace  $x^i y^j\mapsto x^i p^j$.
 
The definitions of mixed
$\mathbbm{H}^{mot}_\R$ and $\mathbbm{H}^{inst}_\mathbbm{C}$ 
are the same as in
$char\!=\!p$, but now  $|\mathbb O/M|=p^{deg(M)} $ for
(standard) $\R$-modules $M\subset  \mathbb O$ and $|\mathbb A/I|=p^{deg(I)}$
for ideals $\mathbbm{C}\subset I\subset \mathbb A$. As in 
$char\!=\!p$, $\mathbbm{H}^{mot}_\R(q,t,\aa)=$  
$(q t)^\de \mathbbm{H}_\mathbbm{C}^{inst} (q,t=1/(qt), \aa)$. 
We will focus on the latter.
Replacing 
here the lifted conductor
by an arbitrary $\mathbbm{I}_\la\subset \Z_p[[x]]$, we obtain 
 $\mathbbm{H}^{inst}_\la$. 

The simplest ``hyperbolic" example is the following 
counterpart of K12n242
in $char\!=\!0$. The ideal $I_\la=\lan x^2, xy^2,y^3\ran\subset
 \F_q[[x,y]]$ 
for $\la=(3,2)$
becomes $\mathbb{I}_\la=\lan x^2, p^2x, p^3\ran\subset \Z_p[[x]]$.
Its superpolynomial $\mathbbm{H}_\la^{inst}$ coincides with
$\sH^{inst}_\la$. For instance, the hyperbolic counterpart
of the total number of standard 
$\R$-modules $M$ in $\mathbb O$ is given by 
$q^\de \sH^{inst}(q,1/q,\aa=0)=
1+q +2q^2 +2q^3 + 2q^4  +q^5$, where $\de=5$.
 The total number of ideals
$\mathbb{I}_\la\subset I\subset \Z_p[[x]]$ is $6+3q$,
different of course.

\begin{conjecture} \label{conj:mixed}
As above, 
%go  from $\r\subset \o$ to $\R\subset \mathbb O$,
go from the ideals $I_\la\subset I\subset \F_q[[x,y]]$ 
for free variables $x,y$ to
the ideals  $\mathbbm{I}_\la \subset I\subset \Z_p[[x]]$.
%$\r$ is with monomial $\c$,  
 Then:\,

%$ \h_\r^{mot}(q,t,\aa)=\mathbbm{H}_\R^{mot}(q,t,\aa) \text{\, and\, }
\centerline{
$\mathscr{H}_\la^{inst}(q,t,\aa)=\mathbbm{H}_\la^{inst}(q,t,\aa),$}

\noindent
where $p$ is assumed sufficiently large, and
$\la$ can be arbitrary (we do not require the superduality). \sq
\end{conjecture}

\comment{
We focus in this work on the $2${\footnotesize nd} equality;
the $1${\footnotesize st} conjecturally follows  from it. 
We note that the lifted conductor of
 $\R$ was assumed  {\sf\em  monomial} mostly because this is where
we experimented 
(joint with {\sf\em ChatGPT}). However, any lift of formulas
or equations for $x,y$ from $char\!=\!p$ to those in
$char\!=\!0$ may potentially result in deviations due
to the usage of Witt vectors. Let us discuss this a bit.
}
\smallskip

The $2$-row case, 
including the $\aa$-refinement, follows from
the  cell-dimension formula proved in
Section~\ref{sec:2row}. In contrast to the $2$-row case,
Conjecture~\ref{conj:mixed} does not hold at the level of individual
cells for three-row diagrams, but only after summing the contributions
of all cells.

The coincidence conjecture for all Young diagrams with at most
$3$ rows when $\aa=0$ is a theorem; see Appendix \ref{app:mixed}.
There are also preliminary confirmations for $4$-row
diagrams.
\medskip

Let us list some Young diagrams $\la$ beyond the $3$-row case
where this conjecture (with $\aa$) was checked:\ 
%$(3,2)$, the simplest hyperbolic $2$-row diagram corresponding
%to K12n242,   $(4,3,1)$, corresponding to the simplest algebraic cable 
%$C\!ab(13,2)T(3,2)$, 
$\la=(5,3,2,1)$, $\la=(5,4,3,1)$, and
$(3,3,2,2,1,1,1)$, corresponding to (non-algebraic)
 $C\!ab(19,2)T(5,2)$.

\medskip
Generally, consider an irreducible Eisenstein polynomial
$P$ over $\Z_p$, and the ring $\mathbb O=\Z_p[[z]]$, where $z$ is its root.
Then pick $x,y$ in the maximal ideal of $\mathbb O$ such that they generate
the whole $\Q_p[z]$ and consider a 
ring $\R=Z_p[[x,y]]\subset \mathbb O$. The 
superpolynomials will be our usual sums of
$t^{deg(M)}(1+\aa q^{\cn})\cdots(1+\aa q^{\rho(M)-1})$ over
{\sf\em standard} $\r$-invariant $M\subset \o^\cn$; by standard,
we mean that $M\mathbb O=\mathbb O^\cn$. Finally, lift the conductor
of $\R$ in $\mathbb{O}$ to $\mathbb{C}$
in $\mathbb A=\Z_p[[y]][[x]]$, where $x$ is now considered 
a free variable, and
defined the instanton sums over $\mathbb A$-modules 
$\C^\cn\subset \m\subset \mathbb A^\cn$. In contrast to $char\!=\!p$, 
the 
rings $\Z_p[[y]]\subset \mathbb O$ are non-trivial, and
such a theory can become significantly more involved.

We experimented a lot with conductors $\mathbb{I}_\la$, but
did not consider so far more general  $\Z_p[[y]]$. The procedure
becomes less canonical, and the
lift of formulas
or equations from $char\!=\!p$ to those in
$char\!=\!0$ may potentially result in alternations due
to the impact of  Witt vectors.
%\smallskip

\comment{
Here ``RH'' means that every zero in $t$ has absolute value
$q^{-1/2}$.  The relevant threshold $q_0(\lambda)$ is the upper endpoint
of the connected RH interval issuing from $q=0$; possible accidental
recovery near $q=1$ is disregarded.  Thus the computations give
\[
\begin{array}{c}
  0.475\leq q_0(6,3,2)<0.476, \\
  0.635\leq q_0(6,4,3,3)<0.640, \\
  0.810\leq q_0(6,5,3,1)<0.820.
\end{array}
\]
The numerical root tests were performed at high precision.  For the
larger examples they were additionally stabilized by setting
$t=q^{-1/2}u$ and rewriting the reciprocal polynomial in
$x=u+u^{-1}$; the RH condition then says that all its $x$-roots are real
and belong to $[-2,2]$.  Notice especially that the example
$\lambda=(6,4,3,3)$ is not positive (its smallest coefficient is $-8$),
although it satisfies superduality and the RH property throughout the
initial interval displayed above.
}

\medskip
Generally speaking, {\sf\em Witt vectors} are almost
inevitable here, but they are not employed 
in the code. The argument based on DVR normal forms and passage
to the associated graded was developed instead
with substantial mathematical
and computational assistance from OpenAI's ChatGPT/Codex.
 However, the corrections  due to
Witt-addition and Witt-multiplication, especially for
non-trivial $\Z_p[[y]]\subset \mathbb O$, are actually close
to the levels when they can interfere with counting the
parameters of the Gr\"obner cells, at least for sufficiently
big diagrams. 
\medskip

%\vskip 0.2cm
{\bf Work with ChatGPT.}
Generally, we began by translating to {\sf\em SageMath} and 
{\sf\em Singular} and adapting some of the author's programs on 
instanton slices. They were written mostly in {\sf\em Mathematica}. 
 This was a substantial challenge because of the 
complexity of the original code and the differences among the 
underlying computational systems. ChatGPT then substantially optimized 
and accelerated these programs, frequently introducing alternative 
mathematical and computational approaches. The author remained 
actively involved throughout, in particular by running the programs, 
analyzing their outputs, and supplying independent calculations and 
theoretical checks. In several important cases, however, ChatGPT 
designed and implemented entirely new programs, 
sometimes amazing.  
\medskip

Almost all significant findings
were cross-checked using two or more
independent implementations and extensive numerical testing. 
Agreement among genuinely different codes was regarded 
as the principal standard of computational validation. Working 
with ChatGPT made it possible to 
maintain such a high level of verification.

\medskip
{\bf Acknowledgements.}
I are very grateful to Pavel Galashin for various discussions,
and providing formulas for EHA superpolynomials,
which were very material for their DAHA counterparts in this
paper. Many thanks to Giovanni Felder, Nikita Nekrasov
and  Andras Szenes for help with instantons, to 
MPIM for the kind invitation, and to David Kazhdan for stimulating
discussions.

The author thanks OpenAI's ChatGPT/Codex for substantial contributions
to the computational methods including the proof of the three-row
comparison between the instanton superpolynomials over $\F_q$ and
their mixed-characteristic counterparts, and  
the proof in Appendix \ref{app:mot-top}
of the key step for Corollary \ref{cor:3gen-top}
on topological invariance of (certain) motivic
superpolynomials. The author assumes full
responsibility for the corresponding statements. The underlying
arguments and computations were checked theoretically
and against various different programs including 
the author's own code.

%\eject

%\includepdf[pages=-,lastpage=77]{rh-reduced.pdf} %%ONLY PDFLATEX

\vskip -2cm
%\medskip
\bibliographystyle{unsrt}

\begin{appendices}
%\appendix
\section{\bf Second Gr\"obner order}\label{app:sec-Gr}
\begin{theorem}\label{thm:second-order-sum}
There exists a map $\mathcal U$ on 
multi-diagrams such that 
$dim\, G^{\,e\prec x\prec y}_{\vec{\la}}$ from
(\ref{dim-f-second}) coincides with 
$dim\, G^{\,x\prec e\prec y}_{\mathcal{U}(\vec{\la})}$
from (\ref{dim-f}). It is  defined in
(\ref{U-second}) below. In the notation there, let
\begin{align}\label{admissible-second}
\Lambda^\cn_\ell
=\{\vec\la:\ \ell\,(\mathcal U(\vec\la))\leq\ell\},
\end{align}
where $\ell(\vec\nu)=\max_\ga\ell(\nu^\ga)$ is the maximum number
of rows.  For arbitrary $\cn\geq1$ and $\ell\geq1$, the free
instanton sum at $\aa=0$ for the ordering $e\prec x\prec y$
summed  over $\Lambda^\cn_\ell$ results in:
\begin{align}\label{prod-free-second}
&\mathscr H^{e\prec x\prec y}_{\La^\cn_\la}
(q,t,\aa=0;\cn)
\equal\sum_{\vec\la\in\Lambda^\cn_\ell}
q^{\,dim\, G^{\,e\prec x\prec y}_{\vec\la}}\, t^{|\vec\la|}\\
\label{identity-second}
&=\sum_{\vec{\la}\in\Lambda^\cn_\ell}
q^{\,\cn\sum_\be|\la^\be|^\dag+
\sum_{\al,\be}N_{\al\be}(\vec{\la})}\
t^{|\vec{\la}|}
=\prod_{i=1}^{\ell}\prod_{j=1}^{\cn}
\frac{1}{1-q^{\cn i-j}t^i}.
\end{align}
\end{theorem}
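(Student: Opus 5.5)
The plan is to deduce the identity from the product formula (\ref{prod-free-0}), which the paper obtains from the dimension formula (\ref{dim-f}) for the ordering $x\prec e\prec y$. For this we need a bijection $\mathcal U$ on multi-diagrams that preserves $|\vec\la|$ and transforms $dim\, G^{\,e\prec x\prec y}_{\vec\la}$ into $dim\, G^{\,x\prec e\prec y}_{\mathcal U(\vec\la)}$. Granting this, the sum over $\Lambda^\cn_\ell$ in (\ref{prod-free-second}) becomes, after reindexing $\vec\nu=\mathcal U(\vec\la)$, the sum over all $\vec\nu$ with $\ell(\vec\nu)\le\ell$ of $q^{dim\,G^{x\prec e\prec y}_{\vec\nu}}t^{|\vec\nu|}$. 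That is exactly (\ref{prod-free-0}). The definition (\ref{admissible-second}) of $\Lambda^\cn_\ell$ is built precisely so that this reindexing works. So the whole content lies in constructing $\mathcal U$ and checking dimensions.

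For the construction I would use $\cn$-quotients. In the ordering $x\prec e\prec y$, the basis vectors sit between $x$ and $y$, so formally $e_k\sim y^{(k-1)/\cn}$. A module $\m\subset\a^\cn$ then corresponds to a $\Z/\cn$-graded ideal in $\F[[x,y^{1/\cn}]]$. Its leading monomial ideal is a single partition whose $\cn$-quotient, read along the $y$-direction (columns), is $\vec\nu$; its $\cn$-core is empty.

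For $e\prec x\prec y$ we have instead $e_k\sim x^{(k-1)/\cn}$. This gives ideals in $\F[[x^{1/\cn},y]]$, and $\vec\la$ is the $\cn$-quotient of a partition read along rows. I would define $\mathcal U(\vec\la)$ to be the composite: assemble $\vec\la$ into a partition with empty $\cn$-core by the row-quotient bijection, then read its column $\cn$-quotient, with the transposition and labelling conventions fixed so that the $\cn=1$ case is the identity. Writing this explicitly as (\ref{U-second}) via an abacus, with beads labelled by row ends, should also make the transport of $\ell(\cdot)$ in (\ref{admissible-second}) concrete. Size is preserved because both quotient maps preserve total weight.

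To check the dimensions, I would compare the two tangent-weight computations from (\ref{instsum}) on the single-partition side. In both orderings the dimension is the number of hooks, or arm/leg pairs, of the assembled partition whose weight goes negative in the corresponding limit. The bulk term $\cn\sum_\be|\la^\be|^\dag$ should match the analogous term in (\ref{dim-f}), since it counts non-end boxes, which are insensitive to the $u$-ordering. The delicate part is matching the boundary corrections: $\sum_{\al,\be}N_{\al\be}$ on the $e\prec x\prec y$ side against $\ell(\la^2)+2\ell(\la^3)+\dots+(\cn-1)\ell(\la^\cn)$ after applying $\mathcal U$. I would first verify this for $\cn=2$ using (\ref{C12C21}), including the example $(3),(2,1)$ with value $7$, and then argue bead by bead on the abacus.

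I expect this matching of end-of-row corrections, especially the tie cases $l_\al(s)+1=0$ resolved by $\al<\be$, to be the main obstacle; a careful choice of runner labelling in $\mathcal U$ may be needed to make it work. As a fallback, I would argue geometrically. Both orderings are Bia\l ynicki-Birula decompositions of the same smooth $M(\cn,m)$ with the same torus-fixed points, and length-restricted loci, defined via the $x\prec e\prec y$ leading diagram, are unions of cells, so the counts agree cellwise after relabelling. In this approach $\mathcal U$ would be defined as the diagram of the generic point of a cell.
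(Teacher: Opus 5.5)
Your reduction is the same as the paper's: find a size-preserving bijection $\mathcal U$ with $\mathrm{dim}\,G^{\,e\prec x\prec y}_{\vec\la}=\mathrm{dim}\,G^{\,x\prec e\prec y}_{\mathcal U(\vec\la)}$, then quote (\ref{prod-free-0}). The gap is that you never construct $\mathcal U$, and that is the whole content. The $\cn$-quotient route cannot supply it, for two reasons.

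\begin{itemize}
\item An $\a$-submodule of $\a^\cn\cong\F[[x,y^{1/\cn}]]$ is in general not stable under $y^{1/\cn}$. So it is not an ideal there, and its leading terms do not form a single partition.
\item Combinatorially, the $\cn$-quotient of $\mu'$ is, up to relabelling runners, the reversed tuple of conjugates of the quotient of $\mu$. Hence ``assemble by rows, read by columns'' only permutes (and possibly conjugates) the components, and any such map fixes $((1),(1))$ for $\cn=2$.
\end{itemize}

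That fixed point already breaks the dimension match. $G^{\,e\prec x\prec y}_{(1),(1)}$ is the single point $\mathfrak m_\a\a^2$, of dimension $0$. By contrast, $G^{\,x\prec e\prec y}_{(1),(1)}$ is the line of modules $\m_c$ generated by $xe_1+ce_2$, $ye_1$, $\mathfrak m_\a e_2$ (in that order $xe_1\prec e_2$), of dimension $1$.

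The same example defeats your geometric fallback.
\begin{itemize}
\item For $c\neq0$ the $\m_c$ are the generic points of $G^{\,e\prec x\prec y}_{(1,1),\emptyset}$. So the two pavings do not match cell by cell.
\item The generic-point map sends both $((1),(1))$ and $((1,1),\emptyset)$ to $((1),(1))$, so it is neither injective nor dimension-preserving.
\item For $\ell=1$ the locus cut out by the $x\prec e\prec y$ diagram contains the $\m_c$ with $c\neq0$. It does not contain $\langle e_2,ye_1,x^2e_1\rangle$, which lies in the same $e\prec x\prec y$ cell. So that locus is not a union of $e\prec x\prec y$ cells, which your argument implicitly needs.
\end{itemize}

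The missing idea, which is the paper's proof, is that both statistics are additive over rows. Let row $i$ of $\la^\be$ have length $p$, and let $h_\al(p)$ be the height of the $p$th column of $\la^\al$. At the terminal square $s$ of that row, $l_\al(s)+1=h_\al(p)-i$. So the row contributes $\cn(p-1)$ to $\mathrm{dim}\,G^{\,e\prec x\prec y}$, plus a colour $c(\be,i)$. The colour is the number of $\al$ with $h_\al(p)<i$, or with $h_\al(p)=i$ and $\al<\be$; it lies in $\{0,\dots,\cn-1\}$. In (\ref{dim-f}), a row of length $p$ in $\nu^\ga$ contributes $\cn(p-1)+\ga-1$.

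Hence $\mathcal U$ just moves each row into $\nu^{c(\be,i)+1}$. This preserves the multiset of row lengths, and it swaps $((1),(1))$ and $((1,1),\emptyset)$ above. It is inverted by undoing a stable sort of column heights, processing row lengths from the largest down.

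Separately, check the last step, which you share with the paper. By row-additivity, $\prod_{i\le\ell}\prod_{j}(1-q^{\cn i-j}t^i)^{-1}$ is the sum over $\vec\nu$ whose rows all have \emph{length} $\le\ell$. It is not the sum over $\vec\nu$ with at most $\ell$ rows. Already for $\cn=1$, $\ell=1$, the one-row diagrams give $1+\sum_{n\ge1}q^{n-1}t^n\neq(1-t)^{-1}$.

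In the corrected reading, $\mathcal U$ leaves the restriction unchanged, since it preserves row lengths. A repaired version of your fallback then proves the identity outright. For either order, the cells with all rows of length $\le\ell$ make up exactly $\{\m\supset y^\ell\a^\cn\}$. Indeed, a leading term $y^\ell e_k$ forces $y^\ell g_k\in\m$ with $g_k\equiv e_k$ modulo $\sum_{k'>k}\F e_{k'}+\mathfrak m_\a\a^\cn$, and the $g_k$ generate $\a^\cn$. So the two sums agree as total point counts, with no cellwise matching needed.
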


{\it Direct proof.}
We construct a weight-preserving bijection which transforms the
dimension statistic in (\ref{dim-f-second}) into that in
(\ref{dim-f}).  Index the rows of every diagram from $0$.  For a
positive integer $p$, put
\begin{align}\label{height-second}
h_\al(p)=\#\{k:\ \la^\al_k\geq p\};
\end{align}
this is the height of the $p${\scriptsize th} column of $\la^\al$.

Consider row $i$ of $\la^\be$, and let $p=\la^\be_i$ be its
length.  We assign to this row the color
\begin{align}\label{color-second}
c(\be,i)=\#\Bigl\{\al:\ &h_\al(p)<i,
\ \hbox{or}\notag\\[-2mm]
&h_\al(p)=i\ \hbox{ and }\al<\be\Bigr\}.
\end{align}
Since $h_\be(p)>i$,  $\be$ itself is not counted, and
$0\leq c(\be,i)\leq\cn-1$.

For $0\leq c\leq\cn-1$, collect all rows of color $c$, arrange
their lengths in decreasing order, and use them as the rows of a
Young diagram $\nu^{c+1}$.  This defines
\begin{align}\label{U-second}
\mathcal U(\la^1,\ldots,\la^\cn)
=(\nu^1,\ldots,\nu^\cn).
\end{align}
The map $\mathcal U$ only recolors rows.  Therefore,
\begin{align}\label{preserve-second}
&\sum_\be|\la^\be|=\sum_\ga|\nu^\ga|,\,
\sum_\be\ell(\la^\be)=\sum_\ga\ell(\nu^\ga),\,
\sum_\be|\la^\be|^\dag
=\sum_\ga|\nu^\ga|^\dag.
\end{align}
It also preserves the multiset of row lengths.  It need not preserve
$\max_\be\ell(\la^\be)$, which is why the transported
condition (\ref{admissible-second}) is necessary.

The terminal square of row $i$ in $\la^\be$ is
$s=(i,p-1)$.  By the definition of the extended leg-length,
\[
l_\al(s)+1=h_\al(p)-i.
\]
Comparing (\ref{Nab-second}) and (\ref{color-second}), we obtain
\begin{align}\label{N-color-second}
\sum_{\al,\be}N_{\al\be}(\vec{\la})
=\sum_{\be,i}c(\be,i)
=\sum_{\ga=1}^{\cn}(\ga-1)\ell(\nu^\ga).
\end{align}

We recall the elementary stable-ranking lemma used here.  Suppose
that the rows longer than $p$ have already been fixed, and let
$L_\al$ be their number in component $\al$.  If $m_\al$ rows of
length $p$ are added, then $h_\al(p)=L_\al+m_\al$.  Give a new row
in position $i$, $L_\be\leq i<L_\be+m_\be$, the stable rank
\[
\#\{\al:\ h_\al(p)<i,\ \hbox{or }h_\al(p)=i
\ \hbox{ and }\al<\be\}.
\]
For fixed $(L_1,\ldots,L_\cn)$, the multiplicities of these stable
ranks determine $(m_1,\ldots,m_\cn)$ uniquely.  Indeed, this is
the inverse of stable sorting of the column heights, with equal
heights resolved by the component index.  Applying this observation
successively to the row lengths, from the greatest to the smallest,
recovers $\vec{\la}$ from $\vec{\nu}$.  Thus $\mathcal U$ is a
bijection.

Using (\ref{preserve-second}) and (\ref{N-color-second}), we obtain
\begin{align}\label{dimensions-U-second}
dim\, G^{\,e\prec x\prec y}_{\vec{\la}}
&=\cn\sum_\be|\la^\be|^\dag
+\sum_{\al,\be}N_{\al\be}(\vec{\la})\notag\\
&=\cn\sum_\ga|\nu^\ga|^\dag
+\sum_\ga(\ga-1)\ell(\nu^\ga)
=dim\, G^{\,x\prec e\prec y}_{\vec{\nu}}.
\end{align}
Since $\mathcal U$ is bijective, preserves $|\vec{\la}|$, and maps
$\Lambda^\cn_\ell$ onto the set
$\{\vec\nu:\ell(\vec\nu)\leq\ell\}$, the left-hand side of
(\ref{identity-second}) equals
\[
\sum_{\ell(\vec\nu)\leq\ell}
q^{\cn\sum_\ga|\nu^\ga|^\dag+
\sum_\ga(\ga-1)\ell(\nu^\ga)}t^{|\vec{\nu}|}.
\]
This is the sum for the ordering $x\prec e\prec y$.  Formula
(\ref{dim-f}) gives its product expansion
\[
\prod_{i=1}^{\ell}\prod_{j=1}^{\cn}
\frac{1}{1-q^{\cn i-j}t^i},
\]
which proves (\ref{prod-free-second}). \qed

%\appendix
\section{\bf Mixed conjecture for 3 rows}\label{app:mixed}
We prove here the equal--mixed-characteristic comparison when the
Young diagram has at most three rows and $\aa=0$.  It is convenient to use
one notation for both cases.  Let $\d$ be a complete discrete valuation
ring, {\sf\em DVR},
 with uniformizer $\pi$ and residue field $\F_q$.  For
\[
\la=(A,B,C),\  A\geq B\geq C\geq0,
\]
the ring of the corresponding instanton slice is defined as
\begin{align}\label{three-DVR-ring}
\mathcal A_\d(\la)=\d[[x]]/
 (x^3,\,\pi^C x^2,\,\pi^B x,\,\pi^A).
\end{align}
If $C=0$, the summand generated by $x^2$ is absent; thus the same
notation includes the one- and two-row cases.

For $\d=\F_q[[y]]$ and $\pi=y$, this definition becomes
\begin{align}\label{three-equal-ring}
\F_q[[x,y]]/I_{(A,B,C)}=
\F_q[[x,y]]/(x^3,x^2y^C,xy^B,y^A).
\end{align}
The corresponding 
mixed-characteristic quotient
is for $\d=W(\F_q)$, the {\sf\em Witt ring over $\F_q$},
  and $\pi=p$.  The case $q=p$ is of particular interest: 
\begin{align}\label{three-mixed-ring}
\Z_p[[x]]/\mathbbm I_{(A,B,C)}=
\Z_p[[x]]/(x^3,p^C x^2,p^B x,p^A).
\end{align}
Finally, the degree of an ideal $L$,
$deg(L)$, is uniformly determined from
the relation $|\mathcal A_\d(\la)/L|=q^{\deg(L)}$.

\begin{theorem}\label{thm:three-mixed}
Let $\la=(A,B,C)$ have at most three rows.  Then for every finite field
$\F_q$ and $W(\F_q)$ taken as the (complete) DVR,
\begin{align}\label{three-inst-equality}
\mathscr H^{inst}_\la(q,t,0)
=\mathbbm H^{inst}_\la(q,t,0).
\end{align}
Explicitly:
\begin{align}\label{three-count-equality}
&\sum_{I_\la\subset I\subset\F_q[[x,y]]}
t^{dim_{\F_q}(\F_q[[x,y]]/I)}=
\sum_{\mathbbm I_\la\subset\mathbbm I\subset W(\F_q)[[x]]}
t^{\log_q|W(\F_q)[[x]]/\mathbbm I|}.
\end{align}
Thus the $2${\footnotesize nd} equality in Conjecture~\ref{conj:mixed} holds at
$\aa=0$ for all Young diagrams with at most three rows.
\end{theorem}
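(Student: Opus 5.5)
We would prove (\ref{three-count-equality}) uniformly in the DVR $\d$, by showing that the number of ideals of a given colength in $\mathcal A_\d(\la)$ depends only on $q$ and $\la$. The key is that $\mathcal A_\d(\la)$ is a quotient of $\d[[x]]$ killed by $x^3$. Hence any ideal $L\supset(x^3,\pi^Cx^2,\pi^Bx,\pi^A)$ is a $\d$-submodule of the free module $\d\oplus\d x\oplus\d x^2$ that is stable under multiplication by $x$ and contains the conductor lattice. So we pass from ideals to \emph{$x$-stable $\d$-lattices} $L\subset\d^3$ with $\pi^A\d\oplus\pi^B\d x\oplus\pi^C\d x^2\subset L$. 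Here $x$ acts as the nilpotent shift $e_0\mapsto e_1\mapsto e_2\mapsto0$, which is defined over $\Z$ and is independent of $\d$.

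Next we parametrize such lattices by echelon (Hermite) normal forms adapted to the filtration by $x$-degree, using the leading $\pi$-valuations, exactly as the Gr\"obner cells were used in Section \ref{sec:2row}. Each lattice has a unique basis of the form
\[
v_0=\pi^{a}e_0+\beta e_1+\gamma e_2,\quad v_1=\pi^{b}e_1+\epsilon e_2,\quad v_2=\pi^{c}e_2 .
\]
Here $\beta$ is taken modulo $\pi^{b}$, $\epsilon$ modulo $\pi^{c}$, and $\gamma$ modulo $\pi^c$ after reduction by $v_1,v_2$. The coefficients are written via Teichm\"uller digits, i.e.\ via a set of representatives of $\F_q$. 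The degree is $a+b+c$.

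The conditions are containment of the conductor, namely $a\le A$, $b\le B$, $c\le C$, together with $x$-stability: $xv_0=\pi^ae_1+\beta e_2\in L$ and $xv_1=\pi^be_2\in L$. These amount to $c\le b\le a$ plus divisibility conditions:
\[
\pi^{b}\mid \pi^{a},\qquad \pi^c\mid(\beta\pi^{a-b}\ldots),
\]
so explicitly $\pi^{a}e_1+\beta e_2-\pi^{a-b}v_1\in\pi^c\d e_2$, i.e.\ $\beta-\pi^{a-b}\epsilon\equiv0\pmod{\pi^c}$.

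Each condition is a congruence between truncated elements of $\d/\pi^k$, linear in the unknowns with coefficients that are powers of $\pi$. Counting solutions of such congruences uses only the sizes $|\pi^i\d/\pi^j\d|=q^{j-i}$ and the additive group structure. The main point is therefore that the count is obtained by composing maps of the form ``multiply by $\pi^m$'' between groups $\d/\pi^k$. Their kernel and image sizes are $q^{\min(\cdot)}$ in both $\F_q[[y]]$ and $W(\F_q)$. No Witt carries intervene, because we never need digit-wise coordinates, only cardinalities of fibers of $\d$-linear maps. Summing over $(a,b,c)$ then gives identical polynomials in $q,t$.

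The main obstacle is the coupled conditions involving $\gamma$. A further subtlety is that $v_0$ need not be the generator with a pure leading $\pi^a$ once the conductor term $\pi^Cx^2$ allows reductions. This is exactly why, as remarked after Conjecture \ref{conj:mixed}, cell-by-cell agreement fails for three rows. To handle it, we would fiber over $(a,b,c,\epsilon)$ and show that the set of admissible $(\beta,\gamma)$ is a coset of a $\d$-submodule of $\d/\pi^b\oplus\d/\pi^c$ cut out by $\d$-linear equations. Its cardinality is then a function of valuations only. Even though individual Gr\"obner cells (defined via digits) differ, the total for each $(a,b,c)$ agrees. If this fiberwise linearity fails, we would instead compare both sides with a recursion on $A$ obtained by removing the top row, using Proposition \ref{prop:shift-emb}-type shifts, and check that both sides satisfy the same recursion.
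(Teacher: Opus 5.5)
Your route is the paper's: pass to $x$-stable $\d$-lattices $L\subset\d^3$ containing $L_\la=\pi^A\d e_0\oplus\pi^B\d e_1\oplus\pi^C\d e_2$, use the upper-triangular normal form, and show that for each $(a,b,c)$ the number of admissible $(\beta,\gamma,\epsilon)$ depends only on $q$ and valuations. The step that fails is your list of conditions.

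Because the basis $v_0,v_1,v_2$ is not diagonal, $L_\la\subset L$ is \emph{not} equivalent to $a\le A$, $b\le B$, $c\le C$. Write $\pi^Be_1=\pi^{B-b}v_1-\pi^{B-b}\epsilon e_2$, and reduce $\pi^Ae_0$ by $v_0,v_1,v_2$ with $r=A-a$. One then needs in addition
\[
\pi^{B-b}\epsilon\equiv0\pmod{\pi^c},\qquad
\pi^{r}\beta\equiv0\pmod{\pi^b},\qquad
\pi^{r}\gamma\equiv\frac{\pi^{r}\beta}{\pi^{b}}\,\epsilon\pmod{\pi^c}.
\]
These are (\ref{three-conductor-12}) and (\ref{three-conductor-0}) in the paper.

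Without them you count the wrong set. For $\la=(1,1,1)$ your conditions give $1+t+qt^2+q^2t^3$: for instance $v_1=\pi e_1+\epsilon e_2$ with $\epsilon$ a unit passes your test, although $\pi e_1\notin L$. The correct answer is $1+t+t^2+t^3$, coming from the four ideals of $\F_q[x]/(x^3)$. Note also that $\gamma$ is entirely free in your list. So the ``coupled conditions involving $\gamma$'' that you call the main obstacle are never derived; they come only from $\pi^Ae_0\in L$.

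Once these conditions are included, the last congruence is bilinear in $(\beta,\epsilon)$. Your claim that every condition is linear with power-of-$\pi$ coefficients is therefore false as stated. Your plan of fibering over $(a,b,c,\epsilon)$ is what repairs it.

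A clean way to make the fibering precise is to work with $L_{12}=\langle v_1,v_2\rangle$. The vector $m=v_0-\pi^ae_0$ matters only modulo $L_{12}$. The remaining conditions read $xm\equiv-\pi^ae_1$ and $\pi^rm\equiv0$ in $Q=(\d e_1\oplus\d e_2)/L_{12}$. So each fibre is empty or a coset of $\{m\in Q:\ xm=0,\ \pi^rm=0\}$. This $Q$, rather than $\d/\pi^b\oplus\d/\pi^c$, is the right ambient module, since $\beta$ is only a representative.

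The paper counts in the order $\epsilon,\beta,\gamma$. The $\gamma$-congruence is solvable iff $\nu\bigl((\pi^r\beta/\pi^b)\epsilon\bigr)\ge\min(r,c)$, and then it has exactly $q^{\min(r,c)}$ solutions. The numbers of admissible $(\beta,\epsilon)$ with prescribed valuations are the same for $\F_q[[y]]$ and $W(\F_q)$.

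Finally, the ``further subtlety'' about $v_0$ is not a real issue: in the normal form $v_0$ always has pivot $\pi^ae_0$. The failure of cell-by-cell agreement concerns Gr\"obner cells, which differ from these normal-form strata. The strata themselves already match for each fixed $(a,b,c)$, so no recursion via Proposition \ref{prop:shift-emb} is needed.
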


{\it Proof.}
We prove a slightly stronger statement.  Work first
over an arbitrary complete $(\d,\pi)$, DVR with residue field $\F_q$
and uniformizer $\pi$.
An ideal in $\mathcal A_\d(\la)$ is a $\d$-lattice
$L\subset \d^3$, containing
\begin{align}\label{three-conductor-lattice}
L_\la=\pi^A \d e_0\oplus\pi^B \d e_1\oplus\pi^C \d e_2,
\end{align}
and invariant under the following action of $x$:
\begin{align}\label{three-x-action}
xe_0=e_1,\  xe_1=e_2,\  xe_2=0.
\end{align}
Generally, $e_i\equal x^i$.  The multiplication by $y$ 
in $char\!=\!p$ and multiplication by $p$ in $char\!=\!0$
are part of  the $\d$-module structure.

Every such lattice has a unique upper-triangular DVR normal form
\begin{align}\label{three-HNF}
&f_0=\pi^a e_0+u e_1+v e_2,\ \,
f_1=\pi^b e_1+w e_2,\ \, 
f_2=\pi^c e_2, \text{ where }\\
\label{three-HNF-ranges}
&0\leq a\leq A,\,  0\leq b\leq B,\,  0\leq c\leq C,
\  u\ mod\, {\pi^b},\text{ and }  v,w\ mod\, {\pi^c}.
\end{align}
The index of $L$ in $\d^3$ is $q^{a+b+c}$, and it contributes
$t^{a+b+c}$ to $\sH^{inst}(\aa=0)$. 

We now write all the conditions on (\ref{three-HNF}).  Invariance
under $x$ gives
\begin{align}\label{three-x-conditions}
a\geq b\geq c,\ 
u\equiv\pi^{a-b}w\pmod {\pi^c}.
\end{align}
Indeed, reduction of $xf_0$ by $f_1,f_2$ gives the last congruence,
and reduction of $xf_1$ by $f_2$ gives $b\geq c$.

Containment of the last two generators in
(\ref{three-conductor-lattice}) gives
\begin{align}\label{three-conductor-12}
\pi^{B-b}w\equiv0\pmod {\pi^c},\ 
c\leq C.
\end{align}
For the first generator, put
\begin{align}\label{three-r0}
r=A-a.
\end{align}
Reduction of $\pi^Ae_0$ by $f_0,f_1,f_2$ gives precisely
\begin{align}\label{three-conductor-0}
\pi^r u&\equiv0\pmod {\pi^b},\notag\\
\pi^r v-\left(\frac{\pi^r u}{\pi^b}\right)w
&\equiv0\pmod {\pi^c}.
\end{align}
The quotient in the second line belongs to $\d$ because of the first
line.  Conversely, (\ref{three-x-conditions}),
(\ref{three-conductor-12}), and (\ref{three-conductor-0}) imply
$xL\subset L$ and $L_\la\subset L$.  Thus they are necessary and
sufficient.

Fix $(a,b,c)$.  We claim that the number of triples $(u,v,w)$ in
(\ref{three-HNF-ranges}) satisfying these congruences depends only
on $q$ and on the six integers $A,B,C,a,b,c$.  Namely, choose $w$
modulo $\pi^c$ subject to the first congruence in
(\ref{three-conductor-12}), and then choose $u$ modulo $\pi^b$
subject to
\begin{align}\label{three-u-system}
u\equiv\pi^{a-b}w\pmod {\pi^c},\ 
\pi^r u\equiv0\pmod {\pi^b}.
\end{align}
%Both steps count elements in ideals and cosets of the finite chain
%rings $\d/\pi^m \d $; their cardinalities are powers of $q$ determined
%only by the indicated valuations.

Also such $(u,w)$, must satisfy the following congruence:
\begin{align}\label{three-v-equation}
\pi^r v\equiv
h(u,w)\pmod {\pi^c},\ 
h(u,w)=\left(\frac{\pi^r u}{\pi^b}\right)w.
\end{align}
It is solvable exactly when
\begin{align}\label{three-v-solvability}
h(u,w)\in \pi^{\min(r,c)}\d/\pi^c \d;
\end{align}
when solvable, it has exactly $q^{\min(r,c)}$ solutions modulo
$\pi^c$.  The solvability condition is again only a valuation
condition on the product of $u$ and $w$.  The numbers $\#$
of elements
of each valuation in $\d/\pi^m \d$ 
\begin{align*}%\label{three-valuation-count}
\#=1\, \hbox{ for the zero class},\text{ and \,}
\#=(q-1)q^{m-d-1}\, \hbox{ for valuation }d<m.
\end{align*}
Therefore the number
\begin{align}\label{three-profile-count}
N^{a,b,c}_{A,B,C}(q)
=\#\{(u,v,w):\text{(\ref{three-x-conditions})--
(\ref{three-conductor-0}) hold}\}
\end{align}
is a polynomial in $q$ independent of the choice of $\d$, and 
\begin{align}\label{three-universal-polynomial}
\sum_{L_\la\subset L,\,xL\subset L}t^{\deg(L)}
=\sum_{\substack{0\leq c\leq C,\,c\leq b\leq B\\
b\leq a\leq A}}
N^{a,b,c}_{A,B,C}(q)t^{a+b+c}.
\end{align}
The right-hand side is the same for $\d=\F_q[[y]]$ and for
$\d=W(\F_q)$.  Using (\ref{three-equal-ring}) and
(\ref{three-mixed-ring}), we obtain (\ref{three-count-equality}).
The definition of the instanton polynomial at $\aa=0$ now gives
(\ref{three-inst-equality}). \sq

\smallskip
{\bf Comments.}
Generally, the strata in (\ref{three-HNF}) not always
coincide with the standard Gr\"obner cells used earlier in the
paper. The equality holds only after summing all strata of a 
fixed degree. Also, adding $\aa$ to our theorem is not straightforward.
The Nakayama number
$dim_{\F_q}L/(\pi,x)L$ is not
constant on  Gr\"obner cells.

\section{\bf Calculating conductors}\label{app:mot-top}

% Final insert.  The notation \r, \F, \rr, \ss and \upsilon is assumed
% to have been introduced in the surrounding text.

\begin{theorem}\label{thm:explicit-semiroot-conductor}
Assume that
\[
 y=z^{\upsilon\ss},\ 
 x=z^{\upsilon\rr}\bigl(1+z^p+\epsilon(z)\bigr),\ 
 \operatorname{ord}_z\epsilon>p,
\]
where
\(
 \rr>\ss,\ \ \gcd(\rr,\ss)=1,\ 
 \gcd(p,\upsilon)=1.
\)
In positive characteristic $\chi$, assume
$\gcd(\chi,\ss)=1$; there is no restriction in characteristic zero.
Put
\[
 h=x^{\ss}-y^{\rr}
\]
and let $J\subset\F[[x,y]]$ be generated by the generalized monomials
$x^ay^bh^c$ whose $z$-valuations are at least $2\delta$, where
\begin{equation*}%\label{explicit-semiroot-delta}
 2\delta=\upsilon^2\rr\ss-\upsilon(\rr+\ss)
 +(\upsilon-1)p+1.
\end{equation*}
If $F(x,y)$ is the reduced equation of $\r$, then $F\in J$, which 
conclusion is independent of the higher series $\epsilon(z)$.
Accordingly, It suffices to pick one monomial $f_k$ such that
$\nu(f_k)=k$ for $k=2\de,\ldots,2\de+\nu(y)-1$; 
formal series in terms of them will give the whole $J$. 
\end{theorem}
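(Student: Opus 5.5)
The plan is to use the standard structure of a plane branch with two Puiseux pairs (characteristic sequence $\upsilon\ss,\upsilon\rr,\upsilon\rr\ss+p$), where $h=x^\ss-y^\rr$ is the approximate root (semiroot) of $F$ of degree $\ss$ in $x$. The first step is to compute $\nu_z(h)$: from $x^\ss=z^{\upsilon\rr\ss}(1+\ss z^p+\dots)$ and $y^\rr=z^{\upsilon\rr\ss}$ we get $h=\ss z^{\upsilon\rr\ss+p}(1+\dots)$. Here $\ss\neq0$ in $\F$ (this is exactly where $\gcd(\chi,\ss)=1$ enters), so $\nu_z(h)=\upsilon\rr\ss+p$, independently of $\epsilon$. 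Next comes the expansion theorem for semiroots (Abhyankar--Moh, Campillo): every $G\in\F[[x,y]]$ has a unique expansion
\begin{align*}
G=\sum_{a<\ss,\;c<\upsilon} g_{a,c}(y)\,x^ah^c+F\cdot(\cdots),
\end{align*}
and the monomials $x^ay^bh^c$ with $a<\ss$ and $c<\upsilon$ have pairwise distinct $z$-valuations modulo the relevant lattice. This holds because $\nu(y)=\upsilon\ss$, $\nu(x)=\upsilon\rr$, $\nu(h)=\upsilon\rr\ss+p$ generate $\Ga$ with $\gcd$ conditions $\gcd(\rr,\ss)=1$ and $\gcd(p,\upsilon)=1$. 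Consequently the valuation of such a sum is the minimum of the valuations of its terms, so there is no cancellation.

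The main step is to write $F$ itself in semiroot form. Take $F$ to be the Weierstrass polynomial in $x$ of degree $\upsilon\ss$. Dividing successively by $h$ (monic in $x$ of degree $\ss$) gives
\begin{align*}
F=h^\upsilon+\sum_{c<\upsilon,\;a<\ss} g_{a,c}(y)\,x^ah^c .
\end{align*}
Since $F(x(z),y(z))=0$, the leading valuations must cancel. By the distinct-valuation property, the only way $\nu(h^\upsilon)=\upsilon(\upsilon\rr\ss+p)$ can be cancelled is that the remaining terms have valuation $\ge\upsilon(\upsilon\rr\ss+p)$, with the minimal ones cancelling $h^\upsilon$. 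So it suffices to prove that every term of $F$ in this expansion is a generalized monomial $x^ay^bh^c$ with valuation at least $2\delta$. For $h^\upsilon$ this is the inequality $\upsilon(\upsilon\rr\ss+p)\ge2\delta$, which follows directly from the formula for $2\delta$. For the other terms we argue as follows. Each $g_{a,c}(y)x^ah^c$ expands into monomials $y^bx^ah^c$ with pairwise distinct valuations. Because the expansion is a power series and its total must vanish, any individual monomial of valuation below $\upsilon(\upsilon\rr\ss+p)$ would be uncancellable, contradicting $F(x,y)=0$. Hence every monomial appearing has valuation $\ge\upsilon\nu(h)\ge2\delta$, so it lies in $J$, and therefore $F\in J$. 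The only place $\epsilon$ could enter is through the valuations, and those depend only on $\Ga$; this gives the independence claim.

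The obstacle I expect is making the ``distinct valuations / no cancellation'' step rigorous. Monomials $x^ay^bh^c$ with $a<\ss$, $c<\upsilon$ can share a valuation residue only if $a,c$ coincide, and this requires checking
\begin{align*}
\upsilon\rr a+\upsilon\ss b+(\upsilon\rr\ss+p)c\equiv\dots
\end{align*}
modulo $\upsilon\ss$, using $\gcd(p,\upsilon)=1$ first to pin down $c$ and then $\gcd(\rr,\ss)=1$ to pin down $a$. If this gets messy, I would fall back on the standard statement for semiroots and cite \cite{AM,Cam}. For the final assertion, note that $J$ is an $\a$-ideal containing all elements of valuation $\ge2\delta$ expressible via the semiroot expansion. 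By the same no-cancellation argument, any $G\in J$ reduces, modulo elements of larger valuation, to a combination of the chosen $f_k$ with $\nu(f_k)=k$ for $2\delta\le k<2\delta+\upsilon\ss$, since multiplying by powers of $y$ shifts $k$ by $\upsilon\ss$. Successive approximation in the complete ring $\a$ then shows that formal series in the $f_k$ exhaust $J$.
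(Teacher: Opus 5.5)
Your proof of the main assertion $F\in J$ is the paper's proof. You compute $\nu(h)=\upsilon\rr\ss+p$ using $\ss\neq0$ in $\F$, and you write the Weierstrass polynomial as $F=h^{\upsilon}+\sum_{a<\ss,\,c<\upsilon}C_{abc}x^ay^bh^c$ by monic division. You then show that the remainder monomials have pairwise distinct valuations: your observation that $w(a,b,c)\bmod \upsilon\ss$ already determines $(a,c)$, via $\gcd(p,\upsilon)=1$ and then $\gcd(\rr,\ss)=1$, is the paper's reduction mod $\upsilon$ followed by reduction mod $\ss$. Finally the minimum-valuation argument gives valuations $\geq D=\upsilon(\upsilon\rr\ss+p)$, and $D-2\delta=\upsilon(\rr+\ss)+p-1>0$. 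Drop the phrase ``modulo the relevant lattice'': what the no-cancellation step needs is exact distinctness, and your residue argument gives exactly that.

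Your last paragraph, on the generators $f_k$, has a gap; note that the paper gives no argument for that clause at all. Successive approximation $G\mapsto G-c\,y^jf_k$ makes $\nu\bigl(G-\sum_kA_k(y)f_k\bigr)$ infinite. But $\nu=\infty$ only means the remainder lies in $\ker(\a\to\r)=(F)$, not that it is zero in $\a$. So what you actually obtain is $J=\sum_k\a f_k+\a F$, and you still need $F\in\sum_k\a f_k$; this is not automatic.

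It does follow from Nakayama once you check $F\in\mathfrak m_\a J$, and the expansion you already have gives this directly. Every remainder monomial occurring in $F$ has $a\geq1$ or $b\geq1$, since $h^c$ with $c<\upsilon$ has valuation below $D$. Hence it is $x$ or $y$ times a generalized monomial of valuation $\geq D-\upsilon\rr=2\delta+\upsilon\ss+p-1$, which lies in $J$. The leading term splits as $h^{\upsilon}=x\cdot x^{\ss-1}h^{\upsilon-1}-y\cdot y^{\rr-1}h^{\upsilon-1}$, and the two cofactors have valuations $2\delta+\upsilon\ss-1$ and $2\delta+\upsilon\rr-1$, so both lie in $J$. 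Therefore $J=\sum_k\a f_k+\mathfrak m_\a J$, and Nakayama gives $J=\sum_k\a f_k$.
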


{\it Proof.}
We have
\begin{align}
 h(z)
 &=z^{\upsilon\rr\ss}
 \left(\bigl(1+z^p+\epsilon(z)\bigr)^{\ss}-1\right) \notag\\
 &=\ss z^{\upsilon\rr\ss+p}
 +\text{terms of higher $z$-order}.
 \label{explicit-semiroot-h-value}
\end{align}
Thus
\begin{equation}\label{explicit-semiroot-basic-values}
 \nu(x)=\upsilon\rr,\ 
 \nu(y)=\upsilon\ss,\ 
 \nu(h)=\upsilon\rr\ss+p,
\end{equation}
where $\nu=\operatorname{ord}_z$.  The assumption on $\chi$ guarantees
that the leading coefficient $\ss$ in
\eqref{explicit-semiroot-h-value} is nonzero.

Normalize $F$ as a monic Weierstrass equation in $x$.  Its $x$-degree
is $\upsilon\ss$, while $h$ is monic of $x$-degree $\ss$.  Repeated
monic division by $h$ gives the unique formal expansion
\begin{equation}\label{explicit-semiroot-expansion}
 F=h^{\upsilon}+
 \sum_{\substack{0\leq a<\ss,\ 0\leq c<\upsilon\\ b\geq0}}
 C_{abc}x^ay^bh^c.
\end{equation}
When $F$ is a polynomial (which can be arranged),
only finitely many $C_{abc}$ are nonzero.  For
an arbitrary formal $\epsilon(z)$, the sum can be infinite, but it is
$(x,y)$-adically convergent and only finitely many terms have valuation
below any prescribed bound.

The valuation of a monomial $x^ay^bh^c$ is
 has value
\begin{equation}\label{explicit-semiroot-standard-value}
 w(a,b,c)=a\upsilon\rr+b\upsilon\ss
 +c(\upsilon\rr\ss+p).
\end{equation}
We claim that these values are pairwise distinct in the
summation above, which is for 
\(
 0\leq a<\ss,\ 
 0\leq c<\upsilon,\ 
 b\geq 0.
\)
Indeed, suppose that
\[
 w(a,b,c)=w(a',b',c').
\]
Reduction modulo $\upsilon$ gives
\[
 (c-c')p\equiv0\pmod{\upsilon}.
\]
Since $\gcd(p,\upsilon)=1$ and $0\leq c,c'<\upsilon$, we obtain
$c=c'$.  Canceling the $c$-terms gives the exact integer equality
\[
 \upsilon\bigl((a-a')\rr+(b-b')\ss\bigr)=0.
\]
Since $\upsilon>0$, this implies
\[
 (a-a')\rr+(b-b')\ss=0.
\]
Reduction modulo $\ss$, together with $\gcd(\rr,\ss)=1$ and
$0\leq a,a'<\ss$, gives $a=a'$, and then $b=b'$.  Notice that 
$2\delta$ does not occur here.

Let us use now that $F(x,y)=0$. We set
\begin{equation}\label{explicit-semiroot-D}
 D=\nu(h^{\upsilon})
 =\upsilon(\upsilon\rr\ss+p).
\end{equation}
 If
some term with $C_{abc}\neq0$ in
\eqref{explicit-semiroot-expansion} had valuation smaller than $D$, 
then the term with the smallest valuation is unique such and
could not be canceled, contradicting $F(x,y)=0$.

Therefore
\begin{equation}\label{explicit-semiroot-vanishing-below-D}
 C_{abc}\neq0\ 
 \Longrightarrow\ 
 w(a,b,c)\geq D.
\end{equation}
At value $D$, exactly one  monomial occurs in $\Si$; there is
at most one by distinctness; it triggers the
cancelations in $\F[[z]]$. Finally,
\begin{align*}
 D-2\delta
 &=\upsilon(\upsilon\rr\ss+p)
 -\bigl(\upsilon^2\rr\ss-\upsilon(\rr+\ss)
 +(\upsilon-1)p+1\bigr) \notag\\
 &=\upsilon(\rr+\ss)+p-1>0.
% \label{explicit-semiroot-D-conductor}
\end{align*}
Every generalized monomial actually occurring in $F$, including
$h^{\upsilon}$, therefore has value at least $D>2\delta$,
which proves $F\in J$.

Note that $\epsilon(z)$ can make $\Si$ infinite. Then all monomials
there will have distinct valuations. The argument involving
$D$ requires only $z^{\upsilon\rr+p}$.  \sq

%\begin{example}\label{ex:explicit-semiroot-C2}
{\bf Examples.}
The simplest family here is 
\[
 x=z^6+z^{7+2k},\ 
 y=z^4,\ 
 h=x^2-y^3,
\]
where  $\upsilon=2$, $\rr=3$, $\ss=2$, $p=2k+1$, and
\begin{align*}
 F(x,y)
 &=x^4-2x^2y^3-4xy^{k+5}+y^6-y^{2k+7} \notag
% &=(x^2-y^3)^2-4xy^{k+5}-y^{2k+7} \notag\\
=h^2-4xy^{k+5}-y^{2k+7}.
% \label{explicit-semiroot-C2-equation}
\end{align*}
The valuations are
\[
 \nu(h^2)=\nu(xy^{k+5})=26+4k,\ \ 
 \nu(y^{2k+7})=28+8k,
\]
where $2\delta=16+2k$.  This first two valuations coincide
(the must due to $F(x,y)=0$), but all others (only one here)
are greater indeed.

% Higher-tail example for the semiroot discussion.
% The notation is compatible with the surrounding text: \F is the base field,
% \r is the branch ring, and \nu denotes the z-adic valuation.

\vskip 0.2cm
A more instructive variation of the case $k=0$  
is as follows. 
Assume that $\operatorname{char}(\F)\ne 2$, and consider
\[
 y=z^4,\ \ x=z^6+z^7+z^8=z^6(1+z+z^2),
 \ \ \r=\F[[x,y]]\subset\F[[z]].
\]
Elimination of $z$ gives 
\(
 F(x,y)={}x^4-4x^3y^2+6x^2y^4-2x^2y^3-4xy^6
             +y^8+y^7+y^6=0.                % \label{higher-tail-F}
\)
This is %\eqref{higher-tail-F} is 
the resultant
\(
 \operatorname{Res}_z\bigl(z^4-y,\ z^6+z^7+z^8-x\bigr).
\)
As above, 
\(
 h=x^2-y^3.
\)

Division by the monic polynomial $h=x^2-y^3$ results in:\,
\begin{align}
 F={}&h^2-4xy^2h+6y^4h-4xy^6-4xy^5+y^8+7y^7=0.
\label{higher-tail-h}
\end{align}
The generalized monomials occurring in \eqref{higher-tail-h}, their
coefficients, and their $z$-valuations are as follows:
\[
\begin{array}{c|rrrrrrr}
 M&h^2&xy^2h&y^4h&xy^6&xy^5&y^8&y^7\\ \hline
 C_M&1&-4&6&-4&-4&1&7\\
 \nu(M)&26&27&29&30&26&32&28   
\end{array}  \, .                                           %\tag{C.\!HT}
\]

\comment{
Here
\[
 h=2z^{13}+3z^{14}+2z^{15}+z^{16},\ \ \nu(h)=13.
\]
For completeness, direct substitution gives
\begin{align*}
 h^2&=4z^{26}+12z^{27}+17z^{28}+16z^{29}
       +10z^{30}+4z^{31}+z^{32},\\
 -4xy^2h&=-8z^{27}-20z^{28}-28z^{29}-24z^{30}
       -12z^{31}-4z^{32},\\
 6y^4h&=12z^{29}+18z^{30}+12z^{31}+6z^{32},\\
 -4xy^6&=-4z^{30}-4z^{31}-4z^{32},\\
 -4xy^5&=-4z^{26}-4z^{27}-4z^{28},\\
 y^8&=z^{32},\ \ 7y^7=7z^{28}.
\end{align*}
Consequently their sum is identically zero.  The first cancelation is
between $h^2$ and $-4xy^5$, both of valuation $26$; after this cancelation,
the coefficients in degrees $27,\ldots,32$ cancel successively as well.
Thus the higher term $z^8$ in $x$ produces a genuinely nontrivial 
cancelation
chain.  
}
The remainder monomials
\(
 xy^2h,\ y^4h,\ xy^6,\ xy^5,\ y^8,\ y^7
\)
have pairwise distinct valuations (as it is supposed to be),
and valuation $26$ of $xy^5$ coincides with that 
of the leading term $h^2$ (must occurs too).

\vskip 0.2cm

{\bf Any plane curve singularities}.
We now come to the general statement.  
Characteristic values $\beta_i$, 
standard integers $n_i$, and (a complete
system of) semiroots $f_i$ will be used. See
%\cite{AbhyankarMoh73,AbhyankarExpansion}.  
\cite{AM}:\, Part I, especially the construction by generalized 
Tschirnhausen transformations, and Part II 
for the successive approximate-root expansion.
Also, see \cite{Cam}:\, the chapters devoted to characteristic 
sequences, approximate roots,
standard expansions, and the value semigroup.

The steps are quite similar to the case we began with.
Now all  successive semiroots are used, not only $y,x,h$, and 
%Theorem~\ref{thm:explicit-semiroot-conductor} 
%is that of the single nontrivial semiroot $h$.  
the direct justification of
the distinctness of valuations of monomials in the
corresponding  $\Si$
is replaced by the standard
Ap\'ery uniqueness for the telescopic value semigroup.  The
minimum-valuation argument remains unchanged.

\begin{theorem}\label{thm:general-semiroot-conductor}
Let $\r\subset \o=\F[[z]]$ be a ring of
 irreducible plane curve singularity over $\F$ with
reduced equation $F(x,y)$.  In positive characteristic $\chi$, we 
assume that 
\(
 \gcd(\chi,n_1n_2\cdots n_g)=1;
\);
there is no restriction in characteristic zero.  
Let $J\subset \a=\F[[x,y]]$ be
generated by the generalized monomials in terms of  complete system
of semiroots (including $y,x$)
whose $z$-valuations are at least $2\delta_\r$.  Then 
$F\in J.$ Thus,
it suffices to pick one monomial $f_k$ such that
$\nu(f_k)=k$ for $k=2\de,\ldots,2\de+\nu(y)-1$;
formal series in terms of them will give $J$. 
%\begin{equation}\label{general-semiroot-membership}
\end{theorem}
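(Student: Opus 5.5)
We generalize the proof of Theorem~\ref{thm:explicit-semiroot-conductor}, replacing the single semiroot $h$ by the full Abhyankar--Moh system. Let $f_0=y$, $f_1=x$, and let $f_2,\ldots,f_g$ be the successive semiroots (approximate roots) of $F$, with $\bar\beta_0=\nu(y)$, $\bar\beta_1=\nu(x)$, $\bar\beta_i=\nu(f_i)$ the minimal generators of $\Ga$. Here $n_i=e_{i-1}/e_i$ for the successive gcd's $e_i$. The assumption $\gcd(\chi,n_1\cdots n_g)=1$ is exactly what makes approximate roots exist and have the expected values, as in \cite{AM}. Normalize $F$ as a monic Weierstrass polynomial in $x$ of degree $n=\nu(y)$. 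Successive monic division by $f_g,f_{g-1},\ldots,f_1$, as in Part~II of \cite{AM} and the standard-expansion chapter of \cite{Cam}, then gives a unique expansion
\[
F=f_g^{\,n_g}+\sum C_{\mathbf a}\, y^{a_0}x^{a_1}f_2^{a_2}\cdots f_g^{a_g},\qquad 0\le a_i<n_i\ (i\ge1),\ a_0\ge0 .
\]
The sum is $(x,y)$-adically convergent, with finitely many terms below any given valuation.

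The first key step is that the values $w(\mathbf a)=\sum a_i\bar\beta_i$ of the monomials in the sum are pairwise distinct. This is the standard Ap\'ery uniqueness for the telescopic semigroup $\Ga$: if $w(\mathbf a)=w(\mathbf a')$, reduce modulo $e_{g-1}$ and use $\gcd(\bar\beta_g,e_{g-1})=e_g=1$ with $0\le a_g,a'_g<n_g$ to get $a_g=a'_g$. Then divide by $e_{g-1}$ and descend. This step replaces the explicit mod-$\upsilon$, mod-$\ss$ argument used before.

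Next comes the minimum-valuation argument. Set $D=\nu(f_g^{n_g})=n_g\bar\beta_g$. If some $C_{\mathbf a}\ne0$ had $w(\mathbf a)<D$, the term of smallest value would be unique by distinctness, so it could not cancel in $F(x(z),y(z))=0$, a contradiction. Hence every monomial occurring in $F$ has value at least $D$.

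The final step is the inequality $D>2\delta$. We use the Gorenstein/planar formula for the conductor,
\[
2\delta=\sum_{i=1}^{g}(n_i-1)\bar\beta_i-\bar\beta_0+1 .
\]
Then $D-2\delta=\bar\beta_0-1-\sum_{i<g}(n_i-1)\bar\beta_i+\bar\beta_g$. Using $\bar\beta_{i+1}>n_i\bar\beta_i$, so that $\bar\beta_g>n_{g-1}\bar\beta_{g-1}>\ldots$, we telescope to
\[
\bar\beta_g-\sum_{i<g}(n_i-1)\bar\beta_i>\bar\beta_1>0 ,
\]
which gives $D>2\delta$. Therefore all monomials of $F$ lie in $J$, and $F\in J$.

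For the last claim, each monomial in the semiroots of value $\ge2\delta$ is an $\a$-multiple of one of the chosen $f_k$ with $2\delta\le k<2\delta+\nu(y)$. To see this, lower the $y$-exponent, or peel off factors, while staying above $2\delta$; every value $\ge2\delta$ lies in $\Ga$, and by Ap\'ery uniqueness it has a unique standard representation. So formal series in the $f_k$ generate $J$. I expect this reduction, together with the conductor inequality, to be the main obstacle. I would first verify the inequality carefully against the example $C\!ab(13,2)T(3,2)$ above, and then check the divisibility-by-$f_k$ claim by comparing standard representations.
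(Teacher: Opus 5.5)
Your proof of the main assertion $F\in J$ is the paper's proof. It has the same ingredients: the semiroot expansion $F=f_g^{n_g}+\sum C_M M$, and pairwise distinct values of the standard remainder monomials (you even write out the descent modulo $e_{g-1},e_{g-2},\ldots$ that the paper only attributes to Ap\'ery uniqueness). It then uses the minimum-valuation argument giving $\nu(M)\ge D=n_g\bar\beta_g$, and $D>2\delta$ from $2\delta=\sum_i(n_i-1)\bar\beta_i-\bar\beta_0+1$ and $\bar\beta_{i+1}>n_i\bar\beta_i$. One small slip: for $g=1$ the sum is empty, so you only get $\bar\beta_g-\sum_{i<g}(n_i-1)\bar\beta_i=\bar\beta_1$, not $>\bar\beta_1$. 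The conclusion $D-2\delta\ge\bar\beta_0+\bar\beta_1-1>0$ is unaffected; the paper treats $g=1$ separately. The paper's proof ends at $F\in J$ and gives no argument for the final sentence of the statement.

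The real problem is your last paragraph. The assertion that every semiroot monomial of value $\ge2\delta$ is a multiple of a single chosen $f_k$ is false. Take $\Gamma=\langle 4,6,13\rangle$ with $y=z^4$, $x=z^6+z^7$, $h=x^2-y^3$, so $2\delta=16$ and $\nu(y)=4$. Choose $f_{16}=y^4$, $f_{17}=yh$, $f_{18}=xy^3$, $f_{19}=xh$. The monomial $x^2y$ has value $16$ and is a generator of $J$, but it is divisible by none of these; it lies in their ideal only through the combination $x^2y=yh+y^4$. Ap\'ery uniqueness concerns standard monomials ($0\le a_i<n_i$ for $i\ge1$). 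The generators of $J$ and the chosen $f_k$ are arbitrary generalized monomials, so it yields no divisibility, and ``peeling off factors while staying above $2\delta$'' stalls at $x^2y$.

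A correct route has two steps.

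(1) The images of the $f_k$ in $\mathbb{F}[[z]]$ have valuations exactly $2\delta,\ldots,2\delta+n-1$, where $n=\nu(y)$. So they form a basis of $z^{2\delta}\mathbb{F}[[z]]/y\,z^{2\delta}\mathbb{F}[[z]]$, and Nakayama over $\mathbb{F}[[y]]$ shows they generate the conductor $z^{2\delta}\mathbb{F}[[z]]$ as an $\mathbb{F}[[y]]$-module. Hence the lifted conductor is $\mathcal{C}=(f_k)_k+(F)$. Combined with $F\in J\subset\mathcal{C}$, this gives $J=\mathcal{C}$, which is all Corollary~\ref{cor:3gen-top} needs.

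(2) To drop $F$ you must show $F\in(f_k)_k$. By Nakayama in $\mathbb{F}[[x,y]]$ this means $F\in\mathfrak{m}\,\mathcal{C}$ for $\mathfrak{m}=(x,y)$, equivalently that $\mathcal{C}$ has exactly $n$ minimal generators. Valuation counting does not give this. The remainder monomials are harmless: a standard monomial with $a_0=0$ has value at most $\sum_{i\ge1}(n_i-1)\bar\beta_i=2\delta+\bar\beta_0-1<D$, so each occurring remainder monomial is $y$ times an element of $\mathcal{C}$. But for $g\ge2$ the leading term $f_g^{n_g}$ is not visibly in $\mathfrak{m}\,\mathcal{C}$, because $(n_g-1)\bar\beta_g<2\delta$ (here $13<16$). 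An extra input is needed, e.g.\ that $\mathcal{C}$ is an integrally closed $\mathfrak{m}$-primary ideal of order $n-1$. Then Zariski's formula $\mu(I)=\mathrm{ord}(I)+1$ for complete ideals gives $\mu(\mathcal{C})=n$.
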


{\it Proof.}
The standard semiroot expansion has the form
\begin{equation}\label{general-semiroot-expansion}
 F=f_g^{n_g}+\sum_M C_M M,
\end{equation}
where $M$ runs through the corresponding remainder monomials.  Semiroot
theory gives two facts we are going to use:
\begin{equation}\label{general-semiroot-two-facts}
 \nu(f_g^{n_g})=n_g\beta_g,\ \ 
 M\neq M'\ \Longrightarrow\ \nu(M)\neq\nu(M').
\end{equation}
The second assertion is equivalent to the uniqueness of the bounded
Ap\'ery representatives for the telescopic value semigroup.

Put
\(D\equal n_g\beta_g.\) Some deviation from the
case above is that the monomials in $\Si$
may have valuations below
$D$.  However, we claim that none occurs in $F$ with a nonzero 
coefficient.  Indeed, if
such terms occurred, choose one term of the 
smallest value $m<D$.  By
\eqref{general-semiroot-two-facts}, it would be then the
 unique term of valuation
$m$ in \eqref{general-semiroot-expansion}. Since the leading term
$f_g^{n_g}$ has value $D>m$, the coefficient of $z^m$ could 
not cancel after the $z$-substitution, contradicting $F(x,y)=0$.
Thus, 
\begin{equation}\label{general-semiroot-bound-by-D}
C_M\neq0\ \Longrightarrow\ \nu(M)\geq D
\end{equation}.

It remains only to compare $D$ with $2\de$, which is routine.
Now:
\begin{equation}\label{general-semiroot-conductor-formula}
 2\delta
 =\sum_{i=1}^g(n_i-1)\beta_i-\beta_0+1.
\end{equation}
Therefore
\begin{align}
 D-2\delta
 &=n_g\beta_g-
 \sum_{i=1}^g(n_i-1)\beta_i+\beta_0-1 \notag\\
 &=\beta_g+\beta_0-1
 -\sum_{i=1}^{g-1}(n_i-1)\beta_i.
 \label{general-semiroot-D-minus-conductor}
\end{align}
For $i<g$, the characteristic values satisfy
\(
 \beta_{i+1}>n_i\beta_i,
\)
and, therefore, 
\(
 (n_i-1)\beta_i<\beta_{i+1}-\beta_i.
\)
If $g\geq2$, we arrive at:
\[
 \sum_{i=1}^{g-1}(n_i-1)\beta_i
 <\sum_{i=1}^{g-1}(\beta_{i+1}-\beta_i)
 =\beta_g-\beta_1,
\]
which results in 
%Substitution in \eqref{general-semiroot-D-minus-conductor} yields
\begin{equation*}%\label{general-semiroot-strict-bound}
 D-2\delta>\beta_0+\beta_1-1>0.
\end{equation*}
When $g=1$, the sum in
\eqref{general-semiroot-D-minus-conductor} is empty and
\[
 D-2\delta=\beta_0+\beta_1-1>0.
\]
Thus \(
 D=n_g\beta_g>2\delta.
\) in all cases, and every
generalized monomial occurring in $F$, including
$f_g^{n_g}$, has valuation greater than $2\delta$ due to
\eqref{general-semiroot-bound-by-D}.  Hence $F\in J$. \sq

\comment{
\begin{example}\label{ex:general-semiroot-smallest-gap}
For the cusp
\[
 x=z^3,\ 
 y=z^2,\ 
 F=x^2-y^3,
\]
we have $D=\nu(x^2)=6$ and $2\delta=2$.  Thus
\[
 D-2\delta=4.
\]
The leading term $x^2$ and the unique standard remainder monomial
$-y^3$ have the common value $6$.  This is the smallest possible gap
for a singular plane branch: when $g=1$ the formula above gives
$D-2\delta=\beta_0+\beta_1-1\geq4$, while additional characteristic
values make the inequality strict.
\end{example}

\begin{remark}\label{rem:general-semiroot-differences}
The formal standard expansion and its exponent bounds alone do not
exclude monomials of small valuation.  What eliminates them from the
defining equation is the combination of two facts: standard remainder
valuations are pairwise distinct, and $F(x(z),y(z))=0$.  After this
elimination, the purely combinatorial inequality
$n_g\beta_g>2\delta$ completes the proof.  This separation is useful:
the first theorem proves distinctness directly by congruences, while in
the general theorem it is supplied by the standard semiroot--Ap\'ery
theory.
\end{remark}
}

\end{appendices}

\end{document}